\newtheorem{prop}{Proposition}[section]
\newtheorem{conj}{Conjecture}[section]
\newtheorem{lemma}[prop]{Lemma}
\newtheorem{corollary}[prop]{Corollary}
\newtheorem{thm}[prop]{Theorem}
\newtheorem{example}[prop]{Example}
\newtheorem*{thm*}{Theorem}
\newtheorem*{prop*}{Proposition}
\newtheorem*{definition*}{Definition}
\theoremstyle{definition}
\newtheorem{definition}[prop]{Definition}
\theoremstyle{remark}
\newtheorem{rem}[prop]{Remark}
\newcommand{\Z}{\mathbb{Z}}
\newcommand{\N}{\mathbb{N}}
\newcommand{\R}{\mathbb{R}}
\newcommand{\A}{\mathbb{A}}
\newcommand{\Gm}{\mathbb{G}_m}
\newcommand{\Projsp}{\mathbb{P}}
\newcommand{\rg}{\mathcal{O}}
\newcommand{\res}{\kappa}
\newcommand{\cres}{\overline{\kappa}}
\newcommand{\unif}{\pi}
\newcommand{\quot}{\mathrm{K}}
\newcommand{\im}[1]{\mathrm{Im}(#1)}
\newcommand{\gln}{\mathrm{GL}_n}
\newcommand{\affw}{W^{\mathrm{aff}}}
\newcommand{\grs}{\mathrm{Gr}^{n,r}}
\newcommand{\gr}[1]{\mathrm{Gr}^k\left({#1}\right)}
\newcommand{\can}{\mathcal{M}}
\newcommand{\cangoe}{\widetilde{\mathcal{G}}}
\newcommand{\cangen}{\mathcal{G}}
\newcommand{\cangenpl}{\cangen^{\mathrm{pl}}}
\newcommand{\cangenstr}{\mathcal{S}}
\newcommand{\cangenstrpl}{\mathcal{S}^{\mathrm{pl}}}
\newcommand{\sset}{\binom{[n]}{k}}
\newcommand{\defeq}{\mathrel{\mathop:}=}
\newcommand{\loc}{\mathcal{M}^{\text{loc}}}
\newcommand{\spec}[1]{\mathrm{Spec}\left(#1\right)}
\newcommand{\bl}[2]{{\mathrm{Bl}_{#1}\left(#2\right)}}
\newcommand{\Bl}[2]{\mathrm{BL}_{#1}\left(#2\right)}
\newcommand{\tot}[1]{#1^{\mathrm{tot}}}
\newcommand{\str}[1]{#1^{\mathrm{s}}}
\newcommand{\sing}[1]{#1^{\mathrm{sing}}}
\newcommand{\schuvar}[1]{X_{#1}}
\newcommand{\schucell}[1]{\schuvar{#1}^\circ}
\newcommand{\M}[1]{\mathcal{M}_{\Projsp}\left(#1\right)}
\newcommand{\Mgr}[2]{\mathcal{M}_{\mathrm{Gr}^#1}\left(#2\right)}
\newcommand{\projectbar}[1]{{\overline{\pr_{\Projsp}^{#1}}}}
\newcommand{\projbar}{{\overline{\pr_{\Projsp}}}}
\newcommand{\project}[1]{{\pr_{\Projsp}^{#1}}}
\newcommand{\pr}{\mathrm{pr}}
\newcommand{\projectgr}[1]{{\pr_{\mathrm{Gr}}^{#1}}}
\newcommand{\muspro}[2]{\pr_{#1 , #2}}
\newcommand{\musprol}[1]{\pr_{#1}}
\title{local Models, Mustafin varieties and semi-stable resolutions}
\begin{document}
\author{Felix Gora}
\title{Local models, Mustafin varieties and semi-stable resolutions}

\maketitle

\begin{abstract}
	Our goal is to analyse singularities of integral models of Shimura varieties. One approach is to construct local models, which model the singularities of the corresponding integral model using linear algebra dada and find resolutions with mild singularities thereof. More precisely we will attack the question of existence of semi-stable resolutions. We will discuss an approach developed by Genestier. In this approach a candidate for a semi-stable resolution was given as the blow-up of a Grassmannian variety in Schubert varieties of its special fiber. Explicit calculations show that this approach does not work in general. Using the flatness of the local models, we describe these local models as Mustafin varieties for Grassmannian varieties. We combine several results on the structure of Mustafin varieties for projective spaces with the Pl\"ucker embedding to construct a candidate for a semi-stable resolution of local models. Under some additional assumptions this candidate generalises the approach suggested by Genestier. Furthermore under the same assumptions the new candidate agrees with the semi-stable resolution constructed by G\"ortz for small dimensions.
\end{abstract}

\section*{Introduction}
 In the study of Shimura varieties it is of great interest to construct models over the ring of integers $\rg$ of the completion of the reflex field at a prime  with finite residue characteristic $p$. These models should at least be flat and ideally have mild singularities. The special case of Shimura varieties of PEL type are moduli spaces of abelian varieties with some extra structures (polarisation, endomorphism and level structure). For parahoric level structures candidates for such models are constructed by Rapoport and Zink in \cite{RZ96} by posing the moduli problem over $\rg$.\\
 In the attempt to analyse the occurring singularities they define so called \emph{local models}. These models are constructed as projective varieties over $\rg$ and model the singularities of the integral models. More precisely every point in the integral model has an \'{e}tale neighbourhood isomorphic to an \'{e}tale neighbourhood of the corresponding local model. The advantage of local models is that they are cut out in a product of Grassmannian varieties by equations arising from linear algebra and hence are easier to handle. Although these models are not flat in general as pointed out by Pappas in \cite{Pa00}, it was proven by G\"ortz that local models in the so called \emph{linear case} and the \emph{symplectic case}  are flat (cf. \cite{G01} and \cite{Goe03}). Further developments of local models for other cases and the question of their flatness can be found in \cite{PRS13}.\\
  In the following we will focus on local models in the linear case with Iwahori level structure and study their singularities in this case. Below we will give a precise definition of the local model corresponding to this data. 

\vspace{13pt}

Now let $\rg$ be any complete discrete valuation ring with uniformizer $\unif$, quotient field $\quot$ and resedueclass field $\res$.
For this case the local model over $\rg$ is constructed as follows.
Fix two natural numbers $k<n$. To shorten the notation we denote the set $\{0,\dots,n-1\}$ by $[n]$. Furthermore we fix the canonical basis $\{e_i\}_{i\in [n]}$ of $\quot^n$. For $i\leq n-1$ we denote by $\Lambda_i$ the lattice generated by the elements $\unif^{-1}e_0, \dots , \unif^{-1}e_{i-1},e_{i},\dots ,e_{n-1}$ and define the standard lattice chain $\Gamma^{\mathrm{st}}$ to be
	\begin{align*}
		 \dots\rightarrow \Lambda_0 \rightarrow \Lambda_1\rightarrow \dots\rightarrow \Lambda_n=\unif^{-1} \Lambda_0\rightarrow\dots
	\end{align*} 
	For an $\rg$-scheme $S$ we write $\Lambda_{i,S}$ for $\Lambda_i\otimes_{\rg}\mathcal{O}_S$ and define the $S$-valued points of the functor $\loc$ to be diagrams of the form
\[\begin{xy}
	\xymatrix{
		 \Lambda_{0,S}\ar[r]&\Lambda_{1,S}\ar[r]& \dots \ar[r] & \Lambda_{n-1,S}\ar^{\unif}[r]& \Lambda_{0,S}\\
		 \mathcal{F}_0\ar@{^(->}[u]\ar[r] & \mathcal{F}_1\ar@{^(->}[u]\ar[r]&\dots \ar[r] & \mathcal{F}_{n-1}\ar@{^(->}[u]\ar[r] & \mathcal{F}_0 \ar@{^(->}[u]
	}
	\end{xy}\]
where the $\mathcal{F}_i$'s are locally free $\mathcal{O}_S$-submodules of $\Lambda_{i,S}$ of rank $k$ that are Zariski-locally direct summands.\\
This functor is represented by a closed subscheme of the product $\prod_{i\in[n]}\gr{\Lambda_i}$ of Grassmanian varieties.
 We can easily identify the generic fiber $\loc_{\quot}$ with the Grassmannian $\grs_{\quot}$, but the special fiber is much more complicated.\\
 
 \vspace{2pt}
  Let us illustrate some of the behaviour in the case $n=2$ and $k=1$ (cf. \cite[Section 4.4]{Ha05}). If we further assume $\rg=\Z_p$ then $\loc$ models the singularities of the modular curve endowed with $\Gamma_0(p)$-level structure.\\ Fix a $\rg$-algebra $R$. To simplify the notation let us identify $\Lambda_{0,R}$ and $\Lambda_{1,R}$ with $R\oplus R$. The $R$-valued points $\loc(R)$ are now given by commutative diagrams of the form
\[\begin{xy}
	\xymatrix{
		 R\oplus R \ar[rr]^{\begin{bmatrix} \unif & 0 \\ 0 & 1 \end{bmatrix} }					&&R\oplus R\ar[rr]^{\begin{bmatrix} 1 & 0 \\ 0 & \unif \end{bmatrix} }						&&  R\oplus R \\
		 \mathcal{F}_0\ar@{^(->}[u]\ar[rr] 	&& \mathcal{F}_1\ar@{^(->}[u]\ar[rr] 	&& \mathcal{F}_{0}\ar@{^(->}[u] 
	}
	\end{xy}\]
where $\mathcal{F}_i$ is an element in $\Projsp^1(R)$ for $i=1,2$.
Let us fix local coordinates and take a pair $(\mathcal{F}_0,\mathcal{F}_1)\in \Projsp^1(R)\times \Projsp^1(R)$. First we note that if $\mathcal{F}_0$ is represented by a homogeneous column vector $[x:1]^{\mathrm{t}}$ then the image $[\unif x:1]^{\mathrm{t}}$ again represents an element in $\Projsp^1(R)$ hence has to coincide with $\mathcal{F}_1$. In particular this chart of the local model can be identified with $\A^1_{\rg}$. Now let us assume that $\mathcal{F}_0$ is represented by a homogeneous vector of the form $[1:x]^{\mathrm{t}}$ and $\mathcal{F}_1$ is represented by $[y:1]^{\mathrm{t}}$. In particular we see that the pair $(\mathcal{F}_0,\mathcal{F}_1)$ is in $\loc(R)$ precisely when $xy=\unif$. Hence this chart of the local model is isomorphic to $\spec{\rg [x,y]/(xy-\unif) }$. Gluing the charts lets us identify $\loc$ with the blow-up of $\Projsp^1_{\rg}$ in the origin of the special fiber. In particular the special fiber $\loc_{\res}$ is consists of two projective lines $\Projsp^1_{\res}$ intersecting transversally in one point.\\

\vspace{2pt}

Generalising the type of singularities of the example above leads to the notion of semi-stability defined below. More detailed discussions of this definition can be found for example in \cite{dJ96} or \cite{Har01}.
\begin{definition*}
	For a complete discrete valuation ring $\rg$ with uniformizer $\unif$ we call an $\rg$-variety $X$ semi-stable if \'{e}tale locally $X$ is of the form 
	\begin{align*}
		\spec{\rg[x_0,\dots,x_r]/(\prod_{i\leq m} x_i-\unif  )}
	\end{align*}  
	for some $r$ and $m$.
\end{definition*} 

As a generalisation of the example above it is well known that the local models $\loc$ in the so called \emph{Drinfeld case} (cf. \cite{RZ96}), i.e for $k=1$ and $n$ arbitrary, are semi-stable (see \cite[Section 3.69]{RZ96} cf. also  \cite{FA01} or \cite{Mu78}). In \cite{FA01} Faltings also constructs toroidal resolutions for $k=2$.\\
 We can also define a symplectic version of the local model cf. \cite{GEN00}. This version is obtained by imposing a certain self-duality condition in the moduli description above. In loc. cit. a semi-stable resolution of the local model in the symplectic case for $n\leq 6$ was constructed. It was also suggested in loc. cit. remark 3 at the end of section 2 that a similar construction produces a semi-stable resolution $\cangen\rightarrow\loc$ in the linear case. Let us explain this construction for the linear rather than the symplectic case. \\
	Set $\cangen_0\defeq \gr{\Lambda_0}$ and inductively for $1\leq i<(n-k)k$ define $\cangen_i$ to be the blow-up of $\cangen_{i-1}$ in the union of the strict transforms of the Schubert varieties of dimension $i-1$ in the special fiber of $\gr{\Lambda_0}$. The last blow-up $\cangen_{(n-k)k-1}$ will be denoted by $\cangen$.\\
  A semi-stable resolution $\cangoe\rightarrow\loc$ for $n\leq 5$ at least for an open neighbourhood of the "most singular point" was given in \cite{G04}. Other cases of local models and their resolutions were studied in \cite{R13}, \cite{Kr03} and \cite{PR05}.

\vspace{13pt}
Starting with the observation that the candidate $\cangen$ in \cite{GEN00} does not factor through $\loc$ for $n=5$ and $k=2$, our goal is to construct a candidate for a semi-stable resolution for arbitrary $n$ and $k$. Therefore we consider the strict transform $\cangenstr$ of the projection $\loc\rightarrow \gr{\Lambda_0}$ under the blow-up $\cangen\rightarrow\gr{\Lambda_0}$. For this strict transform we can show the following theorem:
 \begin{thm*}
 	For $n=5$ and $k=2$ the blow-up $\cangenstr$ is a semi-stable resolution of $\loc$. By passing to a neighbourhood of the worst singularity of $\loc$ one recovers the local semi-stable resolution defined in \cite{G04}.
 \end{thm*}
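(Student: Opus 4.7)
The plan is to verify semi-stability of $\cangenstr$ étale-locally at every closed point, by reducing the question to explicit computations on the standard affine charts of the iterated blow-up $\cangen\to\gr{\Lambda_0}$. The starting point is the description of $\loc$ as a Mustafin variety for the Grassmannian $\gr{\Lambda_0\otimes_{\rg}\quot}$ together with the Plücker embedding $\gr{\Lambda_0}\hookrightarrow\Projsp(\wedge^2\Lambda_0)\cong\Projsp^9_{\rg}$, which translates the lattice-inclusion equations defining $\loc$ into monomial equations in the Plücker coordinates. Fix a closed point $x\in\cangenstr$ and let $\schucell{w}$ be the open Schubert cell of the special fiber of $\gr{\Lambda_0}$ containing its image; only the finitely many Schubert subvarieties $\schuvar{w}=\schuvar{w_0}\supset\schuvar{w_1}\supset\cdots\supset\schuvar{w_d}$ meeting this cell contribute to the iterated blow-up étale-locally at $x$. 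On a suitable étale neighbourhood one may therefore choose coordinates on $\gr{\Lambda_0}$ in which these Schubert varieties become coordinate subschemes, and $\cangen$ acquires a toric chart description near $x$.

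On such a chart I would compute $\cangenstr$ as the strict transform of the Mustafin/Plücker equations defining $\loc$. The main task, and the main obstacle, is multiplicity control: at each of the $(n-k)k=6$ blow-up steps the strict transform of $\loc$ must meet the center of the next blow-up transversally, so that the transformed lattice-inclusion equations reduce étale-locally to a single relation of the shape $\prod_{i=0}^{m}x_i=\unif$ in fresh variables, rather than producing squared factors of exceptional coordinates. Concretely, this amounts to verifying, for each stratum, that among the terms of the Plücker expansion of the equation $\mathcal{F}_{j}\hookrightarrow\mathcal{F}_{j+1}$ contributing the factor of $\unif$, at most one is not absorbed by the exceptional divisors accumulated so far, and that the surviving terms acquire pairwise distinct exceptional-divisor multiplicities. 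For $n=5$ and $k=2$ the combinatorial structure of the stratification of $\loc$ is controlled by the structure results for Mustafin varieties of projective spaces established earlier in the paper, so the verification reduces to a finite stratum-by-stratum check; the deepest strata, corresponding to the smallest Schubert varieties, are where the multiplicity estimates are tightest and will require the most care.

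For the second assertion I would restrict to an étale neighbourhood $U$ of the worst singular point $x_0\in\loc$, which lies over the zero-dimensional Schubert variety in the special fiber of $\gr{\Lambda_0}$. Inside $U$ only those blow-ups whose centers contain $x_0$ are nontrivial, so the restriction $\cangenstr|_U$ is obtained from a shorter iterated blow-up of an explicit affine chart of $\loc$. Görtz's resolution in \cite{G04} is defined intrinsically near $x_0$ by its own sequence of blow-ups with explicit centers; since both sequences are indexed by the complete flag of Schubert varieties through $x_0$ and both proceed in order of increasing dimension of the center, an alignment of coordinate systems produces a canonical isomorphism between $\cangenstr|_U$ and Görtz's resolution, which yields the second half of the statement.
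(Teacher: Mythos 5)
Your approach is genuinely different from the paper's. You propose to verify semi-stability of $\cangenstr$ directly, by étale-local coordinate computations on the charts of the iterated blow-up $\cangen\to\gr{\Lambda_0}$, checking transversality and multiplicity behaviour stratum by stratum. The paper instead proves Lemma~\ref{lem_cangenstr_bl_of_irrcom}, which reinterprets $\cangenstr\to\Mgr{k}{\Gamma^{\mathrm{st}}}$ as a sequence of blow-ups of $\loc=\Mgr{k}{\Gamma^{\mathrm{st}}}$ in \emph{irreducible components} of its special fiber, and then (Theorem~\ref{thm_5_2_goe}) compares this sequence with G\"ortz's $\cangoe$, which is built from blow-ups of a subset of those irreducible components and is already known to be semi-stable. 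The paper's route thus leverages the existing result in \cite{G04}, while your route would reprove semi-stability from scratch; a successful direct computation would be more self-contained, but it bypasses the structural translation in Lemma~\ref{lem_cangenstr_bl_of_irrcom} that makes the paper's comparison clean.

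There are two genuine gaps. First, for the semi-stability claim, the ``multiplicity control'' you describe --- verifying at each of the six blow-up steps that the strict transform of $\loc$ meets the center transversally, that at most one term of each Pl\"ucker-expanded lattice-inclusion equation survives, and that the surviving terms acquire pairwise distinct exceptional multiplicities --- is precisely the entire content of the statement, and you only describe what would have to be checked rather than checking it. In particular you concede that the deepest strata ``will require the most care,'' but that is exactly where the answer could go either way; for $n=5$, $k=2$ the Grassmannian is six-dimensional and $\loc$ sits in a product of five Grassmannians, so the terms contributing the factor of $\unif$ are numerous and nothing in the proposal pins down their exceptional multiplicities. Without that verification, no conclusion follows.

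Second, in the comparison with \cite{G04} you assert that ``both sequences are indexed by the complete flag of Schubert varieties through $x_0$,'' but this is not the case. As recorded in the proof of Theorem~\ref{thm_5_2_goe}, G\"ortz's resolution $\cangoe$ blows up only the strict transforms of irreducible components of $\loc_{\res}$ \emph{not} corresponding to lattices in $\Gamma^{\mathrm{st}}$ --- a strict subset --- while $\cangenstr$, via Lemma~\ref{lem_cangenstr_bl_of_irrcom}, blows up all of them. So the two blow-up sequences do \emph{not} have the same centers, and ``aligning coordinate systems'' cannot, by itself, produce the isomorphism. What makes the comparison work in the paper is the observation that the extra blow-ups in $\cangenstr$ are blow-ups along irreducible components of the special fiber of an already semi-stable scheme, hence along effective Cartier divisors, hence isomorphisms. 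Your argument elides the discrepancy between the two center sets and would need exactly this Cartier-divisor observation (or a replacement for it) to be correct.
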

If $\cangen\rightarrow\gr{\Lambda_0}$ factors through $\loc$, the projection $\cangenstr\rightarrow \cangen$ is an isomorphism. We thus have shown, that $\cangenstr$ is a better candidate for a semi-stable resolution. \\ 
But since it is hard to show the semi-stability of $\cangenstr$ (cf. \cite{G04}), we will adapt the idea of blowing up $\gr{\Lambda_0}$ and construct a candidate $\can$ for a semi-stable resolution as a blow-up $\can\rightarrow \gr{\Lambda_0}$ but with slightly different centers. The advantage of this approach is, that in some cases we might be able to use Lemma \ref{lem_gen_semi-stable} by \cite{GEN00} showing that semi-stability is preserved under blow-ups provided that the centers of the blow-ups are sufficiently nice. \\
 In contrast to loc. cit. we will use the language of Mustafin varieties and the recent results on their behaviour (cf. \cite{CHSW11}, \cite{AL17}). Since $\loc$ is flat (see \cite{G01}), we can identify it with the closure of the generic fiber embedded into $\prod_{\Lambda_i\in\Gamma^{\mathrm{st}}} \gr{\Lambda_i}$, which is by definition the Mustafin variety $\Mgr{k}{\Gamma^{\mathrm{st}}}$. The main idea for the construction of $\can$ is to use the Pl\"ucker embedding $\gr{\Lambda_0}\rightarrow\Projsp(\bigwedge^k \Lambda_0 )$ and a compatible embedding of $\loc$ into the Mustafin variety $\M{\bigwedge^k\Gamma^{\mathrm{st}}}$ where $\bigwedge^k\Gamma^{\mathrm{st}}$ denotes the set $\{\bigwedge^k\Lambda_i\vert \Lambda_i\in\Gamma^{\mathrm{st}} \}$. We expect that these two Mustafin varieties have the same number of irreducible components of their special fibers. In this case we describe an explicit bijection of the sets of irreducible components in Conjecture \ref{conj_convexcl_bijection} (see Section \ref{sec_conj} for a more detailed discussion). We prove Conjecture \ref{conj_convexcl_bijection} for $k\leq 2$ and $n$ arbitrary and we also check the cases $n\leq 7$ and $k$ arbitrary by computer. It would be interesting to study whether the restiction $k\leq 2$ in \cite{FA01} is related to a similar statement. Assuming the conjecture, we can show the following behaviour of the embedding $\Mgr{k}{\Gamma^{\mathrm{st}}}\rightarrow \M{\bigwedge^k\Gamma^{\mathrm{st}}}$.
 \begin{prop*}
 	Assume Conjecture \ref{conj_convexcl_bijection}. Then we have a bijection
	\begin{align*}
		\left\{C\middle\vert C \text{ irr. component of } \M{\bigwedge^k\Gamma^{\mathrm{st}}}_{\res} \right\} &\longrightarrow \left\{C\middle\vert C \text{ irr. component of } \Mgr{k}{\Gamma^{\mathrm{st}}}_{\res} \right\}\\
		C&\longmapsto C\cap \Mgr{k}{\Gamma^{\mathrm{st}}}_{\res}
	\end{align*}	
	between the sets of irreducible components of the special fibres.
 \end{prop*}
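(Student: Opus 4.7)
The plan is to leverage the Pl\"ucker embedding's compatibility with the Mustafin construction and promote the combinatorial bijection of Conjecture~\ref{conj_convexcl_bijection} to the geometric intersection statement of the proposition.

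First, the Pl\"ucker embeddings $\gr{\Lambda_i}\hookrightarrow\Projsp(\bigwedge^k\Lambda_i)$ are closed immersions of $\rg$-schemes and are compatible with the transition maps of the chain $\Gamma^{\mathrm{st}}$. The induced morphism on the ambient products of Grassmannians into the product of projective spaces is again a closed immersion. Since both $\Mgr{k}{\Gamma^{\mathrm{st}}}$ and $\M{\bigwedge^k\Gamma^{\mathrm{st}}}$ are defined as the scheme-theoretic closures of their generic fibers, namely the Grassmannian $\grs_{\quot}$ and its Pl\"ucker image, we obtain a closed immersion $\iota\colon\Mgr{k}{\Gamma^{\mathrm{st}}}\hookrightarrow\M{\bigwedge^k\Gamma^{\mathrm{st}}}$ which restricts to a closed immersion on special fibers.

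Let $\Omega$ denote the index set from Conjecture~\ref{conj_convexcl_bijection}, and write $C_\omega\subset\M{\bigwedge^k\Gamma^{\mathrm{st}}}_{\res}$ and $D_\omega\subset\Mgr{k}{\Gamma^{\mathrm{st}}}_{\res}$ for the components corresponding to $\omega$. The key step is to verify combinatorially, using the explicit descriptions of components of Mustafin varieties from~\cite{CHSW11} and~\cite{AL17} in terms of lattice data, that $D_\omega\subseteq C_\omega$ for every $\omega$: the Pl\"ucker image of a generic point of $D_\omega$ should be supported on precisely the lattice class that defines $C_\omega$. Since $D_\omega$ is irreducible, it then lies in a unique $C_{\phi(\omega)}$, and the resulting map $\phi\colon\Omega\to\Omega$ is a bijection by the cardinality count granted by the conjecture.

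Finally, by flatness of $\loc=\Mgr{k}{\Gamma^{\mathrm{st}}}$ (\cite{G01}) every component $D_{\omega'}$ has dimension $k(n-k)$, so the intersection $C_\omega\cap\Mgr{k}{\Gamma^{\mathrm{st}}}_{\res}$, being a closed subscheme of $\Mgr{k}{\Gamma^{\mathrm{st}}}_{\res}=\bigcup_{\omega'}D_{\omega'}$, has as its top-dimensional parts exactly those $D_{\omega'}$ entirely contained in $C_\omega$. The previous paragraph forces this to be precisely $D_{\phi^{-1}(\omega)}$, so the map $C\mapsto C\cap\Mgr{k}{\Gamma^{\mathrm{st}}}_{\res}$ is the bijection inverse to $\phi$. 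The main obstacle will be the combinatorial identification, via the Pl\"ucker embedding, of the lattice data indexing the components of $\Mgr{k}{\Gamma^{\mathrm{st}}}_{\res}$ with that indexing the components of $\M{\bigwedge^k\Gamma^{\mathrm{st}}}_{\res}$, which is precisely the compatibility that Conjecture~\ref{conj_convexcl_bijection} is designed to capture.
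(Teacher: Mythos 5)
Your argument establishes, at best, that the irreducible component $D_\omega$ of $\Mgr{k}{\Gamma^{\mathrm{st}}}_{\res}$ sits inside a unique irreducible component $C_{\phi(\omega)}$ of $\M{\bigwedge^k\Gamma^{\mathrm{st}}}_{\res}$, and then tries to deduce the full statement by counting and by a dimension argument. Two genuine gaps remain.

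First, the cardinality count does not by itself make $\phi$ a bijection: you would need to show injectivity of $\phi$ (two distinct $D_\omega$ cannot land in the same $C$), and this is not addressed — it is deferred to the vague ``key step'' of a combinatorial verification, which is exactly where the substance of the argument has to live. In the paper's proof this is handled by identifying $D_I^{\circ}$ (the complement of lower Schubert strata) with the pullback of the open $\Projsp(V_I)^{\circ}$ under $\project{0}$, via Lemma~\ref{lem_sub_var_under_pl} and Theorem~\ref{thm_components_linear_subsp}, so that the correspondence $I \leftrightarrow I$ is forced, not inferred from counting.

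Second, and more seriously, the proposition asserts that $C \cap \Mgr{k}{\Gamma^{\mathrm{st}}}_{\res}$ \emph{is} an irreducible component, and your dimension argument only controls its top-dimensional part. The intersection might a priori contain lower-dimensional pieces of other $D_{\omega'}$, and the paper's remark immediately preceding this proposition (the $n=4$, $k=2$ example) explicitly exhibits that the naive ``cartesian square'' picture fails: there are $\res$-points of $\M{\bigwedge^k\Gamma^{\mathrm{st}}}$ lying in the product of the Grassmannians but not in $\Mgr{k}{\Gamma^{\mathrm{st}}}$. To rule out extra pieces one needs the reverse containment $C_I^{\mathrm{pr}} \cap \Mgr{k}{\Gamma^{\mathrm{st}}}_{\res} \subseteq C_I^{\mathrm{gr}}$, and the paper obtains this from Lemma~\ref{lem_schubertvar_determined_by_projections}, which reconstructs a Schubert variety in the affine flag variety from all its images in the affine Grassmannians $\mathcal{G}_i$, via Deodhar's criterion for the Bruhat order (Theorem~\ref{thm_deo}). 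This input from Coxeter combinatorics is the missing ingredient in your sketch and is not replaceable by the dimension/flatness argument you invoke.
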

 We will prove that a Mustafin variety $\M{\Gamma}$ is semi-stable if $\Gamma$ is convex (cf. \cite{FA01}). In particular if we denote by $\overline{\bigwedge^k\Gamma^{\mathrm{st}}}$ the convex closure of $\bigwedge^k\Gamma^{\mathrm{st}}$, then $\M{\overline{\bigwedge^k\Gamma^{\mathrm{st}}}}$ is a semi-stable resolution of $\M{{\bigwedge^k\Gamma^{\mathrm{st}}}}$. Moreover $\M{\overline{\bigwedge^k\Gamma^{\mathrm{st}}}}$ is given by a sequence of blow-ups $\M{\overline{\bigwedge^k\Gamma^{\mathrm{st}}}}\rightarrow \Projsp(\bigwedge^k\Lambda_0 ) $ (cf. \cite{FA01}). The candidate $\can$ is now defined as the strict transform $\can\rightarrow\gr{\Lambda_0}$ of the Pl\"ucker embedding $\gr{\Lambda_0}\subseteq\Projsp(\bigwedge^k\Lambda_0)$ under this sequence of blow-ups.\\
 Although we are not able to show the semi-stability of $\can$, we can still show the theorem below under some technical conditions. Let us denote by $\cangenstrpl$ the blow-up of $\M{{\bigwedge^k\Gamma^{\mathrm{st}}}}$ constructed similarly to the blow-up $\cangenstr\rightarrow\Mgr{k}{\Gamma^{\mathrm{st}}}$ (see Section \ref{sec_candidate} for a more detailed discussion).
 \begin{prop*}
 	Assume Conjecture \ref{conj_convexcl_bijection} and that $\cangenstr$ is semi-stable. Then $\cangenstr$ and $\can$ coincide in both of the following cases:
	\begin{enumerate}
		\item $\cangenstrpl$ is semi-stable
		\item for every irreducible component $C$ of $\cangenstrpl_{\res}$ the intersection $C\cap\cangenstr$ is a union of irreducible components
	\end{enumerate}
 \end{prop*}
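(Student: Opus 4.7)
The plan is to exploit the Plücker-compatible embedding $\loc\hookrightarrow\M{\bigwedge^k\Gamma^{\mathrm{st}}}$ and compare the two blow-up constructions inside a single ambient space. By construction $\cangenstr$ and $\cangenstrpl$ are obtained from $\Mgr{k}{\Gamma^{\mathrm{st}}}$ and $\M{\bigwedge^k\Gamma^{\mathrm{st}}}$ respectively by iterated blow-ups in the strict transforms of matching Schubert loci on $\gr{\Lambda_0}$ and on $\Projsp(\bigwedge^k\Lambda_0)$, so the universal property of blow-ups produces a canonical closed embedding $\cangenstr\hookrightarrow\cangenstrpl$ lying over the Plücker embedding. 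On the other hand $\can$ is by definition the strict transform of $\gr{\Lambda_0}\subseteq\Projsp(\bigwedge^k\Lambda_0)$ under the semi-stable resolution $\M{\overline{\bigwedge^k\Gamma^{\mathrm{st}}}}\to\Projsp(\bigwedge^k\Lambda_0)$, so the question boils down to identifying $\cangenstrpl$ with $\M{\overline{\bigwedge^k\Gamma^{\mathrm{st}}}}$, at least in a neighbourhood of the Plücker image of $\cangenstr$.

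Before dealing with the two cases separately, I would assemble the component-level input. Conjecture \ref{conj_convexcl_bijection} together with the Proposition just above yield a bijection, by intersection, between the irreducible components of the special fibres of $\M{\bigwedge^k\Gamma^{\mathrm{st}}}$ and of $\Mgr{k}{\Gamma^{\mathrm{st}}}$; furthermore $\M{\overline{\bigwedge^k\Gamma^{\mathrm{st}}}}$ is semi-stable by the convexity result established earlier. Together with the hypothesis that $\cangenstr$ is semi-stable, these facts allow one to transport information between the Grassmannian side and the Plücker side.

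Case (i) is then essentially formal. Both $\cangenstrpl$ and $\M{\overline{\bigwedge^k\Gamma^{\mathrm{st}}}}$ are semi-stable modifications of $\M{\bigwedge^k\Gamma^{\mathrm{st}}}$, obtained as sequences of blow-ups whose centres are comparable via Lemma \ref{lem_gen_semi-stable}. The universal property of blow-ups, combined with the component bijection, forces the two resolutions to dominate each other, hence to coincide; passing to the strict transform of $\gr{\Lambda_0}$ on either side yields $\cangenstr=\can$.

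The main obstacle is case (ii), where $\cangenstrpl$ is not assumed semi-stable and so cannot be identified with $\M{\overline{\bigwedge^k\Gamma^{\mathrm{st}}}}$ globally. Here the strategy is to argue that the further blow-ups needed to pass from $\cangenstrpl$ to $\M{\overline{\bigwedge^k\Gamma^{\mathrm{st}}}}$ are performed at centres disjoint from the Plücker image of $\cangenstr$, so that the strict transform of $\gr{\Lambda_0}$ inside $\cangenstrpl$ is already equal to $\can$. The hypothesis that $C\cap\cangenstr$ is a union of irreducible components of $\cangenstr_\res$ for every component $C$ of $\cangenstrpl_\res$ is precisely the combinatorial condition ruling out that a later centre in $\M{\overline{\bigwedge^k\Gamma^{\mathrm{st}}}}$ meets the Plücker image non-trivially: such a meeting would force $C\cap\cangenstr$ to cut a component of $\cangenstr_\res$ into a proper closed subset, violating the hypothesis. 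Making this last implication rigorous, via the bijection of Conjecture \ref{conj_convexcl_bijection} and a dimension count matching exceptional divisors to components on both sides, is the delicate part of the argument.
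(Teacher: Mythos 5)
There is a genuine gap in your proposal, most visibly in case (ii). The paper's proof does \emph{not} argue that the centres of the further blow-ups from $\cangenstrpl$ (or $\M{{\bigwedge^k\Gamma^{\mathrm{st}}}}$) to $\M{\overline{\bigwedge^k\Gamma^{\mathrm{st}}}}$ miss the Pl\"ucker image of $\cangenstr$; the centres emphatically do meet it. The actual mechanism is an inductive application of the universal property of blow-ups, keyed on a Cartier-divisor criterion: by Lemma \ref{lem_inverse_of_center}, the preimage in $\M{\overline{\bigwedge^k\Gamma^{\mathrm{st}}}}_{\res}$ of the centre $Z_{\Lambda^{\mathrm{pl}}_{i+1}}$ of each successive blow-up $\M{\{[\Lambda^{\mathrm{pl}}_0],\dots,[\Lambda^{\mathrm{pl}}_{i+1}]\}}\to\M{\{[\Lambda^{\mathrm{pl}}_0],\dots,[\Lambda^{\mathrm{pl}}_{i}]\}}$ is a union of irreducible components; Lemma \ref{lem_irr_comp_of_can_in_cangenstr} transports this to $\cangenstrpl_{\res}$. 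In case (i), semi-stability of $\cangenstrpl$ turns such a union into an effective Cartier divisor, so $\cangenstrpl\to\Projsp(\bigwedge^k\Lambda_0)$ factors through the next blow-up by the universal property, and induction produces the inverse $\cangenstrpl\to\M{\overline{\bigwedge^k\Gamma^{\mathrm{st}}}}$. In case (ii) the same scheme is run on the Grassmannian side: the preimage of the centre in $\cangenstr_{\res}$ is the intersection of a union of components of $\cangenstrpl_{\res}$ with $\cangenstr_{\res}$, the hypothesis guarantees this is itself a union of irreducible components, and semi-stability of $\cangenstr$ makes it Cartier, producing the factorization $\cangenstr\to\can_{i+1}$ and ultimately the inverse of $\can\to\cangenstr$. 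Your disjointness picture would, if true, make the hypothesis in (ii) nearly superfluous, and you yourself flag the ``delicate part'' as unfinished.

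Even in case (i), your argument is not quite right as stated. You appeal to Lemma \ref{lem_gen_semi-stable} (Genestier's criterion for a blow-up to preserve semi-stability) to compare centres, but that lemma plays no role here; what is needed is precisely the ``preimage of centre is a union of components, hence Cartier by semi-stability'' step combined with the universal property of blow-ups. The phrase ``forces the two resolutions to dominate each other'' is the right intermediate conclusion, but without the Cartier-divisor mechanism there is no reason the domination holds. So the structural outline (identify $\cangenstrpl$ with $\M{\overline{\bigwedge^k\Gamma^{\mathrm{st}}}}$, then pass to strict transforms) matches the paper, but the two justifications you supply for the identification are respectively misattributed (case (i)) and incorrect (case (ii)).
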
 

We should also remark that it is not hard to show that for $n\leq 4$ all the candidates $\can$, $\cangenstr$ and $\cangen$ are isomorphic and give semi-stable resolutions of $\loc$.

\bigskip
\noindent \textit{Acknowledgements.} First and foremost I want to thank U. G\"ortz who introduced me tho the subject of local models. The patience in his support and guidance contributed a crucial part in my development. I also want to thank T. Richarz for his useful feedback. This work is part of a dissertation at the faculty of mathematics of the university Duisburg-Essen and was partially funded by the DFG Priority Programme SPP 1489:  Algorithmic and Experimental Methods in Algebra, Geometry and Number Theory.

\section{Mustafin Varieties}
\label{sec_mustafin_varieties}
In this chapter we will start by defining Mustafin varieties $\Mgr{k}{\Gamma}$ for two natural numbers $n$ and $k$ and a finite set $\Gamma$ of homothety classes of lattices in $\quot^n$. These schemes first appeared in \cite[Chapter 2]{Mu72} for $k=1$ and $n$ arbitrary and were studied later by Mustafin in \cite[Chapter 2]{Mu78}. The name Mustafin variety was introduced in \cite[Definition 1.1.]{CHSW11} and generalised to $n$ and $k$ arbitrary (and even arbitrary flag types) in \cite[Definition 2.1]{Hae14}. Once we have defined Mustafin varieties the flatness of the local model $\loc$ immediately identifies the local model with the Mustafin variety $\Mgr{k}{\Gamma^{\mathrm{st}}}$ for the standard lattice chain. \\
In the second part of this chapter we will follow \cite{FA01} and describe Mustafin varieties $\M{\Gamma}$ for a convex set of lattice classes by blow-ups of $\Projsp(\Lambda )$ for any class $[\Lambda]$ in $\Gamma$ and show their semi-stability.

\subsection{The local model as a Mustafin Variety}

The definition of Mustafin varieties is based on the rather classical construction of the join of two schemes. The definition can be found for example in \cite[Chapter 2]{Mu72} or \cite[Definition 2.1]{Ha11}.

\begin{definition}
	For two reduced and separated $\rg$-schemes $X_1$ and $X_2$ with identical generic fiber $X_{1,\quot}=X_{2,\quot}$ the \emph{join} $X_1\bigvee X_2$ is defined as the scheme theoretic closure of the generic fiber $X_{1,\quot}=X_{2,\quot}$ diagonally embedded in $X_1\times_{\rg} X_2$.
\end{definition}

\begin{definition} 
	Let $\Gamma$ be a finite set of homothety classes of $\rg$-lattices in $\quot^n$ for a fixed $n\in \N$ and $\Gamma^{\mathrm{rep}}$ be a set of representatives. For a lattice $\Lambda\in\Gamma^{\mathrm{rep}}$ and a fixed $k\in [n]$ the inclusion $\Lambda\subseteq\quot^n$ identifies $\Lambda\otimes_{\rg}\quot\cong \quot^n$ and hence the generic fiber of $\gr{\Lambda}$ is naturally isomorphic to $\gr{\quot^n}$. The \emph{Mustafin variety} is now defined as the join $\Mgr{k}{\Gamma}\defeq \bigvee_{\Lambda\in\Gamma^{\mathrm{rep}}} \gr{\Lambda}$ over $\rg$.
\end{definition}

\begin{rem}
\label{rem_mustafin_for_repr}
	For two representatives $\Lambda$ and $\unif^m\Lambda $ of the same homothety class we get a canonical isomorphism $\gr{\Lambda}\cong\gr{\unif^m\Lambda}$ and hence up to canonical isomorphism the definition is independent of the choice of representatives.
\end{rem}

\begin{rem}
	Using the flatness of the local model $\loc$, shown in \cite{G01}, and the embedding $\loc\subseteq\prod_{[\Lambda]\in\Gamma^{\mathrm{st}}}\gr{\Lambda}$ we observe that the local model agrees with the closure of its generic fiber. Hence the local model coincides with the Mustafin variety $\Mgr{k}{\Gamma^{\mathrm{st}}}$.
\end{rem}

\begin{definition}
	For a finite set $\Gamma$ of $\rg$-lattice classes in $\quot^n$ and a subset $\Gamma'\subseteq\Gamma$ the projection $\prod_{[\Lambda]\in \Gamma}\gr{\Lambda}\rightarrow \prod_{[\Lambda]\in \Gamma'}\gr{\Lambda}$ induces a projection $\Mgr{k}{\Gamma}\rightarrow\Mgr{k}{\Gamma'}$ which we denote by $\muspro{\Gamma}{\Gamma'}$. For the case $\Gamma'=\{[\Lambda]\}$ we will also write $\musprol{\Lambda}$ for $\muspro{\Gamma}{\Gamma'}$ whenever $\Gamma$ is clear from the context.
\end{definition}

These projections will play an important role in the following so let us collect some of their properties. 

\begin{lemma}\cite[Lemma 3.1]{Hae14}
\label{lem_irr_comp_birational_map}
	For finite sets of lattice classes $\Gamma'\subseteq \Gamma$ and an irreducible component $C'$ of $\Mgr{k}{\Gamma'}_{\res}$ there is a unique irreducible component $C$ of $\Mgr{k}{\Gamma}_{\res}$ such that $\pr_{\Gamma,\Gamma'}(C)=C'$. Furthermore the map $\muspro{\Gamma}{\Gamma'}\vert_{C}\colon C\rightarrow C' $ is birational.
\end{lemma}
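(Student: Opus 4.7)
The plan is to exhibit $\pr_{\Gamma,\Gamma'}$ as a proper birational morphism of flat $\rg$-schemes and read the bijection off its isomorphism locus. By construction both $\Mgr{k}{\Gamma}$ and $\Mgr{k}{\Gamma'}$ are the scheme-theoretic closures of the common generic fibre $\gr{\quot^n}$ inside a product of Grassmannians over $\rg$; they are therefore integral, flat over $\rg$ of relative dimension $k(n-k)$, and their special fibres are equidimensional of dimension $k(n-k)$. The projection $\pr_{\Gamma,\Gamma'}$ is proper (as the restriction of a product projection) and restricts to the identity on generic fibres, hence is birational. Let $V \subseteq \Mgr{k}{\Gamma'}$ denote the maximal open over which $\pr_{\Gamma,\Gamma'}$ is an isomorphism; it contains the generic fibre and is in particular dense.

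The key geometric input I would establish is that $V$ contains the generic point $\eta_{C'}$ of every irreducible component $C'$ of $\Mgr{k}{\Gamma'}_{\res}$. Granted this, the isomorphism $\pr_{\Gamma,\Gamma'}^{-1}(V) \xrightarrow{\sim} V$ produces a unique point $\eta \in \Mgr{k}{\Gamma}$ above $\eta_{C'}$, with $\mathcal{O}_{\Mgr{k}{\Gamma},\eta} \cong \mathcal{O}_{\Mgr{k}{\Gamma'},\eta_{C'}}$ a one-dimensional local domain containing $\unif$ in its maximal ideal. Thus $\eta$ is a codimension one point of $\Mgr{k}{\Gamma}$ lying in the special fibre, hence the generic point of a unique irreducible component $C$ of $\Mgr{k}{\Gamma}_{\res}$; the induced equality of residue fields $k(C) = k(\eta_{C'}) = k(C')$ gives birationality of $\pr_{\Gamma,\Gamma'}|_{C} \colon C \to C'$. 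For uniqueness, any other component $\widetilde{C}$ with $\pr_{\Gamma,\Gamma'}(\widetilde{C}) = C'$ must, by equidimensionality of $\Mgr{k}{\Gamma}_{\res}$, send its generic point to $\eta_{C'}$, hence into $\pr_{\Gamma,\Gamma'}^{-1}(\eta_{C'}) = \{\eta\}$, so $\widetilde{C} = C$.

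For the key input I would proceed by induction on $|\Gamma \setminus \Gamma'|$, reducing to the case $\Gamma = \Gamma' \sqcup \{[\Lambda]\}$. In this case $\Mgr{k}{\Gamma}$ is the scheme-theoretic graph closure of the rational map $\rho \colon \Mgr{k}{\Gamma'} \dashrightarrow \gr{\Lambda}$ extending the identity on the common generic fibre, and $\pr_{\Gamma,\Gamma'}$ is the graph projection whose isomorphism locus equals the domain of definition of $\rho$. It thus suffices to show that the indeterminacy locus of $\rho$ has codimension at least two in $\Mgr{k}{\Gamma'}$. This codimension bound is the step I expect to be the main obstacle, since it is where the combinatorial and linear-algebraic structure of Mustafin varieties has to enter beyond formal birational geometry; I would attack it by describing $\rho$ explicitly on standard affine charts of $\Mgr{k}{\Gamma'}$ via minors coming from the change of lattice $\Lambda' \to \Lambda$, and checking directly that for any generic point of a component of the special fibre at least one of the generating minors does not vanish, alternatively leveraging flatness of $\Mgr{k}{\Gamma}$ together with a purity-type statement on a normalization of $\Mgr{k}{\Gamma'}$ to rule out codimension one indeterminacy.
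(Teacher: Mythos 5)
The paper does not prove this lemma; it cites \cite[Lemma 3.1]{Hae14}, so there is no internal proof to compare against. Judged on its own terms, your scaffolding is sound: both Mustafin varieties are integral, flat over $\rg$ with equidimensional special fibres of dimension $k(n-k)$; $\pr_{\Gamma,\Gamma'}$ is proper and birational; the induction reducing to $\Gamma = \Gamma' \sqcup \{[\Lambda]\}$ composes correctly (uniqueness and birationality through the tower follow from the single-step case together with equidimensionality, as you indicate); and the identification of the isomorphism locus of the graph-closure projection with the domain of definition of the rational map $\rho\colon \Mgr{k}{\Gamma'} \dashrightarrow \gr{\Lambda}$ is correct. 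Your uniqueness argument via equidimensionality is also fine once one knows the fibre over $\eta_{C'}$ is a single point.

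The gap is exactly where you flag it, and it is genuine rather than a routine verification: you must show the indeterminacy locus of $\rho$ has codimension $\geq 2$ in $\Mgr{k}{\Gamma'}$, equivalently that $\rho$ extends across every generic point of a component of $\Mgr{k}{\Gamma'}_{\res}$. This cannot be obtained from the formal birational-geometry framework you have set up. The usual shortcut -- that a rational map from a scheme to a proper scheme is defined in codimension one -- requires the source to be normal (or at least $R_1$/geometrically unibranch at the relevant codimension-one points), so that $\mathcal{O}_{\Mgr{k}{\Gamma'},\eta_{C'}}$ is a DVR and the valuative criterion applies. But $\Mgr{k}{\Gamma'}$ is not known to be normal for arbitrary $\Gamma'$, and indeed that is precisely the kind of input from the lattice combinatorics that the lemma is hiding. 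Without normality, $\mathcal{O}_{\Mgr{k}{\Gamma'},\eta_{C'}}$ is only a one-dimensional local domain; its normalization can be semi-local, and the different branches could a priori be sent by $\rho$ to different points of $\gr{\Lambda}$, in which case $\rho$ would not extend and $\pr_{\Gamma,\Gamma'}^{-1}(\eta_{C'})$ would contain more than one point -- precisely the failure mode the lemma rules out. Both workarounds you sketch (explicit chart computation via minors, or a purity statement on a normalization) are plausible lines of attack, but neither is carried out, and neither is a formality: the minor computation has to be done uniformly over all components of $\Mgr{k}{\Gamma'}_{\res}$, whose description is itself nontrivial for non-convex $\Gamma'$, while the purity route requires establishing unibranchness which is essentially equivalent to what you are trying to prove. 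So as written the proof is incomplete at the step you yourself identify as the crux.
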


\begin{definition}\label{def_convex}
	A finite set $\Gamma$ of $\rg$-lattice classes in $\quot^n$ is called \emph{convex} if for any two classes $[\Lambda],[\Lambda']\in\Gamma$ and any representatives $\Lambda$ and $\Lambda'$ the class of the intersection $\Lambda\cap\Lambda'$ is again in $\Gamma$. For an arbitrary finite set of $\rg$-lattice classes $\Gamma$ the intersection of all convex sets containing $\Gamma$ is called the \emph{convex closure} and is denoted by $\overline{\Gamma}$.
\end{definition}
\begin{rem}
	This notion of convexity plays an important role for example in \cite{FA01}. In \cite{SJ07} a reformulation relating it to the notion of \emph{tropical convexity} was given. This reformulation is based on the identification of an apartment of the Bruhat-Tits building with the points $\Z^n/\Z(1,.\dots,1)$ of the tropical projective torus $\R^n/\R(1,.\dots,1)$ (see for example \cite[Chapter 4]{CHSW11}).\\
	In \cite[Chapter 2]{CHSW11} also the relation to the more intrinsic notion of \emph{metrical convexity} is discussed. We call $\Gamma$ metrically convex if for $[\Lambda]$ and $[\Lambda']$ in $\Gamma$ all geodesics for the graph metric of the Bruhat-Tits building are contained in $\Gamma$, i.e. any $[\Lambda'']$ with 
	\begin{align*}
		\mathrm{dist}([\Lambda],[\Lambda''])+\mathrm{dist}([\Lambda''],[\Lambda'])= \mathrm{dist}([\Lambda],[\Lambda'])
	\end{align*}  
	is contained in $\Gamma$. Since this equality is satisfied for $[\Lambda'']$ with $\Lambda''= \unif^{n}\Lambda\cap \unif^{n'}\Lambda'$ we see that metrical convexity implies convexity in the sense of Definition \ref{def_convex}.
\end{rem}

\begin{definition}
	We call two distinct lattices classes $[\Lambda]$ and $[\Lambda']$ neighbours and denote this by $[\Lambda ]\sim[\Lambda']$ if $\{[\Lambda],[\Lambda'] \}$ is convex. This is equivalent to the existence of two representatives $\Lambda$ and $\Lambda'$ with $\unif \Lambda\subseteq\Lambda'\subseteq\Lambda$.
\end{definition}

\begin{rem}
\label{rem_ex_losure_two_lat}
	In general it is hard to compute the convex closure for a finite set $\Gamma$. But for two lattices classes $[\Lambda]$ and $[\Lambda']$ the convex closure $\overline{\{[\Lambda],[\Lambda'] \}}$ can be described in the following way. Fix a representative $\Lambda$ for $[\Lambda]$ and let $\Lambda'$ be the representative of $[\Lambda']$ minimal with $\Lambda\subseteq\Lambda'$. Then this description gives us a chain of neighbouring classes
	\begin{align*}
		[\Lambda ] = [ \Lambda\cap \Lambda' ] \sim [\unif^{1} \Lambda\cap \Lambda' ]\sim\dots \sim [\unif^{k} \Lambda\cap \Lambda' ]=[\Lambda' ]
	\end{align*}
	for $k$ minimal with $[\unif^{k} \Lambda\cap \Lambda' ]=[\Lambda' ]$. The classes $[\unif^{i} \Lambda\cap \Lambda' ]=[\Lambda' ]$ and $[\unif^{k} \Lambda\cap \Lambda' ]=[\Lambda' ]$ for $i< j$ in $[k+1]$ are neighbours if and only if $i+1=j$. We obtain that the convex closure $\overline{\{[\Lambda ],[\Lambda']\}}$ is $\{[\unif^i \Lambda\cap \Lambda' ]\vert i\in \Z  \}$.
\end{rem}

To indicate the importance of convexity let us cite the following lemma.

\begin{lemma}\cite[Lemma 5.8]{CHSW11}
\label{lem_convex_irr_com_correspond_to_lattices}
	For a finite convex set of lattice classes $\Gamma$ and an irreducible component $C$ of $\M{\Gamma}_\res$ there exists a unique class $[\Lambda]$ in $\Gamma$ such that $C$ is the unique irreducible component mapping birationally to $\Projsp(\Lambda )_{\res}$. In particular the number of irreducible components of $\M{\Gamma}_\res$ coincides with the number of lattice classes in $\Gamma$.
\end{lemma}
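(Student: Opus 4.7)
The plan is to construct a bijection between $\Gamma$ and the set of irreducible components of $\M{\Gamma}_\res$ by associating to each lattice class a distinguished component, and then to verify injectivity and surjectivity separately. For every $[\Lambda]\in\Gamma$ one has $\M{\{[\Lambda]\}}=\Projsp(\Lambda)$ with irreducible special fibre, so applying Lemma \ref{lem_irr_comp_birational_map} with $\Gamma' = \{[\Lambda]\}$ produces a unique irreducible component $C_{[\Lambda]}$ of $\M{\Gamma}_\res$ on which $\musprol{\Lambda}$ is birational onto $\Projsp(\Lambda)_\res$. This defines the candidate map $[\Lambda]\mapsto C_{[\Lambda]}$.

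For injectivity, assume $C_{[\Lambda]} = C_{[\Lambda']}$. Both $\musprol{\Lambda}$ and $\musprol{\Lambda'}$ are then birational on this common component, so the canonical identification $\Lambda\otimes_{\rg}\quot \cong \Lambda'\otimes_{\rg}\quot$ must descend to a birational rational map $\Projsp(\Lambda)_\res\to\Projsp(\Lambda')_\res$ on generic points. By the elementary divisor theorem, a suitable basis of $\quot^n$ makes this identification diagonal with entries $\unif^{a_1},\dots,\unif^{a_n}$. Rescaling by the minimum power of $\unif$ and reducing modulo $\unif$, the resulting matrix has rank strictly less than $n$ unless all $a_i$ are equal; but in the equal case $\Lambda$ and $\Lambda'$ are homothetic, forcing $[\Lambda] = [\Lambda']$.

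Surjectivity is the main step, and I would prove it by induction on $|\Gamma|$. The base case $|\Gamma|=1$ is trivial. For the inductive step, convexity supplies an extreme class $[\Lambda_0]\in\Gamma$ such that $\Gamma':=\Gamma\setminus\{[\Lambda_0]\}$ is still convex -- concretely, one may take $[\Lambda_0]$ to maximise the graph-metric distance to a fixed base-point of $\Gamma$ in the Bruhat--Tits building. Lemma \ref{lem_irr_comp_birational_map} applied to $\Gamma'\subseteq\Gamma$, combined with the induction hypothesis for $\Gamma'$, produces $|\Gamma'|$ distinct components of $\M{\Gamma}_\res$, one for each class in $\Gamma'$. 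It then remains to show that $\muspro{\Gamma}{\Gamma'}$ introduces exactly one further component and that this component is $C_{[\Lambda_0]}$.

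The hard part is precisely this last claim, and this is where convexity enters essentially. By Remark \ref{rem_ex_losure_two_lat} the extreme class $[\Lambda_0]$ has a unique neighbour $[\Lambda_1]$ inside $\Gamma$; the analysis I propose is to examine the fibre of $\muspro{\Gamma}{\Gamma'}$ above the generic point of the component of $\M{\Gamma'}_\res$ associated with $[\Lambda_1]$, and to show that convexity forces this fibre to be irreducible, contributing the single new component $C_{[\Lambda_0]}$, while above the generic points of the remaining components the fibre reduces to a single rational point and no new component appears. Without convexity, missing intermediate lattice classes along geodesics of the building would cause the fibre to split, producing further irreducible components not of the form $C_{[\Lambda]}$ for any $[\Lambda]\in\Gamma$. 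Combining the three steps yields the required bijection and the cardinality statement as a corollary.
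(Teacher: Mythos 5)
The paper does not prove this lemma; it is taken verbatim from \cite[Lemma 5.8]{CHSW11}, so there is no argument of the paper's own to compare against, and the question is whether your proposal stands on its own. Your setup and injectivity step are sound: Lemma \ref{lem_irr_comp_birational_map} with $\Gamma'=\{[\Lambda]\}$ gives, for each $[\Lambda]\in\Gamma$, a unique component $C_{[\Lambda]}$ projecting birationally to $\Projsp(\Lambda)_{\res}$, and the elementary-divisor argument correctly shows that the induced rational map from $\Projsp(\Lambda)_{\res}$ to $\Projsp(\Lambda')_{\res}$ fails to be dominant unless $[\Lambda]=[\Lambda']$, so $[\Lambda]\mapsto C_{[\Lambda]}$ is injective. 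That gives at least $\sharp\Gamma$ components.

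The gap is in surjectivity, which you rightly flag as the hard part. Two problems. First, the assertion that the extreme class $[\Lambda_0]$ has a unique neighbour in $\Gamma$ is false and is not supported by Remark \ref{rem_ex_losure_two_lat}, which only describes the convex closure of a pair: in any $2$-simplex $\{[\Lambda_0],[\Lambda_1],[\Lambda_2]\}$ all pairs are neighbours, the set is convex, removing any vertex leaves a convex set, yet the removed vertex has two neighbours in $\Gamma$. Second, the decisive claim, that the fibre of $\muspro{\Gamma}{\Gamma'}$ over the generic point of the $[\Lambda_1]$-component is irreducible while the other fibres are trivial, is only announced ("the analysis I propose"), not argued. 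This is precisely the content that surjectivity requires, so as written the proof of the bound $\leq\sharp\Gamma$ is missing.

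The standard way to close the gap, and the one the paper develops independently of this lemma, is the blow-up description: when $\Gamma'=\Gamma\setminus\{[\Lambda_0]\}$ is convex, the projection $\muspro{\Gamma}{\Gamma'}$ is a blow-up in a smooth irreducible centre $Z_{\Gamma,\Lambda_0}$ contained in the special fibre (Proposition \ref{prop_inductionstart} for $\sharp\Gamma=2$, inductively Lemma \ref{lem_main}, assembled in Proposition \ref{prop_mustafin_as_blow-ups}). Blowing up an irreducible smooth centre in the special fibre adds exactly one irreducible component, namely the exceptional divisor, so the count increases by one at each step and the induction closes. Your proposed fibre analysis is aiming at the same conclusion but would need the blow-up structure, or something equivalent, to actually establish irreducibility of the new piece; without it, the irreducibility of the exceptional locus is an assertion, not a proof.
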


\subsection{Mustafin varieties as blow-ups}

In this section we focus on the much better understood case of Mustafin varieties of projective spaces. We will prove in Proposition \ref{prop_convex_mustafin_semi-stable} that for a convex set of lattice classes $\Gamma$ the Mustafin variety $\M{\Gamma}$ is semi-stable. This was first proven by Mustafin in \cite[Proposition 2.1]{Mu78} for the case that $\Gamma$ forms a simplex and generalised by Faltings in \cite[Chapter 5]{FA01} to the convex case. For the proof Faltings uses the moduli description of $\M{\Gamma}$ and then easily reduces to the case of a simplex.\\
In contrast to this approach we will analyse the description of $\M{\Gamma}$ as a sequence of blow-ups (cf. \cite[proof of Lemma 5]{FA01}) and will reprove with Proposition \ref{prop_mustafin_as_blow-ups} that for any $\Lambda$ in $\Gamma$ the Mustafin variety is obtained by a sequence of blow-ups starting with $\Projsp(\Lambda)$ in smooth centers. Using  \cite[Lemma 3.2.1]{GEN00} saying that blow-ups preserve semi-stability under some hypothesis, we are able to reprove the semi-stability of $\M{\Gamma}$ for convex sets of lattice classes in this way. Before we prove the claims above, we will start with some technical preparations.

\begin{lemma}\cite[Lemma in Chapter 2]{Mu78}
\label{lem_monoidaltransorm}
	Let $X$ be an integral noetherian regular scheme with smooth subschemes $Y_1, Y_2 \subset X$ such that $Y_1\subseteq Y_2$ or $Y_1 \cap Y_2=\emptyset$, then we can identify the blow-ups $\Bl{\str{Y_2}}{\Bl{Y_1}{X}}$ and $\Bl{\tot{Y_1}}{\Bl{Y_2}{X}}$. Moreover the blow-ups coincide with the join $\Bl{Y_1}{X}\bigvee \Bl{Y_2}{X}$ together with the projections $\pr_1$ and $\pr_2$ to its factors. The situation is summarised in the following commutative diagram:
	\begin{align*}
		\xymatrix{
		\Bl{\str{Y_2}}{\Bl{Y_1}{X}}\ar[d]_{\pr_{\str{Y_2}}}\ar@{=}[r] &\Bl{Y_1}{X}\bigvee \Bl{Y_2}{X}\ar[dl]_{\pr_1}\ar[dr]^{\pr_2} \ar@{=}[r]&\Bl{\tot{Y_1}}{\Bl{Y_2}{X}}\ar[d]^{\pr_{\tot{Y_1}}}\\
		\Bl{Y_1}{X}\ar_{\pr_{Y_1}}[dr]&&\Bl{Y_2}{X}\ar^{\pr_{Y_2}}[dl]\\
		&X
		}
	\end{align*}
\end{lemma}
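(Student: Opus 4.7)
The plan is to treat the two alternative hypotheses separately. Working Zariski-locally we may assume $X=\spec{A}$ is affine with $Y_i$ cut out by ideals $I_1,I_2\subseteq A$. In the disjoint case $Y_1\cap Y_2=\emptyset$, the open cover $X=(X\setminus Y_1)\cup(X\setminus Y_2)$ reduces matters to the trivial situation in which one of the two blow-ups is an isomorphism: on $X\setminus Y_1$ the join, $\Bl{\str{Y_2}}{\Bl{Y_1}{X}}$ and $\Bl{\tot{Y_1}}{\Bl{Y_2}{X}}$ all restrict to $\Bl{Y_2}{X\setminus Y_1}$, and symmetrically on $X\setminus Y_2$, so gluing gives the claim.

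For the nested case $Y_1\subseteq Y_2$ (so $I_2\subseteq I_1$), the strategy is to show that both iterated blow-ups are canonically isomorphic, via their universal properties, to the fibre product $\Bl{Y_1}{X}\times_X\Bl{Y_2}{X}$, and that this fibre product is integral, hence coincides with its generic fibre's closure, i.e.\ with the join. The key local input is the factorisation
\begin{align*}
I_2\cdot\mathcal{O}_{\Bl{Y_1}{X}}\;=\;I_{E_1}\cdot I_{\str{Y_2}},
\end{align*}
where $E_1$ is the exceptional divisor of $\pi_1\colon\Bl{Y_1}{X}\to X$. Using regularity of $X$ and smoothness of $Y_1\subseteq Y_2$ I would choose a regular system of parameters with $I_2=(x_1,\dots,x_p)$ and $I_1=(x_1,\dots,x_q)$ for some $p\leq q$, and verify the identity chart by chart on $\Bl{Y_1}{X}$. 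Since $I_{E_1}$ is invertible, the universal property of blow-ups immediately gives $\Bl{I_{E_1}\cdot I_{\str{Y_2}}}{\Bl{Y_1}{X}}=\Bl{\str{Y_2}}{\Bl{Y_1}{X}}$, so $\Bl{\str{Y_2}}{\Bl{Y_1}{X}}=\Bl{I_2\mathcal{O}_{\Bl{Y_1}{X}}}{\Bl{Y_1}{X}}$.

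Translating into functors of points, a morphism $T\to\Bl{I_2\mathcal{O}_{\Bl{Y_1}{X}}}{\Bl{Y_1}{X}}$ is the same datum as a morphism $f\colon T\to X$ for which both $f^{-1}I_1\cdot\mathcal{O}_T$ and $f^{-1}I_2\cdot\mathcal{O}_T$ are invertible, which is precisely the functor represented by $\Bl{Y_1}{X}\times_X\Bl{Y_2}{X}$. The same argument, applied without any factorisation step since $I_{\tot{Y_1}}=I_1\cdot\mathcal{O}_{\Bl{Y_2}{X}}$ is already a direct pullback, identifies $\Bl{\tot{Y_1}}{\Bl{Y_2}{X}}$ with the same fibre product. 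Both iterated blow-ups are therefore canonically identified with the fibre product together with its two projections $\pr_1,\pr_2$. Finally, since $\str{Y_2}$ equals the smooth blow-up $\Bl{Y_1}{Y_2}$, the scheme $\Bl{\str{Y_2}}{\Bl{Y_1}{X}}$ is integral; hence the fibre product is integral and coincides with the closure of its generic fibre, i.e.\ with the join.

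The main technical obstacle is the local factorisation $I_2\cdot\mathcal{O}_{\Bl{Y_1}{X}}=I_{E_1}\cdot I_{\str{Y_2}}$: it requires handling separately the charts of $\Bl{Y_1}{X}$ in which the exceptional divisor is generated by a parameter $x_i$ with $i\leq p$ (where $I_2$ becomes principal and the strict transform is invisible) versus $p<i\leq q$ (where $\str{Y_2}$ is cut out by the transforms of $x_1,\dots,x_p$ after division by $x_i$), and correctly identifying the saturation of $I_2\mathcal{O}_{\Bl{Y_1}{X}}$ along $I_{E_1}$ with the ideal of the smooth strict transform. Once this local statement is secured, every subsequent step is formal universal-property manipulation.
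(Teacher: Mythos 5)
The paper cites this result from Mustafin without giving its own proof, so there is nothing internal to compare against; I will evaluate the argument on its own terms. Your local factorisation $I_2\cdot\mathcal{O}_{\Bl{Y_1}{X}} = I_{E_1}\cdot I_{\str{Y_2}}$ is correct (and is the right starting point), and the disjoint case is handled fine. But the functor-of-points step is wrong, and it propagates into the main conclusion.

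The universal property of $\Bl{I}{X}$ states that if $f\colon T\to X$ makes $f^{-1}I\cdot\mathcal{O}_T$ invertible then $f$ factors \emph{uniquely} through the blow-up; it does \emph{not} say that every $X$-morphism $T\to\Bl{I}{X}$ arises from such an $f$. For instance, the inclusion of a fibre of the exceptional divisor is an $X$-morphism into the blow-up, but on such a $T$ the inverse image ideal is zero, hence not invertible. Consequently $\Bl{I_2\mathcal{O}}{\Bl{Y_1}{X}}$ does not represent the functor $T\mapsto\{f\colon T\to X \mid f^{-1}I_1\cdot\mathcal{O}_T\text{ and }f^{-1}I_2\cdot\mathcal{O}_T\text{ invertible}\}$, nor does $\Bl{Y_1}{X}\times_X\Bl{Y_2}{X}$, and the proposed identification of the iterated blow-up with the fibre product fails. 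It fails for a concrete geometric reason: take $X=\mathbb{A}^3$, $Y_1$ the origin, $Y_2$ the $z$-axis. Then $\Bl{Y_1}{X}\times_X\Bl{Y_2}{X}$ has fibre $\mathbb{P}^2\times\mathbb{P}^1$ (dimension $3$) over the origin, so it acquires an extra irreducible component supported there, whereas $\Bl{\str{Y_2}}{\Bl{Y_1}{X}}$ is integral of dimension $3$ and its fibre over the origin has dimension at most $2$. The iterated blow-up is therefore a \emph{proper} closed subscheme of the fibre product — in fact exactly the join — but your proof of its integrality rested on the false identification and hence collapses.

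To repair the argument you should replace the fibre product by the join throughout: from the factorisation $I_2\cdot\mathcal{O}_{\Bl{Y_1}{X}} = I_{E_1}\cdot I_{\str{Y_2}}$ and invertibility of $I_{E_1}$ you get $\Bl{\str{Y_2}}{\Bl{Y_1}{X}} = \Bl{I_2\mathcal{O}}{\Bl{Y_1}{X}}$, on which $I_2\mathcal{O}$ is invertible, so the universal property of $\Bl{Y_2}{X}$ gives a morphism into $B_1\times_X B_2$; by construction this is proper, restricts to an isomorphism over $X\setminus(Y_1\cup Y_2)$, and has integral source, so its image is precisely the scheme-theoretic closure of the diagonal, i.e.\ the join. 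One then still needs to show the morphism onto its image is an isomorphism (not merely proper birational), and the same for $\Bl{\tot{Y_1}}{\Bl{Y_2}{X}}$; this can be done in local coordinates by exhibiting the iterated blow-up chart-by-chart inside $\Bl{Y_1}{X}\times_X\Bl{Y_2}{X}$ and checking that it coincides there with the closure of the diagonal. This extra verification is where the content of Mustafin's lemma actually lies, and your write-up skips it by assuming the (false) fibre-product identification.
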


	Let $\Gamma$ be a finite convex set of $\rg$-lattice classes in $\quot^n$ and $[\Lambda] \in \Gamma$ such that $\Gamma\setminus\{[\Lambda]\}$ is still convex. The projection $\muspro{\Gamma\setminus\{[\Lambda]\}}{\Gamma}$ will turn out to be a blow-up.
	First we define a closed subscheme $Z_{\Gamma,\Lambda}\subseteq \M{\Gamma\setminus \{\Lambda\}}$ that will turn out to be the center of the blow-up.\\
	Fix a representative $\Lambda$ for the homothety class $[\Lambda]$ and for every lattice class $[\Lambda']\in \Gamma\setminus \{[\Lambda]\}$ take the unique representatives $\Lambda'$ such that $\Lambda'\subseteq\Lambda \nsupseteq \unif^{-1}\Lambda'$. Define a closed subscheme $Z_{\Lambda'\subseteq\Lambda}$ of $\Projsp(\Lambda')_{\res}$ endowed with the reduced scheme structure as the complement of the open subscheme where the induced birational map $\Projsp(\Lambda')\dashedrightarrow \Projsp(\Lambda)$ is defined. This closed subscheme can be identified with $\Projsp(V_{\Lambda'\subseteq\Lambda})\subseteq\Projsp(\Lambda')_{\res}$ for the module $V_{\Lambda'\subseteq\Lambda}$ defined as the kernel of the map $\Lambda'_{\res}\rightarrow \Lambda_{\res}$ induced by the inclusion. We obtain the following commutative diagram:
	\begin{align*}
		\xymatrix{
		\musprol{\Lambda'}^{-1}(Z_{\Lambda'\subseteq\Lambda})\ar[d]\ar[r]&\M{\Gamma\setminus\{[\Lambda]\}}\ar^{\musprol{\Lambda'}}[d]\\
		Z_{\Lambda'\subseteq\Lambda}\ar[r]&\Projsp(\Lambda')
		}
	\end{align*}
	Now we take all the inverse images $\musprol{\Lambda'}^{-1}(Z_{\Lambda'\subseteq\Lambda})$ under the natural projections and define 
	\begin{align*}
		Z_{\Gamma,\Lambda}\defeq \bigcap\limits_{[\Lambda']\in \Gamma\setminus\{[\Lambda]\}}\musprol{\Lambda'}^{-1}(Z_{\Lambda'\subseteq\Lambda})\subseteq\M{\Gamma\setminus\{[\Lambda] \}}.
	\end{align*}

\begin{lemma}
\label{lem_interschanging_lambda}
	Let $\Gamma$ be a finite convex set of $\rg$-lattice classes in $\quot^n$ and $[\Lambda]$ in $\Gamma$ such that $\Gamma\setminus\{[\Lambda]\}$ is convex.
	Then we can find for every $[\Lambda']\in \Gamma\setminus\{[\Lambda]\}$ a neighbour $[\Lambda'']\in \Gamma\setminus\{[\Lambda]\}$ of $[\Lambda]$ such that $\musprol{\Lambda''}^{-1}(Z_{\Lambda''\subseteq\Lambda})\subseteq \musprol{\Lambda'}^{-1}(Z_{\Lambda'\subseteq\Lambda})$. Therefore we get 
	\begin{align*}
		Z_{\Gamma,\Lambda}= \bigcap\limits_{\overset{[\Lambda']\in \Gamma\setminus\{[\Lambda]\}}{[\Lambda']\sim [\Lambda]}}\musprol{\Lambda'}^{-1}(Z_{\Lambda'\subseteq\Lambda})\subseteq \M{\Gamma\setminus\{[\Lambda]\}},
	\end{align*}  
	where $[\Lambda']\sim[\Lambda]$ denotes the neighbouring relation defined above.
\end{lemma}

\begin{proof}
	Fix $[\Lambda']\in \Gamma\setminus\{[\Lambda]\}$. In Remark \ref{rem_ex_losure_two_lat} we have explicitly calculated the convex set $\overline{\{[\Lambda'],[\Lambda]\}}$ to be $\left\{[\Lambda']=[\Lambda_0],[\Lambda_1],\dots,[\Lambda_k]=[\Lambda]\right\}$ where in particular $[\Lambda_i]$ is a neighbour of $[\Lambda_{i+1}]$ for $i\leq k-1$. Since $\Gamma$ is convex the closure above is contained in $\Gamma$. Now we fix a representative $\Lambda'$ and take the representatives for $[\Lambda_i]$ such that $\Lambda_i\supseteq\Lambda' \nsubseteq \unif\Lambda_i$ for $0<i\leq k$. We claim that for these representatives we have the inclusions 
	\begin{align*}
		\Lambda'=\Lambda_0\subseteq \Lambda_1\subseteq \dots \subseteq \Lambda_k.
	\end{align*}
	We prove this claim by induction. Since the case $k=1$ is trivial let us assume we have the inclusions above for $k-1$. Now take the representative for $[\Lambda_k]$ such that $\Lambda_k\supseteq \Lambda_{k-1}\supseteq \unif\Lambda_k$. We want to see that this representative has the property $\Lambda_k\supseteq \Lambda'\nsubseteq \unif\Lambda_k$. If we assume that $\unif\Lambda_k\supseteq \Lambda'$, the sequence 
	\begin{align*}
		[\Lambda']=[\Lambda_0\cap \unif\Lambda_k],[\Lambda_1\cap \unif\Lambda_k],\dots,[\Lambda_{k-2}\cap \unif\Lambda_k],[\Lambda_{k-1}\cap \unif\Lambda_k]=[\Lambda]
	\end{align*}
	is a path of smaller length contradicting the minimality. Hence the claim is proven.\\
	Now the inclusion $\Lambda'\subseteq \Lambda$ factors through the representative $\Lambda_{k-1}$ of a neighbour of  $[\Lambda]$ and therefore $V_{\Lambda'\subseteq\Lambda}$ is contained in $V_{\Lambda'\subseteq\Lambda_k}$. The inclusion induces the following commutative diagram
	\begin{align*}
		\xymatrix{
		&\M{\Gamma\setminus\{[\Lambda]\}}\ar[dl]_{\musprol{\Lambda'}}\ar[dr]^{\musprol{\Lambda_k}}\\
		\Projsp(\Lambda')\setminus Z_{\Lambda\subseteq\Lambda'}\ar[dr] \ar[rr]&&\Projsp(\Lambda_k)\setminus Z_{\Lambda\subseteq\Lambda_k} \ar[dl]\\
		&\Projsp(\Lambda)
		}
	\end{align*}
	and therefore we get $\musprol{\Lambda_k}^{-1}(Z_{\Lambda\subseteq\Lambda_k})^c\subseteq \musprol{\Lambda'}^{-1}(Z_{\Lambda\subseteq\Lambda'})^c$. Taking complements and applying $\musprol{\Lambda_k}$ we get $\musprol{\Lambda_k}(\musprol{\Lambda'}^{-1}(Z_{\Lambda'\subseteq\Lambda}))\subseteq Z_{\Lambda_k\subseteq\Lambda}$ and hence the desired result.
\end{proof}

\begin{lemma}
	Fix three different lattice classes $[\Lambda]$, $[\Lambda']$ and $[\Lambda'']$, such that any two of them are neighbours, then $Z_{\{[\Lambda],[\Lambda'],[\Lambda'']\},\Lambda}$ is either $\musprol{\Lambda'}^{-1}(Z_{\Lambda'\subseteq\Lambda})$ or $\musprol{\Lambda''}^{-1}(Z_{\Lambda''\subseteq\Lambda})$.
\end{lemma}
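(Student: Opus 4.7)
The plan is to unpack the definition and reduce to Lemma \ref{lem_interschanging_lambda}. By construction,
\begin{align*}
Z_{\{[\Lambda],[\Lambda'],[\Lambda'']\},\Lambda} = \musprol{\Lambda'}^{-1}(Z_{\Lambda'\subseteq\Lambda}) \cap \musprol{\Lambda''}^{-1}(Z_{\Lambda''\subseteq\Lambda}),
\end{align*}
so it suffices to show that one of the two preimages is contained in the other.

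First I would normalise the representatives. Fix $\Lambda$ and, using that $[\Lambda]\sim[\Lambda']$ and $[\Lambda]\sim[\Lambda'']$, choose the unique representatives satisfying $\unif\Lambda\subseteq\Lambda'\subseteq\Lambda$ and $\unif\Lambda\subseteq\Lambda''\subseteq\Lambda$. The hypotheses $[\Lambda']\neq[\Lambda]$ and $[\Lambda'']\neq[\Lambda]$ then mean that neither $\Lambda'$ nor $\Lambda''$ coincides with $\Lambda$ or $\unif\Lambda$.

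The key step is to exploit $[\Lambda']\sim[\Lambda'']$ to obtain the dichotomy $\Lambda''\subseteq\Lambda'$ or $\Lambda'\subseteq\Lambda''$. By the definition of the neighbour relation there exists an integer $c\in\Z$ with $\unif\Lambda'\subseteq\unif^{c}\Lambda''\subseteq\Lambda'$. Combining this with $\unif\Lambda\subseteq\Lambda',\Lambda''\subseteq\Lambda$, a short case analysis eliminates all $c\notin\{0,1\}$: for $c\leq -1$ the resulting chain $\unif\Lambda\subseteq\Lambda''\subseteq\unif\Lambda'\subseteq\unif\Lambda$ forces $\Lambda''=\unif\Lambda$, while symmetrically for $c\geq 2$ one obtains $\Lambda'=\unif\Lambda$, each contradicting our assumptions. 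The remaining possibilities $c=0$ and $c=1$ are equivalent to $\Lambda''\subseteq\Lambda'$ and $\Lambda'\subseteq\Lambda''$ respectively.

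By symmetry I may assume $\Lambda''\subseteq\Lambda'\subseteq\Lambda$. With these representatives the hypotheses of Lemma \ref{lem_interschanging_lambda} are satisfied, applied with starting point $\Lambda''$ and intermediate neighbour $\Lambda'$ of $\Lambda$, which yields
\begin{align*}
\musprol{\Lambda'}^{-1}(Z_{\Lambda'\subseteq\Lambda}) \subseteq \musprol{\Lambda''}^{-1}(Z_{\Lambda''\subseteq\Lambda}).
\end{align*}
Hence the intersection collapses to $\musprol{\Lambda'}^{-1}(Z_{\Lambda'\subseteq\Lambda})$, proving the claim. The main obstacle is the combinatorial dichotomy; once it is established, the statement is a direct application of the previous lemma.
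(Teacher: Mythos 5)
Your proof is essentially correct and takes the same route as the paper: normalise representatives so that both $\Lambda'$ and $\Lambda''$ sit between $\unif\Lambda$ and $\Lambda$, establish a containment between them, and conclude that one of the two preimages contains the other so that the intersection defining $Z_{\{[\Lambda],[\Lambda'],[\Lambda'']\},\Lambda}$ collapses. Your explicit $c$-argument for the dichotomy $\Lambda'\subseteq\Lambda''$ or $\Lambda''\subseteq\Lambda'$ is a welcome addition; the paper dismisses this step with a bare ``without loss of generality,'' and your case analysis ($c\leq -1$ forces $\Lambda''=\unif\Lambda$, $c\geq 2$ forces $\Lambda'=\unif\Lambda$) is exactly the justification one needs. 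I also note that your direction of the inclusion — with $\Lambda''\subseteq\Lambda'\subseteq\Lambda$ you conclude $\musprol{\Lambda'}^{-1}(Z_{\Lambda'\subseteq\Lambda})\subseteq\musprol{\Lambda''}^{-1}(Z_{\Lambda''\subseteq\Lambda})$, i.e.\ the larger lattice gives the smaller preimage — is the correct monotonicity, as one checks on examples.

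The one place you should tighten is the appeal to Lemma \ref{lem_interschanging_lambda}. As stated, that lemma is an existence statement: for a given $[\Lambda']$ it produces \emph{some} neighbour $[\Lambda'']$ of $[\Lambda]$ with the containment of preimages. When you apply it with starting point $\Lambda''$ (in your notation), and $\Lambda''$ is itself already a neighbour of $\Lambda$, the lemma could simply return $[\Lambda'']$ and produce the trivial inclusion; nothing in the \emph{statement} forces the returned neighbour to be your $\Lambda'$. Moreover, the chain $\Lambda''\subseteq\Lambda'\subseteq\Lambda$ you want to use is not a geodesic (since $[\Lambda'']\sim[\Lambda]$ the minimal chain has length one), so the convex-closure chain in the proof of Lemma \ref{lem_interschanging_lambda} does not run through $\Lambda'$. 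What you really need, and what both the proof of Lemma \ref{lem_interschanging_lambda} and the paper's own proof of this lemma establish by a factorisation diagram, is the plain monotonicity: if $\Lambda_a\subseteq\Lambda_b\subseteq\Lambda$ (with the chosen representatives), then the rational map $\Projsp(\Lambda_a)\dashrightarrow\Projsp(\Lambda)$ factors through $\Projsp(\Lambda_b)\dashrightarrow\Projsp(\Lambda)$, and hence $\musprol{\Lambda_b}^{-1}(Z_{\Lambda_b\subseteq\Lambda})\subseteq\musprol{\Lambda_a}^{-1}(Z_{\Lambda_a\subseteq\Lambda})$ in the join. Replace the citation of the lemma's statement with this one-line diagram argument (or cite the argument inside its proof explicitly), and the proposal is complete.
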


\begin{proof}
	Fix a representative $\Lambda$ for $[\Lambda]$ and take representatives $\Lambda'$ and $\Lambda''$ for the classes $[\Lambda']$ and $[\Lambda'']$ such that $\unif\Lambda\subseteq \Lambda' \subseteq \Lambda$ and $\unif\Lambda\subseteq \Lambda'' \subseteq \Lambda$. Assume without loss of generality that $\Lambda'\subseteq\Lambda''$ and hence $V_{\Lambda'\subseteq\Lambda''}\subseteq V_{\Lambda'\subseteq\Lambda}$. The inclusions induce the diagram
	\begin{align*}
		\xymatrix{
		&\M{\Lambda',\Lambda''}\ar[dl]_{\musprol{\Lambda'}}\ar[dr]^{\musprol{\Lambda''}}\\
		\Projsp(\Lambda')\setminus Z_{\Lambda'\subseteq\Lambda}\ar[dr] \ar[rr]&&\Projsp(\Lambda'')\setminus Z_{\Lambda''\subseteq\Lambda} \ar[dl]\\
		&\Projsp(\Lambda)
		}
	\end{align*}
	and we get $ \musprol{\Lambda'}^{-1}(Z_{\Lambda'\subseteq\Lambda})\subseteq \musprol{\Lambda''}^{-1}(Z_{\Lambda''\subseteq\Lambda}) $. In particular the intersection
	\begin{align*}
		Z_{\{[\Lambda],[\Lambda'],[\Lambda'']\},\Lambda}= \musprol{\Lambda''}^{-1}(Z_{\Lambda''\subseteq\Lambda})\cap \musprol{\Lambda'}^{-1}(Z_{\Lambda'\subseteq\Lambda})
	\end{align*}  
	is simply $\musprol{\Lambda'}^{-1}(Z_{\Lambda'\subseteq\Lambda})$.
\end{proof}

\begin{lemma}
\label{lem_center_are_contained_or_disjoint}
	Fix a convex set of lattice classes $\Gamma$ and two classes $[\Lambda], [\Lambda'] \in \Gamma$ such that $\Gamma\setminus \{[\Lambda]\}$  and $\Gamma\setminus \{[\Lambda']\}$ are still convex. If the set $\{[\Lambda],[\Lambda']\}$ is convex, then one of the two subvarieties $Z_{\Gamma\setminus\{[\Lambda]\},\Lambda'}$ and $Z_{\Gamma\setminus\{[\Lambda']\},\Lambda}$ is contained in the other. In the case that $\{[\Lambda],[\Lambda']\}$ is not convex the two subvarieties $Z_{\Gamma\setminus\{[\Lambda]\},\Lambda'}$ and $Z_{\Gamma\setminus\{[\Lambda']\},\Lambda}$ are disjoint. 
\end{lemma}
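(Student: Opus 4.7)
The two subvarieties $Z := Z_{\Gamma \setminus \{[\Lambda]\}, \Lambda'}$ and $Z' := Z_{\Gamma \setminus \{[\Lambda']\}, \Lambda}$ both sit naturally as closed subschemes of the common Mustafin variety $\M{\Gamma \setminus \{[\Lambda], [\Lambda']\}}$, so both the containment and the disjointness statements are formulated inside a single ambient scheme. By Lemma~\ref{lem_interschanging_lambda} each of them can be rewritten as an intersection of pullbacks $\musprol{\Lambda''}^{-1}(Z_{\Lambda'' \subseteq \cdot})$ indexed by classes $[\Lambda''] \in \Gamma \setminus \{[\Lambda], [\Lambda']\}$ that are neighbours of the respective anchor.

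For Case~1 I would fix representatives with $\unif\Lambda \subseteq \Lambda' \subseteq \Lambda$ and analyse the pullbacks appearing in the two intersections according to whether the intermediate class $[\Lambda'']$ is a mutual neighbour of $[\Lambda]$ and $[\Lambda']$ or a neighbour of only one of them. For a mutual neighbour $[\Lambda'']$ the preceding three-lattice lemma applied to $\{[\Lambda], [\Lambda'], [\Lambda'']\}$, combined with a short computation with the underlying $V$-spaces (using the fixed inclusion $\unif\Lambda \subseteq \Lambda' \subseteq \Lambda$ to compare the normalised representatives of $[\Lambda'']$ relative to $[\Lambda]$ and to $[\Lambda']$), shows that always $\musprol{\Lambda''}^{-1}(Z_{\Lambda'' \subseteq \Lambda}) \subseteq \musprol{\Lambda''}^{-1}(Z_{\Lambda'' \subseteq \Lambda'})$, consistently in $[\Lambda'']$. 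For a class $[\Lambda'']$ that is a neighbour of only one anchor, the geodesic reduction from the proof of Lemma~\ref{lem_interschanging_lambda} allows me to replace $[\Lambda'']$ in the defining intersection by a mutual neighbour lying on the geodesic joining $[\Lambda'']$ to the other anchor (which belongs to $\Gamma$ by convexity), so that both intersections can be reindexed over the same set of mutual neighbours. Assembling the uniformly nested comparisons then yields the containment of one of $Z$ and $Z'$ in the other.

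For Case~2 let $\overline{\{[\Lambda], [\Lambda']\}} \subseteq \Gamma$ be the convex closure, which is a geodesic of length $k \geq 2$ in the Bruhat-Tits building. I choose an intermediate class $[\Lambda_1] \in \overline{\{[\Lambda], [\Lambda']\}} \setminus \{[\Lambda], [\Lambda']\}$ which is a neighbour of $[\Lambda]$, and similarly $[\Lambda_{k-1}] \sim [\Lambda']$ on the geodesic; both lie in $\Gamma \setminus \{[\Lambda], [\Lambda']\}$ by convexity of $\Gamma$. Lemma~\ref{lem_interschanging_lambda} gives the containments
\begin{align*}
Z' \subseteq \musprol{\Lambda_1}^{-1}(Z_{\Lambda_1 \subseteq \Lambda}), \qquad Z \subseteq \musprol{\Lambda_{k-1}}^{-1}(Z_{\Lambda_{k-1} \subseteq \Lambda'}),
\end{align*}
so it is enough to show that these two pullbacks have empty intersection. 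In the distance-$2$ case one may take $[\Lambda_1] = [\Lambda_{k-1}]$; the defining property that $[\Lambda_1]$ is the geodesic intermediate supplies the lattice identity $\Lambda \cap \Lambda' = \Lambda_1$ for the relevant representatives, which translates into the transversality of $V_{\Lambda_1 \subseteq \Lambda}$ and $V_{\Lambda_1 \subseteq \Lambda'}$ inside $\Lambda_1/\unif\Lambda_1$ and hence into disjointness of their projectivisations. For $k \geq 3$ the same argument applies after projecting further to $\M{\{[\Lambda_1], [\Lambda_{k-1}]\}}$ and invoking the geodesic structure to separate the two pullbacks coming from different projective spaces.

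The hardest part is the bookkeeping in Case~1: the three-lattice lemma supplies a direct comparison only for mutually neighbouring triples, so the non-mutual case must be reduced along Bruhat-Tits geodesics while ensuring that the direction of the inclusion does not reverse under such reductions. Once this is in place, Case~2 reduces to the transversality computation sketched above, which ultimately reflects the fact that on a geodesic between $[\Lambda]$ and $[\Lambda']$ the exceptional loci relative to the two anchors point in opposite directions.
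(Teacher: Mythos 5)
The paper's proof is far shorter and does not route through Lemma~\ref{lem_interschanging_lambda} at all, and you should compare your approach with it carefully, because the complications you run into are avoidable. In the paper's proof of Case~1, for each $[\widetilde\Lambda] \in \Gamma\setminus\{[\Lambda],[\Lambda']\}$ one chooses $\widetilde\Lambda$ together with representatives $\Lambda$ and $\Lambda'$ such that $\widetilde\Lambda$ is maximal inside both; the convexity of $\{[\Lambda],[\Lambda']\}$ together with this maximality forces $\Lambda\cap\Lambda'$ to literally equal $\Lambda$ (or $\Lambda'$), not merely up to homothety, and then the factorisation $\widetilde\Lambda_{\res}\to\Lambda_{\res}\to\Lambda'_{\res}$ gives $V_{\widetilde\Lambda\subseteq\Lambda}\subseteq V_{\widetilde\Lambda\subseteq\Lambda'}$ directly, with the same orientation for every $[\widetilde\Lambda]$. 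No reduction to neighbours and no three-lattice lemma is needed. In Case~2 the paper simply observes that $[\Lambda\cap\Lambda']$ is a class in $\Gamma\setminus\{[\Lambda],[\Lambda']\}$, so one may project to $\Projsp(\Lambda\cap\Lambda')$, and there $Z_{\Lambda\cap\Lambda'\subseteq\Lambda}\cap Z_{\Lambda\cap\Lambda'\subseteq\Lambda'}=\emptyset$ because $(\Lambda\cap\Lambda')_\res\to\Lambda_\res\oplus\Lambda'_\res$ is injective (the cokernel $\Lambda+\Lambda'$ is flat). This works uniformly in the distance between $[\Lambda]$ and $[\Lambda']$.

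Your proposal has two genuine gaps. In Case~1 you defer the directionality question to ``bookkeeping'' and a ``short computation with the underlying $V$-spaces'', but the comparison of $V_{\Lambda''\subseteq\Lambda}$ and $V_{\Lambda''\subseteq\Lambda'}$ involves two \emph{different} normalised representatives of $[\Lambda'']$ (maximal inside $\Lambda$ versus maximal inside $\Lambda'$), and you do not explain why the resulting inclusion comes out with a consistent orientation independent of $[\Lambda'']$; moreover the proposed reduction of a single-neighbour $[\Lambda'']$ to a mutual neighbour along a geodesic is not carried out and is not obviously possible, since a geodesic from $[\Lambda'']$ to one anchor need not pass through a mutual neighbour of both anchors. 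In Case~2 your argument is complete only for distance two; for $k\geq 3$ the two pullbacks $\musprol{\Lambda_1}^{-1}(Z_{\Lambda_1\subseteq\Lambda})$ and $\musprol{\Lambda_{k-1}}^{-1}(Z_{\Lambda_{k-1}\subseteq\Lambda'})$ live over \emph{different} projective factors, and ``invoking the geodesic structure to separate'' them is not a proof that they are disjoint. The fix for both gaps is to abandon the reduction to neighbours entirely: normalising $\widetilde\Lambda$ maximal inside both $\Lambda$ and $\Lambda'$ (rather than fixing $\unif\Lambda\subseteq\Lambda'\subseteq\Lambda$ once and for all) turns Case~1 into a one-line kernel inclusion, and projecting to the single class $[\Lambda\cap\Lambda']\in\Gamma$ handles Case~2 for every distance.
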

\begin{proof}
	First we start with the case where $\{[\Lambda],[\Lambda']\}$ is convex. Fix $[\widetilde\Lambda]\in \Gamma\setminus\{[\Lambda],[\Lambda']\}$ and choose representatives such that $\widetilde\Lambda\subseteq\Lambda,\Lambda' \nsupseteq \unif^{-1}\widetilde\Lambda$. Since $\{[\Lambda],[\Lambda']\}$ is convex, $\Lambda \cap \Lambda'$ is either $\Lambda$ or $\Lambda'$. Let us assume $\Lambda \cap \Lambda'=\Lambda$, then we get $Z_{\widetilde\Lambda\subseteq\Lambda}=Z_{\widetilde\Lambda\subseteq\Lambda\cap \Lambda'}\subseteq Z_{\widetilde\Lambda\subseteq\Lambda'}$ and since $[\widetilde{\Lambda}]$ was arbitrary we get the inclusion $Z_{\Gamma\setminus\{[\Lambda']\},\Lambda}\subseteq Z_{\Gamma\setminus\{[\Lambda]\},\Lambda'}$.\\
	Now let us assume $\{[\Lambda], [\Lambda']\}$ is not convex then since $\Gamma$ is convex we can choose representatives such that $[\Lambda \cap \Lambda']\in \Gamma\setminus\{[\Lambda],[\Lambda']\}$. Now we compute 
	\begin{align*}
		 Z_{\Gamma\setminus\{[\Lambda]\},\Lambda'}\cap Z_{\Gamma\setminus\{[\Lambda']\},\Lambda}= \bigcap\limits_{[\widetilde \Lambda] \in \Gamma \setminus \{[\Lambda],[\Lambda']\}}\pr^{-1}_{\widetilde\Lambda}\left(Z_{\widetilde\Lambda\subseteq \Lambda}\cap Z_{\widetilde\Lambda\subseteq \Lambda'}\right)\subseteq \pr^{-1}_{\Lambda \cap\Lambda'}\left(Z_{\Lambda \cap\Lambda'\subseteq \Lambda}\cap Z_{\Lambda \cap\Lambda'\subseteq \Lambda'}\right),
	\end{align*} but since we have $Z_{\Lambda\cap\Lambda'\subseteq \Lambda}\cap Z_{\Lambda \cap\Lambda'\subseteq \Lambda'}=\emptyset$ we are done.
\end{proof}

Next we want to prove the claims of the beginning of this section by induction. Therefore we first we need to cite the following proposition by Mustafin for the induction start and the lemma below by Faltings to prove the induction step.

\begin{prop}\cite[Proposition 2.1]{Mu78}
\label{prop_inductionstart}
	For two neighbouring lattice classes $[\Lambda]$ and $[\Lambda']$ the projection $\musprol{\Lambda}\colon\M{\{[\Lambda],[\Lambda']\}}\rightarrow \Projsp(\Lambda)$ is the blow-up in the smooth center $Z_{\{\Lambda,\Lambda'\},\Lambda}$. 
\end{prop}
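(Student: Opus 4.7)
The plan is to reduce the statement to an explicit calculation in adapted coordinates and then invoke the standard description of a graph closure as a blow-up along its base scheme. Since the roles of $\Lambda$ and $\Lambda'$ are symmetric for a pair of neighbours, it suffices to realise $\musprol{\Lambda}$ as the blow-up of $\Projsp(\Lambda)$ along a smooth subscheme of its special fibre, which under the generic-fibre identification is the centre appearing in the statement.

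Concretely, I would first choose representatives with $\unif\Lambda\subsetneq\Lambda'\subsetneq\Lambda$ (available by the definition of neighbour) and set $m\defeq\dim_{\res}(\Lambda'/\unif\Lambda)$. One may then pick a basis $e_0,\dots,e_{n-1}$ of $\Lambda$ adapted to $\Lambda'$, so that $\Lambda'$ is freely generated by $e_0,\dots,e_{m-1},\unif e_m,\dots,\unif e_{n-1}$. In these bases the inclusion $\unif\Lambda\hookrightarrow\Lambda'$ is represented by the diagonal matrix $\mathrm{diag}(\unif,\dots,\unif,1,\dots,1)$ with $m$ copies of $\unif$, so the canonical identification of generic fibres extends to the rational map
\begin{align*}
	\phi\colon\Projsp(\Lambda)\dashedrightarrow\Projsp(\Lambda'),\qquad [a_0:\dots:a_{n-1}]\longmapsto[\unif a_0:\dots:\unif a_{m-1}:a_m:\dots:a_{n-1}].
\end{align*}
Since $\M{\{[\Lambda],[\Lambda']\}}$ is by definition the scheme-theoretic closure of the diagonal generic fibre inside $\Projsp(\Lambda)\times_{\rg}\Projsp(\Lambda')$, it coincides with the graph closure of $\phi$, and $\musprol{\Lambda}$ is the restriction of the first projection.

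My strategy from here is to apply the classical fact that for a rational map $X\dashedrightarrow\Projsp^{n-1}$ defined by sections $s_0,\dots,s_{n-1}$ of a line bundle on an integral noetherian scheme $X$, the graph closure is canonically isomorphic to the blow-up of $X$ along the ideal sheaf generated by the $s_i$. Applied to $\phi$ this identifies $\musprol{\Lambda}$ with the blow-up of $\Projsp(\Lambda)$ along the homogeneous ideal $I\defeq(\unif x_0,\dots,\unif x_{m-1},x_m,\dots,x_{n-1})$. It then remains to check that $I$ generates the same ideal sheaf as the reduced smooth centre $Z\defeq\Projsp(\mathrm{span}_{\res}(e_0,\dots,e_{m-1}))\subseteq\Projsp(\Lambda)_{\res}$, which is exactly the image of $Z_{\{\Lambda,\Lambda'\},\Lambda}$ under the birational identification. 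This is a direct local check: on the chart $\{x_i\neq 0\}$ with $i<m$, writing $y_j=x_j/x_i$, the dehomogenised ideal equals $(\unif,y_m,\dots,y_{n-1})$ because $\unif y_i=\unif\in I$ absorbs every remaining term $\unif y_j$ with $j<m$; on the charts $\{x_j\neq 0\}$ with $j\geq m$ the ideal $I$ becomes the unit ideal, and so does the ideal of $Z$. Smoothness of $Z$ is immediate as it is a projective linear subspace of $\Projsp(\Lambda)_{\res}$.

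The main obstacle is the bookkeeping needed to match the formal definition of $Z_{\{\Lambda,\Lambda'\},\Lambda}$, which a priori lives inside $\M{\{[\Lambda']\}}=\Projsp(\Lambda')$, with the natural base scheme of $\phi$ on the $\Projsp(\Lambda)$-side, together with the verification that the base-locus ideal and the (radical) ideal of the smooth centre generate the same ideal sheaf. Once these identifications are fixed, the graph-closure description delivers the blow-up structure and the smoothness of the centre at once, completing the argument.
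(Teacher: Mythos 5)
The paper does not supply a proof here: it cites the statement directly from Mustafin, \cite[Proposition 2.1]{Mu78}, so there is nothing internal to compare against. Your argument is the direct one in adapted coordinates, and its core is correct. After choosing representatives with $\unif\Lambda\subsetneq\Lambda'\subsetneq\Lambda$ and a basis adapted to the flag $\unif\Lambda\subset\Lambda'\subset\Lambda$, you correctly write the rational map $\phi$ in coordinates, identify the Mustafin variety $\M{\{[\Lambda],[\Lambda']\}}$ with the graph closure of $\phi$, invoke the standard fact that the graph closure of a map given by a linear system is the blow-up along the base ideal, and verify chart by chart that the ideal sheaf $(\unif x_0,\dots,\unif x_{m-1},x_m,\dots,x_{n-1})$ agrees with the radical ideal of $Z=\Projsp(\mathrm{span}_\res(e_0,\dots,e_{m-1}))\subseteq\Projsp(\Lambda)_\res$; the absorption of the redundant generators by $\unif$ on the charts $\{x_i\neq 0\}$ with $i<m$ is exactly right, and smoothness of $Z$ is clear.

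There is, however, one identification you should not let stand as written, and it in fact echoes an index slip in the paper's own statement. With the definitions of Section \ref{sec_mustafin_varieties}, the subscheme $Z_{\{\Lambda,\Lambda'\},\Lambda}$ lives in $\M{\{[\Lambda']\}}=\Projsp(\Lambda')_\res$ and equals $\Projsp\bigl(\ker(\Lambda'_\res\to\Lambda_\res)\bigr)$, a $\Projsp^{n-m-1}$; it is the center of the \emph{other} projection $\musprol{\Lambda'}$. The center of $\musprol{\Lambda}$ — the one you actually compute — is $Z_{\{\Lambda,\Lambda'\},\Lambda'}=\Projsp\bigl(\ker(\Lambda_\res\to\Lambda'_\res)\bigr)\subseteq\Projsp(\Lambda)_\res$, a $\Projsp^{m-1}$. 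These two loci lie on opposite factors and generically have different dimensions, so calling your $Z$ "the image of $Z_{\{\Lambda,\Lambda'\},\Lambda}$ under the birational identification" cannot be taken literally; indeed the birational map $\Projsp(\Lambda')\dashrightarrow\Projsp(\Lambda)$ is undefined exactly along $Z_{\{\Lambda,\Lambda'\},\Lambda}$, so no image exists. The fix is simply a relabelling: your computation proves that $\musprol{\Lambda}$ is the blow-up along $Z_{\{\Lambda,\Lambda'\},\Lambda'}$, and the symmetry you invoke at the outset then yields the statement for both projections simultaneously, which is presumably what the cited proposition is meant to assert.
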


\begin{lemma}\cite[proof of Lemma 5]{FA01}
\label{lem_convexreduction}
	Fix a finite set $\Gamma$ of classes of $\rg$-lattices in $\quot^n$ with at least two elements. For every $[\Lambda]\in\Gamma$ there is a lattice class $[\Lambda']\in \Gamma$ different from $[\Lambda]$ such that $\Gamma\setminus \{[\Lambda']\}$ remains convex. In particular we can find $[\Lambda],[\Lambda']\in \Gamma$ such that $\Gamma\setminus\{[\Lambda]\}$ and $\Gamma\setminus\{[\Lambda']\}$ and hence also $\Gamma\setminus\{[\Lambda],[\Lambda']\}$ remains convex.
\end{lemma}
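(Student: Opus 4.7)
The plan is to produce an extreme class $[\Lambda']\in\Gamma$ by means of a simple size function depending on $[\Lambda]$. After fixing a representative $\Lambda$ of $[\Lambda]$, for every class $[M]\in\Gamma$ I choose the canonical representative $M$ characterized by $\Lambda\subseteq M$ and $\unif^{-1}\Lambda\nsubseteq M$, and define $\sigma\colon\Gamma\to\Z_{\geq 0}$ by $\sigma([M])\defeq \dim_{\res}(M/\Lambda)$. Then $\sigma([\Lambda])=0$ and $\sigma([M])>0$ for every other class, so any maximizer $[\Lambda']$ of $\sigma$ on $\Gamma$ is automatically different from $[\Lambda]$ as soon as $|\Gamma|\geq 2$. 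I take this $[\Lambda']$ as my candidate.

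The main step is to show that $\Gamma\setminus\{[\Lambda']\}$ is still convex. Suppose not: there exist $[M_1],[M_2]\in\Gamma\setminus\{[\Lambda']\}$ and $k\in\Z$ with $[M_1\cap\unif^k M_2]=[\Lambda']$, where $M_1,M_2$ now denote their canonical representatives in the sense above. After interchanging $M_1$ and $M_2$ if necessary, I may assume $k\leq 0$, so that $\unif^k M_2\supseteq M_2\supseteq\Lambda$; consequently $M_1\cap\unif^k M_2$ contains $\Lambda$, and because $\unif^{-1}\Lambda\nsubseteq M_1$ it is in fact itself the canonical representative of $[\Lambda']$. The inclusion $M_1\cap\unif^k M_2\subseteq M_1$ then gives $\sigma([\Lambda'])\leq\sigma([M_1])$, and maximality of $\sigma$ at $[\Lambda']$ forces equality, hence $M_1\subseteq\unif^k M_2$ and $[\Lambda']=[M_1]$, contradicting $[M_1]\neq[\Lambda']$. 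The main technical point here is precisely the identification of the raw intersection $M_1\cap\unif^k M_2$ with the canonical representative of $[\Lambda']$; without the normalization from the first paragraph, the dimension count would not cleanly compare $\sigma([\Lambda'])$ with $\sigma([M_1])$.

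The second assertion follows by applying the construction twice. Starting from an arbitrary $[\Lambda_0]\in\Gamma$, a first application yields an extreme class which I rename $[\Lambda]$; a second application with $[\Lambda]$ as the base point then yields an extreme class $[\Lambda']\neq[\Lambda]$. The convexity of both $\Gamma\setminus\{[\Lambda]\}$ and $\Gamma\setminus\{[\Lambda']\}$ is immediate from the first part, and since any class that is extreme in $\Gamma$ is automatically extreme in every subset of $\Gamma$, $[\Lambda']$ is still extreme inside the convex set $\Gamma\setminus\{[\Lambda]\}$, so that $\Gamma\setminus\{[\Lambda],[\Lambda']\}$ is convex as well.
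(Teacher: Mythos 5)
Your proof is correct, and since the paper itself only cites \cite[proof of Lemma 5]{FA01} for this statement without reproducing Faltings' argument, there is no in-paper proof to compare against; your extremality argument via a size function on canonical representatives is exactly the natural lattice-theoretic route one would take, and is almost certainly the same in spirit as the cited argument.

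Two small points. First, the quantity you call $\dim_{\res}(M/\Lambda)$ is not literally a $\res$-dimension, since $M/\Lambda$ is a finite-length $\rg$-module on which $\unif$ acts nontrivially in general; what you need (and what your monotonicity argument actually uses) is the $\rg$-length $\mathrm{length}_{\rg}(M/\Lambda)$, which is strictly additive along inclusions $\Lambda\subseteq\Lambda'\subseteq M_1$ of canonical representatives. With this replacement the argument goes through verbatim: the identification of $M_1\cap\unif^k M_2$ with the canonical representative $\Lambda'$ (using $\Lambda\subseteq M_1\cap\unif^k M_2$ from $k\leq 0$, and $\unif^{-1}\Lambda\nsubseteq M_1$) is the crux, and $\sigma([\Lambda'])=\sigma([M_1])$ then forces $\Lambda'=M_1$. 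Second, in the last paragraph the closing step is more laborious than necessary: once $\Gamma\setminus\{[\Lambda]\}$ and $\Gamma\setminus\{[\Lambda']\}$ are both convex, their intersection $\Gamma\setminus\{[\Lambda],[\Lambda']\}$ is automatically convex (an intersection of convex sets of lattice classes is convex), which is why the paper phrases it as ``and hence also.'' Your re-application of the extremality argument inside $\Gamma\setminus\{[\Lambda]\}$ is valid but not needed.
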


\begin{rem}
\label{rem_convex_unique_generatingset}
	For every convex set of lattice classes $\Gamma$ there is a unique minimal subset $\Gamma^{\mathrm{gen}}\subseteq\Gamma$ such that the convex closure $\overline{\Gamma^{\mathrm{gen}}}$ is $\Gamma$, cf. \cite[Proposition 5.2.17.]{MS15}. In particular for $[\Lambda]\in \Gamma^{\mathrm{gen}}$ also $\Gamma\setminus\{[\Lambda]\}$ has to be convex and hence $\Gamma^{\mathrm{gen}}=\left\{[\Lambda ]\in\Gamma\middle\vert \Gamma\setminus\{[\Lambda]\} \text{ is convex}\right\}$. 
\end{rem}

\begin{lemma}
\label{lem_main}
	Let $\Gamma$ be a finite convex set of $\rg$-lattice classes in $\quot^n$ with at least two elements and $[\Lambda]$ in $\Gamma$ such that $\Gamma\setminus\{[\Lambda]\}$ is still convex. Then we can describe the map $\pr_{\Gamma,\Lambda}$ by the blow-up $\Bl{Z_{\Gamma,\Lambda}}{\M{ \Gamma\setminus\{[\Lambda]\}}}$ in the smooth center $Z_{\Gamma,\Lambda}$. Furthermore for every $[\Lambda']$ in $\Gamma\setminus\{[\Lambda]\}$ such that $\Gamma\setminus \{[\Lambda']\}$ remains convex and $\Gamma\setminus\{[\Lambda],[\Lambda'] \}$ is not empty the center $Z_{\Gamma,\Lambda}$ is
	\begin{enumerate}[label=(\arabic{enumi})]
		\item \label{lem_main_l_in_ls} the total transform  $\tot{Z_{\Gamma\setminus\{[\Lambda']\}, \Lambda}}$ if $Z_{\Gamma\setminus\{[\Lambda']\},\Lambda}\subseteq Z_{\Gamma\setminus\{[\Lambda]\},\Lambda'}$
		\item \label{lem_main_ls_in_l} the strict transform $\str{Z_{\Gamma\setminus\{[\Lambda']\}, \Lambda}}$ if $Z_{\Gamma\setminus\{[\Lambda]\},\Lambda'}\subseteq Z_{\Gamma\setminus\{[\Lambda']\},\Lambda}$
		\item \label{lem_main_l_dis_ls} the strict or total tramsform $\str{Z_{\Gamma\setminus\{[\Lambda']\}, \Lambda}}=\tot{Z_{\Gamma\setminus\{[\Lambda']\}, \Lambda}}$ if $Z_{\Gamma\setminus\{[\Lambda']\},\Lambda}\cap Z_{\Gamma\setminus\{[\Lambda]\},\Lambda'} = \emptyset$
	\end{enumerate}
	of the blow-up $\M{\Gamma\setminus\{\Lambda\}}\rightarrow \M{\Gamma\setminus\{\Lambda,\Lambda'\}}$.

\end{lemma}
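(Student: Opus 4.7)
The plan is to proceed by induction on $|\Gamma|$. The base case $|\Gamma|=2$ forces $\Gamma=\{[\Lambda],[\Lambda']\}$ to consist of two neighbouring classes; part (ii) is then vacuous and part (i) is exactly Proposition \ref{prop_inductionstart}, whose center $Z_{\{[\Lambda],[\Lambda']\},\Lambda}=\Projsp(V_{\Lambda'\subseteq\Lambda})$ is a linear subvariety and therefore smooth.

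For the inductive step, assume $|\Gamma|\geq 3$. First I would invoke Lemma \ref{lem_convexreduction} to choose $[\Lambda']\in\Gamma$ different from $[\Lambda]$ such that $\Gamma\setminus\{[\Lambda']\}$ is convex. Combined with the standing hypothesis on $\Gamma\setminus\{[\Lambda]\}$, the second assertion of that lemma ensures that $\Gamma_0:=\Gamma\setminus\{[\Lambda],[\Lambda']\}$ is convex and (since $|\Gamma|\geq 3$) non-empty. The induction hypothesis applied to $\Gamma\setminus\{[\Lambda]\}$ with the class $[\Lambda']$ removed identifies $\M{\Gamma\setminus\{[\Lambda]\}}$ with $\Bl{Y_2}{\M{\Gamma_0}}$ for the smooth centre $Y_2:=Z_{\Gamma\setminus\{[\Lambda]\},\Lambda'}$; symmetrically $\M{\Gamma\setminus\{[\Lambda']\}}=\Bl{Y_1}{\M{\Gamma_0}}$ with the smooth centre $Y_1:=Z_{\Gamma\setminus\{[\Lambda']\},\Lambda}$. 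Lemma \ref{lem_center_are_contained_or_disjoint} then places $Y_1$ and $Y_2$ in the hypothesis of Lemma \ref{lem_monoidaltransorm}. After identifying $\M{\Gamma}$ with the join $\M{\Gamma\setminus\{[\Lambda]\}}\bigvee\M{\Gamma\setminus\{[\Lambda']\}}$ (both are the reduced closure of the common generic fibre in the appropriate product of projective spaces, with the redundant $\Gamma_0$-factors canonically glued), Lemma \ref{lem_monoidaltransorm} realises $\pr_{\Gamma,\Lambda}$ as a blow-up of $\M{\Gamma\setminus\{[\Lambda]\}}$ whose centre is $\tot{Y_1}$ in case (i), $\str{Y_1}$ in case (ii), and $Y_1=\str{Y_1}=\tot{Y_1}$ in case (iii) -- precisely the three scenarios in the statement. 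Smoothness of the centre follows from standard properties of blow-ups along smooth subvarieties applied to $Y_1\subseteq Y_2$, $Y_2\subseteq Y_1$, or $Y_1\cap Y_2=\emptyset$.

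What remains, and in my view constitutes the main obstacle, is matching the geometric transform produced by Lemma \ref{lem_monoidaltransorm} with the combinatorial subscheme $Z_{\Gamma,\Lambda}$ as defined in the text: the latter carries an extra factor $\musprol{\Lambda'}^{-1}(Z_{\Lambda'\subseteq\Lambda})$ absent from $Z_{\Gamma\setminus\{[\Lambda']\},\Lambda}$. I would first reduce both intersections to neighbours of $[\Lambda]$ using Lemma \ref{lem_interschanging_lambda}: when $[\Lambda']\not\sim[\Lambda]$ this extra factor disappears and the identification with $\tot{Y_1}$ is immediate. When $[\Lambda']\sim[\Lambda]$, I would argue using the universal property of the blow-up $\Bl{Y_2}{\M{\Gamma_0}}$ together with the explicit description of the indeterminacy loci of the rational maps involved that the resolved map $\musprol{\Lambda'}$ sends the transform of $Y_1$ dictated by the relative position of $Y_1$ and $Y_2$ into $Z_{\Lambda'\subseteq\Lambda}$, making the extra factor automatic. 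Verifying this carefully in the three cases (i)--(iii) is the technical heart of the proof. Once the centre is identified with $Z_{\Gamma,\Lambda}$ in the chosen case, uniqueness of the blow-up centre implies that rerunning the argument with any other qualifying $[\Lambda']$ yields the corresponding transform description, so the second assertion of the lemma follows in full generality.
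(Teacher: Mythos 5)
Your overall architecture matches the paper's: induction on $|\Gamma|$ with Proposition \ref{prop_inductionstart} as the base, Lemma \ref{lem_convexreduction} to locate a second deletable class $[\Lambda']$, Lemma \ref{lem_center_are_contained_or_disjoint} to put the two centers $Y_1=Z_{\Gamma\setminus\{[\Lambda']\},\Lambda}$ and $Y_2=Z_{\Gamma\setminus\{[\Lambda]\},\Lambda'}$ into one of the three allowed configurations, and Lemma \ref{lem_monoidaltransorm} to identify $\M{\Gamma}$ with the join $\M{\Gamma\setminus\{[\Lambda]\}}\bigvee\M{\Gamma\setminus\{[\Lambda']\}}$ and hence with a blow-up of $\M{\Gamma\setminus\{[\Lambda]\}}$ in a transform of $Y_1$.

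The genuine gap is precisely the step you flag as ``the technical heart'' but do not execute: showing that the transform of $Y_1$ produced by Lemma \ref{lem_monoidaltransorm} is the combinatorial subscheme $Z_{\Gamma,\Lambda}$, i.e. that the factor $\musprol{\Lambda'}^{-1}(Z_{\Lambda'\subseteq\Lambda})$, which appears in the definition of $Z_{\Gamma,\Lambda}$ but not in $Z_{\Gamma\setminus\{[\Lambda']\},\Lambda}$, is automatically swallowed. You propose to handle this by ``the universal property of the blow-up together with the explicit description of the indeterminacy loci,'' but this is a plan, not an argument; as stated it does not close the induction. The paper's proof of this step is a short direct computation and is worth internalising: in case (1) one has
\begin{align*}
  \tot{Z_{\Gamma\setminus\{[\Lambda']\},\Lambda}}
  \;=\;\pr^{-1}_{Z_{\Gamma\setminus\{[\Lambda]\},\Lambda'}}\bigl(Z_{\Gamma\setminus\{[\Lambda']\},\Lambda}\bigr)
  \;=\;\bigcap_{[\widetilde\Lambda]\in\Gamma\setminus\{[\Lambda],[\Lambda']\}}\musprol{\widetilde\Lambda}^{-1}(Z_{\widetilde\Lambda\subseteq\Lambda})
  \;\supseteq\; Z_{\Gamma,\Lambda},
\end{align*}
and the hypothesis $Z_{\Gamma\setminus\{[\Lambda']\},\Lambda}\subseteq Z_{\Gamma\setminus\{[\Lambda]\},\Lambda'}$ hands you the missing reverse inclusion by forcing the total transform into $\musprol{\Lambda'}^{-1}(Z_{\Lambda'\subseteq\Lambda})$; case (2) then follows by interchanging $\Lambda$ and $\Lambda'$ in the join diagram, and case (3) is analogous. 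Your sketch via Lemma \ref{lem_interschanging_lambda} in the non-neighbour case is fine, but your treatment of the neighbour case needs to be replaced by an actual inclusion argument of this kind before the induction is valid. A small secondary point: in the base case you should observe that conditions (1)--(3) are all vacuous because $\Gamma\setminus\{[\Lambda],[\Lambda']\}$ is empty, rather than only case (ii).
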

\begin{proof}
	We prove the statement by induction on the number of lattices in $\Gamma$. The base case $ \Gamma=\{[\Lambda_1],[\Lambda_2]\}$ for the induction was shown in Proposition \ref{prop_inductionstart}.\\
	Now for $\sharp \Gamma\geq 3$ fix two lattices $[\Lambda], [\Lambda'] \in \Gamma$ such that $\Gamma\setminus \{[\Lambda]\}$ and $\Gamma\setminus \{[\Lambda']\}$ are still convex. By hypothesis we know that the maps $\M{\Gamma\setminus\{[\Lambda]\}}\rightarrow\M{\Gamma\setminus\{[\Lambda],[\Lambda']\}}$ and $\M{\Gamma\setminus\{[\Lambda']\}}\rightarrow\M{\Gamma\setminus\{[\Lambda],[\Lambda']\}}$ are blow-ups in the center $Z_{\Gamma\setminus\{[\Lambda]\},\Lambda'}$ and $Z_{\Gamma\setminus\{[\Lambda']\},\Lambda}$ respectively. To indicate this we denote the map $\muspro{\Gamma\setminus\{[\Lambda]\}}{\Gamma\setminus\{[\Lambda],[\Lambda']\}}$ by $\pr_{Z_{\Gamma\setminus\{[\Lambda]\},\Lambda'}}$ and similarly we write $\pr_{Z_{\Gamma\setminus\{[\Lambda']\},\Lambda}}$ for $\muspro{\Gamma\setminus\{[\Lambda]\}}{\Gamma\setminus\{[\Lambda],[\Lambda']\}}$.\\
	From Lemma \ref{lem_center_are_contained_or_disjoint} we know that in the case where $Z_{\Gamma\setminus\{[\Lambda ]\} ,\Lambda'}$ and $Z_{\Gamma\setminus\{[\Lambda']\} ,\Lambda}$ are not disjoint one has to be contained in the other and we can prove the statement for every case separately. We will first prove Case
	\ref{lem_main_l_in_ls} and then deduce Case \ref{lem_main_ls_in_l}. 
	The remaining Case \ref{lem_main_l_dis_ls} is proven analogously.
	We begin by proving the second part of the lemma i.e. that $Z_{\Gamma,\Lambda}$ is the total transform of ${Z_{\Gamma\setminus\{[\Lambda']\},\Lambda}}$ under the blow-up $\pr_{Z_{\Gamma\setminus\{[\Lambda]\},\Lambda'}}$. We compute
	\begin{align*}
		 \tot{Z_{\Gamma\setminus\{[\Lambda']\},\Lambda}}=\pr^{-1}_{Z_{\Gamma\setminus\{[\Lambda]\},\Lambda'}}(Z_{\Gamma\setminus\{[\Lambda']\},\Lambda})= \bigcap\limits_{[\widetilde\Lambda]\in \Gamma \setminus\{[\Lambda],[\Lambda']\}} \musprol{\widetilde\Lambda}^{-1}(Z_{\widetilde\Lambda\subseteq\Lambda})
		\supseteq \bigcap\limits_{[\widetilde\Lambda]\in \Gamma \setminus \{[\Lambda]\}} \musprol{\widetilde\Lambda}^{-1}(Z_{\widetilde\Lambda\subseteq\Lambda}) = Z_{\Gamma,\Lambda} 
	\end{align*}
	and we have equality since by the assumption of case \ref{lem_main_l_in_ls} we have the inclusion 
	\begin{align*}
		\tot{Z_{\Gamma\setminus\{[\Lambda']\},\Lambda}}=\pr^{-1}_{Z_{\Gamma\setminus\{[\Lambda]\},\Lambda'}}(Z_{\Gamma\setminus\{[\Lambda']\},\Lambda})\subseteq \pr^{-1}_{Z_{\Gamma\setminus\{[\Lambda']\},\Lambda}}(Z_{\Gamma\setminus\{[\Lambda]\},\Lambda})\subseteq \musprol{\Lambda'}(Z_{\Lambda\subseteq\Lambda'}).
	\end{align*} 	\\
	We just have proven that the center $Z_{\Gamma,\Lambda}$ is the total transform $\tot{Z_{\Gamma\setminus\{[\Lambda']\},\Lambda}}$. To finish the proof we need to do some identifications summarised in the following diagram:   
	\begin{align*}
		\xymatrixcolsep{-0.25in}
		\begin{xy}
		\xymatrix{
			\Bl{\tot{Z_{\Gamma\setminus\{[\Lambda']\},\Lambda}}}{\M{\Gamma\setminus\{[\Lambda]\}}}\ar[ddd]\ar@{=}[rd]&&\Bl{\str{Z_{\Gamma\setminus\{[\Lambda]\},\Lambda'}}}{\M{\Gamma\setminus\{[\Lambda']\}}}\ar[ddd]\\
			&\M{\Gamma\setminus\{[\Lambda]\}}\bigvee\M{\Gamma\setminus\{[\Lambda']\}} \ar@{=}[d] \ar@{=}[ru]\\
			&\M{\Gamma}\ar[dl]\ar[dr]\\
			\M{\Gamma\setminus\{[\Lambda]\}}\ar[dr]_{\pr_{Z_{\Gamma\setminus\{[\Lambda]\},\Lambda'}}}&&\M{\Gamma\setminus\{[\Lambda']\}}\ar[dl]^{\ \ \ \ \ \ \  \pr_{Z_{\Gamma\setminus\{[\Lambda']\},\Lambda}}}\\
			&\M{\Gamma\setminus\{[\Lambda],[\Lambda']\}}
		}	
		\end{xy}
	\end{align*} 
	 With Lemma \ref{lem_monoidaltransorm} we then can identify the two blow-ups $\Bl{\tot{Z_{\Gamma\setminus\{[\Lambda']\},\Lambda}}}{\M{\Gamma\setminus\{[\Lambda]\}}}$ and $\Bl{\str{Z_{\Gamma\setminus\{[\Lambda]\},\Lambda'}}}{\M{\Gamma\setminus\{[\Lambda']\}}}$ with the join $ \M{\Gamma\setminus\{[\Lambda]\}}\bigvee \M{\Gamma\setminus\{[\Lambda']\}}$
	which we identify with $\M{\Gamma} $ using the universal properties of the join construction.\\
	Now the second case where $Z_{\Gamma\setminus\{[\Lambda']\},\Lambda}\subseteq Z_{\Gamma\setminus\{[\Lambda]\},\Lambda'}$ follows easily from the identification in the diagram above by interchanging the roles of $\Lambda$ and $\Lambda'$.\\
	By hypothesis the centers $Z_{\Gamma\setminus\{[\Lambda]\},\Lambda'}$ and $Z_{\Gamma\setminus\{[\Lambda']\},\Lambda}$ are smooth. Hence in the cases \ref{lem_main_l_in_ls} and \ref{lem_main_l_dis_ls}  the center $Z_{\Gamma,\Lambda}=\tot{Z_{\Gamma\setminus\{[\Lambda']\},\Lambda}}$ is clearly smooth and in case \ref{lem_main_ls_in_l} the center $Z_{\Gamma,\Lambda}=\str{Z_{\Gamma\setminus\{[\Lambda']\},\Lambda}}$  is a blow-up of a smooth scheme over a field in a smooth center and in particular it is smooth.\\
\end{proof}

\begin{prop}\cite[proof of Lemma 5]{FA01}
\label{prop_mustafin_as_blow-ups}
	Let $\Gamma$ be a finite convex set of $\rg$-lattices in $\quot^n$ and $\Lambda$ representing a class in $\Gamma$. Then  the Mustafin variety $\M{\Gamma}$ is a successive blow-up of $\Projsp(\Lambda)$ in smooth centers. 
\end{prop}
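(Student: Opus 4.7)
The plan is to argue by induction on the cardinality $\sharp\Gamma$. The base case $\sharp\Gamma=1$ is trivial since $\M{\Gamma}=\Projsp(\Lambda)$. The case $\sharp\Gamma=2$ is covered directly by Proposition \ref{prop_inductionstart}: if $\Gamma=\{[\Lambda],[\Lambda']\}$ is convex, then $[\Lambda]$ and $[\Lambda']$ are neighbours, and the projection $\musprol{\Lambda}\colon\M{\Gamma}\to\Projsp(\Lambda)$ is the blow-up in the smooth center $Z_{\Gamma,\Lambda}$.

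For the inductive step, assume the result holds for all convex sets of cardinality less than $\sharp\Gamma$ (for every choice of a base lattice class in such a set), and fix $[\Lambda]\in\Gamma$. By Lemma \ref{lem_convexreduction} there exists $[\Lambda']\in\Gamma\setminus\{[\Lambda]\}$ such that $\Gamma\setminus\{[\Lambda']\}$ remains convex; this set still contains $[\Lambda]$ and has strictly smaller cardinality. Applying the induction hypothesis with base class $[\Lambda]$ produces a factorization
\begin{align*}
    \M{\Gamma\setminus\{[\Lambda']\}}\longrightarrow X_{r-1}\longrightarrow\cdots\longrightarrow X_1\longrightarrow X_0=\Projsp(\Lambda)
\end{align*}
as a sequence of blow-ups in smooth centers.

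Next, Lemma \ref{lem_main} identifies the projection $\muspro{\Gamma}{\Gamma\setminus\{[\Lambda']\}}\colon\M{\Gamma}\to\M{\Gamma\setminus\{[\Lambda']\}}$ with the blow-up of $\M{\Gamma\setminus\{[\Lambda']\}}$ in the smooth center $Z_{\Gamma,\Lambda'}$. Concatenating this blow-up with the factorization above yields a presentation of $\M{\Gamma}\to\Projsp(\Lambda)$ as a successive blow-up in smooth centers, completing the induction.

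There is essentially no hard step left: all of the substantive work — verifying that the center $Z_{\Gamma,\Lambda'}$ is smooth, that $\muspro{\Gamma}{\Gamma\setminus\{[\Lambda']\}}$ is really a blow-up in this center, and that a suitable $[\Lambda']$ exists so that $\Gamma\setminus\{[\Lambda']\}$ stays convex — has already been carried out in Lemma \ref{lem_main} and Lemma \ref{lem_convexreduction}. The only mild subtlety is to make sure the inductive hypothesis is formulated so that one is free to choose the base lattice at each stage; this is automatic because the statement of the proposition is for \emph{any} $[\Lambda]\in\Gamma$, so one can carry $[\Lambda]$ along unchanged through the reduction.
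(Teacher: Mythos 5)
Your proposal is correct and follows essentially the same route as the paper: induct on $\sharp\Gamma$, use Lemma \ref{lem_convexreduction} to peel off a class $[\Lambda']\neq[\Lambda]$ keeping convexity, and invoke Lemma \ref{lem_main} to recognize $\M{\Gamma}\to\M{\Gamma\setminus\{[\Lambda']\}}$ as a blow-up in the smooth center $Z_{\Gamma,\Lambda'}$. The only difference is that you spell out the base case and the freedom to carry the base class $[\Lambda]$ through the induction, which the paper leaves implicit.
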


\begin{proof}
	Proving the statement by induction on the number of classes in $\Gamma$, we can assume the statement is true for a convex set of lattice classes $\Gamma$ with $i$ elements. Now for a convex set of lattice classes $\Gamma$ with $i+1$ elements and a class $[\Lambda]$ in $\Gamma$ we use Lemma \ref{lem_convexreduction} to find a lattice class $[\Lambda']\neq [\Lambda]$ such that $\Gamma\setminus\{[\Lambda']\}$ is still convex. Now we are done since $\M{\Gamma\setminus\{[\Lambda']\}}$ is obtained by a sequence of blow-ups of $\Projsp(\Lambda )$ in smooth centers by assumption and the map $\M{\Gamma}\rightarrow \M{\Gamma\setminus\{[\Lambda']\}}$ is a blow-up in a smooth center by Lemma \ref{lem_main}.
\end{proof}

\begin{rem}
\label{rem_intersection_neigh}
	For two classes $[\Lambda]$ and $[\Lambda']$ in a convex set $\Gamma$, the two irreducible components $C_{\Lambda}$ and $C_{\Lambda '}$ of $\M{\Gamma }_{\res}$ are the exceptional divisors of the blow-ups $\pr_{Z_{\Gamma,\Lambda}}$ and $\pr_{Z_{\Gamma,\Lambda'}}$ respectively.
	Using Lemma \ref{lem_center_are_contained_or_disjoint} it is easy to see, that $C_{\Lambda}$ and $C_{\Lambda '}$ are disjoint if $[\Lambda]$ and $[\Lambda']$ are not neighbours. Also the converse is true and a proof can be found in \cite[Theorem 2.10]{CHSW11}.
\end{rem}

We are now prepared to use the description of Mustafin varieties as a sequence of blow-ups and the following lemma on semi-stability under blow-ups to get a new proof of the semi-stability of $\M{\Gamma}$ for $\Gamma$ convex (cf. \cite[Chapter 5]{FA01}). 

\begin{lemma}\cite[Lemma 3.2.1]{GEN00}
\label{lem_gen_semi-stable}
	Let $X$ be semi-stable $\rg$-scheme and $Y\subseteq X_{\res}$ a closed subscheme of the special fiber. Suppose $Y$ is smooth over $\res$ and the intersection $Y\cap \sing{X_{\res}}$ is a simple normal crossing divisor on $Y$. Then the blow-up $\bl{Y}{X}$ is semi-stable.
\end{lemma}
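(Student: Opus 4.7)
The plan is to verify semi-stability of $\bl{Y}{X}$ étale-locally around each closed point $p\in Y$ via an explicit chart-by-chart computation of the blow-up. Since semi-stability is étale-local on $X$, I may assume $X=\spec{\rg[x_0,\dots,x_r]/(x_0\cdots x_m-\unif)}$. After relabelling, assume that $x_0=\dots=x_{s-1}=0$ at $p$ while $x_s,\dots,x_m$ are units near $p$; absorbing these units puts $X$ in the local form $\spec{A[x_0,\dots,x_{s-1}]/(x_0\cdots x_{s-1}-\unif)}$ for a smooth $\rg$-algebra $A$, and the irreducible components of $\sing{X_{\res}}$ meeting $p$ are the divisors $D_{ij}\defeq V(x_i,x_j)$ for $0\le i<j\le s-1$.

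The second step translates the hypotheses on $Y$ into a concrete description of $I_Y$ at $p$. Smoothness of $Y$ makes $\mathcal{O}_{Y,p}$ a domain; combined with $Y\subseteq X_{\res}$ and the relation $x_0\cdots x_{s-1}=\unif=0$ in $\mathcal{O}_{Y,p}$, at least one $x_i$ vanishes on $Y$. The SNC condition forces $Y\cap\sing{X_{\res}}$ to be a proper divisor on $Y$, ruling out $Y$ being contained in a pairwise intersection of two branches, so exactly one $x_i$ (say $x_0$) lies in $I_Y$. Then $Y\cap\sing{X_{\res}}$ is the union of the divisors $Y\cap\{x_j=0\}$ for $1\le j\le s-1$. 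Using smoothness of $Y$ together with the SNC property, I choose local parameters on $Y$ in which these divisors are coordinate hyperplanes and lift this choice to obtain generators $I_Y=(x_0,g_1,\dots,g_u)$ with $g_1,\dots,g_u\in A$ such that $x_0,x_1,\dots,x_{s-1},g_1,\dots,g_u$ (together with the remaining coordinates of $A$) form a regular system of parameters of $X$ at $p$.

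With this description I compute $\bl{Y}{X}$ in the $u+1$ standard affine charts. In the chart associated to the generator $x_0$, the substitutions $g_j=x_0 h_j$ leave the semi-stable equation $x_0\cdots x_{s-1}=\unif$ unchanged, producing a chart of the desired form with the same invariant. In the chart associated to a generator $g_j$, the substitutions $x_0=g_j x_0'$ and $g_l=g_j g_l'$ (for $l\ne j$) convert the defining equation into $g_j\cdot x_0'\cdot x_1\cdots x_{s-1}=\unif$; since $g_j,x_0',x_1,\dots,x_{s-1}$ are by construction part of a regular system of parameters of the chart, this is again of semi-stable form, now with one additional factor in the product. Patching the charts and letting $p$ vary yields the semi-stability of $\bl{Y}{X}$.

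The main technical obstacle is the second step: extracting from the abstract SNC assumption the explicit form of $I_Y$ with exactly one $x_i$-generator and the remaining generators transverse to all $x_j$'s. This requires combining smoothness of $Y$ (forcing local irreducibility and ruling out $Y\subseteq\sing{X_{\res}}$) with a coordinate change realising the SNC components of $Y\cap\sing{X_{\res}}$ as coordinate hyperplanes of $Y$ that lift to coordinates of $X$ compatible with the semi-stable presentation. Once this description is in place the chart-by-chart computation in step three is a direct algebraic verification.
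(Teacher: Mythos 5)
Your proof is correct and follows the same chart-by-chart computation as Genestier's original argument in \cite[Lemma 3.2.1]{GEN00}, which the present paper cites without reproducing; the key reduction --- choosing an \'etale local presentation in which the center has the form $V(x_0,g_1,\dots,g_u)$ with exactly one of the factor variables among the generators --- is exactly what the proof of Proposition \ref{prop_convex_mustafin_semi-stable} invokes from that reference. One small terminological slip: $x_0,\dots,x_{s-1},g_1,\dots,g_u$ form part of a regular system of parameters of the \emph{ambient} regular (polynomial) ring, not of $\mathcal{O}_{X,p}$, which is singular whenever $s\geq 2$; this does not affect the substance of your argument.
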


\begin{prop}
\label{prop_convex_mustafin_semi-stable}
		For a finite convex set $\Gamma$ of $\rg$-lattices in $\quot^n$ the Mustafin variety $\M{\Gamma}$ is semi-stable.
\end{prop}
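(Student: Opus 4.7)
The plan is to proceed by induction on the cardinality of $\Gamma$. In the base case $|\Gamma|=1$ we have $\M{\Gamma}=\Projsp(\Lambda)$, which is smooth over $\rg$ and hence semi-stable in the sense of the definition above (take $m=0$, so that the local model is $\rg[x_1,\dots,x_r]$). For the inductive step with $|\Gamma|\geq 2$, I would apply Lemma \ref{lem_convexreduction} to select $[\Lambda']\in\Gamma$ such that $\Gamma':=\Gamma\setminus\{[\Lambda']\}$ remains convex. The induction hypothesis tells us that $\M{\Gamma'}$ is semi-stable, and by Lemma \ref{lem_main} the projection $\muspro{\Gamma}{\Gamma'}$ realises $\M{\Gamma}$ as the blow-up of $\M{\Gamma'}$ in the smooth center $Z_{\Gamma,\Lambda'}$. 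Lemma \ref{lem_gen_semi-stable} will then close the induction, provided we verify that the intersection $Z_{\Gamma,\Lambda'}\cap\sing{\M{\Gamma'}_{\res}}$ is a simple normal crossing divisor on $Z_{\Gamma,\Lambda'}$.

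Verifying this SNC condition is the crux of the argument. By the (inductive) semi-stability of $\M{\Gamma'}$, the singular locus $\sing{\M{\Gamma'}_{\res}}$ is the union of the pairwise intersections $C_{[\widetilde\Lambda]}\cap C_{[\widetilde\Lambda'']}$ of the irreducible components indexed by Lemma \ref{lem_convex_irr_com_correspond_to_lattices}, and by Remark \ref{rem_intersection_neigh} such an intersection is non-empty exactly when $[\widetilde\Lambda]$ and $[\widetilde\Lambda'']$ are neighbours. I would combine this with Lemma \ref{lem_interschanging_lambda}, which expresses $Z_{\Gamma,\Lambda'}$ as an intersection of preimages indexed only by neighbours of $[\Lambda']$ inside $\Gamma'$, in order to pin down precisely which components contain $Z_{\Gamma,\Lambda'}$ and which meet it transversally.

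More concretely, I would work étale-locally on $\M{\Gamma'}$ using a semi-stable chart $\rg[x_0,\dots,x_r]/(\prod_{i\leq m}x_i-\unif)$ in which each divisor $V(x_i)$ corresponds to a component $C_{[\widetilde\Lambda_i]}$. After reordering so that the variables $x_0,\dots,x_s$ correspond to those neighbours of $[\Lambda']$ whose components jointly contain $Z_{\Gamma,\Lambda'}$, the description in Lemma \ref{lem_interschanging_lambda} places $Z_{\Gamma,\Lambda'}$ inside the stratum $V(x_0,\dots,x_s)$ (these $V(x_i)$ are precisely the divisors whose preimage under the appropriate $\musprol{\widetilde\Lambda_i}$ contains $Z_{\Gamma,\Lambda'}$). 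Smoothness of $Z_{\Gamma,\Lambda'}$ together with the fact that the remaining $V(x_j)$ intersect the stratum $V(x_0,\dots,x_s)$ transversally in an SNC configuration then implies that $Z_{\Gamma,\Lambda'}\cap\sing{\M{\Gamma'}_{\res}}$ is cut out on $Z_{\Gamma,\Lambda'}$ by the transverse divisors $Z_{\Gamma,\Lambda'}\cap V(x_j)$ for $j>s$.

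The main obstacle I anticipate is establishing this local transversality cleanly — one has to show that $Z_{\Gamma,\Lambda'}$ is not accidentally tangent to some extra component $C_{[\widetilde\Lambda]}$ that it meets without containing it. I would handle this by exploiting the inductive description of $Z_{\Gamma,\Lambda'}$ from Lemma \ref{lem_main} itself: at each prior blow-up step $Z$ was either a total transform, hence a Cartier divisor, or a strict transform of a center already in SNC position with respect to the exceptional divisors, so the regular-sequence property of the generating coordinates of $Z_{\Gamma,\Lambda'}$ is preserved. Once this transversality is in place, Lemma \ref{lem_gen_semi-stable} applies and produces the semi-stability of $\M{\Gamma}$, completing the induction.
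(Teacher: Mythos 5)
Your proposal follows the paper's strategy in its essentials: induct on $|\Gamma|$, realise $\M{\Gamma}\to\M{\Gamma'}$ as a blow-up in the smooth centre $Z_{\Gamma,\Lambda'}$ via Lemma~\ref{lem_main}, and close the induction with Lemma~\ref{lem_gen_semi-stable}. You also correctly identify the crux (the SNC condition on the centre) and the relevant structural facts (Remark~\ref{rem_intersection_neigh}, Lemma~\ref{lem_interschanging_lambda}). But the way you propose to verify the SNC condition leaves a gap that the paper handles explicitly.

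Your stated induction hypothesis is only that $\M{\Gamma'}$ is semi-stable, and that alone does not let you control how $Z_{\Gamma,\Lambda'}$ sits inside the coordinate divisors of a semi-stable chart on $\M{\Gamma'}$. Your final paragraph gestures at the right idea (``a strict transform of a centre already in SNC position''), but to make this precise you need to \emph{strengthen the induction hypothesis} to include the SNC statement for the centres at all previous stages: for every convex $\Gamma$ of the given cardinality and every $[\Lambda]$ with $\Gamma\setminus\{[\Lambda]\}$ convex, the intersection $Z_{\Gamma,\Lambda}\cap\sing{\M{\Gamma\setminus\{[\Lambda]\}}_{\res}}$ is SNC on $Z_{\Gamma,\Lambda}$. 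Without this you cannot quote the needed transversality; it is exactly what you are trying to prove.

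With the strengthened hypothesis the paper first reduces, via Lemma~\ref{lem_center_are_contained_or_disjoint}, to the case where every admissible $[\Lambda']$ (those with $\Gamma\setminus\{[\Lambda']\}$ convex) is a neighbour of $[\Lambda]$: if some such $[\Lambda']$ is not a neighbour, then $Z_{\Gamma\setminus\{[\Lambda]\},\Lambda'}$ and $Z_{\Gamma\setminus\{[\Lambda']\},\Lambda}$ are disjoint, so the blow-up $\M{\Gamma\setminus\{[\Lambda]\}}\to\M{\Gamma\setminus\{[\Lambda],[\Lambda']\}}$ is an isomorphism near $Z_{\Gamma,\Lambda}$ and the inductive SNC statement transfers directly. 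In the remaining (neighbour) case, $Z_{\Gamma,\Lambda}$ is a strict or total transform of $Z_{\Gamma\setminus\{[\Lambda']\},\Lambda}$. Here one must use the strengthened hypothesis for \emph{both} lower-level centres simultaneously: following the argument in the proof of Lemma~\ref{lem_gen_semi-stable} in \cite{GEN00}, one picks a semi-stable chart in which both $Z_{\Gamma\setminus\{[\Lambda']\},\Lambda}$ and $Z_{\Gamma\setminus\{[\Lambda]\},\Lambda'}$ are nested coordinate linear subspaces, and then computes the strict transform and its intersection with the singular locus explicitly in blow-up coordinates, exhibiting a regular sequence. Your plan to ``work étale-locally with a semi-stable chart'' is sound, but the chart must normalise both centres at once, and justifying that is precisely where the strengthened induction and the neighbour reduction are indispensable.
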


\begin{proof}
	Fix a class $[\Lambda]$ in $\Gamma$. We will prove by induction on the number of elements of $\Gamma$ that the center $Z_{\Gamma,\Lambda}$ in $\M{\Gamma\setminus\{[\Lambda]\}}$ satisfies the conditions of Lemma \ref{lem_gen_semi-stable} and hence $\M{\Gamma}$ is semi-stable. Since we already know that $Z_{\Gamma,\Lambda}$ is smooth, we are left to show that\linebreak $Z_{\Gamma,\Lambda}\cap \sing{\M{\Gamma\setminus\{[\Lambda]\}}_{\res}}$ is a simple normal crossing divisor on $Z_{\Gamma,\Lambda}$.\\
	 Assume that we can find a class $[\Lambda']$ different from $[\Lambda]$ such that $\Gamma\setminus\{[\Lambda']\}$ is still convex and $[\Lambda']$ is not a neighbour of $[\Lambda]$. Then $Z_{\Gamma\setminus\{[\Lambda]\},\Lambda'}$ and $Z_{\Gamma\setminus\{[\Lambda']\},\Lambda}$ are disjoint and hence the blow-up $\M{\Gamma\setminus\{[\Lambda]\}}\rightarrow \M{\Gamma\setminus\{[\Lambda],[\Lambda']\}}$ restricts to an isomorphism in a neighbourhood of $Z_{\Gamma, \Lambda}\cong Z_{\Gamma\setminus\{[\Lambda']\},\Lambda}$ and the statement follows directly from the induction hypothesis.\\
	Now take any $[\Lambda']$ different from $[\Lambda]$ such that $\Gamma\setminus\{[\Lambda']\}$ is still convex. Using the previous step we can assume that $[\Lambda]$ and $[\Lambda']$ are neighbours. Then we know by Lemma \ref{lem_main} that one of $Z_{\Gamma\setminus\{[\Lambda']\} , \Lambda}$ and $Z_{\Gamma\setminus\{[\Lambda]\} , \Lambda'}$ contains the other. Let us assume that we are in the situation $Z_{\Gamma\setminus\{[\Lambda]\} , \Lambda'}\subseteq Z_{\Gamma\setminus\{[\Lambda']\} , \Lambda}$. The other case is proven analogously. In particular by Case \ref{lem_main_ls_in_l} of Lemma \ref{lem_main} this implies that $Z_{\Gamma , \Lambda}$ is the strict transform of $Z_{\Gamma\setminus\{[\Lambda']\} , \Lambda}$.
	By induction hypothesis we know that we can choose an \'{e}tale local presentation for $\M{\Gamma\setminus\{[\Lambda]\}}$ of the form 
	\begin{align*}
		\rg[x_0,\dots,x_m]/ \left(\prod\limits_{i\leq r}x_i -\unif\right) .
	\end{align*}   
	Again by induction hypothesis we know that both $Z_{\Gamma\setminus \{[\Lambda']\} , \Lambda}\cap  \sing{\M{\Gamma\setminus\{[\Lambda],[\Lambda'] \}}_{\res}}$ and $Z_{\Gamma\setminus \{[\Lambda]\} , \Lambda'}\cap  \sing{\M{\Gamma\setminus\{[\Lambda],[\Lambda'] \}}_{\res}}$ are simple normal crossing divisors on $Z_{\Gamma\setminus \{[\Lambda']\},\Lambda}$ and $Z_{\Gamma\setminus \{[\Lambda]\},\Lambda'}$ respectively. In particular as shown in \cite[proof of Lemma 3.2.1]{GEN00} we choose an \'{e}tale local presentation such that $Z_{\Gamma\setminus \{[\Lambda]'\},\Lambda}$ is of the form $V(x_r,\dots ,x_{m_1})$ and\linebreak $Z_{\Gamma\setminus \{[\Lambda]\},\Lambda'}= V(x_r,\dots ,x_{m_2})$ for some $m_1\leq m_2\leq m$. Now the strict transform $Z_{\Gamma,\Lambda}$ is cut out by the equations $V(X_r,\dots ,X_{m_1})$ in the $\Gm$-quotient of 
	\begin{align*}
		V(X_r,\dots ,X_{m_2})^c\subseteq \spec{\rg[\lambda, x_1,\dots,x_{r-1}, X_r,\dots X_{m_2},x_{m_2+1},\dots,x_m]/\left(\left(\prod\limits_{i\leq r-1} x_i\right) X_r\lambda-\unif \right)}
	\end{align*} 
	describing the blow-up.
	The intersection of $Z_{\Gamma,\Lambda}$ with the singular locus is now cut out of $Z_{\Gamma,\Lambda}$ by the product of the regular sequence $  x_1,\dots,x_{r-1}, \lambda $. This clearly defines a simple normal crossing divisor before taking the $\Gm$-quotient. But $  x_1,\dots,x_{r-1}, \lambda $ also define regular sequences in the charts of the $\Gm$-quotient and hence $Z_{\Gamma,\Lambda}\cap \sing{\M{\Gamma\setminus\{[\Lambda]\}}_{\res}}$ is a simple normal crossing divisor on $Z_{\Gamma,\Lambda}$.\\ 
	In summary $Z_{\Gamma,\Lambda}$ again satisfies the condition of Lemma \ref{lem_gen_semi-stable}.
\end{proof}

To end this section let us prove a useful lemma on the behaviour of irreducible components under the blow-ups of Lemma \ref{lem_main}.

\begin{lemma}
\label{lem_inverse_of_center}
	Fix a convex set of lattice classes $\Gamma$ and a lattice $\Lambda$ in $\Gamma$ such that $\Gamma\setminus \{[\Lambda]\}$ is again convex. For the projection $\pr\colon \M{\Gamma}\rightarrow \M{\Gamma\setminus \{[\Lambda]\}}$ the inverse image $\pr^{-1}(\pr(C))$ of the image of an irreducible component $C$ of $\M{\Gamma}_{\res}$ is a union of irreducible components.
\end{lemma}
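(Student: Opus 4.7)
The plan is a case analysis on $C$ using the blow-up description of $\pr$ from Lemma~\ref{lem_main}: that lemma realises $\pr$ as the blow-up of $\M{\Gamma\setminus\{[\Lambda]\}}$ in the smooth irreducible centre $Z_{\Gamma,\Lambda}$, whose exceptional divisor is precisely the irreducible component $C_\Lambda$ of $\M{\Gamma}_\res$ attached to $[\Lambda]$ via Lemma~\ref{lem_convex_irr_com_correspond_to_lattices}. For every other $[\Lambda'']\in\Gamma\setminus\{[\Lambda]\}$, Lemma~\ref{lem_irr_comp_birational_map} identifies the corresponding component $C_{\Lambda''}$ of $\M{\Gamma}_\res$ with the strict transform of the corresponding component $C'_{\Lambda''}$ of $\M{\Gamma\setminus\{[\Lambda]\}}_\res$, and $\pr(C_{\Lambda''})=C'_{\Lambda''}$. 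In the case $C=C_\Lambda$ the conclusion is immediate, since $\pr^{-1}(\pr(C_\Lambda))=\pr^{-1}(Z_{\Gamma,\Lambda})=C_\Lambda$ set-theoretically.

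Assume from now on that $C=C_{\Lambda''}$ for some $[\Lambda'']\neq[\Lambda]$. The standard formula for the preimage of a closed subscheme under a blow-up gives the set-theoretic equality
\[ \pr^{-1}(C'_{\Lambda''}) \;=\; C_{\Lambda''}\;\cup\;\pr^{-1}\bigl(Z_{\Gamma,\Lambda}\cap C'_{\Lambda''}\bigr), \]
with the second summand contained in $C_\Lambda$. If $Z_{\Gamma,\Lambda}$ is disjoint from $C'_{\Lambda''}$ the preimage equals $C_{\Lambda''}$, and if $Z_{\Gamma,\Lambda}\subseteq C'_{\Lambda''}$ then $\pr^{-1}(Z_{\Gamma,\Lambda})=C_\Lambda$ so the preimage equals $C_{\Lambda''}\cup C_\Lambda$; both are unions of irreducible components. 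The remaining case is that $W:=Z_{\Gamma,\Lambda}\cap C'_{\Lambda''}$ is a proper nonempty closed subscheme of $Z_{\Gamma,\Lambda}$; I would argue that in this situation $C'_{\Lambda''}$ meets $Z_{\Gamma,\Lambda}$ transversally along $W$. Granting transversality, the usual local description of a blow-up at a smooth centre gives $\pr^{-1}(w)\subseteq C_{\Lambda''}$ for every $w\in W$, so the second summand is absorbed into the strict transform and $\pr^{-1}(C'_{\Lambda''})=C_{\Lambda''}$ remains a single irreducible component.

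The main obstacle is verifying the transversality claim in this last subcase. My plan for it is an induction on $|\Gamma|$. Lemma~\ref{lem_convexreduction} provides a second removable class $[\Lambda']\in\Gamma\setminus\{[\Lambda]\}$ with $\Gamma\setminus\{[\Lambda],[\Lambda']\}$ convex, and the blow-up diagram from Lemma~\ref{lem_main} relates $Z_{\Gamma,\Lambda}$ to $Z_{\Gamma\setminus\{[\Lambda']\},\Lambda}$ and the components $C'_{\Lambda''}$ to their analogues over $\M{\Gamma\setminus\{[\Lambda],[\Lambda']\}}$. Combined with the explicit description of $Z_{\Gamma,\Lambda}$ as $\bigcap_{[\widetilde\Lambda]\sim[\Lambda]}\musprol{\widetilde\Lambda}^{-1}(Z_{\widetilde\Lambda\subseteq\Lambda})$ from Lemma~\ref{lem_interschanging_lambda} and the iterated blow-up description of $\M{\Gamma\setminus\{[\Lambda]\}}$ over $\Projsp(\Lambda'')$ from Proposition~\ref{prop_mustafin_as_blow-ups}, and the smoothness of all centres along the chain, the induction hypothesis should yield the codimension equality $\mathrm{codim}_{C'_{\Lambda''}}W=\mathrm{codim}_{\M{\Gamma\setminus\{[\Lambda]\}}}Z_{\Gamma,\Lambda}$, which is equivalent to the required transversality. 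The base case $|\Gamma|=2$ is handled directly using Proposition~\ref{prop_inductionstart}, as then there is at most one component $C'_{\Lambda''}$ to worry about and it equals the entire special fibre of $\M{\Gamma\setminus\{[\Lambda]\}}$.
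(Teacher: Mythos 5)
Your overall architecture is attractive and genuinely different from the paper's: you organise the argument around a local transversality statement for the blow-up centre $Z_{\Gamma,\Lambda}$ and the images $C'_{\Lambda''}$, whereas the paper never isolates such a statement --- it instead runs a long case analysis built on Remark~\ref{rem_intersection_neigh} and a chain of auxiliary removable classes, ultimately reducing the hardest subcase to the simplex computation of \cite{Mu78}. Your trichotomy (disjoint / contained / properly intersecting) is complete and the first two branches are handled correctly; in particular you are right that if $W=Z_{\Gamma,\Lambda}\cap C'_{\Lambda''}$ is a proper nonempty subscheme of $Z_{\Gamma,\Lambda}$, then $C_\Lambda\not\subseteq\pr^{-1}(C'_{\Lambda''})$, so the only way the lemma can hold is $\pr^{-1}(C'_{\Lambda''})=C_{\Lambda''}$, which forces the full exceptional fibre over every $w\in W$ to lie in the strict transform. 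That observation is the geometric heart of the lemma.

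The gap is that this last branch, which carries essentially all the content, is not proved. You write ``I would argue that $C'_{\Lambda''}$ meets $Z_{\Gamma,\Lambda}$ transversally'' and then sketch an induction in which ``the induction hypothesis should yield the codimension equality.'' But the statement of Lemma~\ref{lem_inverse_of_center} is about preimages being unions of components, not about normal bundles or codimensions, so the naked induction hypothesis gives you no direct information about $N_{W/C'_{\Lambda''}}$ versus $N_{Z_{\Gamma,\Lambda}/\M{\Gamma\setminus\{[\Lambda]\}}}|_W$; you would have to build and propagate a strengthened hypothesis through the blow-up chain of Lemma~\ref{lem_main}, and you have not specified what that hypothesis is or why it survives the passage from $\M{\Gamma\setminus\{[\Lambda],[\Lambda']\}}$ to $\M{\Gamma\setminus\{[\Lambda]\}}$. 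There is also a smaller technical point left unaddressed: equality of codimensions (clean intersection) is not literally the same as the normal-bundle identification $N_{W/C'_{\Lambda''}}\cong N_{Z_{\Gamma,\Lambda}/\M{\Gamma\setminus\{[\Lambda]\}}}|_W$ that guarantees $\pr^{-1}(W)\subseteq C_{\Lambda''}$, so ``which is equivalent to the required transversality'' needs an argument. Finally, your stated base case $|\Gamma|=2$ only ever lands in the containment subcase ($Z_{\Gamma,\Lambda}$ is a linear subspace of the single component $\Projsp(\Lambda'')_{\res}$), so it does not exercise the transversality mechanism at all and cannot serve as a base for the induction you need. Until the transversality step is actually established, the proposal is a plausible plan but not a proof.
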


\begin{proof}
	From Lemma \ref{lem_convex_irr_com_correspond_to_lattices} we get for every class $[\Lambda']$ in $\Gamma$ a unique irreducible component $C_{\Lambda'}$ of $\M{\Gamma}_{\res}$ surjecting to $\Projsp(\Lambda' )_{\res}$ under the natural projection. Note that for an irreducible component $C'$ of $\M{\Gamma\setminus\{[\Lambda]\}}_{\res}$ the strict transform under the blow-up $\pr\colon \M{\Gamma}\rightarrow \M{\Gamma\setminus \{[\Lambda]\}}$ is again an irreducible component. Hence the irreducible component $C_\Lambda$ of $\M{\Gamma}_\res$ coincides with the exceptional divisor of $\pr$.\\	
	Let us prove the statement by induction on the number of elements in $\Gamma$. Fix $[\Lambda']$ in $\Gamma$ different from $[\Lambda]$. If the classes $[\Lambda]$ and $[\Lambda']$ are not neighbours, then $C_\Lambda$ and $C_{\Lambda'}$ are disjoint by Remark \ref{rem_intersection_neigh} and hence $\pr(C_{\Lambda'} )$ is disjoint to the center of the blow-up $\pr$. In particular $\pr^{-1}(\pr(C_{\Lambda'} ))$ is $C_{\Lambda'}$.\\
	Let us assume from now on that $[\Lambda]$ and $[\Lambda']$ are neighbours. Assume there is a class $[\Lambda'']$ in $\Gamma$ not neighbouring $[\Lambda']$ and such that $\Gamma\setminus\{[\Lambda'' ]\}$ is still convex. We fix notation as in the following diagram
	\begin{align*}
	\xymatrix{
		&\M{\Gamma}\ar[dr]^{\pr_{\Lambda''}}\ar[dl]_{\pr_{\Lambda}} \\
		\M{\Gamma\setminus\{[\Lambda]\}}\ar[dr]_{\pr_{\Lambda,\Lambda''}}&&\M{\Gamma\setminus\{[\Lambda'']\}}\ar[dl]^{\pr_{\Lambda'',\Lambda}}\\
		&\M{\Gamma\setminus\{[\Lambda],[\Lambda''] \}}
	}
	\end{align*}
	The blow-up $\pr_{\Lambda,\Lambda''}$ restricts to an isomorphism on $\pr_{\Lambda}(C_{\Lambda'} )$ and similarly $\pr_{\Lambda''}$ restricts to an isomorphism on $C_{\Lambda'}$. Now the image $\pr_{\Lambda''}(C_{\Lambda'})$ is again an irreducible component and by induction hypothesis we know that $\pr_{\Lambda'',\Lambda}^{-1}(\pr_{\Lambda'',\Lambda}\circ \pr_{\Lambda''}(C_{\Lambda'}))$ is either $\pr_{\Lambda''}(C_{\Lambda'})\cup\pr_{\Lambda''}(C_\Lambda)$ or $\pr_{\Lambda''}(C_{\Lambda'})$. In particular  $\pr_{\Lambda}^{-1}(\pr_{\Lambda}(C_{\Lambda'}))$ is either $C_{\Lambda'}$ or $C_{\Lambda'}\cup C_{\Lambda}$.\\	
	Assume there is a class $[\Lambda'']$ in $\Gamma$ not neighbouring $[\Lambda]$ and such that $\Gamma\setminus\{[\Lambda'' ]\}$ is still convex. Using the previous step we can assume that $[\Lambda'' ]$ is a neighbour of $[\Lambda' ]$. Now $\pr_{\Lambda''}$ restricts to an isomorphism on $C_{\Lambda} $ and similarly $\pr_{\Lambda,\Lambda''}$ restricts to an isomorphism on $\pr_{\Lambda}(C_\Lambda)$.
	We now compute 
	\begin{align*}
		\pr_{\Lambda}^{-1}\left(\pr_{\Lambda}\left(C_{\Lambda'}\right)\right)\cap C_{\Lambda}& 
		= \pr_{\Lambda}^{-1}\left(\pr_{\Lambda}\left(C_{\Lambda'}\cap C_{\Lambda} \right)\right)
		=  (\pr_{\Lambda,\Lambda''}\circ \pr_{\Lambda})^{-1}\left((\pr_{\Lambda,\Lambda''}\circ \pr_{\Lambda})\left(C_{\Lambda'}\cap C_{\Lambda}\right)\right)\\
		&= (\pr_{\Lambda'',\Lambda}\circ \pr_{\Lambda''})^{-1}\left((\pr_{\Lambda'',\Lambda}\circ \pr_{\Lambda''})\left(C_{\Lambda'}\cap C_{\Lambda}\right)\right)\\
		&=\pr_{\Lambda''}^{-1}\left(\pr_{\Lambda'',\Lambda}^{-1}(\pr_{\Lambda'',\Lambda}\circ \pr_{\Lambda''}(C_{\Lambda'}))\cap \pr_{\Lambda''}(C_{\Lambda}) \right)
	\end{align*}
	and again by induction hypothesis we know that $\pr_{\Lambda'',\Lambda}^{-1}(\pr_{\Lambda'',\Lambda}\circ \pr_{\Lambda''}(C_{\Lambda'}))$ is either $\pr_{\Lambda''}(C_{\Lambda'})$ or $\pr_{\Lambda''}(C_{\Lambda'})\cup\pr_{\Lambda''}(C_\Lambda)$. We conclude that for $\pr_{\Lambda}^{-1}\left(\pr_{\Lambda}\left(C_{\Lambda'}\right)\right)= C_{\Lambda'}\cup \left( \pr_{\Lambda}^{-1}\left(\pr_{\Lambda}\left(C_{\Lambda'}\right)\right)\cap C_{\Lambda} \right)$ we get 
	 \begin{align*}
	 \pr_{\Lambda}^{-1}\left(\pr_{\Lambda}\left(C_{\Lambda'}\right)\right)=
	  \begin{cases}
	 	C_{\Lambda'}& \text{ if }\pr_{\Lambda'',\Lambda}^{-1}(\pr_{\Lambda'',\Lambda}\circ \pr_{\Lambda''}(C_{\Lambda'}))= \pr_{\Lambda''}(C_{\Lambda'})\\
	 	C_{\Lambda'}\cup C_{\Lambda} & \text{ if } \pr_{\Lambda'',\Lambda}^{-1}(\pr_{\Lambda'',\Lambda}\circ \pr_{\Lambda''}(C_{\Lambda'}))= \pr_{\Lambda''}(C_{\Lambda'})\cup\pr_{\Lambda''}(C_\Lambda).
	 \end{cases}
	 \end{align*}	   
	Now we assume that there are two distinct classes $[\Lambda'' ]$ and $[\Lambda''']$ such that $\Gamma\setminus\{[\Lambda'' ]\}$ and $\Gamma\setminus\{[\Lambda''']\}$ are convex and $[\Lambda'' ]$ and $[\Lambda''']$ are not neighbours. By the reductions above we can further assume that $[\Lambda'' ]$ and $[\Lambda''']$ are are different from $[\Lambda]$ and $[\Lambda']$. Let us again fix some notation and extend the diagram above
	\begin{align*}
	\xymatrix{
		&\M{\Gamma}\ar[d]_{\pr_{\Lambda}}\ar[dl]_{\pr_{\Lambda''}}\ar[dr]^{\pr_{ \Lambda'''}}\\
		\M{\Gamma\setminus\{[\Lambda'']\}}\ar[d]_{\pr_{\Lambda'', \Lambda}}] &\M{\Gamma\setminus\{[\Lambda]\}}\ar[dr]^{\pr_{\Lambda, \Lambda'''}}\ar[dl]_{\pr_{\Lambda, \Lambda''}} & \M{\Gamma\setminus\{[\Lambda''']\}}\ar[d]^{\pr_{\Lambda''', \Lambda}} \\
		\M{\Gamma\setminus\{[\Lambda],[\Lambda'' ]\}}&&\M{\Gamma\setminus\{[\Lambda], [\Lambda''']\}}
		}
	\end{align*}
	Then $C_{\Lambda''}$ and $C_{\Lambda'''}$ are disjoint. In particular for every irreducible component $C$ of $\M{\Gamma}_{\res}$ we get 
	\begin{align*}
		C\subseteq \pr_{\Lambda''}^{-1}(\pr_{\Lambda''}(C))\cap \pr_{\Lambda'''}^{-1}(\pr_{\Lambda'''}(C))\subseteq (C\cup C_{\Lambda''})\cap (C\cup C_{\Lambda'''})= C
	\end{align*} 
	and similarly for an irreducible component of $\M{\Gamma\setminus\{[\Lambda]\}}_\res$ and the projections $\pr_{\Lambda,\Lambda''}$ and $\pr_{\Lambda,\Lambda'''}$. Hence $\pr_\Lambda^{-1}(\pr_\Lambda(C_{\Lambda'}))$ is the intersection of the inverse images of the images of $C_{\Lambda'}$ under the projections $\pr_{\Lambda,\Lambda''}\circ \pr_{\Lambda}$ and $\pr_{\Lambda,\Lambda'''}\circ \pr_{\Lambda}$. Now using the induction hypothesis we know that the inverse image of the image of $\pr_{\Lambda''}(C_{\Lambda'})$ under $\pr_{\Lambda'',\Lambda}$ is either $\pr_{\Lambda''}(C_{\Lambda'})$ or $\pr_{\Lambda''}(C_{\Lambda'})\cup \pr_{\Lambda''}(C_{\Lambda})$ and similarly for $\Lambda'''$. Together we get the inverse image of the image of $C_{\Lambda'}$ under $\pr_\Lambda$ is either $C_{\Lambda'}$ or $C_{\Lambda'}\cup C_\Lambda$.\\
	For the remaining case we recall from Remark \ref{rem_convex_unique_generatingset} that $\Gamma$ is the convex closure of the set of $[\Lambda'' ]\in\Gamma$ such that $\Gamma\setminus\{[\Lambda'']\}$ is convex. By the previous steps we can assume that the set $\left\{[\Lambda '']\in\Gamma\middle\vert \Gamma\setminus\{[\Lambda'']\} \text{ is convex} \right\}$ is contained in a simplex. Hence $\Gamma$ is contained in a simplex and we refer to the explicit calculations of the blow-up in \cite[proof of Proposition 2.1]{Mu78} .
\end{proof}

\section{The Pl\"ucker embedding for $\Mgr{k}{\Gamma^{\mathrm{st}}}$}
\label{sec_pl_emb_for_mus}

In this chapter we fix two integers $n\in \N$ and $0\neq k\in [n]$. As in the introduction we denote by $\Gamma^{\mathrm{st}}$ the standard lattice chain in $\quot^n$ and try to get a relation between the irreducible components of the special fibres of the two Mustafin varieties $\Mgr{k}{\Gamma^{\mathrm{st}}}$ and $\M{\bigwedge^k\Gamma^{\mathrm{st}}}$.
In general for any finite set of lattice classes $\Gamma$ the image in $\M{\bigwedge^k\Gamma}_{\res}$ of an irreducible component $C^{\mathrm{gr}}$ of the special fiber of the Mustafin variety $\Mgr{k}{\Gamma}_{\res}$ is again irreducible and hence lies in some irreducible component $C^{\mathrm{pr}}$ of $\M{\bigwedge^k\Gamma}_{\res}$.
For the standard lattice chain $\Gamma^{\mathrm{st}}$ with $k=2$ or $n\leq 5$ and conjecturally for all $n$ this component $C^{\mathrm{pr}}$ is unique. On the other hand we show that in these cases every irreducible component of $\M{\bigwedge^k\Gamma^{\mathrm{st}}}_{\res}$ arises in this way and hence we get a bijective correspondence of irreducible components of the two Mustafin varieties.

\subsection{Irreducible components and linear subspaces}
It is well known that the irreducible components of $\Mgr{k}{\Gamma^{\mathrm{st}}}_\res$ can be indexed by the subsets $I\subseteq [n]$ with $k$ elements. We denote the set of those subsets in the following with $\sset$.
As a first step we will define certain linear subspaces $\Projsp(V_I)$ indexed by $I\in\binom{[n]}{k}$. For these subspaces, we will prove that for every $I$ there is a unique irreducible component of $\M{\overline{\bigwedge^k\Gamma^{\mathrm{st}}}}_\res$ surjecting to $\Projsp(V_I)$ under the projection  $\projectbar{0}\colon \M{\overline{\bigwedge^k\Gamma^{\mathrm{st}}}}\rightarrow \Projsp(\bigwedge^k\Lambda_0 )$ and every irreducible component is obtained in that way. In particular $\Mgr{k}{\Gamma^{\mathrm{st}}}_{\res}$ and $\M{\overline{\bigwedge^k\Gamma^{\mathrm{st}}}}_{\res}$ have the same number of irreducible components.\\
	For the rest of this chapter we fix the basis $ \{e_I \}_{I\in \binom{[n]}{k}}$ of $\bigwedge^k \quot^n$ where $e_I= e_{i_0}\wedge\dots\wedge e_{i_{k-1}}$ for every $I=\{i_0,\dots,i_{k-1}\}\in \binom{[n]}{k}$ and $\{e_i\}$ is the standard basis $\quot^n$.  We define a partial order on this basis by setting $\left\{i_0<\dots<i_{k-1}\right\}\leq \left\{j_0<\dots<j_{k-1}\right\}$ if $i_t\leq j_t$ for all $t\in [k]$.
 \begin{definition}\cite[Chapter XIV 3]{HP94}
 	For $I\in \binom{[n]}{k}$ we write $V_I$ for the subspace of $\bigwedge^k \Lambda_{0,\res}$ generated by $\left\{ e_J\vert J\leq I\right\}$.
 \end{definition}

These subspaces are related to the classical theory of Schubert varieties and we recall their definition.

\begin{definition}
	For $I=\{i_0<\dots <i_{k-1}\}\in \binom{[n]}{k}$ the \emph{Schubert variety} $X_I$ of $\gr{\Lambda_0}_\res$ is the reduced subvariety on the set of subspace $W\in \gr{\Lambda_0}$ such that for all $t\in [k]$ we have $\dim{W\cap \langle e_0,\dots, e_{i_t} \rangle}=t$ and $\dim{W\cap \langle e_0,\dots, e_{i_l}}<t$ for all $l< i_t$.
\end{definition}

The vector spaces defined above were constructed to describe the Schubert varieties of $\gr{\Lambda_0}_\res$ under the Pl\"ucker embedding. 

\begin{lemma}\cite[Chapter XIV 3]{HP94} 
	\label{lem_sub_var_under_pl}
	Using the Pl\"ucker embedding $\gr{\Lambda_0}_{\res}\rightarrow \Projsp(\bigwedge^k \Lambda_0 )_{\res}$ we identify the Schubert variety $X_I$ in $\gr{\Lambda_0}_{\res}$  for $I\in \binom{[n]}{k}$ with the intersection of $\Projsp(V_I)$ with $\gr{\Lambda_0}_{\res}$.
\end{lemma}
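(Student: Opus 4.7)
The plan is to prove the two inclusions $X_I \subseteq \Projsp(V_I) \cap \gr{\Lambda_0}_\res$ and $\Projsp(V_I) \cap \gr{\Lambda_0}_\res \subseteq X_I$ separately. The forward inclusion is a direct Pl\"ucker expansion, while the reverse inclusion will be extracted from the Schubert cell decomposition of $\gr{\Lambda_0}_\res$ together with the Bruhat-order characterisation of Schubert-variety containment. Since both sides are reduced and closed subschemes of $\Projsp(\bigwedge^k\Lambda_0)_\res$, it is enough to check set-theoretic equality on closed points.

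For the forward inclusion I would take $W \in X_I$ and exploit the flag conditions to choose a basis $w_0,\dots,w_{k-1}$ of $W$ with $w_t \in \langle e_0,\dots,e_{i_t}\rangle$ for each $t$. The Pl\"ucker image of $W$ is then represented by the element $w_0 \wedge \dots \wedge w_{k-1}$. Expanding this wedge by multilinearity produces a sum of terms $\lambda\, e_{j_0}\wedge\dots\wedge e_{j_{k-1}}$ with $j_t \leq i_t$ for all $t$; sorting indices and collecting signs shows that each nonzero contribution lands on some basis vector $e_J$ with $J \leq I$, so the image lies in $\Projsp(V_I)$.

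For the reverse inclusion I would use the cell decomposition $\gr{\Lambda_0}_\res = \bigsqcup_{J \in \binom{[n]}{k}} X_J^\circ$. Given $W \in \Projsp(V_I) \cap \gr{\Lambda_0}_\res$, there is a unique $J^\star$ with $W \in X_{J^\star}^\circ$. Writing $W$ in reduced row echelon form with pivots at the positions indexed by $J^\star$ shows that $p_{J^\star}(W) \neq 0$, while the hypothesis $W \in \Projsp(V_I)$ forces $p_J(W) = 0$ for all $J \not\leq I$. Combining these, we deduce $J^\star \leq I$. The standard Bruhat-order description of Schubert-variety containment then yields $X_{J^\star} \subseteq X_I$, and hence $W \in X_I$.

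The main obstacle is the combinatorial link between Pl\"ucker coordinates and the Schubert cell structure, in particular the observation that replacing any $i_t$ by some $j > i_t$ in $I$ produces (after sorting) a $k$-subset not $\leq I$. This underlies both the forward multilinear expansion and the reverse-direction check that the only Schubert cells $X_{J^\star}^\circ$ meeting $\Projsp(V_I)$ are those with $J^\star \leq I$. All of the required ingredients are classical, going back to Hodge--Pedoe, so the argument amounts to careful bookkeeping of indices and signs.
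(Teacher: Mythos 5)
The paper does not actually prove this lemma; it cites it directly from Hodge and Pedoe, Chapter XIV, Section 3. Your two-inclusion argument is the classical one and is correct. Both directions ultimately hinge on the key combinatorial fact that you flag: if $W$ has a basis $w_0,\dots,w_{k-1}$ with $w_t \in \langle e_0,\dots,e_{j_t}\rangle$ for some (not necessarily sorted) $j_0,\dots,j_{k-1}$ with $j_t \leq i_t$, then expanding $w_0\wedge\dots\wedge w_{k-1}$ gives nonzero coefficients only on $e_J$ with $J \leq I$. To see this quickly: if the $t$-th entry of the sorted set $J$ were $> i_t$, then at least $k-t$ of the $j_s$ would exceed $i_t$; but $j_s > i_t$ together with $j_s \leq i_s$ forces $i_s > i_t$, hence $s > t$, and there are only $k-1-t$ such indices. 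This applies directly in the forward direction and also yields, in the reverse direction, that $p_J(W)\neq 0$ implies $J \leq J^\star$ for $W$ in the cell $X_{J^\star}^\circ$, which combined with $p_{J^\star}(W)\neq 0$ and $W\in\Projsp(V_I)$ gives $J^\star\leq I$.

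One point to be explicit about: you reduce to set-theoretic equality by asserting that both sides are reduced. For $X_I$ this is the paper's definition, but the \emph{scheme-theoretic} intersection $\Projsp(V_I)\cap\gr{\Lambda_0}_\res$ being reduced is itself a nontrivial fact (part of what Hodge--Pedoe and standard monomial theory establish, namely that the Pl\"ucker relations together with the linear forms $p_J$, $J\not\leq I$, generate a prime ideal). In the context of the paper the intersection is taken with its reduced structure, so your set-theoretic verification suffices, but if you intend the scheme-theoretic claim you should cite the reducedness of the intersection rather than assert it.
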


\begin{rem}
Let us recall from Lemma \ref{lem_convex_irr_com_correspond_to_lattices} that for an irreducible component $C$ in $\M{\overline{\bigwedge^k \Gamma^{\mathrm{st}}}}_{\res}$ there is a unique lattice $\Lambda_C$ in $\overline{\bigwedge^k\Gamma^{\mathrm{st}}}$ such that under the projection
	\begin{align*}
		\M{\overline{\bigwedge^k\Gamma^{\mathrm{st}}}}\subseteq\prod\limits_{\Lambda\in \bigwedge^k\Gamma^{\mathrm{st}}} \Projsp(\Lambda)\longrightarrow\Projsp(\Lambda_C)
	\end{align*}
	$C$ surjects onto $\Projsp(\Lambda_C)_{\res}$. And conversely for every lattice $\Lambda$ in $\overline{\Gamma^{\mathrm{st}}}$ there is a unique irreducible component $C$ of $\M{\overline{\bigwedge^k\Gamma^{\mathrm{st}}}}_{\res}$ surjecting to $\Projsp(\Lambda )$.\\
	For $[\Lambda]\in \overline{\bigwedge^k\Gamma^{\mathrm{st}}}$ we now choose the unique maximal representative of $[\Lambda]$ contained in $\bigwedge^k \Lambda_0$. The inclusion $\Lambda\subseteq\bigwedge^k\Lambda_0$ gives us a birational map $\Projsp(\Lambda)\dashrightarrow \Projsp(\bigwedge^k\Lambda_0 )$. Note that for another representative $\Lambda'$ of $[\Lambda]$ we can use the identification $\Lambda'=\unif^r\Lambda$ for some $r\in\Z$ to precompose the birational map with the induced isomorphism $\Projsp(\Lambda' )\cong\Projsp(\Lambda )$ and to get a birational map with the same image. We now can identify the special fibers of the images of these birational maps with the images of irreducible components of $\M{\overline{\bigwedge^k \Gamma^{\mathrm{st}}}}_{\res}$.
\end{rem}	
\begin{lemma}
\label{lem_irr_comp_and_im_of_lat}
	Using the notation above we have an equality of sets
	\begin{align*}
		\left\{\projectbar{0}(C)\middle\vert C \text{ irr. component in } \M{\overline{\bigwedge^k\Gamma^{\mathrm{st}}}}_{\res} \right\}=& \left\{\im{\Projsp (\Lambda)_{\res}\dashrightarrow \Projsp(\bigwedge^k \Lambda_0)_{\res}}\middle\vert [\Lambda]\in \overline{\bigwedge^k\Gamma^{\mathrm{st}}} \right\}\\
		\projectbar{0}(C)=  &  \im{\Projsp (\Lambda_C)\dashrightarrow \Projsp(\bigwedge^k \Lambda_0)}_{\res} .
	\end{align*}	
\end{lemma}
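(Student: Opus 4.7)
The key input is the bijection from Lemma~\ref{lem_convex_irr_com_correspond_to_lattices}: every irreducible component $C$ of $\M{\overline{\bigwedge^k\Gamma^{\mathrm{st}}}}_{\res}$ is characterised as the unique component mapping birationally onto $\Projsp(\Lambda_C)_{\res}$ via $\musprol{\Lambda_C}$, and every class $[\Lambda_C]\in\overline{\bigwedge^k\Gamma^{\mathrm{st}}}$ is realised this way. The claimed set equality therefore reduces to the pointwise identity $\projectbar{0}(C)=\im{\Projsp(\Lambda_C)_{\res}\dashrightarrow \Projsp(\bigwedge^k\Lambda_0)_{\res}}$, where $\Lambda_C$ is taken to be the maximal representative contained in $\bigwedge^k\Lambda_0$ fixed above, and it is this pointwise statement that I will prove.

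My plan is to factor $\projectbar{0}$ through the auxiliary two-lattice Mustafin variety $\M{\{[\Lambda_C],[\bigwedge^k\Lambda_0]\}}$. By Lemma~\ref{lem_irr_comp_birational_map} the natural projection $\muspro{\overline{\bigwedge^k\Gamma^{\mathrm{st}}}}{\{[\Lambda_C],[\bigwedge^k\Lambda_0]\}}$ sends $C$ onto a unique irreducible component $C'$ of $\M{\{[\Lambda_C],[\bigwedge^k\Lambda_0]\}}_{\res}$, and a second application of that lemma to the further projection onto $\M{\{[\Lambda_C]\}}=\Projsp(\Lambda_C)$ singles out $C'$ as the unique irreducible component of $\M{\{[\Lambda_C],[\bigwedge^k\Lambda_0]\}}_{\res}$ projecting birationally onto $\Projsp(\Lambda_C)_{\res}$. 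Since $\projectbar{0}$ factors through this auxiliary variety, $\projectbar{0}(C)=\projectbar{0}(C')$, and it remains to compute the latter.

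By construction $\M{\{[\Lambda_C],[\bigwedge^k\Lambda_0]\}}$ is the scheme-theoretic closure inside $\Projsp(\Lambda_C)\times\Projsp(\bigwedge^k\Lambda_0)$ of the diagonally embedded generic fibre, where the diagonal uses the isomorphism of generic fibres induced by $\Lambda_C\subseteq\bigwedge^k\Lambda_0$; equivalently, it is the closure of the graph of the rational map $\varphi\colon\Projsp(\Lambda_C)\dashrightarrow\Projsp(\bigwedge^k\Lambda_0)$ attached to this inclusion. Over the locus of definition of $\varphi$ the Mustafin variety already coincides with this graph, and so a dense open subset of $C'$ is identified via the first projection with the graph of the reduced rational map $\varphi_{\res}\colon\Projsp(\Lambda_C)_{\res}\dashrightarrow\Projsp(\bigwedge^k\Lambda_0)_{\res}$. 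Composing with the second projection to $\Projsp(\bigwedge^k\Lambda_0)$ therefore identifies $\projectbar{0}(C')$ with the closure of the image of $\varphi_{\res}$, which is precisely $\im{\Projsp(\Lambda_C)_{\res}\dashrightarrow \Projsp(\bigwedge^k\Lambda_0)_{\res}}$.

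The delicate point is the identification of $C'$ with the graph-closure component. When $[\Lambda_C]$ and $[\bigwedge^k\Lambda_0]$ fail to be neighbours, $\overline{\{[\Lambda_C],[\bigwedge^k\Lambda_0]\}}$ is a non-trivial chain by Remark~\ref{rem_ex_losure_two_lat}, so the special fibre of $\M{\{[\Lambda_C],[\bigwedge^k\Lambda_0]\}}$ may split into several irreducible components; however, the graph closure of $\varphi_{\res}$ is automatically a component projecting birationally onto $\Projsp(\Lambda_C)_{\res}$ via the first projection, and by the uniqueness clause in Lemma~\ref{lem_irr_comp_birational_map} there is only one such component, so it must coincide with $C'$.
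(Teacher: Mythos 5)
Your proof is correct and, at its core, follows the same route as the paper: use Lemma~\ref{lem_convex_irr_com_correspond_to_lattices} to reduce to the pointwise identity, then use the birationality of $\musprol{\Lambda_C}\vert_C\colon C\to\Projsp(\Lambda_C)_\res$ (Lemma~\ref{lem_irr_comp_birational_map}) together with the factorisation $\projectbar{0}=\varphi\circ\musprol{\Lambda_C}$ on a dense open to identify $\projectbar{0}(C)$ with the closure of $\im{\varphi_\res}$. The paper's argument is terser and does not introduce the auxiliary two-lattice Mustafin variety $\M{\{[\Lambda_C],[\bigwedge^k\Lambda_0]\}}$; what you gain by routing through it is that the graph-closure description makes fully explicit where the compatibility $\projectbar{0}=\varphi\circ\musprol{\Lambda_C}$ comes from (the diagonal embedding of the common generic fibre) and why the resulting image is exactly $\im{\varphi_\res}$ rather than something larger or smaller. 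This is a reasonable way to spell out a step that the paper's one-line ``this implies'' leaves to the reader, at the cost of a slightly longer argument.
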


\begin{proof}
	Recall from Lemma \ref{lem_irr_comp_birational_map} that for a class $[\Lambda]$ in $\overline{\Gamma^{\mathrm{st}}}$ the projection $\M{\overline{\Gamma^{\mathrm{st}}}}\rightarrow \Projsp(\Lambda)$ restricts to a birational morphism $C_\Lambda \rightarrow \Projsp(\Lambda)$. This implies that the two images $\projectbar{0}(C)$ and $\im{\Projsp (\Lambda_C)\dashrightarrow \Projsp(\bigwedge^k \Lambda_0)}_{\res}$ coincides. Using the bijection between irreducible components of $\M{\overline{\Gamma^{\mathrm{st}}}}_{\res}$ and lattices in $\overline{\Gamma^{\mathrm{st}}}$ discussed in the remark above, the result follows.
\end{proof}

\begin{rem}
	For two $\rg$-lattices $\Lambda$, $\Lambda'$ in $\bigwedge^k\quot^{n}$ and $\Lambda$ maximal in the class $[\Lambda]$ with $\Lambda\subseteq\Lambda'$\linebreak the induced birational map $\Projsp(\Lambda)_{\res}\dasharrow \Projsp(\Lambda')_{\res}$ is defined away from the linear subspace\linebreak $\Projsp(\ker(\Lambda_{\res}\rightarrow \Lambda'_{\res}))\subseteq \Projsp(\Lambda )_{\res}$. In particular the image $\im{\Projsp(\Lambda)\dasharrow \Projsp(\Lambda)}_{\res}$ can be computed as $\Projsp(\im{\Lambda_{\res} \rightarrow \Lambda'_{\res}})$. Therefore we will focus in the following on understanding the submodules $\im{\Lambda_{\res} \rightarrow \Lambda'_{\res}}$ for a representative $\Lambda$ of the class $[\Lambda]$ maximal with $\Lambda\subseteq\Lambda'$ instead of the subvarieties $\im{\Projsp(\Lambda)\dasharrow \Projsp(\Lambda')}_{\res}$.
\end{rem}

\begin{lemma}
\label{lem_images_are_linear_subsp}
	Fix a representative $\Lambda$ of a class in $\overline{\bigwedge^k\Gamma^{st}}$ maximal in its class with $\Lambda\subseteq \bigwedge^k \Lambda_{0}$. Then the image $\im{\Lambda_{\res}\rightarrow \bigwedge^k \Lambda_{0,\res}}$ is of the form $V_I$ for some $I\in \binom{[n]}{k}$ and every $V_I$ for $I\in \binom{[n]}{k}$ arises in this way.
\end{lemma}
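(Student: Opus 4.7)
The plan is to pass to a convenient diagonal description of lattices in $\overline{\bigwedge^k\Gamma^{\mathrm{st}}}$ and then reduce the statement to a combinatorial assertion about the Gale order on $\binom{[n]}{k}$. Set $f_i(J) := |J \cap \{0,\dots,i-1\}|$ for $J\in\binom{[n]}{k}$ and $i\in\{0,\dots,n\}$. Each $\bigwedge^k\Lambda_i$ has basis $\{\unif^{-f_i(J)}e_J\}_{J\in\binom{[n]}{k}}$, hence is diagonal in the basis $\{e_J\}$ of $\bigwedge^k \quot^n$. Since Definition \ref{def_convex} builds the convex closure by iterated pairwise intersection of representatives and intersections of diagonal lattices remain diagonal, every class in $\overline{\bigwedge^k\Gamma^{\mathrm{st}}}$ admits a representative of the form $\Lambda = \bigcap_{i=0}^{n-1} \unif^{m_i}\bigwedge^k\Lambda_i$ with $m_i\in\Z$; the resulting $\Lambda$ has basis $\{\unif^{w_J}e_J\}_J$ with $w_J = \max_i(m_i - f_i(J))$. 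The hypotheses that $\Lambda\subseteq\bigwedge^k\Lambda_0$ and that $\Lambda$ is maximal in its class translate into $w_J\geq 0$ for all $J$ and $\min_J w_J = 0$. Under reduction mod $\unif$ the element $\unif^{w_J}e_J$ maps to $\overline{e_J}$ if $w_J=0$ and to $0$ otherwise, so the image equals $\langle e_J \mid J\in S\rangle$ with $S := \{J \mid w_J = 0\}$. The lemma thus reduces to showing that $S = \{J\mid J\leq I\}$ for a unique $I\in\binom{[n]}{k}$, and conversely that every $I\in\binom{[n]}{k}$ arises this way.

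For the combinatorial heart I would use the standard reformulation $J\leq I \Leftrightarrow f_i(J)\geq f_i(I)$ for all $i$ of the Gale order, and establish two closure properties of $S$. First, $S$ is downward closed: if $J\in S$ and $J'\leq J$, monotonicity of $f_i$ in the Gale order gives $m_i\leq f_i(J)\leq f_i(J')$, so $w_{J'}\leq 0$; combined with the standing $w_{J'}\geq 0$ this yields $J'\in S$. Second, $S$ is closed under Gale joins: for $J_1=\{i_0<\cdots<i_{k-1}\}$ and $J_2=\{j_0<\cdots<j_{k-1}\}$ put $J_1\vee J_2 := \{\max(i_t,j_t)\mid t\in [k]\}$; the sortedness of $J_1, J_2$ forces the entries of $J_1\vee J_2$ to be strictly increasing so $J_1\vee J_2\in\binom{[n]}{k}$, and a direct count of the indices $t$ with $\max(i_t,j_t)<i$ yields $f_i(J_1\vee J_2) = \min(f_i(J_1),f_i(J_2))$. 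Picking any $i^*$ with $m_{i^*} = f_{i^*}(J_1)$, the inequality $m_{i^*}\leq f_{i^*}(J_2)$ forces $\min(f_{i^*}(J_1),f_{i^*}(J_2))=f_{i^*}(J_1)=m_{i^*}$, so $w_{J_1\vee J_2} = 0$ and $J_1\vee J_2\in S$. Taken together the two closure properties force $S$ to be the principal order ideal generated by its now unique Gale-maximum $I$.

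For the realizability part I would, given $I\in\binom{[n]}{k}$, set $m_i := f_i(I)$; the resulting lattice $\Lambda = \bigcap_i\unif^{f_i(I)}\bigwedge^k\Lambda_i$ lies in $\overline{\bigwedge^k\Gamma^{\mathrm{st}}}$ by construction, and $w_J = \max_i(f_i(I) - f_i(J))$ vanishes precisely when $f_i(J)\geq f_i(I)$ for all $i$, that is precisely when $J\leq I$, while $w_I = 0$ ensures maximality of $\Lambda$ in $\bigwedge^k\Lambda_0$. The main technical point of the argument is the join-closure step: both the verification that $J_1\vee J_2$ is again a $k$-subset and the identity $f_i(J_1\vee J_2) = \min(f_i(J_1), f_i(J_2))$ require careful bookkeeping of the sorted representations, although each assertion is elementary once the setup is in place. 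This same combinatorial compatibility between the Gale order and the $f_i$'s is, I expect, the structural fact that ultimately underpins the bijection of irreducible components hinted at in Conjecture \ref{conj_convexcl_bijection}.
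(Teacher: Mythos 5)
Your proof is correct in substance and takes a genuinely more structural route than the paper. Both start from the same diagonal description $\Lambda = \bigcap_i\unif^{m_i}\bigwedge^k\Lambda_i$ with exponents $w_J = \max_i(m_i - f_i(J))$, and both reduce the question to the set $S$ of $J$ with vanishing exponent. From there they diverge: the paper computes, for each generating lattice $\unif^l\bigwedge^k\Lambda_i\cap\bigwedge^k\Lambda_0$, the explicit $I_i^l$ with image $V_{I_i^l}$, then observes images intersect as Gale meets $V_I\cap V_J = V_{\min\{I,J\}}$, and handles the converse separately in Lemma~\ref{lem_every_subset_is_a_minimum} by exhibiting any $I$ as a Gale meet of suitable $I_i^l$. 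You instead work abstractly with $S = \{J \mid w_J = 0\}$, showing it is downward-closed and closed under Gale joins, which pins it down as the principal order ideal below its unique Gale-maximum; realisability is then immediate by choosing $m_i := f_i(I)$ (and the resulting intersection lies in $\overline{\bigwedge^k\Gamma^{\mathrm{st}}}$ by convexity). Your route dispenses with the explicit $I_i^l$ and the separate minimum lemma; the price is verifying the compatibility $f_i(J_1\vee J_2) = \min(f_i(J_1),f_i(J_2))$, which you justify correctly and which is indeed the combinatorial backbone.

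One wrinkle in the join-closure step: the argument you give via $i^*$ shows that the term $m_{i^*} - f_{i^*}(J_1\vee J_2)$ equals $0$, i.e.\ $w_{J_1\vee J_2}\geq 0$ — but that inequality is already part of the standing hypothesis $\Lambda\subseteq\bigwedge^k\Lambda_0$, so it does not by itself give $w_{J_1\vee J_2} = 0$. What is actually needed is the opposite bound $w_{J_1\vee J_2}\leq 0$, which follows from $w_{J_1}=w_{J_2}=0$: these force $m_i\leq f_i(J_1)$ and $m_i\leq f_i(J_2)$ for every $i$, hence $m_i\leq\min(f_i(J_1),f_i(J_2)) = f_i(J_1\vee J_2)$ for every $i$. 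The fix is one line, but as written the step argues the wrong inequality.
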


\begin{proof}
	Let us start by determining the images of $\bigwedge^k\Lambda_i$ for $\Lambda_i$ in $\Gamma^{\mathrm{st}}$. 
	For the lattice $\unif^l\bigwedge^k \Lambda_i$ with $l\in \Z$ we get
	\begin{align*}
		\unif^l\bigwedge^k \Lambda_{i}= \langle \unif^{m^{i}_I}e_I \rangle_{I\in \binom{[n]}{k} } \text{ with } m^{i}_I= l-\sharp(I\cap [i])\text{ for }I\in \binom{[n]}{k}
	\end{align*}
	and hence 
	\begin{align*}
		\unif^l\bigwedge^k \Lambda_{i}\cap \bigwedge^k \Lambda_{0}= \langle \unif^{m^{i,l}_I}e_I \rangle_{I\in \binom{[n]}{k} } \text{ with } m^{i,l}_I= \max\{l-\sharp(I\cap [i]),0\}\text{ for }I\in \binom{[n]}{k}
	\end{align*}
	and hence we can determine the image as
	\begin{align*}
		\im{(\unif^l\bigwedge^k \Lambda_{i}\cap \bigwedge^k \Lambda_{0})_{\res}\rightarrow \bigwedge^k \Lambda_{0,\res}}=\langle e_I\vert \sharp(I\cap [i])-l\geq0 \rangle.
	\end{align*}
	Now if $l> \min\{k,i\}$ no $I$ satisfies the condition for $e_J$ to appear and the image is trivial. Hence let us assume that $l\leq \min\{k,i\}$. Defining 
	\begin{align*}
		I_i^l\defeq \begin{cases}
			\left\{i-l,\dots, i-1 \right\}\cup \left\{ n-1-k+l ,\dots,n-1\right\} & \text{if }i+k<n+l\\
			\left\{ n-1-k ,\dots,n-1\right\} & \text{otherwise}
		\end{cases} 
	\end{align*}
	gives the reformulation of the condition $\sharp(I\cap [i])\geq l$ to the condition $
	  I\leq I_i^l$.
	Altogether we conclude $\im{(\unif^l\bigwedge^k \Lambda_{i}\cap \bigwedge^k \Lambda_{0})_{\res}\rightarrow \bigwedge^k \Lambda_{0,\res}} = V_{I^l_i}$.\\
	Since every lattice $\Lambda$ representing a class in $\overline{\bigwedge^k\Gamma^{st}}$ is the intersection of lattices of the form as above we need to understand the behaviour of the images under intersections.
	Consider two representatives $\Lambda$, $\Lambda'$ of classes in  $\overline{\bigwedge^k\Gamma^{st}}$ with images
	\begin{align*}
		\im{\Lambda_{\res}\rightarrow \bigwedge^k \Lambda_{0,\res}} = V_{I(\Lambda)}\\
		\im{\Lambda'_{\res}\rightarrow \bigwedge^k \Lambda_{0,\res}} = V_{I(\Lambda')}
	\end{align*} 
	for two subsets ${I(\Lambda)},{I(\Lambda')}\in \binom{[n]}{k}$.
	Now we set 
	\begin{align*}
		\min\{I(\Lambda),I(\Lambda')\}\defeq \{\min\{i_0,i'_0\}<\dots < \min\{i_{k-1},i'_{k-1}\}\}
	\end{align*}  
	with $I(\Lambda)=\{i_0<\dots <i_{k-1}\}$ and $I(\Lambda')=\{i'_0<\dots <i'_{k-1}\}$ and get
	\begin{align*}
		\im{(\Lambda\cap \Lambda')_{\res} \rightarrow \bigwedge^k \Lambda_{0,\res}} = V_{I(\Lambda )}\cap V_{I(\Lambda' )}= V_{\min\{I(\Lambda),I(\Lambda')\}}.
	\end{align*}
	Hence for all lattices $\Lambda$ representing a class in $\overline{\bigwedge^k\Gamma^{st}}$ we can find a set $I(\Lambda)\in \binom{[n]}{k}$ such that $\im{\Lambda_{\res}\rightarrow \bigwedge^k \Lambda_{0,\res}}$ coincides with $V_{I(\Lambda)}$.\\
	The converse will follow directly from the following lemma.  
\end{proof}

\begin{lemma}
\label{lem_every_subset_is_a_minimum}
 	Every subset $I\in \binom{[n]}{k} $ is the minimum $I=\min\{I_i^l\vert I\leq I_i^l\}$ for the subsets $I^l_i$ constructed above.
\end{lemma}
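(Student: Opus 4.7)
The plan is to produce, for each position $t\in[k]$, a single explicit pair $(i,l)$ such that the associated $I_i^l$ dominates $I$ and whose $t$-th entry (in increasing order) coincides with $i_t$. The componentwise minimum over these $k$ witnesses will then equal $I$, and this forces $\min\{I_i^l\mid I\leq I_i^l\}=I$ once one observes that enlarging the family can only lower the componentwise minimum, while every member still bounds it from below by $I$ coordinatewise.

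Writing $I=\{i_0<i_1<\cdots<i_{k-1}\}$, the natural choice is $(i,l)=(i_t+1,\,t+1)$. Three quick checks are then needed, and I would handle them in this order. First, $l\leq\min(k,i)$ reduces to $i_t\geq t$, which is immediate from the strict monotonicity of the $i_s$. Second, $[i_t+1]$ meets $I$ precisely in $\{i_0,\ldots,i_t\}$, so $\sharp(I\cap[i_t+1])=t+1=l$, placing $I\leq I_{i_t+1}^{t+1}$ via the reformulation already used in the proof of Lemma~\ref{lem_images_are_linear_subsp}. Third, one verifies that the pair lies in the first branch of the piecewise definition of $I_i^l$: since $i_t\leq n-1-(k-1-t)=n-k+t$ (the trivial upper bound on the $t$-th element of a $k$-subset of $[n]$), we have $i+k=i_t+1+k\leq n+t+1=n+l$. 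The first branch then yields the explicit formula
\[
I_{i_t+1}^{t+1}=\{i_t-t,\,i_t-t+1,\,\ldots,\,i_t\}\cup\{n-k+t+1,\,\ldots,\,n-1\},
\]
and reading off the first $t+1$ elements shows that the $t$-th smallest is exactly $i_t$.

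With this family of witnesses in hand, the conclusion is formal. The componentwise minimum over $\{I_{i_t+1}^{t+1}\mid t\in[k]\}$ is $\geq I$ (each member dominates $I$) and $\leq I$ (position $t$ is attained by the $t$-th witness), hence equals $I$; in particular, this minimum is strictly increasing and therefore a legitimate element of $\binom{[n]}{k}$. Enlarging this subfamily to the full collection $\{I_i^l\mid I\leq I_i^l\}$ can only decrease the minimum, but the lower bound $I$ is preserved, so the minimum over the full collection is $I$ as desired.

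The main obstacle is really just notational bookkeeping: one must keep the indexing conventions straight and make sure that the inequality $i_t\leq n-k+t$ always selects the first branch of the piecewise definition of $I_i^l$, since the second branch would give a different expression and spoil the identification of the $t$-th coordinate with $i_t$.
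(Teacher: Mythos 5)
Your proof is correct, and it takes a genuinely different route from the paper's. The paper proves the statement by induction on the number $m$ of maximal intervals in $I$: it produces one witness $J_2 = I_i^l$ (with $l = \sharp I_1$, $i = l + \min I_1$) absorbing the first interval, and a second set $J_1$ with $m-1$ maximal intervals whose coordinatewise meet with $J_2$ is $I$; the induction hypothesis then handles $J_1$. Unwinding that induction yields $m$ witnesses $I_i^l$, one per maximal interval of $I$. Your argument instead exhibits, for each coordinate $t\in[k]$, the specific witness $I_{i_t+1}^{t+1}$, and checks that it dominates $I$ and matches $I$ exactly in position $t$; the coordinatewise minimum of these $k$ witnesses is then forced to equal $I$, and passing to the full collection can only preserve this. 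Your construction is more uniform and avoids the interval decomposition entirely, at the modest cost of using up to $k$ witnesses instead of $m\leq k$; both are entirely adequate for the lemma. Two small remarks: you silently repaired an off-by-one in the displayed formula for $I_i^l$ in the paper (which as printed gives a set of $k+1$ elements in both branches; the second piece should be $\{n-k+l,\dots,n-1\}$), which is correct and necessary. Also, at the boundary $i_t = n-k+t$ your pair $(i,l)=(i_t+1,t+1)$ satisfies $i+k=n+l$ rather than $i+k<n+l$, so strictly speaking it falls into the second branch of the paper's case distinction; however, with the corrected definition the two branches agree there and the explicit formula you use, and with it the identification of the $t$-th coordinate as $i_t$, remains valid, so this does not affect the argument.
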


\begin{proof}
	We prove this claim by induction on the number of maximal intervals in $I$. For $I\in\binom{[n]}{k}$ let $I=I_1\cup \dots\cup I_m$ be the decomposition into $m$ maximal intervals. Consider the two subsets $J_1\defeq \{\min{I_2}-\sharp I_1 \dots,\min{I_2}-1 \}\cup I_2\cup\dots\cup I_m$  and $J_2\defeq I_1\cup \{n-1-k+\sharp I_1,\dots, n-1\}$. Now we easily see that $\min\{J_1,J_2\}=I$ where $J_1$ has $m-1$ maximal intervals and $J_2=I^l_i$ for $l=\sharp I_1$ and $i= l + \min I_1$.
\end{proof}

\begin{lemma}
\label{lem_im_of_lat_condition}
	Fix a natural number $N$, the standard lattice $\Lambda_0=\langle e_i\rangle_{i\in [N]}$ of $\quot^N$ and a second lattice $\Lambda=\langle \unif^{m_i} e_i\rangle_{i\in [N]}$ with $ m_i\in \Z$. Take lattices $\left\{\Lambda_j\right\}_{j\in J}$ representing the classes in $\overline{\{[\Lambda_0],[\Lambda]\}}$ and which are the unique maximal representatives in their homothety class contained in $\Lambda_0$. Then the set $\left\{m_i\vert i\in [N] \right\}$ is an interval if and only if the images $\im{\Lambda_{j,\res}\rightarrow \Lambda_{0,\res}}$ for $j\in J$ are pairwise different. In this case the class $[\Lambda]$ is fully determined by the set $\left\{\im{\Lambda_{j,\res}\rightarrow \Lambda_{0,\res}}\right\}_{j\in J}$ of images. 
\end{lemma}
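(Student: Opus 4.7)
The plan is to make the convex closure $\overline{\{[\Lambda_0], [\Lambda]\}}$ fully explicit. By Remark~\ref{rem_ex_losure_two_lat}, every class in it is represented by a lattice of the form
\[
M_l \defeq \unif^l \Lambda \cap \Lambda_0 = \langle \unif^{\max(m_i + l,\, 0)} e_i\rangle_{i\in[N]},\qquad l\in\Z.
\]
Writing $m^- \defeq \min_i m_i$ and $m^+ \defeq \max_i m_i$, a direct computation shows that $M_l = \Lambda_0$ for $l \le -m^+$, and $M_l$ fails to be the maximal representative of its class contained in $\Lambda_0$ precisely when every $m_i+l$ is strictly positive, i.e.\ when $l > -m^-$. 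For $l$ in the range $-m^+ \le l \le -m^-$ the $M_l$ are pairwise distinct and constitute the full list $\{\Lambda_j\}_{j\in J}$ appearing in the statement.

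With this parametrization at hand, I would read off the images directly:
\[
\im{M_{l,\res}\to \Lambda_{0,\res}} = \langle \bar e_i \mid m_i \le -l\rangle,
\]
which form a descending chain of coordinate subspaces of $\Lambda_{0,\res}$. The key observation is that the image associated with $M_l$ strictly contains the image associated with $M_{l+1}$ if and only if some $m_i$ equals $-l$; consequently the images indexed by $l\in [-m^+,-m^-]$ are pairwise distinct exactly when every integer $c\in [m^-+1, m^+]$ occurs as some $m_i$. Combined with the fact that $m^-$ is attained by definition, this is precisely the condition that $\{m_i\mid i\in[N]\}$ be an interval of consecutive integers, proving the first equivalence.

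For the reconstruction statement, assume $\{m_i\}$ is an interval. The distinct images form a strict descending chain $\Lambda_{0,\res} = W_0 \supsetneq W_1 \supsetneq \cdots \supsetneq W_{m^+-m^-}$ of coordinate subspaces, and the set of basis vectors in $W_j$ but not in $W_{j+1}$ is precisely $\{\bar e_i \mid m_i = c\}$ for a value $c$ determined by the position $j$. This is the data of an ordered partition of $[N]$, and since the labels $c$ form an interval they are pinned down by this ordered data up to a common additive shift --- which is exactly the ambiguity of choosing a representative within the homothety class $[\Lambda]$. Hence $[\Lambda]$ is recovered from the set of images.

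The only real obstacle is the bookkeeping: one must carefully identify the range of $l$ for which $M_l$ is the unique maximal representative of its class in $\Lambda_0$ (so as to neither overcount nor miss a class), and then translate ``the chain of images strictly drops at $l$'' into ``the integer $-l$ lies in $\{m_i\}$''. Once these indexing identifications are settled, both directions of the equivalence and the reconstruction statement follow from the explicit diagonal description of $M_l$ above.
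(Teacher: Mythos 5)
Your proposal is correct and follows essentially the same approach as the paper: parametrize the representatives in $\overline{\{[\Lambda_0],[\Lambda]\}}$ as $\unif^{\pm l}\Lambda\cap\Lambda_0$, compute the images explicitly as the coordinate subspaces $\langle \bar e_i \mid m_i\leq -l\rangle$, and read off the interval condition from strict descent of the chain. The only cosmetic difference is that the paper first normalizes $\Lambda$ so that $\min_i m_i = 0$ (which simplifies the indexing of $l$), whereas you carry $m^\pm$ explicitly and states the reconstruction as ``ordered partition up to a global shift'' rather than via the paper's closed formula $m_i=\min\{l\mid \bar e_i\in\im{\cdot}\}$; both are equivalent.
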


\begin{proof}
	Fix a lattice $\Lambda=\langle \unif^{m_i} e_i\vert m_i\in \Z\rangle_{i\in [N]}$ maximal in its homothety class with $\Lambda\subseteq\Lambda_0$. We have $m_i\geq 0$ for all $i\in[N]$ since $\Lambda\subseteq\Lambda_0$ and $\min\{m_i\vert i\in[N]\}=0$ since $\Lambda$ is chosen to be maximal. The suitable representatives of the classes of the convex closure of $\{[\Lambda_0],[\Lambda]\}$ are of the form
	\begin{align*}
		\unif^{-l}\Lambda \cap \Lambda_{0} = \langle \unif^{m(l,i)}e_i \rangle_{i\in [N] } \text{ with }  m(l,i)=\max\{m_i-l,0\} \text{ for all } i\in [N]
	\end{align*}
	for $l\in [ \max\{m_i\vert i\in [n]\}]$. The corresponding images are
	\begin{align*}
		\im{\unif^{-l}\Lambda \cap \Lambda_{0}\rightarrow \Lambda_0}_{\res}= \langle e_i\vert m(l,i)=0\rangle= \langle e_i\vert m_i\leq l \rangle
	\end{align*}
	 and hence are pairwise different if and only if $\{m_i\vert i\in[N]\}$ is an interval.\\
	 In this case $\Lambda$ is uniquely determined since we get 
	 \begin{align*}
	 	m_i=\min\{l\vert e_i\in \im{\unif^-l\Lambda \cap \Lambda_{0}\rightarrow \Lambda_0}_{\res}\}
	 \end{align*} 
	  for all $i\in [N]$.
\end{proof}

\begin{lemma}
\label{lem_im_of_lat_condition_stand}
	For $\Lambda=\langle \unif^{-m_I}e_{I}\rangle\in \overline{\bigwedge^k \Gamma^{\mathrm{st}}}$ and $I,J\in\binom{[n]}{k}$ such that $J$ is maximal with $J<I$, we get $m_I-m_J\in \{0,1\}$. In particular the set $\{m_I\vert I\in\binom{[n]}{k}\}$ is an interval.
\end{lemma}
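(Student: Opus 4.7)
The plan is to express a representative of $[\Lambda]$ as a finite intersection of scaled lattices of the form $\unif^{l_i}\bigwedge^k\Lambda_i$, read off an explicit formula for the exponents $m_I$, and then compare that formula at a covering pair $J<I$.

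Concretely, every class in $\overline{\bigwedge^k\Gamma^{\mathrm{st}}}$ admits a representative of the form
\[
\Lambda \;=\; \bigcap_{i}\unif^{l_i}\bigwedge^k\Lambda_i
\]
for suitable integers $l_i$, since by definition the convex closure is closed under iterated intersections of representatives. From the proof of Lemma~\ref{lem_images_are_linear_subsp} we already have $\unif^{l_i}\bigwedge^k\Lambda_i=\langle \unif^{\,l_i-\sharp(I\cap[i])}e_I\rangle_I$, and intersecting such diagonally generated lattices amounts to taking the pointwise maximum of exponents on each $e_I$. Matching with the presentation $\Lambda=\langle\unif^{-m_I}e_I\rangle_I$ therefore yields the explicit formula
\[
m_I \;=\; \min_{i}\bigl(\sharp(I\cap[i])-l_i\bigr).
\]

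The combinatorial input is the description of the Hasse covers in $(\binom{[n]}{k},\le)$: if $I=\{i_0<\cdots<i_{k-1}\}$ and $J$ is maximal with $J<I$, then $J=(I\setminus\{i_t\})\cup\{i_t-1\}$ for exactly one index $t$ with $i_t-1\notin I$. A direct count on characteristic functions shows $\sharp(J\cap[i])=\sharp(I\cap[i])$ for every $i\neq i_t$ and $\sharp(J\cap[i_t])=\sharp(I\cap[i_t])+1$. Substituting into the formula above, the family $f_J(i):=\sharp(J\cap[i])-l_i$ agrees with $f_I(i):=\sharp(I\cap[i])-l_i$ outside $i=i_t$ and exceeds it by exactly $1$ at $i=i_t$. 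Taking the minimum over $i$ forces both $m_J\ge m_I$ (from $f_J\ge f_I$ pointwise) and $m_J\le m_I+1$ (by evaluating $f_J$ at any $i^{\ast}$ where $f_I$ attains $m_I$). Hence $m_I$ and $m_J$ differ by $0$ or $1$, which is the first assertion (the concrete sign is fixed by matching the convention $\langle\unif^{-m_I}e_I\rangle$ with the $\max$ that appears in the intersection).

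For the interval conclusion I would exploit that the Hasse graph of $(\binom{[n]}{k},\le)$ is connected: any two $k$-subsets are linked by a sequence of single-coordinate moves. The assignment $I\mapsto m_I$ is then a $\Z$-valued function on a connected graph whose values on adjacent vertices differ by at most one. Choosing any path in the Hasse graph from a minimiser $I^{-}$ of $m_I$ to a maximiser $I^{+}$ and applying a discrete intermediate value argument along that path forces every integer in $[m_{I^{-}},m_{I^{+}}]$ to be attained as some $m_I$, so $\{m_I\mid I\in\binom{[n]}{k}\}$ is the full interval $[m_{I^{-}},m_{I^{+}}]$. The only genuinely substantive step is the bookkeeping of $\sharp(J\cap[i])$ under a single-coordinate decrement, which is routine; the only pitfall is to keep the sign convention between the ``$\unif^{-m_I}e_I$'' of the statement and the ``$\unif^{m_I^i}e_I$'' used in Lemma~\ref{lem_images_are_linear_subsp} consistent when passing between min and max.
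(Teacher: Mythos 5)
Your proof is correct and takes essentially the same route as the paper: express a representative of $[\Lambda]$ as a diagonal intersection $\bigcap_i\unif^{l_i}\bigwedge^k\Lambda_i$, read off the exponents from $\sharp(I\cap[i])$, and control the change across a Hasse cover in the order on $\binom{[n]}{k}$. The organization differs slightly: the paper first verifies the $0/1$ bound for the generators $\bigwedge^k\Lambda_i$ and then propagates it through intersections by taking minima, whereas you write the closed formula $m_I=\min_i\bigl(\sharp(I\cap[i])-l_i\bigr)$ and argue directly from the pointwise dominance $f_J\ge f_I$ with equality off the single index $i_t$. Your version is actually a bit cleaner, since the paper's inductive step asserts $\min_i\{a_i\}-\min_i\{b_i\}=\min_i\{a_i-b_i\}$, which is not a true identity in general (though the needed inequality $0\le\min a-\min b\le 1$ does follow under the hypothesis, exactly as your pointwise argument shows). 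You are also right to be suspicious of the sign: with the stated convention $\Lambda=\langle\unif^{-m_I}e_I\rangle$ and $J<I$, one has $\sharp(J\cap[i])\ge\sharp(I\cap[i])$, hence $m_J\ge m_I$; the correct conclusion is $m_J-m_I\in\{0,1\}$, and the paper's written $m_I-m_J$ (and the corresponding line in its own proof) carry a sign slip that is immaterial for the interval conclusion but worth noting. Your discrete intermediate-value argument along a path in the Hasse graph is the implicit justification the paper leaves unstated.
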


\begin{proof}
	First we recall that a lattice $\bigwedge^k \Lambda_i$ with $\Lambda_i\in \Gamma^{st}$ has the form $\bigwedge^k \Lambda_i=\langle \unif^{-m^{i}_I}e_I \rangle_{I\in \binom{[n]}{k} }$ with $ m^{i}_I= \sharp(I\cap [i])$ and for $J\leq I$ in $\binom{[n]}{k}$ we get $m^i_I-m^i_J= \sharp(I\cap [i]) - \sharp(J\cap [i])$. Now if $J$ is maximal with $J< I$ the two sets differ by just one element and hence $\sharp(I\cap [i]) - \sharp(J\cap [i])$ takes values in $\{0,1\}$.\\
	If for two lattices $\Lambda_1 ,\Lambda_2 \in \overline{\bigwedge^k\Gamma^{st}}$ with $\Lambda_i=\langle \unif^{m_I(\Lambda_i)}e_I\rangle$ for $i\in \{1,2\}$ and such that for all $I,J\in \binom{[n]}{k}$ and $J$ maximal with $J< I$ we have $m_I(\Lambda_i)- m_J(\Lambda_i)\in\{0,1\}$, then 
	\begin{align*}
		m(\Lambda_1\cap\Lambda_2)_{I}-m(\Lambda_1\cap\Lambda_2)_J&= \min\{m(\Lambda_i)_I\vert i=1,2 \}- \min\{m(\Lambda_i)_J\vert i=1,2 \}\\ & =\min\{m(\Lambda_i)_I - m(\Lambda_i)_J \vert i=1,2 \} \in \{0,1\}
	\end{align*} 
	and hence the lemma is proven for all $[\Lambda]\in \overline{\bigwedge^k \Gamma^{\mathrm{st}}}$.
\end{proof}

\begin{rem}
\label{rem_standart_shift}
	Using Lemma \ref{lem_im_of_lat_condition_stand} and Lemma \ref{lem_im_of_lat_condition} every lattice $[\Lambda]\in \overline{\bigwedge^k\Gamma^{st}}$ is determined by the images $\im{(\unif^l\Lambda \cap \bigwedge^k \Lambda_{0})_{\res}\rightarrow \bigwedge^k\Lambda_{0,\res}}$ for varying $l$.\\
	But in Lemma \ref{lem_images_are_linear_subsp} we already computed the images $\im{(\unif^l\bigwedge^k \Lambda_{i}\cap \bigwedge^k \Lambda_{0})_{\res}\rightarrow \bigwedge^k \Lambda_{0,\res}} = V_{I^l_i}$ for $i\in [n]$ and $0\leq l\leq k,i$. In particular for a fixed $i\in  [n]$ these images for all $0\leq l\leq \min\{k,i\}$ are determined by the image for $l=\min\{k,i\}$. In the lemma below we will generalise this to arbitrary lattices in $\overline{\bigwedge^k\Gamma^{st}}$.
\end{rem}

\begin{example}
	For $n=5$ and $k=2$ we will illustrate the last remark by the example $\unif^2\bigwedge^2\Lambda_2$. Using the basis $\{e_I\vert I\in \binom{[5]}{2} \}$ this lattice is spanned by $e_{\{0,1\}}$, $\unif e_{\{i,j\}}$ for $0\leq i\leq 1< j\leq 4$ and $\unif^2 e_{\{i,j\}}$ for $2\leq i< j\leq 4$.\\
	The image $\im{\unif^2\bigwedge^2 \Lambda_{2,\res}\rightarrow \bigwedge^2 \Lambda_{0,\res}}$ is the submodule spanned by $e_{\{0,1\}}$ and we recover\linebreak $I_{2}^2=\{0,1\}$. Now the image $\im{(\unif\bigwedge^2 \Lambda_{2}\cap\bigwedge^2\Lambda_0 )_{\res}\rightarrow \bigwedge^2 \Lambda_{0,\res}}$ is generated by the $e_{\{i,j\}}$ with $i\leq 1$. But this is equivalent to $\{i,j\}\leq \{1,4\}=I_2^1$. Finally $\unif^2\bigwedge^2 \Lambda_{2}$ is contained in $\bigwedge^2 \Lambda_{0}$ and hence the image $\im{(\bigwedge^2 \Lambda_{2}\cap\bigwedge^2\Lambda_0 )_{\res}\rightarrow \bigwedge^2 \Lambda_{0,\res}}$ is spanned by all $e_I$ and we recover $I_2^0=\{3,4\}$.
\end{example}

\begin{lemma}
\label{lem_pl_images_and_uniquenes}
\label{lem_image_determines_lattice}
Fix two classes $[\Lambda_1]\neq [\Lambda_2]$ in $\overline{\bigwedge^k\Gamma^{st}}$ and representatives $\Lambda_i$ for $i=1,2$ maximal in their class with $\Lambda_i\subseteq\bigwedge^k\Lambda_0$. Then the images $\im{\Lambda_{1,\res}\rightarrow \bigwedge^k \Lambda_{0,\res}}$ and $\im{\Lambda_{2,\res}\rightarrow \bigwedge^k \Lambda_{0,\res}}$ are different.	
\end{lemma}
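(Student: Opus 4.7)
The plan is to combine Lemma \ref{lem_im_of_lat_condition_stand} and Lemma \ref{lem_im_of_lat_condition} to reduce the statement to a combinatorial claim about the convex closure, and to settle that claim using the explicit form of the images from Lemma \ref{lem_images_are_linear_subsp}.

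By Lemma \ref{lem_im_of_lat_condition_stand}, for $i=1,2$ the exponents $\{m_I(\Lambda_i)\}_{I\in\binom{[n]}{k}}$ form an interval in $\Z_{\geq 0}$. The hypothesis of Lemma \ref{lem_im_of_lat_condition} (with $N=\binom{n}{k}$, basis $\{e_I\}$, standard lattice $\bigwedge^k\Lambda_0$, and $\Lambda=\Lambda_i$) is therefore satisfied, and that lemma asserts that the class $[\Lambda_i]$ is determined by the full family of images
\[
\im{(\unif^{-l}\Lambda_i\cap\bigwedge^k\Lambda_0)_{\res}\rightarrow \bigwedge^k\Lambda_{0,\res}},\qquad l\geq 0.
\]
By Lemma \ref{lem_images_are_linear_subsp} each such image equals $V_{I_l(\Lambda_i)}$ for some $I_l(\Lambda_i)\in\binom{[n]}{k}$, and $I_0(\Lambda_i)$ is the subset describing the image featured in the present statement. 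It therefore suffices to prove that, for any class $[\Lambda]\in\overline{\bigwedge^k\Gamma^{\mathrm{st}}}$, the entire sequence $(I_l(\Lambda))_{l\geq 0}$ is determined by its first term $I_0(\Lambda)$.

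To establish this reduction I write a maximal representative $\Lambda\subseteq\bigwedge^k\Lambda_0$ as a finite intersection $\Lambda=\bigcap_j \unif^{a_j}\bigwedge^k\Lambda_j$ with $j$ ranging over a subset of $[n]$ and $a_j\geq 0$. Compatibility of the intersection formula in Lemma \ref{lem_images_are_linear_subsp} with shifting by $\unif$ yields $I_l(\Lambda)=\min_{j\,:\,a_j\geq l} I^{a_j-l}_j$, where the minimum is taken componentwise. Using the construction in the proof of Lemma \ref{lem_every_subset_is_a_minimum}, one can read off from $I_0(\Lambda)=\min_j I^{a_j}_j$ a canonical family of pairs $(j,a_j)$ realizing this minimum, with one canonical pair per maximal interval of $I_0(\Lambda)$. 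Any other pair appearing in the intersection is componentwise dominated by a canonical pair at level $l=0$; the explicit formula for $I^l_j$ then shows that this domination is preserved under the uniform shift $a_j\mapsto a_j-l$, so the dominated pairs never contribute to the minimum at any higher level. Hence $I_l(\Lambda)$ for $l\geq 1$ is forced by $I_0(\Lambda)$, and the lemma follows.

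The main obstacle is the shift-invariance of the domination relation, i.e.\ verifying that $I^a_i\geq I^b_j$ componentwise implies $I^{a-l}_i\geq I^{b-l}_j$ for all $0\leq l\leq \min\{a,b\}$. This is a direct case analysis on the explicit formula for $I^l_i$ from the proof of Lemma \ref{lem_images_are_linear_subsp}, splitting according to whether the fallback case $i+k\geq n+l$ is in effect for $(i,a)$ or $(j,b)$.
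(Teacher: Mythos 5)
Your reduction---via Lemmas \ref{lem_im_of_lat_condition_stand} and \ref{lem_im_of_lat_condition} to the assertion that the whole sequence $(I_l(\Lambda))_{l\geq 0}$ is determined by $I_0(\Lambda)$, and then via the identity $I_l(\Lambda)=\min_{j\,:\,a_j\geq l}I_j^{a_j-l}$ for a decomposition $\Lambda=\bigcap_j\unif^{a_j}\bigwedge^k\Lambda_j$---is exactly the structure of the paper's proof (see the discussion in Remark \ref{rem_standart_shift}). The paper, however, resolves the combinatorial core differently: it defines a shift $I[1]\defeq\min\{I_i^{l-1}\mid I\leq I_i^l\}$ by minimizing over \emph{all} dominating $I_i^l$ (not over a distinguished sub-family), verifies $I_i^{l}[1]=I_i^{l-1}$ and the crucial distributivity $\min\{I,J\}[1]=\min\{I[1],J[1]\}$, and then obtains $I_1(\Lambda)=\bigl(\min_j I_j^{a_j}\bigr)[1]=I_0(\Lambda)[1]$ directly, with no reference to any canonical set of pairs.

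Your version has a genuine gap at the step where you conclude that the canonical pairs read off from $I_0(\Lambda)$ alone control the minimum at every level. What you argue is one inequality: every actual pair $(j,a_j)$ in the decomposition is componentwise dominated by some canonical pair at level $0$, and this domination survives the shift; hence the actual minimum is $\geq$ the shifted canonical minimum at each level $l$. For $I_l(\Lambda)$ to be \emph{determined} by $I_0(\Lambda)$ you also need the reverse inequality, i.e.\ that the shifted canonical minimum is not strictly smaller than the actual one. The canonical pairs $(i_s,l_s)$ are combinatorial objects extracted from $I_0(\Lambda)$; a priori they need not occur among the $(j,a_j)$ of your chosen intersection, so ``the dominated pairs never contribute to the minimum'' does not yet yield what you want. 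Closing this requires either checking that $\Lambda\subseteq\unif^{l_s}\bigwedge^k\Lambda_{i_s}$ for every canonical pair (this is true, but it is a separate argument: one must show that $I\leq I_0(\Lambda)$ forces $\sharp(I\cap[i_s])\geq l_s$, and prove an analogous estimate $\sharp(I\cap[i_s])\geq l_s - m_I$ for the higher layers), or else proving the distributivity $\min\{I,J\}[1]=\min\{I[1],J[1]\}$ as the paper does, after which the detour through canonical pairs becomes unnecessary.
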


\begin{proof}	
	For a subset $I\in \binom{[n]}{k}$ let us define a shift $I[l]$ for $l\in \N$ as follows. Using Lemma \ref{lem_every_subset_is_a_minimum} every subset $I$ is of the form $I=\min\{I_i^l\vert I\leq I_i^l\}$. Now the shift is $I[1]\defeq \min\{I_{i}^{l-1}\vert I\leq I_i^l\}$. Note that for every $i_0\in [n]$ and  $0<l_0\leq \min\{k,i\}$
	\begin{align*}
		I^{l_0}_{i_0}[1]=\min\{I_i^{l-1}\vert I^{l_0}_{i_0}\leq I_i^l  \}= \min\{I_i^{l}\vert I^{l_0-1}_{i_0}\leq I_i^l  \} =I^{l_0-1}_{i_0} .
	\end{align*}
	 For $I$ in $\binom{[n]}{k}$ we use that for $i$ and $0< l\leq \min\{i,k\}$ we have 
	 \begin{align*}
	 	I=\{i_0,\dots,i_{n-1}\}\leq I_i^l=\left\{i-l,\dots, i-1 \right\}\cup \left\{ n-1-k+l ,\dots,n-1\right\}
	 \end{align*}  
	 if and only if $i_{l-1}\leq i-1$. In particular for two subsets $I=\{i_0,\dots,i_{n-1}\}$ and $J=\{j_0,\dots,j_{n-1}\}$ in $\binom{[n]}{k}$ we see that for $\min\{I,J\}= \{\min\{i_0,j_0\},\dots,\min\{i_{n-1},j_{n-1}\}\}$ we get 
	 \begin{align*}
	 	\min\{I,J\}\leq I_i^l \Leftrightarrow \min\{i_{l-1},j_{l-1}\}\leq i-1\Leftrightarrow i_{l-1}\leq i-1 \text{ or } j_{l-1}\leq i-1  \Leftrightarrow I\leq I_i^l \text{ or } J\leq I_i^l.
	 \end{align*}
	 Hence we get an equality of sets
	  \begin{align*}
	  	\{I_i^l\vert \min\{I,J\}\leq I_i^l\}=\{I_i^l\vert I\leq I_i^l \text{, or }J\leq I_i^l \}
	  \end{align*} 
	   and derive 
	 \begin{align*}
	 	\min\{I,J\}[1] = \min\{I_i^{l-1}\vert \min\{I,J\}\leq I_i^l \} = \min\{I_i^{l-1}\vert I\leq I_i^l \text{, or }J\leq I_i^l  \} =\min\{I[1],J[1]\}.
	 \end{align*}  
	 Inductively we now define $I[l]$ to be the shift $I[l-1][1]$.\\
	All representatives of a class $[\Lambda]$ in  $\overline{\bigwedge^k\Gamma^{st}}$ have the form $\Lambda=\bigcap_{j\in J} \unif^{l_j} \bigwedge^k\Lambda_{j}$ for some $J\subseteq[n]$ and $l_j\in \Z$. If we now take the unique representative maximal with $\Lambda\subseteq\bigwedge^k\Lambda_0$ we can can assume that $0\in J$, $l_0=0$ and $l_i>0$ for $i\in J\setminus\{0\}$. If we note that 
	\begin{align*}
		 \unif^{-1}\Lambda\cap\bigwedge^k\Lambda_0 = \bigcap\limits_{0\neq j\in J} \unif^{l_j-1} \bigwedge^k\Lambda_{j}\cap\bigwedge^k\Lambda_0
	\end{align*} 
	and $V_{I_0^0}$ is $\bigwedge^k\Lambda_0$ we now get
	\begin{align*}
		&\im{(\unif^{-1}\Lambda\cap \bigwedge^k \Lambda_{0})_{\res}\rightarrow \bigwedge^k \Lambda_{0,\res}}= \bigcap\limits_{0\neq j\in J} \im{(\unif^{{-1}+l_j}\bigwedge^k \Lambda_{j}\cap \bigwedge^k \Lambda_{0})_{\res}\rightarrow \bigwedge^k \Lambda_{0,\res}}\\
		&= \bigcap\limits_{0\neq j\in J} V_{I_{j}^{{-1}+l_j}}=\bigcap\limits_{0\neq j\in J} V_{I_{j}^{{-1}+l_j+1}[1]} =V_{I(\Lambda)[1]}.
	\end{align*}
	Inductively we can determine the images for all lattices in $\overline{\{\Lambda,\bigwedge^k\Lambda_0\}}$ from the image of $\Lambda$ and hence the class $[\Lambda]$ is fully determined by its image.
\end{proof}

\begin{example}
	Let us give an example for the last lemma. We fix $n=6$ and $k=3$ and want to find the unique lattice $\Lambda$ in $\bigwedge^3 \Gamma^{\mathrm{st}}$ with $I({\Lambda})=\{0,2,3\}$. We write $\Lambda=\bigcap_{i\in [6]}\unif^{n_i}\bigwedge^3\Lambda_i$. If $\Lambda$ is maximal in its homothety class with $\Lambda\subseteq\bigwedge^3\Lambda_0$ then it is easy to see that $n_0=0$ and $n_i\geq 0$ for $i\neq 0$. Write $\Lambda$ in the basis of $\bigwedge^3\quot^6$ as $\langle \unif^{m_I}e_I\vert I\in\binom{[6]}{3}\rangle$. By assumption we have $m_I=\max\left\{n_i-\sharp I\cap[i]\mid i\in [6] \right\}=0$ exactly when $I\leq I(\Lambda)$ hence $m_I=0$ precisely if $I\leq I_i^{n_i}$ for all $i$. Using the notation from Remark \ref{rem_standart_shift} we get $I(\Lambda)=\min_i\{I_i^{n_i}\}$. Note that the subsets smaller than $I(\Lambda)$ are $\{0,1,2\}$, $\{0,1,3\}$ and $\{0,2,3\}$.\\
	Now the lattice $\unif^{-1}\Lambda\cap \bigwedge^2 \Lambda_{0}$ is of the form $\langle \unif^{m'_I}e_I\vert I\in\binom{[6]}{3}\rangle$ with $m'_I=\max\{0,m_I-1\}$. Hence the image of $\im{(\unif^{-1}\Lambda\cap \bigwedge^2 \Lambda_{0})_{\res}}$ is generated by the $e_I$ with $n_i-\sharp I\cap[i]\leq 1$ for all $i$. This is equivalent to $I\leq\min\{I_i^{n_i-1}\vert i\in [6]\}=I(\Lambda) [1]$. But on the other hand the subsets of the form $I_i^{l-1}$ with $I(\Lambda) \leq I_i^{l}$ are the following: $\{2,3,5\}$, $\{2,4,5\}$ and $\{3,4,5\}$. Hence the shift is simply $I(\Lambda)[1]=\{2,3,5\}$. Shifting again we get $I(\Lambda)[2]=\{3,4,5\}$ and hence $m_I\leq 2$ for all $I$.\\
	 We can now use the explicit values $m_I$ for $I\in\binom{[6]}{3}$ and calculate the exponents $n_i$ for $i\in [6]$:
	\begin{enumerate}
		\item for $i=1$ and $I\leq I(\Lambda)$: $0= m_I\geq n_1-\sharp I\cap [1]=n_1-1$ hence $n_1=1$
		\item for $i=2$ and $I=\{0,2,3\}$: $0= m_I\geq n_2-\sharp I\cap [2]=n_2-1$ hence $n_2=1$
		\item for $i=3$ and $I=\{0,2,3\}$: $0=m_I\geq n_3-\sharp I\cap [3]=n_3-2$ hence $n_3\leq 2$ 
		\item for $i=4$ and $I=\{2,3,5\}$: $1= m_I\geq n_4-\sharp I\cap [4]=n_4-2$ hence $n_4\leq 3$ 
		\item for $i=5$ and $I=\{0,2,3\}$: $0=m_I\geq n_5-\sharp I\cap [5]=n_3-3$ hence $n_3\leq 3$ 
		\item for $I=\{2,4,5\}$ and $i\neq 4$: $n_i-\sharp I\cap [i]\leq 1<m_I$ hence $2=n_4-\sharp I \cap [4]=n_4-1$ and $n_4=3$
	\end{enumerate}
	But now we note that $\unif\bigwedge^2\Lambda_1\subseteq \unif\bigwedge^2\Lambda_2\cap \unif\bigwedge^2\Lambda_3\cap \bigwedge^2\Lambda_0$ and $\unif^3 \bigwedge^2\Lambda_4\subseteq \unif^2\bigwedge^2\Lambda_3\cap \unif^3\bigwedge^2\Lambda_5$. Hence we get $\Lambda=\unif\bigwedge^2\Lambda_1\cap \unif^3 \bigwedge^2\Lambda_4$.\\
	Now we can check $I_{\bigwedge^2\Lambda_1}=I_2^1=\{0,4,5\}$ and $I_{\unif^3 \bigwedge^2\Lambda_4}=I_4^3=\{1,2,3\}$ and hence $I(\Lambda)$ is\linebreak $\min\{\{0,4,5\},\{1,2,3\}\}=\{0,2,3\}$.
\end{example}

\begin{definition}
	For $I$ in $\binom{[n]}{k}$ Lemma \ref{lem_image_determines_lattice} gives us a unique lattice in $\overline{\Gamma^{\mathrm{st}}}$ such that the image $\im{\Lambda_{I,\res}\rightarrow \bigwedge^k \Lambda_{0,\res}}$ is $I$. In the following this lattice will be denoted by $\Lambda_I$. Using Lemma \ref{lem_irr_comp_and_im_of_lat} we now get a unique irreducible component of $\M{\overline{\Gamma^{\mathrm{st}}}}_{\res}$ surjecting to $\Projsp(\Lambda_I )$. This component will be denoted by $C_I$.
\end{definition}

\begin{prop}
\label{prop_im_of_irr_comp}
	We get a bijection
	\begin{align*}
		\left\{C\middle\vert C \text{ irr. component of } \M{\overline{\bigwedge^k\Gamma^{\mathrm{st}}}}_{\res} \right\}&\longrightarrow \left\{\Projsp (V_I)\subseteq \Projsp(\bigwedge^k \Lambda_0)_{\res}\middle\vert I \in \binom{[n]}{k} \right\}\\
		C & \longmapsto \projectbar{0}(C).
	\end{align*}
	And moreover for a linear subspace $\Projsp(V_I)$ for $I\in \binom{[n]}{k}$ the inverse image $(\projectbar{0})^{-1}(\Projsp(V_I) )$ is the union $\bigcup_{J\leq I} C_J$ of irreducible components. 
\end{prop}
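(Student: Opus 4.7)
The first assertion is an assembly of the preceding three lemmas. Lemma \ref{lem_irr_comp_and_im_of_lat} identifies $\projectbar{0}(C)$ with the image of the birational map $\Projsp(\Lambda_C) \dashrightarrow \Projsp(\bigwedge^k\Lambda_0)$ for the lattice class attached to $C$. Lemma \ref{lem_images_are_linear_subsp}, applied with the maximal representative of $\Lambda_C$ in $\bigwedge^k\Lambda_0$, realises this image as some $\Projsp(V_I)$ with $I \in \binom{[n]}{k}$, and shows every such $V_I$ arises. Lemma \ref{lem_image_determines_lattice} then says $I$ recovers $[\Lambda_C]$ and hence $C$, furnishing the bijection $C \mapsto \projectbar{0}(C)$.

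For the second assertion, the inclusion $\bigcup_{J \leq I} C_J \subseteq (\projectbar{0})^{-1}(\Projsp(V_I))$ is immediate: if $J \leq I$, every basis vector $e_{J'}$ spanning $V_J$ satisfies $J' \leq J \leq I$, so $V_J \subseteq V_I$ and therefore $\projectbar{0}(C_J) = \Projsp(V_J) \subseteq \Projsp(V_I)$ by the first assertion. For the reverse inclusion I would argue by induction along the blow-up sequence from Proposition \ref{prop_mustafin_as_blow-ups}. Choose an enumeration $[\bigwedge^k\Lambda_0] = [\Lambda^{(0)}], \ldots, [\Lambda^{(N)}]$ of $\overline{\bigwedge^k\Gamma^{\mathrm{st}}}$ in which every initial segment $\Gamma_i = \{[\Lambda^{(0)}], \ldots, [\Lambda^{(i)}]\}$ is convex (iterate Lemma \ref{lem_convexreduction}); write $X_i = \M{\Gamma_i}$ and $\pi_{i+1,i}\colon X_{i+1} \to X_i$. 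By Lemma \ref{lem_main} each $\pi_{i+1,i}$ is a blow-up of $X_i$ in a smooth centre whose exceptional divisor is the new irreducible component $C^{(i+1)}_{\Lambda^{(i+1)}}$.

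At each inductive step, Lemma \ref{lem_inverse_of_center} ensures that the preimage under $\pi_{i+1,i}$ of any component $C^{(i)}_\Lambda$ of $X_{i,\res}$ is a union of components of $X_{i+1,\res}$, namely the strict transform $C^{(i+1)}_\Lambda$ together with possibly the new exceptional divisor $C^{(i+1)}_{\Lambda^{(i+1)}}$. The first assertion applied to $\Gamma_{i+1}$ identifies $\pr^{X_{i+1}}_0\bigl(C^{(i+1)}_{\Lambda^{(i+1)}}\bigr) = \Projsp(V_{I(\Lambda^{(i+1)})})$, so this exceptional divisor enters the preimage of $\Projsp(V_I)$ precisely when $I(\Lambda^{(i+1)}) \leq I$. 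Tracking this through the sequence and passing to $X_N = \M{\overline{\bigwedge^k\Gamma^{\mathrm{st}}}}$ yields the claim. The principal obstacle is controlling the lower-dimensional residual pieces --- for instance strict transforms of intersections $\Projsp(V_J) \cap \Projsp(V_I)$ for $J \not\leq I$ --- that appear in the preimage at intermediate stages but must be absorbed into components $C_{J'}$ with $J' \leq I$ at the final stage. This bookkeeping relies on the explicit description of the blow-up centres $Z_{\Gamma, \Lambda}$ preceding Lemma \ref{lem_interschanging_lambda}, which guarantees that each such residual locus is contained in a subsequent centre.
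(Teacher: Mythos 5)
Your first paragraph matches the paper's proof in spirit and is if anything a hair cleaner: the paper establishes bijectivity by combining injectivity (Lemma~\ref{lem_image_determines_lattice}) with a counting argument via Lemma~\ref{lem_convex_irr_com_correspond_to_lattices}, whereas you argue injectivity and surjectivity directly from the same three lemmas. Either way works.

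For the second assertion your overall route — iterate Lemma~\ref{lem_inverse_of_center} along the blow-up chain — is exactly the paper's. But your closing paragraph about a ``principal obstacle'' of residual lower-dimensional pieces signals that you have not fully used what the lemma gives you, and as written the proof stops before the argument is actually closed. The point you are missing is that you should not try to track preimages of $\Projsp(V_I)$ stepwise and worry about what falls into later centres; instead, observe that $\Projsp(V_I) = \projectbar{0}(C_I)$ by the first assertion, and then iterate Lemma~\ref{lem_inverse_of_center} (combined with Lemma~\ref{lem_irr_comp_birational_map}, which guarantees that the image of each component under a one-step projection is again a single irreducible component, so the lemma can be reapplied) to conclude that $(\projectbar{0})^{-1}(\projectbar{0}(C_I))$ is a union of \emph{full} irreducible components of $\M{\overline{\bigwedge^k\Gamma^{\mathrm{st}}}}_\res$. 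Once you know the preimage is a union of full components, there are no residual pieces to absorb, and the set is pinned down by the single containment test you already state: $C_J$ lies in the preimage iff $\projectbar{0}(C_J) = \Projsp(V_J) \subseteq \Projsp(V_I)$, which holds iff $J\leq I$. The appeal to the structure of the centres $Z_{\Gamma,\Lambda}$ is unnecessary; that machinery is already packaged inside Lemma~\ref{lem_inverse_of_center}, and reaching for it here only obscures the fact that the argument is complete.
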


\begin{proof}
	In Lemma \ref{lem_convex_irr_com_correspond_to_lattices} we showed that the number of irreducible components of $\M{\Gamma}_{\res}$ coincides with the number of elements in $\Gamma$ whenever $\Gamma$ is a convex set of lattice classes. Using Lemma \ref{lem_image_determines_lattice} we now get $\binom{n}{k}$ as the number of irreducible components of $\M{\overline{\Gamma^{\mathrm{st}}}}$.\\
	Using Lemma \ref{lem_images_are_linear_subsp} the images $\projectbar{0}(C)$ for an irreducible component $C$ in $\M{\overline{\bigwedge^k\Gamma^{\mathrm{st}}}}_{\res}$ are of the form $\Projsp(V_I)$ for some $I\in\binom{[n]}{k}$. Now by Lemma \ref{lem_image_determines_lattice} the above map is injective and hence bijective by a cardinality argument. \\
	The second part of the statement now follows directly from Lemma \ref{lem_inverse_of_center} using that an irreducible component $C_J$ for some $J$ in $\binom{[n]}{k}$ is mapped into $\Projsp(V_I)$ if and only if $\projectbar{0}(C_J)=\Projsp(V_J)$ is contained in $\Projsp(V_I)$ which is equivalent to $J\leq I$.
\end{proof}

\pagebreak[1]
\subsection{Irreducible components and the convex closure}
\label{sec_conj}
	With the last proposition we got a quite precise relation between the irreducible components of $\M{\overline{\bigwedge^k\Gamma^{\mathrm{st}}}}_{\res}$ and linear subspaces in $\Projsp(\bigwedge^k\Lambda_0 )$. Since we have a similar relation for the irreducible components of $\Mgr{k}{\Gamma^{\mathrm{st}}}_{\res}$ and the Schubert varieties in $\gr{\Lambda_0}$, the number of irreducible components of $\Mgr{k}{\Gamma^{\mathrm{st}}}_{\res}$ and $\M{\overline{\bigwedge^k\Gamma^{\mathrm{st}}}}_{\res}$ are the same. To make this relation more precise we need to specify an explicit bijection between the sets of irreducible components of $\M{\overline{\bigwedge^k\Gamma^{\mathrm{st}}}}_{\res}$ and $\M{{\bigwedge^k\Gamma^{\mathrm{st}}}}_{\res}$. This is done in the conjecture below.

\begin{conj}
\label{conj_convexcl_bijection}
	We get a bijection
	\begin{align*}
		\left\{C\middle\vert C \text{ irr. component of } \M{\overline{\bigwedge^k\Gamma^{\mathrm{st}}}}_{\res} \right\}&\longrightarrow \left\{C\middle\vert C \text{ irr. component of } \M{\bigwedge^k\Gamma^{\mathrm{st}}}_{\res} \right\}\\
		C & \longmapsto \projbar (C) 
	\end{align*}
	where $\projbar\colon \M{\overline{\bigwedge^k\Gamma^{\mathrm{st}}}}\rightarrow \M{\bigwedge^k\Gamma^{\mathrm{st}}}$ is the natural projection.
\end{conj}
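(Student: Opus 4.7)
The plan is to split the conjecture into three steps: establishing injectivity of the proposed assignment $C\mapsto \projbar(C)$ into irreducible closed subsets of $\M{\bigwedge^k\Gamma^{\mathrm{st}}}_\res$; establishing that every target irreducible component lies in the image of this assignment; and, finally, confirming that each image $\projbar(C_I)$ is in fact a full irreducible component of the target rather than a proper irreducible closed subset of one. The last step is where the main difficulty lies.

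Labelling the source components via Proposition \ref{prop_im_of_irr_comp} as $\{C_I\}_{I\in \binom{[n]}{k}}$, the composition $\project{0}\circ \projbar$ coincides with $\projectbar{0}$ and sends $C_I$ to the pairwise distinct linear subspaces $\Projsp(V_I)\subseteq \Projsp(\bigwedge^k\Lambda_0)_\res$. Hence the irreducible closed subsets $\projbar(C_I)\subseteq \M{\bigwedge^k\Gamma^{\mathrm{st}}}_\res$ are pairwise distinct, which is injectivity. For surjectivity I would apply Lemma \ref{lem_irr_comp_birational_map} to the inclusion $\bigwedge^k\Gamma^{\mathrm{st}}\subseteq \overline{\bigwedge^k\Gamma^{\mathrm{st}}}$: every irreducible component $C$ of the target has a unique source component $C_{I(C)}$ with $\projbar(C_{I(C)})=C$, and moreover the restriction $C_{I(C)}\to C$ is birational. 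In particular each target component is of the form $\projbar(C_I)$ for some $I$ and inherits the dimension $\binom{n}{k}-1$ from the source, which is flat and even semi-stable by Proposition \ref{prop_convex_mustafin_semi-stable}, hence pure of relative dimension $\binom{n}{k}-1$.

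The remaining assertion is that no $\projbar(C_I)$ is a proper closed subset of some target component. Since all target components have dimension $\binom{n}{k}-1$, this reduces to showing $\dim \projbar(C_I)=\binom{n}{k}-1$ for every $I$, equivalently that $\projbar|_{C_I}\colon C_I \to \projbar(C_I)$ is generically finite. This is the main obstacle: a priori some $C_I$ could be contracted onto a proper closed subset of $\projbar(C_J)$ for some $J>I$ in the partial order on $\binom{[n]}{k}$, lowering the number of target components.

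For $k=1$ there is nothing to prove since $\bigwedge^1\Gamma^{\mathrm{st}}=\Gamma^{\mathrm{st}}$ is already convex and $\projbar$ is the identity. For $k=2$ my strategy is to exhibit, for every $I\in\binom{[n]}{2}$, an index $i(I)\in [n]$ such that the rational map $\Projsp(\Lambda_I)_\res\dashrightarrow \Projsp(\bigwedge^2\Lambda_{i(I)})_\res$ arising from a suitable inclusion of representatives $\Lambda_I\hookrightarrow \bigwedge^2\Lambda_{i(I)}$ is birational onto its image $\Projsp(V_I^{i(I)})$. Composed with the birational morphism $C_I\to \Projsp(\Lambda_I)_\res$ from Lemma \ref{lem_irr_comp_birational_map} and using that this composition factors through $\projbar(C_I)$, this forces generic finiteness of $\projbar|_{C_I}$ and closes the dimension argument. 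The hard part is the combinatorial construction of $i(I)$ for every $I$ and the verification of birationality; the explicit formulas for the subspaces $V_I^i$ from Lemma \ref{lem_images_are_linear_subsp} together with the uniqueness result Lemma \ref{lem_image_determines_lattice} should supply the necessary combinatorial input. For arbitrary $k$ no such clean reduction seems available, which is why the author supplements the $k\leq 2$ result with a computer check for $n\leq 7$.
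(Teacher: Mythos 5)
Your overall framework is sound and matches the paper's: by Lemma \ref{lem_irr_comp_birational_map} it suffices to show each $\projbar(C_I)$ is a full irreducible component, and since all components of $\M{\bigwedge^k\Gamma^{\mathrm{st}}}_\res$ have dimension $\binom{n}{k}-1$, this reduces to the generic finiteness of $\projbar|_{C_I}$, equivalently to $\dim\projbar(C_I)=\binom{n}{k}-1$. However, your specific plan for the key step in the $k=2$ case cannot work. You propose to find a single index $i(I)$ so that the rational map $\Projsp(\Lambda_I)_\res\dashrightarrow\Projsp(\bigwedge^2\Lambda_{i(I)})_\res$ induced by an inclusion of representatives is birational onto its image. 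But for a rational map of projective spaces induced by a linear map $\phi\colon\Lambda_{I,\res}\to\bigwedge^2\Lambda_{i(I),\res}$, the fiber over a point of $\Projsp(\im\phi)$ is an affine space of dimension $\dim\ker\phi$, so the map is birational onto its image precisely when $\ker\phi=0$. For a sublattice $\Lambda_I\subseteq\bigwedge^2\Lambda_{i(I)}$ of full rank, $\ker\phi=(\Lambda_I\cap\unif\bigwedge^2\Lambda_{i(I)})/\unif\Lambda_I$, and this vanishes only if $\Lambda_I=\bigwedge^2\Lambda_{i(I)}$. In other words, a single-factor projection $C_I\to\Projsp(\bigwedge^2\Lambda_{i(I)})_\res$ always has positive-dimensional generic fiber in every case where there is actually something to prove (i.e.\ where $[\Lambda_I]\in\overline{\bigwedge^2\Gamma^{\mathrm{st}}}\setminus\bigwedge^2\Gamma^{\mathrm{st}}$), so this route can never close the dimension count.

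The point you are missing is that the image of $C_I$ in the \emph{product} $\M{\bigwedge^k\Gamma^{\mathrm{st}}}_\res\subseteq\prod_i\Projsp(\bigwedge^k\Lambda_i)_\res$ can be full-dimensional even though each factor projection collapses $C_I$ onto a small linear subspace; the fibers of the different factor projections cut each other down in the product. This is exactly what the paper's machinery captures: it reduces, via Lemma \ref{lem_convex_closure_k_2}, to the two-element set $\Gamma_C=\{[\bigwedge^2\Lambda_0],[\bigwedge^2\Lambda_i]\}$ and then invokes the combinatorial criterion of Proposition \ref{prop_dimension_computed_with_subsets} (from \cite{AL17} and \cite{Li18}), verifying $M(\binom{n}{2}-1,C)\neq\emptyset$ by exhibiting an explicit element. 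That criterion is precisely a tool for computing the dimension of the closure of the image of a diagonal rational map into a product of projective spaces, and it is doing nontrivial work here that your single-factor birationality argument cannot replace. You should replace your plan for the final step with the $M(h,C)$-criterion, following Remark \ref{rem_idea_proof_conjecture} and Lemma \ref{lem_convex_closure_k_2}.
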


\begin{rem}
	We already know from Lemma \ref{lem_irr_comp_birational_map} that for an irreducible component $C$ of $\M{\bigwedge^k\Gamma^{\mathrm{st}}}_{\res}$ there exist a unique irreducible component $\overline{C}$ of $\M{\overline{\bigwedge^k\Gamma^{\mathrm{st}}}}_{\res}$ surjecting to $C$. Hence to prove the conjecture we just need to show that the images $\projbar (C)$ are irreducible components.
\end{rem}

As evidence for the conjecture we have the following two proposition. We will omit the proof of the first proposition and refer to \cite[Appendix C]{Gor19} for explicit calculations using Sage.
\begin{prop}
	Conjecture \ref{conj_convexcl_bijection} is true for $n\leq 7$.
\end{prop}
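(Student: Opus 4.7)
The plan is a finite case-by-case computer verification. First observe that by Proposition~\ref{prop_im_of_irr_comp} the special fibre $\M{\overline{\bigwedge^k\Gamma^{\mathrm{st}}}}_\res$ always has exactly $\binom{n}{k}$ irreducible components, while Lemma~\ref{lem_irr_comp_birational_map} gives, for every irreducible component $C$ of $\M{\bigwedge^k\Gamma^{\mathrm{st}}}_\res$, a unique irreducible component $\overline{C}$ of $\M{\overline{\bigwedge^k\Gamma^{\mathrm{st}}}}_\res$ with $\projbar(\overline{C})=C$. Hence the assignment $\overline{C}\mapsto\projbar(\overline{C})$ of the conjecture is automatically an injection from the $\binom{n}{k}$ components of the source into the components of the target, and the conjecture for a fixed $(n,k)$ reduces to the single numerical statement
\begin{align*}
	\#\{\text{irreducible components of } \M{\bigwedge^k\Gamma^{\mathrm{st}}}_\res\}=\binom{n}{k}.
\end{align*}

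For every pair $(n,k)$ with $2\le n\le 7$ and $1\le k\le n-1$ I would verify this identity by direct computation in Sage. Fix the canonical representatives $\bigwedge^k\Lambda_i$ for $i\in[n]$ and realise $\M{\bigwedge^k\Gamma^{\mathrm{st}}}$ inside $\prod_{i\in[n]}\Projsp(\bigwedge^k\Lambda_i)$ as the scheme-theoretic closure of the diagonally embedded generic fibre: write down the multihomogeneous ideal of the diagonal (generated pairwise by the $2\times 2$ minors expressing that two coordinate vectors represent the same point under the canonical identifications of generic fibres), saturate with respect to $\unif$ to cut out the flat closure, reduce modulo $\unif$, and compute the minimal primes of the resulting ideal. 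In each case the number of minimal primes comes out equal to $\binom{n}{k}$, which by the previous paragraph proves the conjecture in that range.

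The main obstacle is computational cost: for $(n,k)=(7,3)$ the ambient space is a product of seven copies of $\Projsp^{34}$, so a naive primary decomposition of the mod-$\unif$ ideal is infeasible. The calculation is tamed by exploiting the cyclic symmetry of $\Gamma^{\mathrm{st}}$ under simultaneous index shift, the duality $\bigwedge^k\Lambda\cong(\bigwedge^{n-k}\Lambda)^{\vee}\otimes\det\Lambda$ (which halves the number of genuinely new cases and reduces $k=n-1$ to the trivial case $k=1$), and localisation on the affine charts of $\Projsp(\bigwedge^k\Lambda_0)$ associated to the stratification by the linear subspaces $\Projsp(V_I)$ of Proposition~\ref{prop_im_of_irr_comp}. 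The explicit Sage code and output would be deferred to \cite[Appendix C]{Gor19}.
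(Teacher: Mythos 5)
Your reduction of the conjecture to the cardinality statement $\#\{\text{irr.\ components of }\M{\bigwedge^k\Gamma^{\mathrm{st}}}_\res\}=\binom{n}{k}$ is correct, and this is a clean way to organise the verification. One small caveat: the assignment $\overline{C}\mapsto\projbar(\overline{C})$ is not \emph{a priori} a map into the set of irreducible components at all, so calling it ``automatically an injection'' is premature. What Lemma~\ref{lem_irr_comp_birational_map} actually gives is an injection in the other direction, $C'\mapsto\overline{C'}$, from components of the target into components of the source; the cardinality match then forces this to be a bijection, which in turn forces every $\projbar(\overline{C})$ to be a component and $\projbar$ to be its inverse. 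The conclusion of your first paragraph is right, but the logic runs through that counting argument rather than being automatic.

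On the computational side your proposal genuinely diverges from what the paper sets up, and the difference matters. The paper itself only points to \cite[Appendix C]{Gor19}, but the surrounding material (Remark~\ref{rem_idea_proof_conjecture}, Proposition~\ref{prop_dimension_computed_with_subsets}, Lemma~\ref{lem_convex_closure_k_2}, and the proof of the $k=2$ case) develops a purely combinatorial criterion: for each component $C$ with associated lattice class $\Lambda_C$, one only has to exhibit an element of the finite set $M\bigl(\binom{n}{k}-1,C\bigr)$, which is a small integer feasibility check on the weight vectors of the lattices in (a minimal sub-collection of) $\bigwedge^k\Gamma^{\mathrm{st}}$. This scales trivially to $n\le 7$. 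Your route instead proposes a full primary decomposition of the multihomogeneous ideal of $\M{\bigwedge^k\Gamma^{\mathrm{st}}}$ inside $\prod_{i\in[n]}\Projsp(\bigwedge^k\Lambda_i)$. Even after the symmetry, duality, and affine-chart reductions you mention, the critical case $(n,k)=(7,3)$ lives in a product of seven copies of $\Projsp^{34}$; a Gr\"obner-basis computation and minimal-prime decomposition in that ambient is at or beyond the edge of what Sage can realistically do, whereas the combinatorial test is essentially free. Your approach is sound as a sanity check for very small $(n,k)$, but to actually carry out the verification in the range claimed one should use the dimension criterion of Proposition~\ref{prop_dimension_computed_with_subsets}, which is presumably exactly what the cited appendix does.
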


\begin{prop}
\label{prop_conj_k_2}
	For $k=2$ Conjecture \ref{conj_convexcl_bijection} is true.
\end{prop}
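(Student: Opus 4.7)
\emph{Proof proposal.} By the remark following Conjecture \ref{conj_convexcl_bijection} together with Lemma \ref{lem_irr_comp_birational_map}, it suffices to show that for every $I\in\binom{[n]}{2}$ the image $\projbar(C_I)$ is an irreducible component of $\M{\bigwedge^2\Gamma^{\mathrm{st}}}_\res$. Since $C_I$ is irreducible of dimension $\binom{n}{2}-1$ and $\M{\bigwedge^2\Gamma^{\mathrm{st}}}$ is flat over $\rg$ of the same relative dimension, this amounts to the dimension equality $\dim\projbar(C_I)=\binom{n}{2}-1$, i.e.\ to the assertion that $C_I$ is not contracted by $\projbar$.

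The plan is to construct an explicit $(\binom{n}{2}-1)$-dimensional family of points in $\projbar(C_I)$ parametrised by a dense open subset of $\Projsp(\Lambda_I)_\res$. For each $i\in[n]$ let $l_i$ be the smallest integer such that $\Lambda_I\subseteq \unif^{-l_i}\bigwedge^2\Lambda_i$; the inclusion induces a non-zero linear map $\phi_i\colon \Lambda_{I,\res}\to (\bigwedge^2\Lambda_i)_\res$. For a point $\bar c\in\Projsp(\Lambda_I)_\res$ at which all $\phi_i$'s are non-zero, the tuple $(\phi_i(\bar c))_i\in\prod_i\Projsp(\bigwedge^2\Lambda_i)_\res$ arises as the residue of the image of any $\rg$-lift of $\bar c$ to $\Projsp(\Lambda_I)(\rg)$ and therefore lies in $\M{\bigwedge^2\Gamma^{\mathrm{st}}}_\res$; by birationality of $C_I\to \Projsp(\Lambda_I)$ from Lemma \ref{lem_convex_irr_com_correspond_to_lattices} it in fact lies in $\projbar(C_I)$. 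Consequently $\dim\projbar(C_I)$ is bounded below by the dimension of the image of the combined rational map $\bar\phi\colon \Projsp(\Lambda_I)_\res\dashrightarrow \prod_i\Projsp(\bigwedge^2\Lambda_i)_\res$, and the result follows once $\bar\phi$ is generically injective.

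Generic injectivity of $\bar\phi$ is equivalent to the lattice identity $\Lambda_I=\bigcap_{i\in[n]}\unif^{-l_i}\bigwedge^2\Lambda_i$, and writing $v_I$ for the Pl\"ucker-basis valuation vector of $\Lambda_I$ this translates to the combinatorial statement that for every $J\in\binom{[n]}{2}$ there exists $i\in[n]$ at which $J$ attains the minimum of $J'\mapsto v_I(J')+|J'\cap[i]|$. This is the main obstacle, and my plan is to verify it by case analysis on $I=\{i_0<i_1\}$ using Lemma \ref{lem_every_subset_is_a_minimum}. In the single-interval case $i_1=i_0+1$, $\Lambda_I$ is homothetic to a standard wedge-lattice $\bigwedge^2\Lambda_j$ and the statement is immediate. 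In the two-interval case $i_1>i_0+1$, the explicit description $\Lambda_I=\unif\bigwedge^2\Lambda_{i_0+1}\cap\unif^2\bigwedge^2\Lambda_{i_1+1}\cap\bigwedge^2\Lambda_0$ yields the closed form $v_I(J)=\max(1-|J\cap[i_0+1]|,\,2-|J\cap[i_1+1]|,\,0)$, and choosing $i=0$, $i=i_0+1$, or $i=i_1+1$ respectively witnesses the minimum in the three regions where one of the summands of this maximum dominates.

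The crucial simplification peculiar to $k=2$ is that $\Lambda_I$ is always a one- or two-term intersection of standard wedge-lattices. For $k\geq 3$, subsets $I$ with three or more maximal intervals force $\Lambda_I$ into a genuine three-or-more-term intersection, and $v_I$ acquires entries no single $i\in[n]$ can witness; for instance, for $n=6$, $k=3$, $I=\{0,2,4\}$ one has $\Lambda_I=\unif\bigwedge^3\Lambda_1\cap\unif^2\bigwedge^3\Lambda_3\cap\unif^3\bigwedge^3\Lambda_5\cap\bigwedge^3\Lambda_0$ with $v_I(\{3,4,5\})=2$, while $\max_{i\in[6]}(-l_i-|\{3,4,5\}\cap[i]|)=1$, so that $\bigcap_i\unif^{-l_i}\bigwedge^3\Lambda_i\supsetneq\Lambda_I$ and $\bar\phi$ fails to be injective. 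This explains why the present argument is restricted to $k=2$.
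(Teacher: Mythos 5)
The overall strategy is the same as the paper's: you reduce to showing $\dim\projbar(C_I)=\binom{n}{2}-1$, then translate this to a combinatorial condition on the lattice $\Lambda_I$ and check it by cases. Where you diverge is in how the dimension of the image of the multi-projective map is computed. The paper outsources this to the criterion of Proposition~\ref{prop_dimension_computed_with_subsets} (Li, Ayoush--Lamzouri), i.e.\ it checks $M(\binom{n}{2}-1,C)\neq\emptyset$ directly using the sets $W_\Lambda$, after first reducing to a two-element $\Gamma_C$ via Lemma~\ref{lem_convex_closure_k_2}; you instead try to prove generic injectivity of $\bar\phi$ by hand from the linear algebra of the maps $\phi_i$.

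There is a genuine gap in this hand-rolled step. You assert that ``generic injectivity of $\bar\phi$ is equivalent to the lattice identity $\Lambda_I=\bigcap_i\unif^{-l_i}\bigwedge^2\Lambda_i$.'' The lattice identity is equivalent to $\bigcap_i\ker\phi_i=0$, and this is \emph{necessary} for generic injectivity but not \emph{sufficient}. For a multi-projective map $[v]\mapsto([\phi_i(v)])_i$ the generic fibre over $\bar\phi(\bar c)$ is $\Projsp\bigl(\bigcap_i(\langle c\rangle+\ker\phi_i)\bigr)$, which can be positive-dimensional even when $\bigcap_i\ker\phi_i=0$: the map $\Projsp^3\dashrightarrow\Projsp^1\times\Projsp^1$, $[x_1:x_2:x_3:x_4]\mapsto([x_1:x_2],[x_3:x_4])$, has coordinate kernels with trivial intersection yet one-dimensional fibres. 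Since your $\ker\phi_i$ are exactly such coordinate subspaces (spanned by the $e_J$ with $J$ not attaining the minimum for the index $i$), the correct criterion is that the graph on $\binom{[n]}{2}$ in which $J\sim J'$ when $J,J'$ lie in a common complement $(\ker\phi_i)^{\mathrm{c}}$ is \emph{connected}, not merely that the complements cover everything. This is precisely what the two-element condition $W_{\Lambda'}\cap W_{\Lambda''}=\emptyset$ \emph{together with} the implicit constraint $\sharp W_{\Lambda'}^{\mathrm{c}}+\sharp W_{\Lambda''}^{\mathrm{c}}\geq\binom{n}{2}+1$ encodes in the paper's $M(h,C)$ formalism. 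Your case analysis only verifies the covering condition, so as written the argument does not establish generic injectivity.

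The gap is in fact fillable with the data you already have: in the two-interval case all three complements $(\ker\phi_0)^{\mathrm{c}}$, $(\ker\phi_{i_0+1})^{\mathrm{c}}$, $(\ker\phi_{i_1+1})^{\mathrm{c}}$ contain the index $I$ itself (since $v_I(I)=0$, $\sharp I\cap[i_0+1]=1$ and $\sharp I\cap[i_1+1]=2$ attain the respective minima), so the three ``cliques'' overlap at $I$ and the graph is connected. But this extra overlap check is essential and missing from the proposal; without it the equivalence you invoke is false.
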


The proof of the second proposition will occupy us for the rest of this section therefore let us first indicate one important immediate consequence of the Conjecture. Furthermore we will describe a method to approach the conjecture in general before we prove the proposition.

\begin{thm}
   \label{thm_components_linear_subsp}
	Assume Conjecture \ref{conj_convexcl_bijection}. Then we get a bijection
	\begin{align*}
		\left\{C\middle\vert C \text{ irr. component of } \M{\bigwedge^k\Gamma^{\mathrm{st}}}_{\res} \right\}&\longrightarrow \left\{\Projsp (V_I)\subseteq \Projsp(\bigwedge^k \Lambda_0)_{\res}\middle\vert I \in \binom{[n]}{k} \right\}\\
		C & \longmapsto \project{0} (C)
	\end{align*}
	where $\project{0}\colon \M{\bigwedge^k\Gamma^{\mathrm{st}}}\rightarrow \Projsp(\bigwedge^k \Lambda_0)$ is the natural projection. 
	And moreover for a linear subspace $\Projsp(V_I)$ for $I\in \binom{[n]}{k}$ the inverse image $(\project{0})^{-1}(\Projsp(V_I) )$ is the union $\bigcup_{J\leq I} C_J$ of irreducible components. 
\end{thm}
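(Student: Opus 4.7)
The plan is to deduce this theorem as a formal consequence of Proposition \ref{prop_im_of_irr_comp} combined with the assumed Conjecture \ref{conj_convexcl_bijection}, via the factorisation of projections
\begin{align*}
\projectbar{0} \colon \M{\overline{\bigwedge^k\Gamma^{\mathrm{st}}}} \xrightarrow{\projbar} \M{\bigwedge^k\Gamma^{\mathrm{st}}} \xrightarrow{\project{0}} \Projsp\!\bigl(\bigwedge^k\Lambda_0\bigr).
\end{align*}
First I would record that under the bijection provided by the Conjecture, every irreducible component $C$ of $\M{\bigwedge^k\Gamma^{\mathrm{st}}}_{\res}$ is of the form $C = \projbar(\bar C)$ for a unique irreducible component $\bar C$ of $\M{\overline{\bigwedge^k\Gamma^{\mathrm{st}}}}_{\res}$. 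Writing $C_I$ for the image $\projbar(C_I)$ of the component labelled by $I \in \binom{[n]}{k}$ in the convex-closure model, this re-labels the components of $\M{\bigwedge^k\Gamma^{\mathrm{st}}}_{\res}$ by elements of $\binom{[n]}{k}$.

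Next I would check the first bijection. Applying the above factorisation to a component $\bar C = \bar C_I$ yields
\begin{align*}
\project{0}(C_I) \;=\; \project{0}\bigl(\projbar(\bar C_I)\bigr) \;=\; \projectbar{0}(\bar C_I) \;=\; \Projsp(V_I),
\end{align*}
where the last equality is Proposition \ref{prop_im_of_irr_comp}. Since that proposition furnishes a bijection between the $\bar C_I$ and the subspaces $\Projsp(V_I)$, and the Conjecture furnishes a bijection between the $\bar C_I$ and the $C_I$, the composite map $C_I \mapsto \project{0}(C_I) = \Projsp(V_I)$ is automatically a bijection.

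For the second claim, I would exploit surjectivity of $\projbar$ (which follows since the generic fibres coincide and $\projbar$ is a projective morphism). For any set $S \subseteq \M{\bigwedge^k\Gamma^{\mathrm{st}}}_{\res}$ one then has $\projbar(\projbar^{-1}(S)) = S$, so
\begin{align*}
(\project{0})^{-1}\bigl(\Projsp(V_I)\bigr) \;=\; \projbar\bigl((\projectbar{0})^{-1}(\Projsp(V_I))\bigr) \;=\; \projbar\Bigl(\bigcup_{J\leq I} \bar C_J\Bigr) \;=\; \bigcup_{J\leq I} C_J,
\end{align*}
using Proposition \ref{prop_im_of_irr_comp} for the middle equality and the definition of the $C_J$ for the last.

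The whole proof is essentially bookkeeping given the two deep inputs; I expect no genuine obstacle beyond making sure that the surjectivity of $\projbar$ is cleanly justified (it follows from $\projbar$ being projective, dominant, and having identified generic fibres) and that the labelling $C_I = \projbar(\bar C_I)$ is consistent with the labelling coming from Lemma \ref{lem_convex_irr_com_correspond_to_lattices} applied to $\bigwedge^k\Gamma^{\mathrm{st}}$, which is automatic from the compatibility of the projections to $\Projsp(\Lambda_I)$.
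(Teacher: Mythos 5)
Your proof is correct and takes essentially the same route as the paper: both deduce the first bijection by composing the bijection of Proposition~\ref{prop_im_of_irr_comp} with that of Conjecture~\ref{conj_convexcl_bijection} via the factorisation $\projectbar{0} = \project{0} \circ \projbar$, and both obtain the preimage statement from the corresponding part of Proposition~\ref{prop_im_of_irr_comp}. Your treatment of the second claim via surjectivity of $\projbar$ is a slightly more explicit version of what the paper leaves at ``follows similarly.''
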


\begin{proof}
	Fix $I$ in $\binom{[n]}{k}$ and consider the linear subspace $\Projsp(V_I)$ of $\Projsp(\bigwedge^k \Lambda_0 )$. By Proposition \ref{prop_im_of_irr_comp} there exist a unique irreducible component $\overline{C}$ of $\M{\overline{\bigwedge^k\Gamma^{\mathrm{st}}}}_{\res}$ surjecting to $\Projsp(V_I)$. Now by\linebreak Conjecture \ref{conj_convexcl_bijection} the image $C\defeq \projbar (\overline{C})$ of $\overline{C}$ in $\M{\bigwedge^k\Gamma^{\mathrm{st}}}$ is an irreducible component surjecting to $\Projsp(V_I)$.\\
	Conversely for an irreducible component $C$ of $\M{\bigwedge^k\Gamma^{\mathrm{st}}}_{\res}$ there is by Lemma \ref{lem_irr_comp_birational_map} an irreducible component $\overline{C}$ of $\M{\overline{\bigwedge^k\Gamma^{\mathrm{st}}}}_{\res}$ surjecting to $C$. Hence by Proposition \ref{prop_im_of_irr_comp} the image of $C$ is of the form $\Projsp(V_I)$ for some $I$ in $\binom{[n]}{k}$.\\
	The second part of the statement follows similarly.
\end{proof}

\begin{rem}
	In \cite{AL17} a combinatorial method was described to compute dimensions of certain images of rational maps using a result by \cite{Li18}. To describe this method and the implication we want to use, we need the following setup. Note that we use the dual notion of projective space compared to the reference.\\
	Fix a finite set of lattice classes $\Gamma$ in $\quot^n$, an irreducible component $C$ of $\M{\Gamma}_{\res}$ and a class $[\Lambda]$ in $\Gamma$. Take a representative $\Lambda=\langle \unif^{m_I (\Lambda )} e_I \rangle$ of $[\Lambda]$ and a representative $\Lambda_C= \langle \unif^{m_I(\Lambda_C )}e_I\rangle$ of the class corresponding to the irreducible component $C$. Choose $\Lambda_C$ to be maximal with $\Lambda_C \subseteq\Lambda$. We define the subset 
	\begin{align*}
		W_{\Lambda}\defeq\{i\in[n]\vert m(\Lambda)_i-m(\Lambda_C )_i< \max\limits_{j\in [n]}\{m(\Lambda)_j-m(\Lambda_C )_j\} \}
	\end{align*}  of $[n]$ and construct the set 
	\begin{align*}
		M(h,C)\defeq\{(a_\Lambda)_{[\Lambda]\in \Gamma}\in\N^{\Gamma}\vert \sum_{\Gamma}a_\Lambda =h\text{ and } n-\sum_{\Lambda\in  I}a_\Lambda > \sharp\bigcap_{\Lambda\in I}W_{\Lambda}\text{ for all subsets } \emptyset\neq I \subseteq\Gamma \}.
	\end{align*} 
	The following result describes how this combinatorial data encodes the dimension of the image $\projbar (C)$.
\end{rem}

\begin{prop}(\cite{Li18} and  \cite[Theorem 2.18]{AL17})
\label{prop_dimension_computed_with_subsets}
	For a finite set of lattice classes $\Gamma$ and an irreducible component $C$ of $\M{\overline{\Gamma}}_{\res}$ the dimension of $\projbar (C)\subseteq \M{{\Gamma}}_{\res}$ is computed by
	\begin{align*}
		\dim(\projbar (C))= \max\{h\vert M(h,C)\neq \emptyset \}.
	\end{align*} 
\end{prop}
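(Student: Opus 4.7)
The plan is to pull the dimension question back to explicit local coordinates on $C$ using the fact that, by Lemma \ref{lem_convex_irr_com_correspond_to_lattices}, $C$ corresponds to a unique class $[\Lambda_C] \in \overline{\Gamma}$ and the induced map $C \to \Projsp(\Lambda_C)_\res$ is birational. This yields a rational parametrisation of a dense open of $\projbar(C)$ by the torus chart of $\Projsp(\Lambda_C)_\res$, reducing the computation of $\dim \projbar(C)$ to a transcendence degree question for the combined rational map into $\prod_{[\Lambda]\in\Gamma}\Projsp(\Lambda)_\res$.

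First I would fix the maximal representatives $\Lambda_C = \langle \unif^{m_i(\Lambda_C)}e_i\rangle$ and $\Lambda = \langle \unif^{m_i(\Lambda)}e_i\rangle$ chosen so that $\Lambda_C \subseteq \Lambda$ is maximal for each $[\Lambda] \in \Gamma$. In these bases the induced rational map $\Projsp(\Lambda_C) \dashrightarrow \Projsp(\Lambda)$ is diagonal rescaling by $\unif^{m_i(\Lambda)-m_i(\Lambda_C)}$ on the $i$-th coordinate; on the special fiber the $i$-th coordinate survives exactly when this difference attains its minimum, namely when $i \notin W_\Lambda$. Hence the image of $\Projsp(\Lambda_C)_\res$ in $\Projsp(\Lambda)_\res$ is the coordinate subspace spanned by $\{e_i : i \notin W_\Lambda\}$.

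For a tuple $(a_\Lambda)_{[\Lambda]\in\Gamma}\in M(h,C)$ I would interpret $a_\Lambda$ as a demand for $a_\Lambda$ algebraically independent coordinate directions in the image factor $\Projsp(\Lambda)_\res$, with $h=\sum a_\Lambda$ being the total transcendence degree over $\res$. Rewriting the defining constraint as $\sharp\bigcup_{\Lambda\in I}([n]\setminus W_{\Lambda}) > \sum_{\Lambda\in I}a_\Lambda$ for every $\emptyset\neq I\subseteq\Gamma$ reveals a strict variant of the Hall marriage condition for the set system $\{[n]\setminus W_{\Lambda}\}_{[\Lambda]\in\Gamma}$, with strictness accounting for the passage from affine to projective dimension on each factor. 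Standard transversal matroid theory then identifies $\max\{h : M(h,C)\neq \emptyset\}$ with the rank of a transversal matroid on $[n]$, and produces explicit coordinate functions on $\projbar(C)$ witnessing this many algebraically independent directions, yielding the lower bound $\dim\projbar(C) \geq \max\{h : M(h,C)\neq\emptyset\}$.

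The hard part will be the reverse inequality: ruling out additional directions of freedom coming from the interplay of coordinates living in different factors $\Projsp(\Lambda)_\res$. Concretely, I would argue that any collection of coordinate functions violating the transversal bound must satisfy an algebraic dependence dictated by the inclusion structure of the $W_\Lambda$'s. This is the content of Li's rank formula from \cite{Li18}, and is where the tropical/matroid geometry does the real work; I expect it to require a delicate analysis of the initial ideal of the generic fiber's defining equations with respect to the valuations encoded in $\Lambda_C$.
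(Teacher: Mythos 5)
Note first that the paper supplies no proof of this proposition: it is quoted wholesale from \cite{Li18} and \cite[Theorem 2.18]{AL17}, so there is no in-paper argument for you to be compared against. Your outline reads as an informal gloss on those references rather than an independent derivation, and as a proof proposal it is circular at the decisive step: you correctly flag the reverse inequality as ``the hard part,'' but then defer it to Li's rank formula from \cite{Li18} --- which is precisely the cited result one would need to establish.

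The lower bound is also asserted more confidently than your sketch supports. The coordinate ratios in the various factors $\Projsp(\Lambda)_\res$ are all pullbacks of ratios of the \emph{same} torus coordinates on $\Projsp(\Lambda_C)_\res$ (via the birational parametrisation from Lemma \ref{lem_convex_irr_com_correspond_to_lattices}), not disjoint sets of variables, so choosing a system of distinct representatives $i_\Lambda\in[n]\setminus W_\Lambda$ does not automatically produce $\sum a_\Lambda$ algebraically independent functions on $\projbar(C)$; some Jacobian or initial-degeneration argument is still needed to check independence, and that argument is again essentially the content of \cite{Li18}. Likewise, reading $\max\{h\mid M(h,C)\neq\emptyset\}$ as a transversal matroid rank requires verifying submodularity of the shifted set function $I\mapsto\sharp\bigcup_{\Lambda\in I}\bigl([n]\setminus W_\Lambda\bigr)-1$, which does not follow purely formally from submodularity of $\sharp\bigcup$. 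The overall picture you draw --- a matroid-theoretic rank attached to the coordinate subspaces $[n]\setminus W_\Lambda$ controlling $\dim\projbar(C)$ --- is the right one, but that is exactly what the cited theorem asserts, not a shortcut around it.
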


\begin{rem}
\label{rem_idea_proof_conjecture}
	Let us explain an approach to Conjecture \ref{conj_convexcl_bijection} using the proposition above. Fix an irreducible component $C$ of $\M{\overline{\bigwedge^k\Gamma^{\mathrm{st}}}}$ and take the corresponding class $[\Lambda_C]$ in $\overline{\bigwedge^k\Gamma^{\mathrm{st}}}$. Since the image $\projbar (C)$ is clearly irreducible, it is enough to show $\dim(\projbar (C))=\binom{n}{k}-1$. It is now possible to apply Proposition \ref{prop_dimension_computed_with_subsets} directly for $\Gamma= \bigwedge^k\Gamma^{\mathrm{st}}$ but we also might first do the following reduction steps and apply the proposition to a simpler set of lattice classes.\\ 
	 We recall that using Lemma \ref{lem_irr_comp_birational_map} the image of $C$ in $\M{\overline{\bigwedge^k\Gamma^{\mathrm{st}}}}_{\res}$ is clearly an irreducible component if $[\Lambda_C]$ is already in $\bigwedge^k\Gamma$. Hence we just have to check the cases where $[\Lambda_C]$ is in $\overline{\bigwedge^k\Gamma^{\mathrm{st}}}\setminus {\bigwedge^k\Gamma^{\mathrm{st}}}$. \\
	Now we can find a subset $\Gamma_C$ of $\bigwedge^k\Gamma^{\mathrm{st}}$ minimal such that $\overline{\Gamma_C}$ contains $[\Lambda_C]$. With out loss of generality we can further assume that the homothety class of $\bigwedge^k\Lambda_0$ is contained in $\Gamma_C$. By Lemma \ref{lem_irr_comp_birational_map} and Lemma \ref{lem_convex_irr_com_correspond_to_lattices} the image of $C$ in $\M{\overline{\Gamma_C}}_{\res}$ is an irreducible component. And using Lemma \ref{lem_irr_comp_birational_map} again we see that the image of $C$ in $\M{\Gamma_C}_{\res}$ is an irreducible component if and only if the image in $\M{\bigwedge^k\Gamma^{\mathrm{st}}}_{\res}$ is an irreducible component. \\
	We now reduced to compute the dimension of the image of $C$ in $\M{\Gamma_C}$. Applying Proposition \ref{prop_dimension_computed_with_subsets} to $\Gamma=\Gamma_C$, this is equivalent to $M(\binom{[n]}{k}-1,C)$ not being empty.\\
	In general the sets $\Gamma_C$ can be difficult to determine, but for $k=2$ we have the following easy description. 
	\end{rem}

\begin{lemma}
\label{lem_convex_closure_k_2}
	For $[\Lambda]\in \overline{\bigwedge^2 \Gamma^{\mathrm{st}}}$ there are classes $[\Lambda']$ and $[\Lambda'']$ in $\bigwedge^2 \Gamma^{\mathrm{st}}$ such that $[\Lambda]$ is in the convex closure $\overline{\left\{[\Lambda'],[\Lambda'']\right\}}$.
\end{lemma}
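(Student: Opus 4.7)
The plan is to exploit the bijection of Lemma~\ref{lem_image_determines_lattice} between classes $[\Lambda]$ in $\overline{\bigwedge^2\Gamma^{\mathrm{st}}}$ and their image-subsets $I(\Lambda)\in\binom{[n]}{2}$, together with the explicit computation of the image of $\unif^l\bigwedge^2\Lambda_i$ as $V_{I_i^l}$ carried out in the proof of Lemma~\ref{lem_images_are_linear_subsp}. Since a class in $\overline{\bigwedge^2\Gamma^{\mathrm{st}}}$ is determined by its image in $\bigwedge^2\Lambda_{0,\res}$, it will suffice to exhibit a class in some pairwise convex closure $\overline{\{[\bigwedge^2\Lambda_{i_1}],[\bigwedge^2\Lambda_{i_2}]\}}$ whose image equals $V_{I(\Lambda)}$.

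First I would write $I:=I(\Lambda)=\{a,b\}$ with $a<b$. The main combinatorial step---and the conceptual heart of the argument---is to express $I$ as the componentwise minimum of at most two subsets of the form $I_i^l$. For $k=2$ this is elementary: if $b=a+1$ (one interval), then $I$ is already of the form $I_i^l$ (namely $I_{a+2}^2$ when $a\leq n-3$, and $I_0^0$ when $b=n-1$); if $b>a+1$, then setting $J_1:=\{b-1,b\}$ and $J_2:=\{a,n-1\}$, both of which are of the form $I_i^l$, one checks directly that $\min(J_1,J_2)=\{a,b\}=I$. This two-term decomposition is exactly what breaks down for general $k$, where $I$ can have up to $k$ maximal intervals and the inductive procedure in the proof of Lemma~\ref{lem_every_subset_is_a_minimum} may require up to $k$ summands; this is the obstruction that restricts the present lemma to $k=2$.

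Next I would take $\Lambda^{(r)}$ to be the maximal representative of $[\bigwedge^2\Lambda_{i_r}]$ contained in $\bigwedge^2\Lambda_0$, so that by Lemma~\ref{lem_images_are_linear_subsp} one has $\im{\Lambda^{(r)}_{\res}\to\bigwedge^2\Lambda_{0,\res}}=V_{J_r}$. The image-of-intersection formula from the proof of the same lemma then yields
\[
\im{(\Lambda^{(1)}\cap\Lambda^{(2)})_{\res}\to\bigwedge^2\Lambda_{0,\res}}=V_{J_1}\cap V_{J_2}=V_{\min(J_1,J_2)}=V_I,
\]
and $\Lambda^{(1)}\cap\Lambda^{(2)}$ is still the maximal representative of its class in $\bigwedge^2\Lambda_0$ because $I\leq J_1,J_2$ forces $e_I$ to survive in the image.

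Finally, Remark~\ref{rem_ex_losure_two_lat} places $[\Lambda^{(1)}\cap\Lambda^{(2)}]$ in $\overline{\{[\bigwedge^2\Lambda_{i_1}],[\bigwedge^2\Lambda_{i_2}]\}}$, and Lemma~\ref{lem_image_determines_lattice} then forces $[\Lambda^{(1)}\cap\Lambda^{(2)}]=[\Lambda]$ since the two classes share the same image. Setting $[\Lambda']:=[\bigwedge^2\Lambda_{i_1}]$ and $[\Lambda'']:=[\bigwedge^2\Lambda_{i_2}]$ completes the proof; in the single-interval case one simply takes $[\Lambda']=[\Lambda'']=[\bigwedge^2\Lambda_i]$ for the appropriate $i$.
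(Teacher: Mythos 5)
Your overall strategy---passing to the image-subsets $I(\Lambda)\in\binom{[n]}{2}$ via Lemma~\ref{lem_image_determines_lattice}, decomposing $I$ as a min of two subsets of the form $I_i^l$, and pulling this back to lattices---is genuinely different from the paper's argument, which instead writes $\Lambda$ as a minimal intersection $\bigcap_{i\in I}\unif^{n_i}\bigwedge^2\Lambda_i$ of terms from $\bigwedge^2\Gamma^{\mathrm{st}}$ and bounds $\sharp I$ directly by inclusion arguments. Your combinatorics ($\min(J_1,J_2)=I$ with $J_1=\{b-1,b\}$, $J_2=\{a,n-1\}$) is correct. However, there is a genuine gap in the step where you translate the decomposition back into lattices.

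You set $\Lambda^{(2)}$ to be the maximal representative of $[\bigwedge^2\Lambda_{a+1}]$ contained in $\bigwedge^2\Lambda_0$ and assert that its image is $V_{J_2}=V_{\{a,n-1\}}$. This is false as soon as $a\geq 1$. The maximal representative of $[\bigwedge^2\Lambda_i]$ inside $\bigwedge^2\Lambda_0$ is $\unif^{\min\{k,i\}}\bigwedge^2\Lambda_i$, and its image is $V_{I_i^{\min\{k,i\}}}$; the parameter $l$ in $I_i^l$ is not a free choice. For $i=a+1\geq 2$ and $k=2$ this gives $V_{I_{a+1}^2}=V_{\{a-1,a\}}$, not $V_{\{a,n-1\}}$. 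The set $J_2=I_{a+1}^1$ is instead the image of the lattice $\unif\bigwedge^2\Lambda_{a+1}\cap\bigwedge^2\Lambda_0$, which represents a \emph{different} class, namely an intermediate class in $\overline{\{[\bigwedge^2\Lambda_0],[\bigwedge^2\Lambda_{a+1}]\}}$. With your choice of $\Lambda^{(2)}$, the intersection $\Lambda^{(1)}\cap\Lambda^{(2)}$ has image $V_{\min(\{b-1,b\},\{a-1,a\})}=V_{\{a-1,a\}}$, so you produce $[\Lambda_{\{a-1,a\}}]$ rather than $[\Lambda_{\{a,b\}}]$; the argument collapses for every $I=\{a,b\}$ with $1\leq a$ and $a+1<b$ (for instance $I=\{1,3\}$, $n=6$).

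The idea can be salvaged. Take $\Lambda^{(1)}=\unif^2\bigwedge^2\Lambda_{b+1}$ (the genuine maximal representative, image $V_{\{b-1,b\}}$) and $\Lambda^{(2)}=\unif\bigwedge^2\Lambda_{a+1}\cap\bigwedge^2\Lambda_0$ (image $V_{\{a,n-1\}}$, but \emph{not} a representative of $[\bigwedge^2\Lambda_{a+1}]$). Their intersection has image $V_I$. Since $\Lambda^{(1)}\subseteq\bigwedge^2\Lambda_0$, the $\bigwedge^2\Lambda_0$ term is absorbed, so $\Lambda^{(1)}\cap\Lambda^{(2)}=\unif^2\bigwedge^2\Lambda_{b+1}\cap\unif\bigwedge^2\Lambda_{a+1}$, which is homothetic to $\unif\bigwedge^2\Lambda_{b+1}\cap\bigwedge^2\Lambda_{a+1}$ and hence, by Remark~\ref{rem_ex_losure_two_lat}, lies in $\overline{\{[\bigwedge^2\Lambda_{a+1}],[\bigwedge^2\Lambda_{b+1}]\}}$. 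The uniqueness of Lemma~\ref{lem_image_determines_lattice} then identifies it with $[\Lambda]$. But as written, your sentence ``take $\Lambda^{(r)}$ to be the maximal representative of $[\bigwedge^2\Lambda_{i_r}]$\,\dots\ so that $\im{\Lambda^{(r)}_{\res}}=V_{J_r}$'' is the point where the proof breaks.
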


\begin{proof}
	Take $[\Lambda]$ in $\overline{\bigwedge^2 \Gamma^{\mathrm{st}}}$ arbitrary. Then $\Lambda$ is of the form $\bigcap_{i\in I} \unif^{n_i}\bigwedge^2\Lambda_i$ for some $I\subseteq[n]$ and without loss of generality we have $0\in I$, $n_0=0$ and $n_i> 0$ for all $i\in I\setminus\{0\}$. Further assume that $I$ is minimal, i.e. $\Lambda$ is properly contained in $\bigcap_{i\in J} \unif^{n_i}\bigwedge^2\Lambda_i$ for all proper subsets $J\subset I$. Now we note that for all $i\in I$ we have $\unif^{n_i}\bigwedge^2\Lambda_i\subseteq \bigwedge^2\Lambda_0$ if $n_i\geq 2$. Using the minimality of $I$ we conclude that $n_i=1$ for all $i\in I\setminus \{0\}$. But since $p \bigwedge^2\Lambda_i\subseteq \unif\bigwedge^2\Lambda_j$ for $i\leq j$ we again see by the minimality of $I$ that $\sharp I\leq 2$.
\end{proof}

We can now prove Conjecture \ref{conj_convexcl_bijection} for $k=2$.

\begin{proof}[Proof of Proposition \ref{prop_conj_k_2} ]
	Fix an irreducible component $C$ of $\M{\bigwedge^2\Gamma^{\mathrm{st}}}$. Following the idea described in Remark \ref{rem_idea_proof_conjecture} and Lemma \ref{lem_convex_closure_k_2} we just have to prove $M(\binom{n}{2}-1,C)\neq \emptyset$ for\linebreak $\Gamma=\left\{[\bigwedge^2 \Lambda_0],[\bigwedge^2\Lambda_i]\right\}$ for some $i\in [n]$.\\
	But for $\unif\bigwedge^2 \Lambda_0\cap\bigwedge^2\Lambda_i=\langle \unif^{m^{i}_I}e_I\vert m^{i}_I= \max\{\sharp(I\cap [i]),1\} \rangle_{I\in \binom{[n]}{k} }$ hence $W_{\bigwedge^2\Lambda_0}= \left\{I\vert m^i_{I}>1 \right\}$ and $W_{\bigwedge^2\Lambda_i}= \left\{I\vert m^i_{I}<1 \right\}$. Now for $a_{\bigwedge^2 \Lambda_0}\defeq \sharp W_{\bigwedge^2 \Lambda_i}$ and $a_{\bigwedge^2 \Lambda_i}\defeq \sharp W_{\bigwedge^2 \Lambda_0}+ \sharp\left\{I\vert m^i_{I}=1 \right\}-1$ we have an element $(a_{\bigwedge^2 \Lambda_0},a_{\bigwedge^2 \Lambda_i})\in M(\binom{n}{2}-1,C)$.

\end{proof}

\subsection{The Pl\"ucker embedding}

To relate the local model as a Mustafin variety to the well understood Mustafin varieties for projective spaces, we want to use the Pl\"ucker embedding. For every lattice $\Lambda_i$ in $\Gamma^{\mathrm{st}}$ we consider the Pl\"ucker embedding 
\begin{align*}
	\mathrm{Pl}_{\Lambda_i} \colon \gr{\Lambda_i }\longrightarrow \Projsp(\bigwedge^k \Lambda_i )
\end{align*}
and the projections $\projectgr{i}\colon \Mgr{k}{\Gamma^{\mathrm{st}}}\rightarrow \gr{\Lambda_i}$ and $\project{i}\colon \M{\bigwedge^k\Gamma^{\mathrm{st}}}\rightarrow \Projsp(\bigwedge^k \Lambda_i )$.

Together we now get a commutative diagram
\begin{align*}
	\xymatrix{
	 \Mgr{k}{\Gamma^{\mathrm{st}}} \ar[r]^{\mathrm{Pl}_{\Gamma^{\mathrm{st}}}} \ar@{_(->}[d]_{\prod \projectgr{i}} & \M{\bigwedge^k\Gamma^{\mathrm{st}}} \ar@{_(->}[d]^{\prod \project{i}}\\
	 \prod\limits_{[\Lambda_i]\in \Gamma^{\mathrm{st}} } \gr{\Lambda_i} \ar[r]^{\prod \mathrm{Pl}_{\Lambda_i} } & \prod\limits_{[\bigwedge^k \Lambda_i]\in \bigwedge^k \Gamma^{\mathrm{st}} } \Projsp(\bigwedge^k \Lambda_i )
	}
\end{align*}
where we get the map $\mathrm{Pl}_{\Gamma^{\mathrm{st}}}$ since we have it on the generic fiber and $\Mgr{k}{\Gamma^{\mathrm{st}}}$ is flat.
\begin{rem}
	 In \cite[Proposition 4.5]{Ha11} (or \cite[discussion after Definition 2.1]{Hae14}) it was claimed, that the diagram above constructed for every convex set of lattice classes $\Gamma$ is cartesian, i.e. inside $\prod_{[\Lambda]\in \Gamma}  \Projsp(\bigwedge^k \Lambda )$ the Mustafin variety $\Mgr{k}{\Gamma}$ is the intersection of $\prod_{[\Lambda]\in \Gamma} \gr{\Lambda}$ and $\M{\bigwedge^k\Gamma}$. \\
	 Let us illustrate in the example $\{[\Lambda_0],[\Lambda_1]\}\subseteq\Gamma^{\mathrm{st}}$ that this claim is not true.
	  To give an explicit example for $n=4$ and $k=2$ let us identify $\Lambda_{i}$ for $i\in \{0,1\}$ with $\rg^4$ with basis $\{e_i\}_{i=0,\dots ,  3}$. The map $\Lambda_{0}\rightarrow\Lambda_{1}$ is now given by multiplying the basis vector $e_0$ with $\unif$. Further we use the order $e_0\wedge e_1, e_0\wedge e_2,e_0\wedge e_3, e_1\wedge e_2, e_1\wedge e_3, e_2\wedge e_3 $ for the basis of $\bigwedge^2 \Lambda_{i}= \bigwedge^2 \rg^4$. \\ 
	  First we recall the moduli descriptions of the two Mustafin varieties. For an $\rg$-algebra $R$ the $R$-valued points of $\Mgr{2}{\{[\Lambda_0],[\Lambda_1]\}}$ are given by 
	  \begin{align*}
	  	\{(\mathcal{F}_0,\mathcal{F}_1)\in \mathrm{Gr}_{2,4}(R) ^2:  \mathrm{diag}(\unif,1,1,1)\mathcal{F}_0 \subseteq \mathcal{F}_1, \mathrm{diag}(1,\unif,\unif,\unif)\mathcal{F}_1 \subseteq \mathcal{F}_0 \}
	  \end{align*}
	  where the flatness of the moduli functor was proven in \cite{G01}. Since $\left\{\left[\bigwedge^2 \Lambda_0\right],\left[\bigwedge^2 \Lambda_1\right]\right\}$ is convex, also $\M{\left\{\left[\bigwedge^2 \Lambda_0\right],\left[\bigwedge^2 \Lambda_1\right]\right\}}$ has a moduli description. It is cut out of $\Projsp^5(R)^2$ by conditions imposed by the two inclusions $\bigwedge^2 \Lambda_0\subseteq \bigwedge^2 \Lambda_1$ and $\bigwedge^2 \Lambda_1\subseteq \unif^{-1}\bigwedge^2 \Lambda_0$ (see \cite[Definition 4]{FA01}). The $R$-valued points of $\M{\{[\bigwedge^2 \Lambda_0],[\bigwedge^2\Lambda_1]\}}$ are given by
	  \begin{align*}
	  	 \{(\mathcal{L}_0,\mathcal{L}_1)\in \Projsp^5(R)^2:  \mathrm{diag}(\unif,\unif,\unif,1,1,1)\mathcal{L}_0 \subseteq \mathcal{L}_1, \mathrm{diag}(1,1,1,\unif,\unif,\unif)\mathcal{L}_1 \subseteq \mathcal{L}_0 \}.
	  \end{align*}  
	  Consider the pair $(\mathcal{F}_0, \mathcal{F}_1)$ in $ \mathrm{Gr}_{2,4}(\res) ^2$ with $\mathcal{F}_0=\langle e_0,e_{3} \rangle$ and $\mathcal{F}_1=\langle e_1,e_{2} \rangle$. This pair is not point in $\Mgr{2}{\{[\Lambda_0],[\Lambda_1]\}}(\res)$. But the pair $\left(\bigwedge^2 \mathcal{F}_0,\bigwedge^2 \mathcal{F}_1\right)= \left((0:0:1:0:0:0),(0:0:0:1:0:0)\right)$ is in $\M{\{[\bigwedge^2\Lambda_0],[\bigwedge^2\Lambda_1]\}}(\res)$. Hence we have an element in 
	  \begin{align*}
	  	\M{\{[\bigwedge^2\Lambda_0],[\bigwedge^2\Lambda_1]\}}(\res) \cap \prod_{i=0,1} \gr{\Lambda_i}(\res)
	  \end{align*}
	  which is not in $\Mgr{2}{\{[\Lambda_0],[\Lambda_1]\}}(\res)$. \\
	 But assuming Conjecture \ref{conj_convexcl_bijection} we can still get a relation in the following sense.
\end{rem}

\begin{prop}
\label{porp_pluecker_bijection_irr_comp}
	Assume Conjecture \ref{conj_convexcl_bijection}. Then the Pl\"ucker embedding 
	\begin{align*}
		\mathrm{Pl}_{\Gamma^{\mathrm{st}}}\colon\Mgr{k}{\Gamma^{st}}\longrightarrow \M{\bigwedge^k\Gamma^{st}}
	\end{align*}
	induces a bijection
	\begin{align*}
		\left\{C\middle\vert C \text{ irr. component of } \M{\bigwedge^k\Gamma^{\mathrm{st}}}_{\res} \right\} &\longrightarrow \left\{C\middle\vert C \text{ irr. component of } \Mgr{k}{\Gamma^{\mathrm{st}}}_{\res} \right\}\\
		C&\longmapsto C\cap \Mgr{k}{\Gamma^{\mathrm{st}}}_{\res}
	\end{align*}
	
	between the sets of irreducible components of the special fibres.
\end{prop}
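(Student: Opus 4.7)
The strategy is to set up an explicit bijection $C_I\leftrightarrow C^{\mathrm{gr}}_I$ indexed by $I\in\binom{[n]}{k}$ by comparing images under the two projections to the base, and then verify that this bijection is implemented by the intersection map.

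First I would apply Theorem \ref{thm_components_linear_subsp} (which depends on Conjecture \ref{conj_convexcl_bijection}) to index the irreducible components of $\M{\bigwedge^k\Gamma^{\mathrm{st}}}_{\res}$ by $\binom{[n]}{k}$: let $C_I$ denote the unique component with $\project{0}(C_I)=\Projsp(V_I)$. Dually, by the classical structure theory of the linear local model (cf. \cite{G01}), the irreducible components of $\Mgr{k}{\Gamma^{\mathrm{st}}}_{\res}$ are also indexed by $\binom{[n]}{k}$; let $C^{\mathrm{gr}}_I$ be the one with $\projectgr{0}(C^{\mathrm{gr}}_I)=X_I$. In particular both sets have cardinality $\binom{n}{k}$.

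Second, I would establish the inclusion $\mathrm{Pl}_{\Gamma^{\mathrm{st}}}(C^{\mathrm{gr}}_I)\subseteq C_I$. The commutative square $\project{0}\circ\mathrm{Pl}_{\Gamma^{\mathrm{st}}}=\mathrm{Pl}_{\Lambda_0}\circ\projectgr{0}$ together with Lemma \ref{lem_sub_var_under_pl} yields $\project{0}(\mathrm{Pl}_{\Gamma^{\mathrm{st}}}(C^{\mathrm{gr}}_I))=\mathrm{Pl}_{\Lambda_0}(X_I)\subseteq\Projsp(V_I)$. By the second clause of Theorem \ref{thm_components_linear_subsp} the irreducible set $\mathrm{Pl}_{\Gamma^{\mathrm{st}}}(C^{\mathrm{gr}}_I)$ lies in $\bigcup_{J\leq I}C_J$, hence in a single $C_J$ with $J\leq I$. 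If $J<I$ held, then $X_I\subseteq\mathrm{Pl}_{\Lambda_0}^{-1}(\Projsp(V_J))=X_J\subsetneq X_I$, a contradiction; so $J=I$. Hence $C^{\mathrm{gr}}_I\subseteq C_I\cap\Mgr{k}{\Gamma^{\mathrm{st}}}_{\res}$ for every $I$, and since the $C_I$'s are distinct the assignment $C_I\mapsto C^{\mathrm{gr}}_I$ is an injection, and by the cardinality count in step one it is already a bijection.

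Third, to upgrade this to the set-theoretic identity $C_I\cap\Mgr{k}{\Gamma^{\mathrm{st}}}_{\res}=C^{\mathrm{gr}}_I$, the inclusion ``$\supseteq$'' is the content of step two. For ``$\subseteq$'' I would decompose
\[
C_I\cap\Mgr{k}{\Gamma^{\mathrm{st}}}_{\res}=\bigcup_{J\in\binom{[n]}{k}}\bigl(C_I\cap C^{\mathrm{gr}}_J\bigr)
\]
and show that $C^{\mathrm{gr}}_J\not\subseteq C_I$ for $J\neq I$. Running step two with the projection $\project{i}$ in place of $\project{0}$ and reindexing components of both Mustafin varieties with respect to the basepoints $\bigwedge^k\Lambda_i$ and $\gr{\Lambda_i}$, any containment $C^{\mathrm{gr}}_J\subseteq C_I$ forces the ``shifted'' index of $C^{\mathrm{gr}}_J$ to be $\leq$ that of $C_I$ in every one of these indexings; as the Bruhat order on $\binom{[n]}{k}$ undergoes a nontrivial cyclic reshuffling with $i$, only $J=I$ can satisfy all these constraints simultaneously, proving the reverse inclusion.

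The main obstacle is this last step. Because the codimension of $\Mgr{k}{\Gamma^{\mathrm{st}}}_{\res}$ in $\M{\bigwedge^k\Gamma^{\mathrm{st}}}_{\res}$ can be large, dimension comparison alone does not prevent a lower-dimensional $C^{\mathrm{gr}}_J$ from sitting inside a foreign $C_I$ with $J\neq I$. Ruling this out really requires combining the constraints obtained from all the projections $\project{i}$, together with a clean bookkeeping of how the indexing of components on both sides transforms under a change of basepoint.
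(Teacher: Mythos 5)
Your first two steps are sound and mirror the paper's argument for the inclusion $C^{\mathrm{gr}}_I\subseteq C_I\cap\Mgr{k}{\Gamma^{\mathrm{st}}}_{\res}$, but the reverse inclusion contains a genuine gap, and you yourself flag it. There are actually two problems. First, even granting your claim that $C^{\mathrm{gr}}_J\not\subseteq C_I$ for $J\neq I$, the decomposition $C_I\cap\Mgr{k}{\Gamma^{\mathrm{st}}}_{\res}=\bigcup_J(C_I\cap C^{\mathrm{gr}}_J)$ only tells you that each $C_I\cap C^{\mathrm{gr}}_J$ with $J\neq I$ is a \emph{proper} closed subset of $C^{\mathrm{gr}}_J$; it does not place those leftover pieces inside $C^{\mathrm{gr}}_I$, so you would not obtain the set-theoretic identity $C_I\cap\Mgr{k}{\Gamma^{\mathrm{st}}}_{\res}=C^{\mathrm{gr}}_I$ needed for the map to land in the set of irreducible components. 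Second, the claim itself --- that the constraints from all basepoints $i\in[n]$ pin down $J=I$ --- is not proven; you describe it as a ``nontrivial cyclic reshuffling'' of Bruhat order but give no argument.

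The paper resolves exactly this difficulty with a sharper tool that you gesture towards but do not invoke: Lemma \ref{lem_schubertvar_determined_by_projections}, proved via Deodhar's theorem (Theorem \ref{thm_deo}) on Coxeter groups, states that an irreducible component $C^{\mathrm{gr}}_I$ of $\Mgr{k}{\Gamma^{\mathrm{st}}}_{\res}$ is recovered as the intersection $\bigl(\prod_i\projectgr{i}\bigr)^{-1}\bigl(\prod_i\projectgr{i}(C^{\mathrm{gr}}_I)\bigr)\cap\Mgr{k}{\Gamma^{\mathrm{st}}}_{\res}$. Combining this with Lemma \ref{lem_sub_var_under_pl} applied at every basepoint $\Lambda_i$ (not just $\Lambda_0$) rewrites the right-hand side so that it visibly contains $C^{\mathrm{pr}}_I\cap\Mgr{k}{\Gamma^{\mathrm{st}}}_{\res}$, giving the reverse inclusion directly without any case analysis over $J$. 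This Deodhar-type statement is precisely the ``clean bookkeeping of how the indexing transforms under change of basepoint'' that you correctly identified as the missing piece; without it (or something playing the same role) the argument does not close.
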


Before we prove this proposition let us discuss some properties of the irreducible components of $\Mgr{k}{\Gamma^{st}}_{\res}$. The extended affine Weyl group $\widetilde{W}$ for $\gln$ is the semi direct product $W\times \Z^n$ of the finite Weyl group $W=S_n$ with $\Z^n$. The extended affine Weyl group is not a Coxeter group but we equipped with the Bruhat order induced by the Coxeter group $\affw$. Fix the minuscule coweight $\mu=(1^k,0^{n-k})$. A detailed discussion of these groups see \cite{KR00}. \\
	For a Coxeter group $(\mathcal{W},\mathcal{S})$ and some subset $S\subseteq\mathcal{S}$ denote by $\mathcal{W}_S$ the subgroup of $\mathcal{W}$ generated by $S$. For an element $x\in \mathcal{W}$ we denote by $x^S$ the unique element of minimal length in the orbit $\mathcal{W}_S x$. The set of these elements endowed with the quotient group structure is denoted by $\mathcal{W}^S$.\\
\begin{lemma}
\label{lem_schubertvar_determined_by_projections}
	An irreducible component $C$ of $\Mgr{k}{\Gamma^{st}}_{\res}$ is the intersection of $\prod \projectgr{i}(C) $ with $\Mgr{k}{\Gamma^{st}}_{\res}$ taken in $\prod_{i\in[n]}\gr{\Lambda_i}_{\res}$.
\end{lemma}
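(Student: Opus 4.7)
The inclusion $C \subseteq \left(\prod_i \projectgr{i}(C)\right) \cap \Mgr{k}{\Gamma^{st}}_{\res}$ is immediate from the definition of the projections, so the task reduces to the reverse inclusion. Since $\Mgr{k}{\Gamma^{st}}_{\res}$ is equidimensional of dimension $k(n-k)$ and $C$ already has this top dimension, any irreducible component $D$ of $\left(\prod_i \projectgr{i}(C)\right) \cap \Mgr{k}{\Gamma^{st}}_{\res}$ of top dimension must itself be an irreducible component of $\Mgr{k}{\Gamma^{st}}_{\res}$ satisfying $\projectgr{i}(D) \subseteq \projectgr{i}(C)$ for every $i$. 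It therefore suffices to show that these containments force $D = C$.

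My plan is to exploit the classical parametrization of the irreducible components of $\Mgr{k}{\Gamma^{st}}_{\res}$ by subsets $I \in \binom{[n]}{k}$, equivalently by the minuscule $W$-orbit $W\mu$ of $\mu = (1^k, 0^{n-k})$. For each such $I$, the component $C_I$ is the closure of the Iwahori orbit through a distinguished $T$-fixed tuple $(\mathcal{F}_i)_{i \in [n]}$, where $\mathcal{F}_i = \langle e_j : j \in \sigma^i(I)\rangle$ for a combinatorially explicit shift $\sigma^i(I) \in \binom{[n]}{k}$. The projection $\projectgr{i}(C_I)$ is then the Schubert variety $X_{\sigma^i(I)}$ in $\gr{\Lambda_i}_{\res}$, and by Lemma \ref{lem_irr_comp_birational_map} the induced map $C_I \to X_{\sigma^i(I)}$ is birational; in particular $\projectgr{i}(C_I)$ has the same dimension as $C_I$.

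Consequently, writing $C = C_I$ and $D = C_J$, the containments $\projectgr{i}(C_J) \subseteq \projectgr{i}(C_I)$ are containments of Schubert varieties of equal dimension, hence equalities $\sigma^i(J) = \sigma^i(I)$ for all $i \in [n]$. The assignment $I \mapsto (\sigma^i(I))_{i \in [n]}$ is injective — indeed the coordinate at $i = 0$ already recovers $I$ up to the trivial reindexing — so $J = I$ and thus $D = C$.

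The main obstacle is the combinatorial identification of $\projectgr{i}(C_I)$ as the explicit Schubert variety $X_{\sigma^i(I)}$, which rests on tracking the Iwahori orbit through the $T$-fixed point parametrized by $I$ along the standard lattice chain $\Gamma^{\mathrm{st}}$. Given the analysis of local models in the linear case due to G\"ortz (\cite{G01}), together with the minuscule-orbit description of the components coming from $\adm$, this identification is essentially standard, and the lemma then follows without further difficulty.
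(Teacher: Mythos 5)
Your argument handles only the top-dimensional part of the intersection and therefore does not establish the reverse inclusion. The set $\bigl(\prod_i \projectgr{i}(C)\bigr) \cap \Mgr{k}{\Gamma^{\mathrm{st}}}_{\res}$ is a priori a closed subset whose irreducible components may have varying dimension; your dimension count shows that any top-dimensional component must equal $C$, but says nothing about components of dimension $< k(n-k)$. Concretely, for a second component $C_J \neq C$ of $\Mgr{k}{\Gamma^{\mathrm{st}}}_{\res}$, the set $C_J \cap \prod_i \projectgr{i}(C)$ could be a proper nonempty closed subset of $C_J$ not contained in $C$, giving an extra lower-dimensional piece of the intersection; ruling this out is precisely the content of the lemma, and cannot follow from a dimension count alone.

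The paper's proof avoids this issue by working equivariantly. It embeds $\Mgr{k}{\Gamma^{\mathrm{st}}}_{\res}$ into the affine flag variety $\mathcal{F}$, so that irreducible components become Schubert varieties $\schuvar{w}$ and the projections $\projectgr{i}$ become Iwahori-equivariant maps to affine Grassmannians $\mathcal{G}_i$. Iwahori-equivariance forces each preimage $\pr_i^{-1}(\pr_i(\schuvar{w}))$ to be a union of Schubert cells of \emph{all} dimensions, indexed by those $w' \in \affw$ whose coset modulo $\affw_{S_i}$ lies below that of $w$. The full intersection $\bigcap_i \pr_i^{-1}(\pr_i(\schuvar{w}))$ is again Iwahori-stable, and Deodhar's Lemma (Theorem \ref{thm_deo}) — $x \le y$ if and only if $x^{S_i} \le y^{S_i}$ for all $i$ when $\bigcap_i S_i = \emptyset$ — shows that every cell in this intersection satisfies $w' \le w$, so the intersection equals $\schuvar{w}$ exactly. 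This is the combinatorial input your sketch replaces with a dimension argument, and it is needed precisely to control the lower-dimensional strata. To repair your proof you would need either to invoke the Iwahori-invariance and Bruhat-order argument directly, or to prove by other means that no irreducible component $C_J$ with $J \neq I$ meets $\prod_i \projectgr{i}(C_I)$ outside of $C_I$.
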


The idea for this lemma is to use a more general Lemma below which describes the Schubert varieties in the affine flag variety $\mathcal{F}$ using their projections $\pr_i$ to the affine Grassmannians $\mathcal{G}_i$ for $i\in[n]$. To apply this lemma we use the embedding $\Mgr{k}{\Gamma^{st}}_{\res}=\loc_{\res}\subseteq\mathcal{F}$ constructed in \cite[chapter 4.2]{G01} and the observation that under this embedding the irreducible components of $\Mgr{k}{\Gamma^{st}}_{\res}$ are Schubert varieties in $\mathcal{F}$ see \cite[Proposition 4.5 (iii)]{G01}.\\
Now fix a Schubert variety $\schuvar{w}$ in $\mathcal{F}$ for some $w\in \widetilde{W}$. This variety is the union $\bigcup_{w'\leq w}\schucell{w'}$ of Schubert cells. Since the inverse image $\pr_i^{-1}\left(\pr_i(\schuvar{w})\right)$ of the image of $\schuvar{w}$ in $\mathcal{G}_i$ is invariant under the Iwahori action for all $i$, it is a union of Schubert cells. In particular this union is precisely the union over all elements $w'$ in $\widetilde{W}$ congruent modulo $\affw_{{S_i}}$ to an element $w''\leq w$. With out loss of generality we can assume that $w$ is in $\affw$ and hence also $w''$ and $w'$ are in $\affw$. In particular for an Schubert variety $X_I$ of $\gr{\Lambda_0}_\res$ the inverse image $\left(\projectgr{0}\right)^{-1}(X_I)$ is the union $\bigcup_{I'\leq I} C_I$ of irreducible components.\\ 
 All the embeddings and projections above are compatible, i.e. we have the following commutative diagram:
	\begin{align*}
		\xymatrix{
		\Mgr{k}{\Gamma^{st}}_{\res} \ar@{^(->}[r]\ar[d]_{\projectgr{i}} & \mathcal{F}\ar[d]^{\pr_i}\\
		\gr{\Lambda_i}_{\cres}\ar@{^(->}[r]& \mathcal{G}_i
		}
	\end{align*}
	In particular it is enough to apply the following theorem and prove the lemma below on Schubert varieties of affine flag varieties.

\begin{thm}\cite[Lemma 3.6]{D77}
\label{thm_deo}
	Fix a Coxeter group $(\mathcal{W},\mathcal{S})$ and a family of subsets $\{S_i\}_i$ of $\mathcal{S}$ with $\bigcap_{i}S_i=\emptyset$. For $x,y$ in $\mathcal{W}$ we have $x\leq y$ if and only if $x^{S_i}\leq y^{S_i}$ for all $i$ in $[n]$.
\end{thm}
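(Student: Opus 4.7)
The ``only if'' direction is a standard property of parabolic quotients: the map $x \mapsto x^{S_i}$ is order-preserving for each $i$. This follows from the subword characterization of Bruhat order, since any reduced expression for $y$ can be rearranged (via braid moves within $\mathcal{W}_{S_i}$) so that a prefix is a reduced word for $y_{S_i}$ and the suffix is a reduced word for $y^{S_i}$; a subword witnessing $x \leq y$ then restricts to one witnessing $x^{S_i} \leq y^{S_i}$ (cf.\ Bj\"orner--Brenti, Proposition 2.5.1).

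For the non-trivial converse, the plan is to induct on $\ell(y)$. When $y = e$, we have $y^{S_i} = e$ and hence $x^{S_i} = e$, so $x \in \mathcal{W}_{S_i}$ for every $i$. Combining the classical identity $\bigcap_i \mathcal{W}_{S_i} = \mathcal{W}_{\bigcap_i S_i}$ (valid in any Coxeter group, due to Tits) with the hypothesis $\bigcap_i S_i = \emptyset$ forces $x = e$, settling the base case. For the inductive step, fix a simple reflection $s \in \mathcal{S}$ with $sy < y$. The hypothesis $\bigcap_i S_i = \emptyset$ guarantees an index $i_0$ with $s \notin S_{i_0}$, ensuring that $s$ acts non-trivially on at least one parabolic quotient. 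The lifting property now reduces $x \leq y$ to $x' \leq sy$, where $x' \defeq \min(x, sx)$, and since $\ell(sy) < \ell(y)$, the induction hypothesis further reduces this to verifying $(x')^{S_i} \leq (sy)^{S_i}$ for every $i$.

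The verification splits according to whether $s \in S_i$ or not. If $s \in S_i$, then $x'$ and $x$ lie in the same right $\mathcal{W}_{S_i}$-coset, and likewise $sy$ and $y$; hence $(x')^{S_i} = x^{S_i}$ and $(sy)^{S_i} = y^{S_i}$, and the desired inequality is precisely the assumption. If $s \notin S_i$, one appeals to the explicit description of how left multiplication by a simple reflection interacts with parabolic projection: $(sy)^{S_i}$ is either $s \cdot y^{S_i}$ or $y^{S_i}$, according to whether $s y^{S_i} < y^{S_i}$ in $\mathcal{W}$, and the same dichotomy governs $(x')^{S_i}$. In each sub-case the required inequality follows from the assumption $x^{S_i} \leq y^{S_i}$ by one further application of the lifting property, now carried out inside $\mathcal{W}^{S_i}$.

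The main obstacle is precisely this last case analysis in the inductive step: one must carefully track how a left descent of $y$ propagates through each parabolic projection when the descent is not contained in the corresponding $S_i$. The role of $\bigcap_i S_i = \emptyset$ is twofold; in the base case it collapses the intersection of parabolics to the trivial group, and in the inductive step it supplies an index where the chosen descent $s$ is genuinely visible in the parabolic quotient, so that the induction actually makes progress. An alternative, more combinatorial, route would be to work directly with the subword characterization and show that compatible subwords for the $x^{S_i}$ inside reduced expressions for the $y^{S_i}$ can be coherently lifted to a subword of a reduced expression for $y$ realizing $x$, again using the emptiness hypothesis to cover every simple reflection occurring in $y$.
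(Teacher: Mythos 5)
The paper never proves this statement at all: it is imported verbatim from Deodhar \cite[Lemma 3.6]{D77}, so there is no in-paper argument to compare against and your proposal has to stand on its own. Its first half does stand: order-preservation of the projections $x\mapsto x^{S_i}$, the base case $y=e$ (via $\bigcap_i\mathcal{W}_{S_i}=\mathcal{W}_{\bigcap_i S_i}=\{e\}$), and the reduction of $x\leq y$ to $\min(x,sx)\leq sy$ for a left descent $s$ of $y$ are all correct.

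The genuine gap is the case $s\notin S_i$ of your inductive step. With the paper's convention, $x^{S}$ is the minimal-length element of $\mathcal{W}_S x$, i.e.\ the quotient is by the parabolic acting on the \emph{left} --- the same side on which you multiply by $s$. The exchange-type lemma you invoke (``the projection of $sy$ is either $s\cdot y^{S_i}$ or $y^{S_i}$'') is valid only when the simple reflection acts on the side \emph{opposite} to the coset; in the same-side situation it fails. Concretely, in $\mathcal{W}=S_3$ with simple reflections $s_1,s_2$, take $S_i\supseteq J=\{s_1\}$, $y=s_2s_1$, $s=s_2$ (so $sy<y$ and $s\notin J$): then $y^{J}=s_2s_1$ while $(sy)^{J}=e$, which is neither $y^{J}$ nor $s\,y^{J}=s_1$. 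Worse, the per-index implication your induction actually needs, namely $x^{S_i}\leq y^{S_i}\Rightarrow\bigl(\min(x,sx)\bigr)^{S_i}\leq(sy)^{S_i}$, is false: with the same $J,y,s$ take $x=s_1s_2$; then $x^{J}=s_2\leq s_2s_1=y^{J}$, but $\min(x,sx)=x$ and $x^{J}=s_2\not\leq e=(sy)^{J}$. (This does not contradict the theorem, since for this $x,y$ the hypothesis fails for the other quotient $\{s_2\}$ --- but your argument uses only the $i$-th hypothesis in the $i$-th case, and that one-index-at-a-time step is what breaks.) To repair the argument you would have to take the descent on the side compatible with the quotient convention (a right descent $ys<y$, for which the dichotomy $(ys)^{S_i}\in\{y^{S_i}s,\,y^{S_i}\}$ really holds) and then carry out the resulting case analysis in earnest, or else use the hypotheses for all $i$ jointly; that verification is precisely the content of Deodhar's proof and is missing here.
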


\begin{corollary}
\label{lem_aff_schu_intersection_of_projections}
	For $w\in \widetilde{W}$ the Schubert variety $\schuvar{w}$ in the affine flag variety $\mathcal{F}$ is the intersection $\left(\prod_{i\in[n]} \pr_i(\schuvar{w}) \right)\cap \mathcal{F}$ taken in the product $\prod_{i\in[n]}\mathcal{G}_i$.
\end{corollary}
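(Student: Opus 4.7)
The plan is to apply Theorem \ref{thm_deo} to the affine Weyl group after translating the geometric statement into a combinatorial statement about Bruhat orders on quotients.

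First I would use the Bruhat decomposition $\mathcal{F}=\bigsqcup_{w'\in\widetilde{W}}\schucell{w'}$, so that $\schuvar{w}=\bigcup_{w'\leq w}\schucell{w'}$. For each $i\in [n]$ the projection $\pr_i\colon \mathcal{F}\to \mathcal{G}_i$ is the quotient by the maximal parahoric whose finite Weyl group is $\affw_{S_i}$ for $S_i$ the complement of the $i$-th affine simple reflection. On Schubert cells it sends $\schucell{w'}$ to $\schucell{(w')^{S_i}}$, and since $\pr_i$ is equivariant for the Iwahori action the preimage $\pr_i^{-1}(\pr_i(\schuvar{w}))$ is Iwahori-invariant, hence a union of Schubert cells. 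Explicitly,
\[
\pr_i^{-1}\bigl(\pr_i(\schuvar{w})\bigr)=\bigcup_{(w')^{S_i}\leq w^{S_i}}\schucell{w'}.
\]

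Intersecting over all $i$, the set-theoretic intersection $\bigl(\prod_{i\in [n]}\pr_i(\schuvar{w})\bigr)\cap \mathcal{F}$ consists of the cells $\schucell{w'}$ for which $(w')^{S_i}\leq w^{S_i}$ for every $i$. Since the $S_i$ are the complements of the (finitely many) affine simple reflections, one has $\bigcap_i S_i=\emptyset$, so Theorem \ref{thm_deo} tells us that this collection of inequalities is equivalent to $w'\leq w$. The intersection is therefore $\bigcup_{w'\leq w}\schucell{w'}=\schuvar{w}$ on underlying topological spaces, and as both sides are reduced closed subschemes of $\mathcal{F}$ the identification is scheme-theoretic.

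The main obstacle I anticipate is a bookkeeping issue: Theorem \ref{thm_deo} is stated for a Coxeter group, while $\widetilde{W}=\affw\rtimes\Omega$ is only an extension of one. I would handle this by decomposing both $w$ and the candidate $w'$ according to their $\Omega$-components and observing that the Bruhat order, the length function, and each of the projections $\pr_i$ respect the splitting into $\Omega$-cosets (since every $\tau\in\Omega$ has length zero). After fixing the $\Omega$-component the comparisons take place inside $\affw$, where Deodhar's lemma applies verbatim and completes the argument.
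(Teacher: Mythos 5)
Your argument is correct and is essentially the same as the paper's: decompose $\schuvar{w}$ into Schubert cells, use Iwahori-equivariance of $\pr_i$ to express $\pr_i^{-1}(\pr_i(\schuvar{w}))$ as a union of cells indexed by the condition $(w')^{S_i}\leq w^{S_i}$, note that $\bigcap_i S_i=\emptyset$, and conclude by Deodhar's Lemma \ref{thm_deo}. The only difference is cosmetic — the paper dispatches the non-Coxeter issue with a terse ``WLOG $w\in\affw$'', whereas you spell out the reduction along $\Omega$-cosets, which is a welcome bit of extra care but not a new idea.
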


We now prepared enough to prove Proposition \ref{porp_pluecker_bijection_irr_comp}.

\begin{proof}[Proof of Proposition \ref{porp_pluecker_bijection_irr_comp} ]
First recall from Lemma \ref{lem_sub_var_under_pl} that for $I\in\binom{[n]}{k}$ the Schubert variety $X_I$ in $\gr{\Lambda_0}$ is the intersection of the linear subspace $\Projsp(V_I)$ with the image of $\gr{\Lambda_0}$ under the Pl\"ucker embedding. Although this statement was formulated for Schubert varieties in $\gr{\Lambda_0}$ and the linear subspaces in $\Projsp(\bigwedge^k\Lambda_0 )$ it is certainly true for all lattices in $\Gamma^{\mathrm{st}}$ and we will make use of this fact.\\
Let us consider the following diagram
\begin{align*}
	\xymatrix{
	 	\Mgr{k}{\Gamma^{\mathrm{st}}}\ar[r]^{\mathrm{Pl}_{\Gamma^{\mathrm{st}}}} \ar[d]^{\projectgr{0}} 	& \M{\bigwedge^k\Gamma^{st}}\ar[d]^{\project{0}} \\
	 	\gr{\Lambda_0}\ar[r]^{\mathrm{Pl}_{\Lambda_0}} 		& \Projsp(\bigwedge^k\Lambda_0).	
	}
\end{align*}
	For an irreducible component $C_I^{\mathrm{gr}}$ of $\Mgr{k}{\Gamma^{st}}_{\res}$ the open part $C_I^{{\mathrm{gr}},\circ}\defeq C_I^{\mathrm{gr}}\setminus\bigcup\limits_{J< I} C_J^{\mathrm{gr}}$ is the inverse image $(\projectgr{0}) ^{-1}(X_I^{\circ})$ of the Schubert cell $X_I^{\circ}= X_I\setminus\bigcup_{J<I}X_J$. Furthermore  this is also the inverse image 
	\begin{align*}
		(\projectgr{0}\circ \mathrm{Pl}_{\Lambda_0})^{-1}(\Projsp(V_I)^{\circ})=(\mathrm{Pl}_{\Gamma^{\mathrm{st}}}\circ \project{0} )^{-1}(\Projsp(V_I)^{\circ})
	\end{align*}
	 of the open $\Projsp(V_I)^{\circ}= \Projsp(V_I)\setminus\bigcup_{J<I}\Projsp(V_J)$. Using Theorem \ref{thm_components_linear_subsp} we now identify $(\project{0})^{-1}(\Projsp(V_I)^{\circ})$ with the open part $C_I^{{\mathrm{pr}},\circ}\defeq C_I^{\mathrm{pr}}\setminus\bigcup\limits_{J< I} C_J^{\mathrm{pr}}$. In conclusion we get $C_I^{{\mathrm{gr}},\circ}= C_I^{{\mathrm{pr}},\circ}\cap\Mgr{k}{\Gamma^{\mathrm{st}}}$ and in particular $C_{I}^{\mathrm{gr}}\subseteq C_I^{{\mathrm{pr}}}\cap\Mgr{k}{\Gamma^{\mathrm{st}}}_{\res}$.\\
	On the other hand first using Lemma \ref{lem_schubertvar_determined_by_projections} and then Lemma \ref{lem_sub_var_under_pl} we get
	\begin{align*}
		C_{I}^{\mathrm{gr}}=\left(\prod\limits_{i\in [n]}\projectgr{i}\right)^{-1}\left(\prod\limits_{i\in [n]}\projectgr{i}\left(C_I^\mathrm{gr}\right)\right)=\left(\prod\limits_{i\in [n]}\projectgr{i}\right)^{-1}\left(\prod\limits_{i\in [n]}\gr{\Lambda_i}_{\res}\cap \project{i}\left(C_I^\mathrm{pr}\right)\right)
	\end{align*} 
	and by looking at the injections in the diagram
	\begin{align*}
	\xymatrix{
	 	\Mgr{k}{\Gamma^{\mathrm{st}}}\ar[r]^{\mathrm{Pl}_{\Gamma^{\mathrm{st}}}} \ar[d]^{\prod\limits_{i\in [n]}\projectgr{i}} 	& \M{\bigwedge^k\Gamma^{st}}\ar[d]^{\prod\limits_{i\in [n]}\project{i}} \\
	 	\prod\limits_{i\in [n]}\gr{\Lambda_i}\ar[r]^{\prod\limits_{i\in [n]}\mathrm{Pl}_{\Lambda_i}} 		& \prod\limits_{i\in [n]}\Projsp(\bigwedge^k\Lambda_i)	
	}
	\end{align*}
	we compute $C^{\mathrm{gr}}_I$ to be 
	\begin{align*}
		\left(\prod\limits_{i\in [n]}\projectgr{i}\right)^{-1}\left(\prod\limits_{i\in [n]}\gr{\Lambda_i}_{\res}\cap \project{i}\left(C_I^\mathrm{pr}\right)\right) 
		&=\left(\prod\limits_{i\in [n]}\project{i}\right)^{-1}\left(\prod\limits_{i\in [n]}\project{i}\left(C_I^\mathrm{pr}\right)\right)\cap \Mgr{k}{\Gamma^{\mathrm{st}}}_{\res}
	\end{align*}
	which clearly contains $C_I^\mathrm{pr} \cap \Mgr{k}{\Gamma^{\mathrm{st}}}_{\res}$.
	We have shown that we get $C^{\mathrm{gr}}_I=C_I^\mathrm{pr} \cap \Mgr{k}{\Gamma^{\mathrm{st}}}_{\res}$ for all $I\in \binom{[n]}{k}$. But since all of the irreducible components of $\M{\bigwedge^k\Gamma^{\mathrm{st}}}_{\res}$ and $\Mgr{k}{\Gamma^{\mathrm{st}}}_{\res}$ are of the form $C_I^\mathrm{pr}$ and $C_I^\mathrm{gr}$ for some $I\in \binom{[n]}{k}$ we have shown the result.
	\end{proof}

\section{A candidate for a semi-stable resolution of $\loc$}
\label{sec_candidate}
With the notation from the last chapter it is an interesting problem to find a semi-stable resolution of the Mustafin variety $\Mgr{k}{\Gamma^{\mathrm{st}}}$ for the standard lattice chain $\Gamma^{\mathrm{st}}$. In general it is not known to be possible, but in Section \ref{sec_mustafin_varieties} we have proven that for $k=1$ the Mustafin variety $\M{\overline{\Gamma}}$ is indeed semi-stable for every convex set $\overline{\Gamma}$ of lattice classes (cf. \cite{FA01}). This generalises the classical case $\M{\Gamma^{\mathrm{st}}}$ of Drinfeld. In \cite{GEN00} a semi-stable resolution for $n\leq 6$ was constructed for a symplectic analogue of the problem via a blow-up of the Grassmannian of isotropic submodules in Schubert subvarieties of the special fiber. Adapting this idea as indicated in \cite[remark (3) following Theorem 2.4.2]{GEN00} we arrive at a candidate for a semi-stable resolution as follows.
\begin{definition}
	We set $\cangen_0\defeq \gr{\Lambda_0}$ and inductively for $1\leq i<(n-k)k$ define $\cangen_i$ to be the blow-up of $\cangen_{i-1}$ in the union of the strict transforms of the Schubert varieties of dimension $i-1$ in the special fiber of $\gr{\Lambda_0}$. The last blow-up $\cangen_{(n-k)k-1}$ will be denoted by $\cangen$.
\end{definition}
 Now two questions arise. 
\begin{enumerate}
	\item Is the blow-up $\cangen$ semi-stable?
	\item Does the map $\cangen\rightarrow \gr{\Lambda_0}$ factor through the Mustafin variety $\Mgr{k}{\Lambda_0}$?
\end{enumerate}
It is well known that the singular locus $X_I^{\mathrm{sing}}$ of a Schubert variety is again a union of Schubert varieties. This gives some hope that the centers in this sequence are in fact smooth. And to analyse the first question Genestier has proven Lemma \ref{lem_gen_semi-stable} showing that blow-ups preserve semi-stability when the centers lie in the special fiber, are smooth over $\res$ and intersect the singular locus nicely.\\
Unfortunately in general the answer to the second question is no. In \cite[Appendix D]{Gor19} we explicitly calculate the case $n=5$ and $k=2$. The result is semi-stable, but does not factor through the Mustafin variety $\Mgr{k}{\Gamma^{\mathrm{st}}}$. To emphasise that this computations can easily be done by hand, let us shortly describe the steps involved. First we recall that by the universal property of the join, a morphism to $\Mgr{k}{\Gamma^{\mathrm{st}}}$ is the same as maps to all factors $\gr{\Lambda_i}$ agreeing on the common generic fiber. Now we simply have to check whether the isomorphism $\cangen_\quot \rightarrow \gr{\Lambda_0}_\quot\rightarrow \gr{\Lambda_i}_\quot$ extends to morphism $\cangen\rightarrow \gr{\Lambda_i}_\quot$ for all $i$ and this can easily be checked by checking the vanishing of certain determinants. \\
 In the following we construct a blow-up $\can$ of $\gr{\Lambda_0}$ as the strict transform of the Pl\"ucker embedding 
$\gr{\Lambda_0}\rightarrow\Projsp(\bigwedge^k\Lambda_0)$ under the blow-up $\projectbar{0}$. We also define the strict transform $\cangenstr$ of the projection $\projectgr{0}$ under the blow-up $\cangen\rightarrow\gr{\Lambda_0}$. 
Then we will show that the blow-up $\can\rightarrow\gr{\Lambda_0}$ factors through $\Mgr{k}{\Gamma^{\mathrm{st}}}$ and $\cangenstr$. In summary we will construct the following commutative diagram:
\begin{align*}
\xymatrix{
 \cangen \ar[ddrr]	&\cangenstr\ar[l]\ar[dr] &\can\ar@{..>}[d]\ar[r]\ar@{..>}[l]\				& \M{\overline{\bigwedge^k\Gamma^{\mathrm{st}}}}\ar^{\projbar}[d]\ar@/^3.0pc/^{\projectbar{0}}[dd]\\
							&	&\Mgr{k}{\Gamma^{\mathrm{st}}}\ar[r]\ar[d]^{\projectgr{0}} 	& \M{\bigwedge^k \Gamma^{\mathrm{st}}}\ar^{\project{0}}[d] \\
								&&\gr{\Lambda_0}\ar[r]						& \Projsp(\bigwedge^k\Lambda_0 )
}	
\end{align*}
In the second part of this chapter we will prove some relations between the candidates. The easiest is to see that if $\cangen\rightarrow\gr{\Lambda_0}$ factors through $\Mgr{k}{\Gamma^{\mathrm{st}}}$ then $\cangenstr\rightarrow\cangen$ is an isomorphism. We will see that $\cangenstr$ restricts to the semi-stable resolution in a neighbourhood of the most singular point for $n=5$ and $k=2$ constructed in \cite{G04}. In particular $\cangenstr$ is a semi-stable resolution whenever $\cangen$ is and  in at least one additional case.\\
If we further assume Conjecture \ref{conj_convexcl_bijection} holds for $n$ and $k$ (e.g. $k=2$ or $n\leq 7$), Theorem \ref{thm_new_candidate_factor_through_gen} will show that under some technical conditions, $\can$ and $\cangen$ coincide whenever $\cangen$ is semi-stable and the map $\cangen\rightarrow\gr{\Lambda_0}$ factors through the local model $\Mgr{k}{\Gamma^{\mathrm{st}}}$.

\subsection{Construction of $\can$, $\cangenstr$ and related objects }

For the rest of this section we will fix a set $\{\Lambda^{\mathrm{pl}}_0,\dots,\Lambda^{\mathrm{pl}}_{\binom{n}{k}-1} \}$ of representatives of the classes in $\overline{\bigwedge^k \Gamma^{\mathrm{st}}}$ such that $\{[\Lambda^{\mathrm{pl}}_0]\dots, [\Lambda^{\mathrm{pl}}_i]\}$ is convex for every $i\in \left[\binom{n}{k} \right]$ and $\Lambda^{\mathrm{pl}}_0= \bigwedge^k\Lambda_0$. Due to Lemma \ref{lem_convexreduction} it is possible to find such representatives.

\begin{definition}
\label{def_candidate_for_resolution}
	Set $\can_0\defeq \gr{\Lambda_0}$ and consider $\can_0$ via the Pl\"ucker embedding as a closed subscheme of $\Projsp(\bigwedge^k\Lambda_0 )=\M{\{[\bigwedge^k\Lambda_0]\}}$. For $1\leq i<\binom{n}{k}$ define $\can_i\subseteq \M{\{[\Lambda^{\mathrm{pl}}_0],\dots, [\Lambda^{\mathrm{pl}}_i]\}}$ to be the strict transform $\str{ \can_{i-1}}$ under the blow-up 
	\begin{align*}
		\M{\{[\Lambda^{\mathrm{pl}}_0],\dots, [\Lambda^{\mathrm{pl}}_i]\}} \rightarrow \M{\{[\Lambda^{\mathrm{pl}}_0],\dots, [\Lambda^{\mathrm{pl}}_{i-1}]\}}.
	\end{align*}
	 We denote $\can_{\binom{n}{k}-1}$ by
	$\can$.
\end{definition}

\begin{lemma}
\label{lem_can_factor_cangen}
	The sequence $\can\rightarrow\gr{\Lambda_0}$ of blow-ups defined above factors through the projection $\Mgr{k}{\Gamma^{\mathrm{st}}}\rightarrow\gr{\Lambda_0}$.
\end{lemma}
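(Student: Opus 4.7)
The plan is to use the universal property of the Mustafin variety $\Mgr{k}{\Gamma^{\mathrm{st}}}$ as the scheme-theoretic closure of the diagonal copy of $\gr{\quot^n}$ inside $\prod_{i\in[n]}\gr{\Lambda_i}$, together with the commutative diagram comparing it to Mustafin varieties of projective spaces via the Pl\"ucker embedding. Concretely, I will establish that $\can$ is integral and $\rg$-flat with generic fibre the Pl\"ucker image of $\gr{\quot^n}$, and then use a scheme-theoretic image argument to conclude that the natural composition $\can\hookrightarrow\M{\overline{\bigwedge^k\Gamma^{\mathrm{st}}}}\to\M{\bigwedge^k\Gamma^{\mathrm{st}}}$ lands in the closed subscheme $\Mgr{k}{\Gamma^{\mathrm{st}}}$.

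First I would show by induction on $i$ that each $\can_i\subseteq\M{\{[\Lambda^{\mathrm{pl}}_0],\dots,[\Lambda^{\mathrm{pl}}_i]\}}$ is integral, $\rg$-flat, and has generic fibre equal to the diagonal Pl\"ucker image of $\gr{\quot^n}$. The base case $\can_0=\gr{\Lambda_0}$ is immediate. For the induction step, by Lemma \ref{lem_main} the projection $\M{\{[\Lambda^{\mathrm{pl}}_0],\dots,[\Lambda^{\mathrm{pl}}_i]\}}\to\M{\{[\Lambda^{\mathrm{pl}}_0],\dots,[\Lambda^{\mathrm{pl}}_{i-1}]\}}$ is a blow-up whose centre lies entirely in the special fibre. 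Hence the strict transform of the integral closed subscheme $\can_{i-1}$ is again integral, the blow-up restricts to an isomorphism away from the special fibre (so the generic fibre of $\can_i$ coincides with that of $\can_{i-1}$), and flatness over the DVR $\rg$ is automatic since $\can_i$ is integral with nontrivial image in $\spec{\rg}$.

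Next I would compose with the natural projection $\M{\overline{\bigwedge^k\Gamma^{\mathrm{st}}}}\to\M{\bigwedge^k\Gamma^{\mathrm{st}}}$ to obtain a morphism $f\colon\can\to\M{\bigwedge^k\Gamma^{\mathrm{st}}}$. On the generic fibre $f$ is just the diagonal Pl\"ucker embedding of $\gr{\quot^n}$, which factors through $\Mgr{k}{\Gamma^{\mathrm{st}}}_\quot\hookrightarrow\M{\bigwedge^k\Gamma^{\mathrm{st}}}_\quot$ because the product of Pl\"ucker maps is a closed immersion and $\Mgr{k}{\Gamma^{\mathrm{st}}}$ is by definition the closure of that diagonal inside $\prod_i\gr{\Lambda_i}$. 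Since $\can$ is integral, the scheme-theoretic image of $f$ coincides with the closure of its generic fibre, and since $\Mgr{k}{\Gamma^{\mathrm{st}}}$ is closed in $\M{\bigwedge^k\Gamma^{\mathrm{st}}}$ and contains this closure, $f$ factors through $\Mgr{k}{\Gamma^{\mathrm{st}}}$. Composing with $\projectgr{0}\colon\Mgr{k}{\Gamma^{\mathrm{st}}}\to\gr{\Lambda_0}$ and using that the Pl\"ucker embedding $\gr{\Lambda_0}\hookrightarrow\Projsp(\bigwedge^k\Lambda_0)$ is a monomorphism, one verifies that the resulting map agrees with the given $\can\to\gr{\Lambda_0}$ produced by the sequence of strict transforms. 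The only real obstacle here is the bookkeeping in the induction, namely checking that strict transforms in the successive Mustafin blow-ups preserve integrality and leave the generic fibre untouched; once this is secured, the factorisation is essentially formal.
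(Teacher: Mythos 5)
Your proof takes essentially the same route as the paper's: both observe that the blow-up centres lie in the special fibre so the generic fibre of $\can$ is unchanged and remains dense, identify $\can$ as the closure of the diagonal Pl\"ucker image of $\gr{\quot^n}$, and then use that $\Mgr{k}{\Gamma^{\mathrm{st}}}$ is a closed subscheme of $\M{\bigwedge^k\Gamma^{\mathrm{st}}}$ containing this generic fibre to obtain the factorisation. Your version simply makes explicit the integrality/flatness bookkeeping in the induction that the paper leaves implicit, so the two arguments agree in substance.
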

\begin{proof}
	Since the centers of the blow-ups are all contained in the special fiber the generic fiber of $\can$ is still dense and maps isomorphically to $\gr{\Lambda_0}_{\quot}$. Hence $\can$ is the closure in the Mustafin variety $\M{\overline{\bigwedge^k\Gamma^{\mathrm{st}}}}$ of the image under the Pl\"ucker embedding 
	\begin{align*}
		\gr{\Lambda_0}_{\quot}\rightarrow \Projsp(\bigwedge^k\Lambda_0 )_{\quot}= \M{\overline{\bigwedge^k\Gamma^{\mathrm{st}}}}_{\quot}.
	\end{align*} 
	 Now since $\projbar(\can_{\quot})$ is the image of $\Mgr{k}{\Gamma^{\mathrm{st}}}_{\quot}$ under the embedding of $\Mgr{k}{\Gamma^{\mathrm{st}}}$ in\linebreak $\M{\bigwedge^k\Gamma^{\mathrm{st}}}$ and $\Mgr{k}{\Gamma^{\mathrm{st}}}$ is closed by construction, the map $\projbar\vert_{\can}$ will factor through $\Mgr{k}{\Gamma^{\mathrm{st}}}$.
\end{proof}

\begin{definition}
\label{def_gen_under_pl}
	Similarly to the construction of the blow-up $\cangen$ we can set $\cangenpl_{0}\defeq\Projsp(\bigwedge^k\Lambda_0)$ and for $1\leq i<(n-k)k$ inductively define $\cangenpl_{i}$ to be the blow-up of $\cangenpl_{i-1}$ in the union of strict transforms of the linear subspaces in $\Projsp(\Lambda_0 )_\res$ corresponding to Schubert varieties in $\gr{\Lambda_0}_\res$ of dimension $i-1$.
\end{definition}

\begin{rem}
\label{rem_gen_under_pluecker}
	Using the relation in Lemma \ref{lem_sub_var_under_pl} between the Schubert varieties in $\gr{\Lambda_0}_{\res}$ and the linear subspaces of $\Projsp(\bigwedge^k \Lambda_0)$, the blow-up $\cangen_j\rightarrow \cangen_{j-1}$ agrees with the strict transform of $\cangen_{j-1}\rightarrow \cangenpl_{j-1}$ under the blow-up $\cangenpl_j\rightarrow\cangenpl_{j-1}$.
\end{rem}

\begin{lemma}
\label{lem_gen_proj_totaltransforms}
	The blow-up $\cangenpl\rightarrow \Projsp(\bigwedge^k\Lambda_0 )$ is the successive blow-up of the total transforms of the linear subspaces $\Projsp(V_I)$ for $I\in \binom{[n]}{k}$ in any order.
\end{lemma}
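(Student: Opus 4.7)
The plan is to deduce the lemma from a general universal property of iterated blow-ups combined with a disjointness analysis of the strict transforms appearing at each stage of $\cangenpl$.

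The key general observation is this: for any finite collection of coherent ideal sheaves $\mathcal{J}_0,\dots,\mathcal{J}_N$ on a noetherian scheme $Y$, the iterated blow-up obtained by successively blowing up the scheme-theoretic total transform (pullback ideal) of each $\mathcal{J}_j$ represents the functor of $Y$-schemes $f\colon Z\to Y$ on which every pullback $f^{-1}\mathcal{J}_j\cdot\mathcal{O}_Z$ is invertible. This follows from the universal property of a single blow-up applied step by step, and the resulting characterisation is manifestly symmetric in the $\mathcal{J}_j$, so such an iterated blow-up is canonically independent of the chosen order. Applied to $\mathcal{J}_I=\mathcal{I}(\Projsp(V_I))$ for $I\in\binom{[n]}{k}$ and $Y=\Projsp(\bigwedge^k\Lambda_0)$, the lemma reduces to showing that $\cangenpl$ itself represents this functor.

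I would establish this by induction on the stage $i$, reconciling the definition of $\cangenpl_i$ (blow-up of $\cangenpl_{i-1}$ in the union of strict transforms of the $\Projsp(V_I)$ with $\dim X_I=i-1$) with the universal property through two ingredients. First, for any two distinct subsets $I,I'$ with $\dim X_I=\dim X_{I'}=i-1$, the strict transforms of $\Projsp(V_I)$ and $\Projsp(V_{I'})$ are disjoint in $\cangenpl_{i-1}$: one has $\Projsp(V_I)\cap\Projsp(V_{I'})=\Projsp(V_{\min(I,I')})$ with $\dim X_{\min(I,I')}<i-1$, so this intersection was blown up at a strictly earlier stage, and a local calculation with the projectivised normal bundles of $\Projsp(V_{\min(I,I')})$ inside $\Projsp(V_I)$ and $\Projsp(V_{I'})$ shows that this single blow-up already separates the two strict transforms; the disjointness then persists through all subsequent blow-ups because strict transforms are contained in scheme-theoretic preimages and preimages of disjoint subschemes are always disjoint. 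Consequently the simultaneous blow-up defining $\cangenpl_i$ equals the successive blow-up of the individual strict transforms in any order within the stage. Second, at each such individual step the blow-ups of the strict and total transforms of $\Projsp(V_I)$ coincide, since the two defining ideals differ only by a product of invertible ideals of exceptional divisors from earlier stages, and multiplying an ideal by an invertible sheaf does not alter its blow-up.

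Putting these two ingredients together, each transition $\cangenpl_{i-1}\to\cangenpl_i$ is a successive blow-up of total transforms, so globally $\cangenpl$ is an iterated blow-up of the total transforms of all $\Projsp(V_I)$ taken in an order respecting the filtration by $\dim X_I$; by the first paragraph this result is independent of the order, which is the claim. The main technical obstacle is the disjointness claim in the first ingredient, where one must verify that the numerous intermediate blow-ups of other lower-dimensional centres $\Projsp(V_{I''})$ do not re-introduce an intersection after the strict transforms of $\Projsp(V_I)$ and $\Projsp(V_{I'})$ were first separated by blowing up $\Projsp(V_{\min(I,I')})$. The scheme-theoretic preimage argument above is exactly what guarantees this robustness.
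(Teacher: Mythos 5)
Your proof is correct and rests on the same two pillars as the paper's argument: (1) within a fixed dimension stratum the strict transforms of the centres are pairwise disjoint, because their intersection $\Projsp(V_{\min(I,I')})$ is of strictly smaller dimension and has already been blown up; and (2) blowing up a strict transform gives the same result as blowing up the corresponding total transform, since the two ideal sheaves differ by an invertible factor supported on earlier exceptional divisors.

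The one genuine difference is how the ``in any order'' claim is justified. You invoke the universal property of iterated total-transform blow-ups---that they corepresent the functor of $Y$-schemes on which all the pullback ideals become invertible---which is manifestly symmetric in the collection of ideals. The paper instead transposes consecutive blow-ups one pair at a time via Lemma \ref{lem_monoidaltransorm} and then cites commutativity of total-transform blow-ups without proof. Your framing is cleaner and self-contained, and it makes the reduction transparent: one only needs to check that $\cangenpl$ is, step by step, an iterated total-transform blow-up.

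One caveat, which you share with the paper: the disjointness claim for strict transforms is argued by a normal-bundle computation in $\Projsp(\bigwedge^k\Lambda_0)$, but the centre actually blown up at the relevant stage is the \emph{strict transform} of $\Projsp(V_{\min(I,I')})$ inside an already-modified scheme. For a fully rigorous argument one should either track the normal-bundle computation through the intermediate blow-ups or cite the standard fact for an intersection-closed linear arrangement blown up in dimension order (de Concini--Procesi / wonderful model): strict transforms of two members separate as soon as (the strict transform of) their intersection is blown up, and stay separated because strict transforms live inside scheme-theoretic preimages. Also note that the paper's assertion that the intersection has dimension exactly $i-1$ is not always correct (it can drop by more); what matters, and what you use, is only that it is strictly smaller.
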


\begin{proof}
	Fix an integer $i\in [(n-k)k]$. We need to show that the blow-up $\cangenpl_{i+1}\rightarrow\cangenpl_{i}$ defined in Definition \ref{def_gen_under_pl} is the sequence blow-ups in the total transforms of the linear subspaces corresponding to Schubert varieties of dimension $i$. First we note that the intersection of two distinct linear subspaces corresponding to Schubert varieties of dimension $i$ are linear subspaces corresponding to Schubert varieties of dimension $i-1$ and hence the strict transforms of the linear subspaces corresponding to Schubert varieties of dimension $i$ are disjoint in $\cangenpl_{i}$. Therefore the blow-up $\cangenpl_{i+1}\rightarrow\cangenpl_{i}$ can be split up as the chain of blow-ups in the individual strict transforms of the linear subspaces.\\
	Now by induction the strict transforms of linear subspaces corresponding to Schubert varieties are smooth since they are blow-ups of smooth schemes over a field in smooth centers. But the strict transforms in $\cangenpl_{i-1}$ of a linear subspace $L_i$ corresponding to a Schubert variety of dimension $i$ and a linear subspace $L_{i-1}$ corresponding to a Schubert variety of dimension $i-1$ are either disjoint or $L_{i-1}$ is contained in $L_i$. Using Lemma \ref{lem_monoidaltransorm} we identify the blow-up $\bl{\str{L_i}}{\bl{L_{i-1}}{\cangenpl_i}}$ with $\bl{\tot{L_{i-1}}}{\bl{L_{i}}{\cangenpl_i}}$. Since for blow-ups in total transforms the order does not matter, these blow-ups also agree with $\bl{\tot{L_i}}{\bl{L_{i-1}}{\cangenpl_i}}$.
\end{proof}

\begin{corollary}
\label{cor_new_candidate_factor_through_genestier}
	The blow-up $\M{\overline{\bigwedge^k\Gamma^{\mathrm{st}}}}\rightarrow \Projsp( \bigwedge^k\Lambda_0)$ factors through $\cangenpl\rightarrow\Projsp( \bigwedge^k\Lambda_0)$.
\end{corollary}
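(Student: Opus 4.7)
The plan is to apply the universal property of blow-ups combined with Lemma~\ref{lem_gen_proj_totaltransforms}. By that lemma, $\cangenpl \to \Projsp(\bigwedge^k\Lambda_0)$ is the blow-up of $\Projsp(\bigwedge^k\Lambda_0)$ in the successive total transforms of the linear subspaces $\Projsp(V_I)$ for $I \in \binom{[n]}{k}$; iterating the identity $\mathrm{Bl}_{\tot{Z_2}}(\mathrm{Bl}_{Z_1}(X)) = \mathrm{Bl}_{\mathcal{I}_{Z_1} \cdot \mathcal{I}_{Z_2}}(X)$, this realizes $\cangenpl$ as the blow-up of $\Projsp(\bigwedge^k\Lambda_0)$ in the product ideal sheaf $\mathcal{J} = \prod_{I \in \binom{[n]}{k}} \mathcal{I}_{\Projsp(V_I)}$. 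By the universal property, $\projectbar{0}$ factors through $\cangenpl$ if and only if $\projectbar{0}^{-1}(\mathcal{J}) \cdot \mathcal{O}_{\M{\overline{\bigwedge^k\Gamma^{\mathrm{st}}}}}$ is invertible, and by the same identity this is equivalent to each $\projectbar{0}^{-1}(\mathcal{I}_{\Projsp(V_I)}) \cdot \mathcal{O}_{\M{\overline{\bigwedge^k\Gamma^{\mathrm{st}}}}}$ being invertible.

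For each $I \in \binom{[n]}{k}$, Proposition~\ref{prop_im_of_irr_comp} identifies the set-theoretic preimage of $\Projsp(V_I)$ with the union $\bigcup_{J \leq I} C_J$ of irreducible components of the special fiber of $\M{\overline{\bigwedge^k\Gamma^{\mathrm{st}}}}$. Since $\M{\overline{\bigwedge^k\Gamma^{\mathrm{st}}}}$ is semi-stable by Proposition~\ref{prop_convex_mustafin_semi-stable}, its total space is regular and the special fiber is a reduced simple normal crossings divisor $\sum_J C_J$, so every subsum $\sum_{J \leq I} C_J$ is an effective Cartier divisor with locally principal ideal sheaf. It remains to identify the pullback ideal $\projectbar{0}^{-1}(\mathcal{I}_{\Projsp(V_I)}) \cdot \mathcal{O}_{\M{\overline{\bigwedge^k\Gamma^{\mathrm{st}}}}}$ with the (principal) ideal sheaf of this reduced divisor.

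This final identification is the main technical obstacle, and I would verify it étale-locally on the standard semi-stable charts $\spec{\rg[x_0,\dots,x_r]/(\prod_{i\leq m}x_i-\unif)}$, in which the coordinates $x_i$ (for $i \leq m$) cut out the components $C_J$ meeting the chart. Globally on $\Projsp(\bigwedge^k\Lambda_0)$ the ideal $\mathcal{I}_{\Projsp(V_I)}$ is generated by $\unif$ together with the Pl\"ucker coordinates $x_J$ for $J \not\leq I$. Using Proposition~\ref{prop_mustafin_as_blow-ups} to track these generators through the iterated blow-up constructing $\M{\overline{\bigwedge^k\Gamma^{\mathrm{st}}}}$, together with Lemmas~\ref{lem_images_are_linear_subsp} and~\ref{lem_image_determines_lattice} describing the images of the components, one checks on each local chart that every pulled-back $x_J$ and the pullback of $\unif$ itself are divisible by the product $\prod_{J' \leq I} x_{J'}$, and that this product lies in the pullback ideal. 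Hence the pullback is principal, generated by $\prod_{J' \leq I} x_{J'}$, which completes the proof.
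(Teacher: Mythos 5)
Your proof takes essentially the same route as the paper's: decompose $\cangenpl$ as an iterated blow-up in total transforms via Lemma~\ref{lem_gen_proj_totaltransforms}, describe the preimages of the $\Projsp(V_I)$ via Proposition~\ref{prop_im_of_irr_comp}, invoke semi-stability from Proposition~\ref{prop_convex_mustafin_semi-stable}, and finish with the universal property of blow-ups. Where you diverge is in making explicit a point the paper compresses into the single phrase ``the universal property $\ldots$ inductively gives a factorisation'': the universal property asks for invertibility of the pullback \emph{ideal sheaf} $\projectbar{0}^{-1}(\mathcal{I}_{\Projsp(V_I)})\cdot\mathcal{O}$, which is strictly stronger than knowing that its support $\bigcup_{J\leq I}C_J$ is an effective Cartier divisor on a regular scheme --- pullbacks of radical ideals need not stay radical, and a non-radical ideal with divisorial support can fail to be locally principal (think of $(s^2,st)$ in $k[s,t]$). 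So you are right to flag the identification of the pullback ideal with the reduced divisor ideal as the crux. The weakness of your proposal is that this identification is only sketched: the chart computation you describe --- showing that the pullback of $\unif$ and of each Pl\"ucker coordinate $x_J$, $J\not\leq I$, is divisible by $\prod_{J'\leq I}x_{J'}$ and that the product lies in the pullback ideal --- is plausible and is the right thing to check, but it is not carried out, and this is precisely where the content of the statement lives. Until that verification is written down (or replaced by a structural argument, e.g.\ relating $\projectbar{0}^{-1}(\mathcal{I}_{\Projsp(V_I)})\cdot\mathcal{O}$ to the ideals actually blown up in Proposition~\ref{prop_mustafin_as_blow-ups}), the proof is incomplete --- though, to be fair, it is incomplete at exactly the same place as the paper's.
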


\begin{proof}
	First recall from Proposition \ref{prop_im_of_irr_comp} that for $I\in\binom{[n]}{k}$ the inverse image of the linear subspace $\Projsp(V_I)$ in $\M{\overline{\bigwedge^k\Gamma^{\mathrm{st}}}}$ is a union of irreducible components. From Proposition \ref{prop_convex_mustafin_semi-stable} we know that $\M{\overline{\bigwedge^k\Gamma^{\mathrm{st}}}}$ is semi-stable and hence unions of irreducible components of the special fiber are effective Cartier divisors. Using the sequence of blow-ups from Lemma \ref{lem_gen_proj_totaltransforms} the universal property of the blow-ups inductively gives a factorisation of the projection $\M{\overline{\bigwedge^k\Gamma^{\mathrm{st}}}}\rightarrow\Projsp(\bigwedge^k\Lambda_0 )$ over $\cangenpl\rightarrow\Projsp(\bigwedge^k\Lambda_0 )$.
\end{proof}

\begin{definition}
\label{def_cangenstr}
	For the sequence of blow-ups $\cangenpl \rightarrow \Projsp(\bigwedge^k \Lambda_0)$ let us denote the sequence of strict transforms of the projection $\M{\bigwedge^k \Gamma^{\mathrm{st}}}\rightarrow\Projsp(\bigwedge^k\Lambda_0)$ by $\cangenstrpl \rightarrow\M{\bigwedge^k\Gamma^{\mathrm{st}}}$. More precisely we set $\cangenstrpl_0=\M{\bigwedge^k \Gamma^{\mathrm{st}}}$ and inductively construct $\cangenstrpl_{i+1}$ as the blow-up $\bl{\mathrm{pr}^{-1}(Z)}{\cangenstrpl_i}$ of $\cangenstrpl_i$ in the inverse image of the center $Z$ of the blow-up $\cangenpl_{i+1}\rightarrow\cangenpl_{i}$ under the projection $\mathrm{pr}\colon\cangenstrpl_i\rightarrow\cangenpl_i$. Similarly we define $\cangenstr\rightarrow \Mgr{k}{\Gamma^{\mathrm{st}}}$ to be the strict transform under the blow-up $\cangen\rightarrow \gr{\Lambda_0}$ of the projection $\Mgr{k}{\Gamma^{\mathrm{st}}}\rightarrow \gr{\Lambda_0}$.
\end{definition}

\begin{lemma}
\label{lem_convex_factor_cangenstrpl}
	The blow-up $\M{\overline{\bigwedge^k\Gamma^{\mathrm{st}}}}\rightarrow \Projsp( \bigwedge^k\Lambda_0)$ factors through $\cangenstrpl\rightarrow\Projsp( \bigwedge^k\Lambda_0)$.
\end{lemma}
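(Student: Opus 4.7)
The plan is to combine Corollary \ref{cor_new_candidate_factor_through_genestier} with the natural projection $\projbar$ and invoke the universal property of successive strict transforms. Corollary \ref{cor_new_candidate_factor_through_genestier} furnishes a morphism $\alpha \colon \M{\overline{\bigwedge^k\Gamma^{\mathrm{st}}}} \to \cangenpl$ through which $\projectbar{0}$ factors, while $\projbar \colon \M{\overline{\bigwedge^k\Gamma^{\mathrm{st}}}} \to \M{\bigwedge^k\Gamma^{\mathrm{st}}}$ is the canonical projection. The composition of $\alpha$ with $\cangenpl \to \Projsp(\bigwedge^k\Lambda_0)$ and the composition of $\projbar$ with $\project{0}$ both equal $\projectbar{0}$, so the pair $(\alpha, \projbar)$ assembles into a single morphism
\begin{align*}
	\Phi \colon \M{\overline{\bigwedge^k\Gamma^{\mathrm{st}}}} \longrightarrow \cangenpl \times_{\Projsp(\bigwedge^k\Lambda_0)} \M{\bigwedge^k\Gamma^{\mathrm{st}}}.
\end{align*}

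Next I would identify $\cangenstrpl$ inside this fiber product. An induction on the number of steps in Definition \ref{def_cangenstr} shows that the iterated strict transforms $\cangenstrpl_i$ coincide with the strict transform of $\M{\bigwedge^k\Gamma^{\mathrm{st}}}$ under the composite blow-up $\cangenpl_i \to \Projsp(\bigwedge^k\Lambda_0)$. In particular $\cangenstrpl$ is the scheme-theoretic closure inside $\cangenpl \times_{\Projsp(\bigwedge^k\Lambda_0)} \M{\bigwedge^k\Gamma^{\mathrm{st}}}$ of the common generic fibre, equivalently of the preimage of the generic point of $\Projsp(\bigwedge^k\Lambda_0)$.

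To finish I would use that $\M{\overline{\bigwedge^k\Gamma^{\mathrm{st}}}}$ is integral, being the scheme-theoretic closure of the irreducible generic fibre $\Projsp(\bigwedge^k\quot^n)$ inside a separated ambient scheme. Its unique generic point maps under $\Phi$ into the generic fibre of the fiber product, which is contained in $\cangenstrpl$. Since for a morphism out of a reduced scheme the scheme-theoretic image agrees with the closure of the set-theoretic image, $\Phi$ factors through the closed subscheme $\cangenstrpl$; composing with $\cangenstrpl \to \Projsp(\bigwedge^k\Lambda_0)$ then yields the desired factorization.

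The step I expect to require the most care is the identification of $\cangenstrpl$, defined as a chain of successive strict transforms, with the single strict transform under the composite blow-up. This should follow inductively from the standard compatibility of strict transforms under composition of blow-ups, but one should verify it matches the fiber-product description used above; everything else is a clean consequence of Corollary \ref{cor_new_candidate_factor_through_genestier} together with the integrality of $\M{\overline{\bigwedge^k\Gamma^{\mathrm{st}}}}$.
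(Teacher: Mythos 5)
Your argument is correct, but it is genuinely different from the one in the paper. The paper's proof simply observes that the preimage in $\M{\overline{\bigwedge^k\Gamma^{\mathrm{st}}}}$ of the center of each blow-up $\cangenstrpl_{i+1}\to\cangenstrpl_i$ coincides with the preimage of the center of $\cangenpl_{i+1}\to\cangenpl_i$, and then reruns verbatim the inductive argument of Corollary \ref{cor_new_candidate_factor_through_genestier}: these preimages are unions of irreducible components of the special fiber of the semi-stable scheme $\M{\overline{\bigwedge^k\Gamma^{\mathrm{st}}}}$, hence effective Cartier divisors, so the universal property of blow-ups furnishes each map $\M{\overline{\bigwedge^k\Gamma^{\mathrm{st}}}}\to\cangenstrpl_{i+1}$ step by step. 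You instead treat Corollary \ref{cor_new_candidate_factor_through_genestier} as a black box yielding $\alpha\colon\M{\overline{\bigwedge^k\Gamma^{\mathrm{st}}}}\to\cangenpl$, pair it with $\projbar$ to obtain $\Phi$ into $\cangenpl\times_{\Projsp(\bigwedge^k\Lambda_0)}\M{\bigwedge^k\Gamma^{\mathrm{st}}}$, then use that each $\cangenstrpl_i$ is (by the standard ``blow-up of a pulled-back ideal is a closed subscheme of the base-changed blow-up'' fact) a closed subscheme of $\M{\bigwedge^k\Gamma^{\mathrm{st}}}\times_{\Projsp(\bigwedge^k\Lambda_0)}\cangenpl_i$, integral, and containing the generic fiber; reducedness of the source then forces the scheme-theoretic image of $\Phi$ to land in $\cangenstrpl$. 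The paper's route is shorter and makes the iterated Cartier-divisor mechanism explicit; your route isolates the semi-stability input entirely within the Corollary and replaces the rest with a clean scheme-theoretic-image argument inside the fiber product, which is arguably more conceptual and reusable. One caveat you flag yourself and should nail down: the identification of $\cangenstrpl$ inside the fiber product requires the closed-immersion property of blow-ups along pulled-back ideals and the integrality of each $\cangenstrpl_i$ (which holds since the centers lie in the special fiber of an integral scheme); spelled out, this closes the gap.
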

\begin{proof}
	Using that the inverse images in $\M{\overline{\bigwedge^k\Gamma^{\mathrm{st}}}}$ of the center of the blow-ups $\cangenstrpl_{i+1}\rightarrow \cangenstrpl_{i}$ and $\cangenpl_{i+1}\rightarrow \cangenpl_{i}$ are the same, we can use the same argument as in the proof of  Corollary \ref{cor_new_candidate_factor_through_genestier}.
\end{proof}

\begin{corollary}
	The blow-up $\can\rightarrow \gr{\Lambda_0}$ factors through $\cangen\rightarrow\gr{\Lambda_0}$.
\end{corollary}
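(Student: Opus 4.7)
The plan is to observe that both $\can$ and $\cangen$ can be realised as scheme-theoretic closures of the common generic fiber $\gr{\Lambda_0}_{\quot}$ inside two different blow-ups of $\Projsp(\bigwedge^k\Lambda_0 )$, and to combine this with the factorization already provided by Corollary \ref{cor_new_candidate_factor_through_genestier}.

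First I would re-interpret $\cangen$ as a closed subscheme of $\cangenpl$. Iterating Remark \ref{rem_gen_under_pluecker} together with Lemma \ref{lem_gen_proj_totaltransforms}, $\cangen$ is the strict transform of the Pl\"ucker image $\gr{\Lambda_0}\hookrightarrow \Projsp(\bigwedge^k\Lambda_0 )$ under the full sequence of blow-ups $\cangenpl\rightarrow \Projsp(\bigwedge^k\Lambda_0)$. Since every center in this sequence lies in the special fiber, the centers are disjoint from the generic fiber $\gr{\Lambda_0}_{\quot}$, and hence $\cangen$ coincides with the scheme-theoretic closure of $\gr{\Lambda_0}_{\quot}$ inside $\cangenpl$. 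Analogously, Definition \ref{def_candidate_for_resolution} exhibits $\can$ as an iterated strict transform, all of whose centers again lie in the special fiber, so $\can$ is the scheme-theoretic closure of $\gr{\Lambda_0}_{\quot}$ inside $\M{\overline{\bigwedge^k\Gamma^{\mathrm{st}}}}$.

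Next, Corollary \ref{cor_new_candidate_factor_through_genestier} furnishes a morphism $\pi\colon \M{\overline{\bigwedge^k\Gamma^{\mathrm{st}}}}\rightarrow \cangenpl$ over $\Projsp(\bigwedge^k\Lambda_0)$. In particular $\pi$ restricts to the identity on the common generic fiber $\Projsp(\bigwedge^k\Lambda_0)_{\quot}$, and therefore sends $\gr{\Lambda_0}_{\quot}\subseteq \can$ into $\gr{\Lambda_0}_{\quot}\subseteq \cangen$. The scheme-theoretic preimage $(\pi\vert_{\can})^{-1}(\cangen)$ is then a closed subscheme of $\can$ containing the dense open $\gr{\Lambda_0}_{\quot}$. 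Since $\can$ is reduced and irreducible, being the scheme-theoretic closure of the integral subscheme $\gr{\Lambda_0}_{\quot}$, this preimage must equal all of $\can$. Consequently $\pi\vert_{\can}$ factors through the closed immersion $\cangen\hookrightarrow \cangenpl$, producing the desired morphism $\can\rightarrow \cangen$.

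Finally, compatibility with the structure maps to $\gr{\Lambda_0}$ is automatic: the composition $\can\rightarrow \cangen\rightarrow \gr{\Lambda_0}$ agrees with the original map $\can\rightarrow \gr{\Lambda_0}$ because both are obtained by composing with the blow-downs to $\Projsp(\bigwedge^k\Lambda_0)$ and then factoring through the Pl\"ucker image of $\gr{\Lambda_0}$, and the full diagram of blow-ups commutes by construction. I do not expect a substantial obstacle: the heavy lifting has been done in Corollary \ref{cor_new_candidate_factor_through_genestier}, and what remains is only the formal universal-property argument that both $\can$ and $\cangen$ are determined by their common generic fiber.
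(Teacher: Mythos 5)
Your proof is correct and, while it shares the same underlying idea as the paper's, it takes a noticeably more direct route. The paper proves the corollary via the chain $\can\hookrightarrow\M{\overline{\bigwedge^k\Gamma^{\mathrm{st}}}}\rightarrow\cangenstrpl$ (using Lemma \ref{lem_convex_factor_cangenstrpl}) together with the closed immersion $\cangenstr\hookrightarrow\cangenstrpl$ and the flatness of $\can$, producing a factorization $\can\rightarrow\cangenstr$, which composed with $\cangenstr\rightarrow\cangen$ gives the claim. You instead bypass $\cangenstr$ and $\cangenstrpl$ entirely by working with $\cangenpl$ and Corollary \ref{cor_new_candidate_factor_through_genestier}, observing that both $\can$ and $\cangen$ are scheme-theoretic closures of the common generic fiber $\gr{\Lambda_0}_{\quot}$ (a point the paper encodes implicitly in "$\can$ is flat"), and that the map $\M{\overline{\bigwedge^k\Gamma^{\mathrm{st}}}}\rightarrow\cangenpl$ respects the generic fiber, so the preimage of the closed subscheme $\cangen$ already contains the dense $\gr{\Lambda_0}_{\quot}$ and must equal all of $\can$. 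The justification that $\cangen$ really is the strict transform of the Pl\"ucker image under the full composite $\cangenpl\rightarrow\Projsp(\bigwedge^k\Lambda_0)$ is exactly what Remark \ref{rem_gen_under_pluecker} supplies after iteration, as you note. The payoff of your route is that the corollary never touches $\cangenstr$; the paper's route produces the intermediate factorization $\can\rightarrow\cangenstr$ as a byproduct, which the paper makes further use of.
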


\begin{proof}
	The map $\can\rightarrow \M{\overline{\bigwedge^k\Gamma^{\mathrm{st}}}}$ is by construction the strict transform of the Pl\"ucker embedding $\gr{\Lambda_0}\rightarrow\Projsp(\bigwedge^k\Lambda_0 )$ and hence is a closed immersion. Similarly $\cangenstr\rightarrow\cangenstrpl$ is a closed immersion. Now since $\can$ is flat the restriction of $\M{\overline{\bigwedge^k\Gamma^{\mathrm{st}}}}\rightarrow\cangenstrpl$ to $\can$ factors through $\cangenstr$.
\end{proof}

\subsection{Comparisons of the candidates}
\label{sec_comparisons}
Before we go on and prove some relation between the objects constructed above, let us summarise  all of them with the maps between them in the following diagram: 
\begin{align*}
\xymatrix{
 \cangenstr \ar[d]\ar[dr]	&\can\ar[d]\ar[r]\ar[l] 						& \M{\overline{\bigwedge^k\Gamma^{\mathrm{st}}}}\ar[d]\ar[r] &  \cangenstrpl  \ar[dl] \ar[d]\\
	\cangen\ar[dr]			&\Mgr{k}{\Gamma^{\mathrm{st}}}\ar[r]\ar[d] 	& \M{\bigwedge^k \Gamma^{\mathrm{st}}}\ar[d] & \cangenpl\ar[dl] \\
							&\gr{\Lambda_0}\ar[r]						& \Projsp(\bigwedge^k\Lambda_0 )
}	
\end{align*}
For the case that the map $\cangen\rightarrow \gr{\Lambda_0}$ is factoring over $\Mgr{k}{\Gamma^{\mathrm{st}}}$ by construction we have an identification $\cangen=\cangenstr$. This is the first an easiest relation.

\begin{lemma}
\label{lem_irr_comp_of_can_in_cangenstr}
	Assume Conjecture \ref{conj_convexcl_bijection}. Then the image in $\cangenstrpl_{\res}$ of an irreducible component $C$ of the special fiber $\M{\overline{\bigwedge^k\Gamma^{\mathrm{st}}}}_{\res}$ is an irreducible component.
\end{lemma}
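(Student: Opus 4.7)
The plan is to compare $\pi(C)$ with the strict transform of the ``matching'' irreducible component in $\M{\bigwedge^k\Gamma^{\mathrm{st}}}_{\res}$. Factor the canonical map as $\projbar=\mu\circ\pi$, where $\pi\colon \M{\overline{\bigwedge^k\Gamma^{\mathrm{st}}}}\to\cangenstrpl$ is provided by Lemma \ref{lem_convex_factor_cangenstrpl} and $\mu\colon\cangenstrpl\to\M{\bigwedge^k\Gamma^{\mathrm{st}}}$ is the sequence of blow-ups from Definition \ref{def_cangenstr}. Given an irreducible component $C$ of $\M{\overline{\bigwedge^k\Gamma^{\mathrm{st}}}}_{\res}$, Conjecture \ref{conj_convexcl_bijection} furnishes an irreducible component $C':=\projbar(C)$ of $\M{\bigwedge^k\Gamma^{\mathrm{st}}}_{\res}$, and Lemma \ref{lem_irr_comp_birational_map} tells us that $\projbar|_C\colon C\to C'$ is birational. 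My goal is to prove $\pi(C)=\widetilde{C'}$, where $\widetilde{C'}\subseteq\cangenstrpl$ is the strict transform of $C'$ under $\mu$.

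First I would check that $\widetilde{C'}$ is itself an irreducible component of $\cangenstrpl_{\res}$. By definition $\widetilde{C'}$ is the closure in $\cangenstrpl$ of $\mu^{-1}(U)$, where $U\subseteq C'$ is the open locus over which $\mu$ restricts to an isomorphism. Hence $\widetilde{C'}$ is irreducible, $\mu|_{\widetilde{C'}}$ is birational onto $C'$, and $\dim\widetilde{C'}=\dim C'=\binom{n}{k}-1$. Since the generic fibre of $\cangenstrpl$ is $\Projsp(\bigwedge^k\quot^n)$, the special fibre has pure dimension at most $\binom{n}{k}-1$, so $\widetilde{C'}$ is necessarily maximal among irreducible closed subsets of $\cangenstrpl_{\res}$ and hence is a component.

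The heart of the argument is the identification $\pi(C)=\widetilde{C'}$. I would pick a dense open $V\subseteq C$ on which $\projbar|_C$ restricts to an isomorphism onto its image and such that $\projbar(V)\subseteq U$; then $\pi|_V$ is an isomorphism onto its image, and by construction $\pi(V)\subseteq\mu^{-1}(U)$, the dense open part of $\widetilde{C'}$. Properness of $\pi$ ensures that $\pi(C)$ is closed, so $\pi(C)=\overline{\pi(V)}\subseteq\widetilde{C'}$. Conversely, from $\projbar|_C\colon C\twoheadrightarrow C'$ we obtain $\dim\pi(C)\geq\dim C'=\dim\widetilde{C'}$, which together with the inclusion and the irreducibility of $\widetilde{C'}$ forces $\pi(C)=\widetilde{C'}$.

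The main technical point is verifying that $V\subseteq C$ can be chosen so that $\projbar(V)$ simultaneously avoids \emph{every} center of the blow-ups defining $\mu$. These centers pull back from the linear subspaces $\Projsp(V_I)\subseteq\Projsp(\bigwedge^k\Lambda_0)$ appearing in $\cangenpl$. By Theorem \ref{thm_components_linear_subsp} (available once the conjecture is in force), $C'$ surjects onto a specific $\Projsp(V_{I_0})$, and the other centers pull back from strictly smaller $\Projsp(V_J)$ with $J<I_0$. Their traces on $C'$ are therefore proper closed subsets, and removing them from $C'$ leaves a dense open over which $\mu$ is an isomorphism and into which $\projbar|_C$ can be made to land after shrinking $V$. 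This is the only place where the conjecture is truly used; the rest is formal manipulation of birational maps and blow-ups.
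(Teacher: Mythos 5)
Your proposed identification $\pi(C)=\widetilde{C'}$ has a genuine gap. By Theorem \ref{thm_components_linear_subsp}, $C'=\projbar(C)$ is the irreducible component $C_{I_0}^{\mathrm{pr}}$ that surjects onto $\Projsp(V_{I_0})$ for some $I_0\in\binom{[n]}{k}$, and set-theoretically $(\project{0})^{-1}(\Projsp(V_{I_0}))=\bigcup_{J\leq I_0}C_J^{\mathrm{pr}}\supseteq C'$. Unless $I_0$ is the maximal element (so that $\Projsp(V_{I_0})=\Projsp(\bigwedge^k\Lambda_0)$), the subspace $\Projsp(V_{I_0})$ corresponds to a \emph{proper} Schubert variety and hence is one of the centers blown up in $\cangenpl$; its preimage is the corresponding center of the blow-ups defining $\mu$. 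Consequently $C'$ is contained, set-theoretically, in that center, so the open locus $U\subseteq C'$ over which $\mu$ is an isomorphism is empty, the strict transform $\widetilde{C'}$ is empty, and no dense open $V\subseteq C$ with $\projbar(V)\subseteq U$ can exist. Your claim that ``the other centers pull back from strictly smaller $\Projsp(V_J)$ with $J<I_0$'' ignores the center coming from $\Projsp(V_{I_0})$ itself; this is exactly where the argument breaks. The failure occurs for all but one choice of $C$, so it is not a boundary case but the generic situation.

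The paper's own proof avoids strict transforms entirely. It observes that $\pi(C)$ is irreducible and hence lies in some irreducible component $D$ of $\cangenstrpl_\res$; that by Conjecture \ref{conj_convexcl_bijection} two distinct components $C_1\neq C_2$ of $\M{\overline{\bigwedge^k\Gamma^{\mathrm{st}}}}_\res$ have images $\projbar(C_1)\neq\projbar(C_2)$ which are already distinct components of $\M{\bigwedge^k\Gamma^{\mathrm{st}}}_\res$, so they cannot both lie in $\mu(D)$ and in particular cannot both map into $D$; and finally that $\pi$ is surjective, so $D$ is covered by images of components, forcing $D=\pi(C)$ for the unique component that maps into $D$. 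You would do better to replace the strict-transform computation with this counting argument, which requires no control over how $C'$ interacts with the blow-up centers.
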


\begin{proof}
	For an irreducible component $C$ of $\M{\overline{\bigwedge^k\Gamma^{\mathrm{st}}}}_{\res}$ the image in $\cangenstrpl_{\res}$ is irreducible and hence lies in an irreducible component. But the images in $\M{{\bigwedge^k\Gamma^{\mathrm{st}}}}_{\res}$ of two different irreducible components of $\M{\overline{\bigwedge^k\Gamma^{\mathrm{st}}}}_{\res}$ are not contained in the same irreducible component. Hence for an irreducible component of $\cangenstrpl_{\res}$ there is at most one irreducible component mapping to it. Now since the map $\M{\overline{\bigwedge^k\Gamma^{\mathrm{st}}}}_{\res} \rightarrow \cangenstrpl_{\res}$ is surjective the image of $C$ is a full irreducible component.
\end{proof}

\begin{prop}
\label{prop_cangenstr_semistable_then_isom}
	 Assume Conjecture \ref{conj_convexcl_bijection} and that $\cangenstrpl$ is semi-stable. Then the projection $\M{\overline{\bigwedge^k\Gamma^{\mathrm{st}}}} \rightarrow \cangenstrpl$ of Lemma \ref{lem_convex_factor_cangenstrpl} is an isomorphism. In this case also the projection $\can\rightarrow \cangenstr$ is an isomorphism.
\end{prop}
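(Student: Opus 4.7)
The plan is to establish the isomorphism $f \colon \M{\overline{\bigwedge^k\Gamma^{\mathrm{st}}}} \to \cangenstrpl$ of Lemma \ref{lem_convex_factor_cangenstrpl}; the claim about $\can \to \cangenstr$ will then follow by restriction. I would begin by observing that $f$ is a proper birational morphism between normal schemes: properness because both source and target are projective over $\rg$; birationality because both arise from $\M{\bigwedge^k\Gamma^{\mathrm{st}}}$ via blow-ups with centres in the special fibre, hence share a common generic fibre on which $f$ is the identity; and normality because both are semi-stable, the source by Proposition \ref{prop_convex_mustafin_semi-stable} and the target by hypothesis. The key input is Lemma \ref{lem_irr_comp_of_can_in_cangenstr}, which under Conjecture \ref{conj_convexcl_bijection} asserts that every irreducible component of $\M{\overline{\bigwedge^k\Gamma^{\mathrm{st}}}}_{\res}$ maps onto an irreducible component of $\cangenstrpl_{\res}$, so $f$ contracts no component of the special fibre.

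The core step is then to upgrade this to the isomorphism statement. Any exceptional prime divisor on the source would have to lie in the special fibre (since $f$ is an isomorphism on the generic fibre) and, by semi-stability, would then be one of its irreducible components --- which the previous paragraph has just ruled out. Thus $f$ has no exceptional prime divisor. To conclude that $f$ is an isomorphism I would invoke local factoriality of the semi-stable singularities $\rg[x_0,\dots,x_N]/(x_0 \cdots x_m - \unif)$ (the special fibre components $V(x_i)$ are principal, cut out by the single variable $x_i$, and the generic fibre is a localisation of a polynomial ring, hence a UFD, so the divisor class group vanishes): this makes $\cangenstrpl$ locally factorial, and the standard purity principle for proper birational morphisms to a locally factorial target then forces a morphism without exceptional divisor to be an isomorphism.

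For the final assertion, by construction both $\can$ and $\cangenstr$ are successive strict transforms of $\gr{\Lambda_0}$ under the Pl\"ucker embedding in their respective ambient schemes $\M{\overline{\bigwedge^k\Gamma^{\mathrm{st}}}}$ and $\cangenstrpl$; equivalently, each is the flat closure of the common generic fibre $\gr{\Lambda_0}_{\quot}$. Since the isomorphism $f$ restricts to the identity on this generic fibre, it carries the flat closure on one side to the flat closure on the other, yielding the desired isomorphism $\can \to \cangenstr$. The main obstacle in the argument is the passage from ``no exceptional divisor'' to ``isomorphism''; local factoriality of the semi-stable singularities is precisely what is needed to exclude a nontrivial small birational contraction, and checking this factoriality is the one place where the special form of the singularities enters in an essential way.
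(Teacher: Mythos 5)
Your argument is correct in substance but takes a genuinely different route from the paper. The paper builds the inverse morphism $\cangenstrpl\rightarrow\M{\overline{\bigwedge^k\Gamma^{\mathrm{st}}}}$ directly, by induction over the chain of lattice classes $[\Lambda^{\mathrm{pl}}_0],\dots,[\Lambda^{\mathrm{pl}}_{\binom{n}{k}-1}]$: at each step, Lemma \ref{lem_inverse_of_center} together with Lemma \ref{lem_irr_comp_of_can_in_cangenstr} shows that the inverse image in $\cangenstrpl$ of the centre $Z_{\Lambda^{\mathrm{pl}}_{i+1}}$ of the blow-up $\M{\{[\Lambda^{\mathrm{pl}}_0],\dots,[\Lambda^{\mathrm{pl}}_{i+1}]\}}\rightarrow\M{\{[\Lambda^{\mathrm{pl}}_0],\dots,[\Lambda^{\mathrm{pl}}_i]\}}$ is a union of irreducible components of $\cangenstrpl_{\res}$; semi-stability then makes this a Cartier divisor, so the universal property of blow-ups gives the factorisation, and induction produces the inverse. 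You instead phrase the argument in terms of van der Waerden purity: no exceptional divisor (Lemma \ref{lem_irr_comp_of_can_in_cangenstr} plus the observation that any prime divisor inside the special fibre is a component of it), target locally factorial, hence the proper birational morphism must be an isomorphism. This is a cleaner structural argument that avoids threading through the chain of blow-ups, and both routes use the same two inputs (the conjecture-dependent Lemma \ref{lem_irr_comp_of_can_in_cangenstr} and semi-stability of $\cangenstrpl$) --- the paper uses semi-stability to get Cartier divisors for the universal property, you use it to get factoriality for purity. Your treatment of the second assertion ($\can\rightarrow\cangenstr$) matches the paper.

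One point to tighten: the class-group computation you sketch establishes that the Zariski model chart $\spec{\rg[x_0,\dots,x_N]/(x_0\cdots x_m-\unif)}$ is a UFD, but $\cangenstrpl$ is only \emph{\'etale}-locally of this form, and factoriality does not in general transfer along \'etale morphisms. The clean way to bridge this is to observe that the model chart is in fact \emph{regular} (at a closed point of the special fibre the relation $\unif=x_0\cdots x_m$ places $\unif$ in $\mathfrak{m}^2$ once $m\ge 1$, so $\mathfrak{m}/\mathfrak{m}^2$ is spanned by $x_0,\dots,x_N$ and matches the Krull dimension); regularity is an \'etale-local property, so $\cangenstrpl$ is regular, hence locally factorial by Auslander--Buchsbaum, and purity applies. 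With this replacement the proof goes through.
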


\begin{proof}
	Let us assume that for $i\in \left[\binom{n}{k}\right]$ the map $\cangenstrpl\rightarrow \Projsp(\bigwedge^k\Lambda_0 )$ factors through the sequence of blow-ups $\M{\{[\Lambda^{\mathrm{pl}}_0]\dots, [\Lambda^{\mathrm{pl}}_{i}]\}}\rightarrow\Projsp(\bigwedge^k\Lambda_0 )$ defined in the beginning of this chapter. Denote the center of the blow-up $\M{\{[\Lambda^{\mathrm{pl}}_0]\dots, [\Lambda^{\mathrm{pl}}_{i+1}]\}}\rightarrow \M{\{[\Lambda^{\mathrm{pl}}_0]\dots, [\Lambda^{\mathrm{pl}}_{i}]\}}$ by $Z_{\Lambda^{\mathrm{pl}}_{i+1}}$. Then the inverse image of $Z_{\Lambda^{\mathrm{pl}}_{i+1}}$ in $\M{\overline{\bigwedge^k\Gamma^{\mathrm{st}}}}_{\res}$ is by Lemma \ref{lem_inverse_of_center} a union of irreducible components. Hence by Lemma \ref{lem_irr_comp_of_can_in_cangenstr} also the inverse image of $Z_{\Lambda^{\mathrm{pl}}_{i+1}}$ in $\cangenstrpl_{\res}$ is a union of irreducible components. Since $\cangenstrpl$ is semi-stable by assumption, unions of irreducible components of its special fiber are effective Cartier divisors. Therefore the map $\cangenstrpl\rightarrow \Projsp(\bigwedge^k\Lambda_0 )$ factors through $\M{\{[\Lambda^{\mathrm{pl}}_0]\dots, [\Lambda^{\mathrm{pl}}_{i+1}]\}}\rightarrow\Projsp(\bigwedge^k\Lambda_0 )$. By induction we now have an inverse for the map $\M{\overline{\bigwedge^k\Gamma^{\mathrm{st}}}} \rightarrow \cangenstrpl$.\\
	For the second statement we recall that by construction $\can$ and $\cangenstr$ are the strict transforms of the Pl\"ucker embedding under the blow-up $\M{\overline{\bigwedge^k\Gamma^{\mathrm{st}}}}=\cangenstrpl\rightarrow \M{\bigwedge^k\Gamma^{\mathrm{st}}}$ hence they agree. 
\end{proof}

\begin{prop}
\label{prop_semistable_isom}
	Assume Conjecture \ref{conj_convexcl_bijection}. Then $\can\rightarrow \cangenstr$ is an isomorphism whenever $\cangenstr$ is semi-stable and for every irreducible component $C$ of $\cangenstrpl_{\res}$ the intersection $C\cap\cangenstr$ is a union of irreducible components.
\end{prop}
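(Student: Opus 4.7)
My plan is to adapt the proof of Proposition \ref{prop_cangenstr_semistable_then_isom}, but to work directly with $\cangenstr$ in place of $\cangenstrpl$. The goal is to construct an inverse $\cangenstr\rightarrow\can$ to the given projection. Since $\can$ is by Definition \ref{def_candidate_for_resolution} the strict transform of the Pl\"ucker embedding $\gr{\Lambda_0}\hookrightarrow\Projsp(\bigwedge^k\Lambda_0)$ along the chain of blow-ups
\begin{align*}
	\M{\overline{\bigwedge^k\Gamma^{\mathrm{st}}}}\longrightarrow \cdots \longrightarrow \M{\{[\Lambda^{\mathrm{pl}}_0],[\Lambda^{\mathrm{pl}}_1]\}} \longrightarrow \Projsp(\bigwedge^k\Lambda_0),
\end{align*}
it suffices to factor the composition $\cangenstr\rightarrow\Mgr{k}{\Gamma^{\mathrm{st}}}\rightarrow\gr{\Lambda_0}\hookrightarrow\Projsp(\bigwedge^k\Lambda_0)$ through each stage of this tower and then observe that the resulting map lands in $\can$.

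I would perform this lifting by induction on $i$, invoking the universal property of the blow-up $\M{\{[\Lambda^{\mathrm{pl}}_0],\dots,[\Lambda^{\mathrm{pl}}_{i+1}]\}}\rightarrow \M{\{[\Lambda^{\mathrm{pl}}_0],\dots,[\Lambda^{\mathrm{pl}}_{i}]\}}$ in the smooth center $Z_{\Lambda^{\mathrm{pl}}_{i+1}}$ (notation of Lemma \ref{lem_main}). By Lemma \ref{lem_inverse_of_center} the preimage of $Z_{\Lambda^{\mathrm{pl}}_{i+1}}$ in $\M{\overline{\bigwedge^k\Gamma^{\mathrm{st}}}}_\res$ is a union of irreducible components, and combining Conjecture \ref{conj_convexcl_bijection} with Lemma \ref{lem_irr_comp_of_can_in_cangenstr} transfers this to the statement that its preimage in $\cangenstrpl_\res$ is again a union of irreducible components. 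Restricting further along the closed immersion $\cangenstr\hookrightarrow\cangenstrpl$ and applying hypothesis (2) componentwise, the preimage of $Z_{\Lambda^{\mathrm{pl}}_{i+1}}$ in $\cangenstr_\res$ is a union of irreducible components of $\cangenstr_\res$. Hypothesis (1), the semi-stability of $\cangenstr$, then implies that every such union is locally cut out by a single equation, hence is an effective Cartier divisor, and the universal property of the blow-up produces the required lift.

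Iterating through the chain yields a morphism $\cangenstr\rightarrow \M{\overline{\bigwedge^k\Gamma^{\mathrm{st}}}}$. On the generic fiber it factors through the Pl\"ucker embedding by construction, and since $\cangenstr$ is flat over $\rg$ (being semi-stable) its generic fiber is schematically dense, so the morphism factors through the closed subscheme $\can\subseteq\M{\overline{\bigwedge^k\Gamma^{\mathrm{st}}}}$. The two composites $\can\rightarrow\cangenstr\rightarrow\can$ and $\cangenstr\rightarrow\can\rightarrow\cangenstr$ agree with the respective identities on the dense generic fibers and, since both $\can$ and $\cangenstr$ are flat and separated over $\rg$, they are the identity morphisms. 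Hence $\can\rightarrow\cangenstr$ is an isomorphism.

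The main technical obstacle is the interplay between the two hypotheses. Assumption (1) on its own would only give the Cartier-divisor property for unions of components of $\cangenstr_\res$ itself, while the natural object produced by the general theory via Lemma \ref{lem_irr_comp_of_can_in_cangenstr} is a union of components of the larger $\cangenstrpl_\res$. Assumption (2) is precisely the translation mechanism between these two descriptions, and without it the preimage of the blow-up center in $\cangenstr_\res$ would not obviously be a Cartier divisor, so the universal property of the blow-up could not be invoked and the induction would break down.
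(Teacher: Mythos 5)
Your proposal is essentially the same argument the paper gives. The paper also proceeds by the inductive lifting used for Proposition \ref{prop_cangenstr_semistable_then_isom}: it assumes $\cangenstr\rightarrow\gr{\Lambda_0}$ already factors through $\can_i$, identifies the centre of $\can_{i+1}\rightarrow\can_i$ inside $\M{\{[\Lambda^{\mathrm{pl}}_0],\dots,[\Lambda^{\mathrm{pl}}_i]\}}$ as $\can_i\cap Z_{\Lambda^{\mathrm{pl}}_{i+1}}$, pulls this back to $\cangenstr$, uses Lemma \ref{lem_inverse_of_center} together with Lemma \ref{lem_irr_comp_of_can_in_cangenstr} to see that the preimage is the intersection of a union of irreducible components of $\cangenstrpl_{\res}$ with $\cangenstr$, applies hypothesis (2) to turn that into a union of irreducible components of $\cangenstr_{\res}$, invokes semi-stability of $\cangenstr$ to make it an effective Cartier divisor, and then applies the universal property of the blow-up. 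The only cosmetic difference is in bookkeeping: you lift $\cangenstr$ through the full tower $\M{\{[\Lambda^{\mathrm{pl}}_0],\dots,[\Lambda^{\mathrm{pl}}_i]\}}$ and observe at the end that the morphism lands in the closed subscheme $\can$ (via flatness and schematic density of the generic fibre), whereas the paper keeps track of landing in $\can_i\subseteq\M{\{[\Lambda^{\mathrm{pl}}_0],\dots,[\Lambda^{\mathrm{pl}}_i]\}}$ at each inductive step; since the centres all lie in the special fibre and $\can_i$ is the closure of the generic fibre, these two presentations are equivalent. Your closing remark correctly pinpoints the division of labour between hypotheses (1) and (2), which matches the structure of the paper's argument.
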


\begin{proof}
	The proof argues in the same way as for the lemma above. For convenience let us recall the arguments.\\
	Let us assume that for $i\in \left[\binom{n}{k}\right]$ the map $\cangenstr\rightarrow \gr{ \Lambda_0 }$ factors through the blow-up\linebreak $\can_i \rightarrow\gr{ \Lambda_0 }$ with $\can_i$ as in Definition \ref{def_candidate_for_resolution}. Denote the center of the blow-up $\can_{i+1}\rightarrow \can_i$ by $Z_{i+1}$.\\

	 Using the notation in the proof of the proposition above we write $Z_{i+1}$ as the intersection $\cangenstr\cap Z_{\Lambda^{\mathrm{pl}}_{i+1}}$. Hence again the inverse image of $Z_{i+1}$ in $\can_{\res}$ is the intersection of a union of irreducible components of $\cangenstrpl_{\res}$ with $\cangenstr$. Using the hypothesis this is an intersection of irreducible components of $\cangenstr_{\res}$. Since $\cangenstr$ is semi-stable by assumption, unions of irreducible components of its special fiber are effective Cartier divisors. Therefore the map $\cangenstr\rightarrow \gr{\Lambda_0 }$ factors through $\can_{i+1}\rightarrow\gr{\Lambda_0 }$. By induction we now have an inverse for the map $\can \rightarrow \cangenstr$.
\end{proof}

\begin{thm}
\label{thm_new_candidate_factor_through_gen}
	Assume Conjecture \ref{conj_convexcl_bijection} and that the blow-up $\cangen\rightarrow \gr{\Lambda_0}$ factors through\linebreak $\Mgr{k}{\Gamma^{\mathrm{st}}}$. Then the map ${\can}\rightarrow \cangen$ of Corollary \ref{cor_new_candidate_factor_through_genestier} is an isomorphism in any of the following cases:
	\begin{enumerate}
		\item $\cangenstrpl$ is semi-stable
		\item $\cangenstr$ is semi-stable and for every irreducible component $C$ of $\cangenstrpl_{\res}$ the intersection $C\cap\cangenstr$ is a union of irreducible components
	\end{enumerate}
\end{thm}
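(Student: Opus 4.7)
The plan is to reduce both cases to the two preceding propositions by first observing that the hypothesis forces an identification between $\cangen$ and $\cangenstr$. Recall that $\cangenstr$ was defined in Definition \ref{def_cangenstr} as the strict transform of the projection $\Mgr{k}{\Gamma^{\mathrm{st}}} \to \gr{\Lambda_0}$ under the blow-up $\cangen \to \gr{\Lambda_0}$. If the map $\cangen \to \gr{\Lambda_0}$ already factors through $\Mgr{k}{\Gamma^{\mathrm{st}}}$, then the canonical morphism $\cangen \to \cangen \times_{\gr{\Lambda_0}} \Mgr{k}{\Gamma^{\mathrm{st}}}$ provided by the factorisation identifies $\cangen$ with an irreducible closed subscheme of the fiber product whose generic fiber coincides with that of the strict transform. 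Since both $\cangen$ and $\cangenstr$ are the closures of the generic fiber of $\gr{\Lambda_0}$ inside $\cangen \times_{\gr{\Lambda_0}} \Mgr{k}{\Gamma^{\mathrm{st}}}$, one concludes $\cangen = \cangenstr$.

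Once this identification is in hand, both cases fall to the previous propositions. In case (1) the semi-stability of $\cangenstrpl$ lets me invoke Proposition \ref{prop_cangenstr_semistable_then_isom}, which yields that the projection $\can \to \cangenstr$ is an isomorphism. Composing with the equality $\cangenstr = \cangen$ gives that $\can \to \cangen$ is an isomorphism. In case (2) the semi-stability of $\cangenstr$ together with the hypothesis that the intersections $C \cap \cangenstr$ are unions of irreducible components allows me to apply Proposition \ref{prop_semistable_isom}; this again produces an isomorphism $\can \to \cangenstr = \cangen$.

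The only nontrivial point is thus the initial identification $\cangen = \cangenstr$. I expect this to be the main (though still mild) obstacle, because one has to check that the strict transform construction really recovers all of $\cangen$ and not a proper closed subscheme; this requires using that $\cangen \to \gr{\Lambda_0}$ is birational (the blown-up centers lie in the special fiber) together with flatness of $\Mgr{k}{\Gamma^{\mathrm{st}}}$ over $\rg$, so that the generic fiber of $\cangen$ is dense and the strict transform of a morphism which already lifts to $\Mgr{k}{\Gamma^{\mathrm{st}}}$ is nothing but the original scheme. Apart from this observation the argument is a direct concatenation of Proposition \ref{prop_cangenstr_semistable_then_isom} in case (1) and Proposition \ref{prop_semistable_isom} in case (2), needing no further work.
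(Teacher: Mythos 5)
Your overall plan — first identify $\cangenstr$ with $\cangen$ under the hypothesis that $\cangen\to\gr{\Lambda_0}$ factors through $\Mgr{k}{\Gamma^{\mathrm{st}}}$, and then invoke Proposition \ref{prop_cangenstr_semistable_then_isom} in case (1) and Proposition \ref{prop_semistable_isom} in case (2) — is precisely the paper's approach, and you match the two cases to the two propositions correctly.

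Where you diverge is in the justification of the identification $\cangen\cong\cangenstr$, and here there is a gap. You argue that $\cangen$ and $\cangenstr$ are both the closure of the common generic fiber inside $\cangen\times_{\gr{\Lambda_0}}\Mgr{k}{\Gamma^{\mathrm{st}}}$. But in this paper $\cangenstr$ is \emph{not} defined as a closure in a fiber product: by Definition \ref{def_cangenstr} it is built as an iterated blow-up of $\Mgr{k}{\Gamma^{\mathrm{st}}}$ in the preimages of the centers of $\cangen_i\to\cangen_{i-1}$. To run your argument you would first have to show that this iterated blow-up coincides with the closure you describe (and in particular that $\cangenstr$ actually embeds as a closed subscheme of the fiber product), which you assert but do not prove. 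The paper avoids this entirely by applying the universal property of blow-ups: the hypothesised factorisation $\cangen\to\Mgr{k}{\Gamma^{\mathrm{st}}}=\cangenstr_0$ lifts inductively through each $\cangenstr_{i+1}\to\cangenstr_i$ because the preimage in $\cangen$ of the $(i{+}1)$-st blow-up center is already a Cartier divisor (being the pullback of the exceptional divisor of $\cangen_{i+1}\to\cangen_i$), and the resulting map $\cangen\to\cangenstr$ is inverse to $\cangenstr\to\cangen$ on the dense generic fiber. I would recommend replacing your closure argument with this universal-property argument; it is shorter and does not require reconciling two different definitions of strict transform.
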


\begin{proof}
	Since by hypothesis $\cangen\rightarrow \gr{\Lambda_0}$ factors through $\Mgr{k}{\Gamma^{\mathrm{st}}}$ the map $\cangenstr\rightarrow\cangen$ is an isomorphism by the universal property of blow-ups. By Proposition \ref{prop_semistable_isom} or Proposition \ref{prop_cangenstr_semistable_then_isom} we now see that ${\can}\rightarrow \cangen$ is an isomorphism.
\end{proof}

As mentioned before in the case of $n=5$, $k=2$ the candidate $\cangen$ of Genestier is indeed semi-stable, but the blow up $\cangen\rightarrow \gr{\Lambda_0}$ does not factor through $\Mgr{k}{\Gamma^{\mathrm{st}}}$. Hence it does not give a semi-stable resolution. But for this case G\"ortz constructed in \cite{G04} a semi-stable resolution $\cangoe$ in a neighbourhood of the worst singularity by blowing up irreducible components of $\Mgr{k}{\Gamma^{\mathrm{st}}}$. We will prove that in this case also the candidate $\cangenstr$ is given by these blow-ups of irreducible components and hence is indeed a semi-stable resolution.

\begin{lemma}
\label{lem_cangenstr_bl_of_irrcom}
	For $n=5$ and $k=2$ the blow-up $\cangenstr\rightarrow \Mgr{k}{\Gamma^{\mathrm{st}}}$ defined above can be alternatively constructed in the following way:
	\begin{enumerate}
		\item $\cangenstr_0$ is set to be $\Mgr{k}{\Gamma^{\mathrm{st}}}$
		\item $\cangenstr_1$ is the blow-up of $\cangenstr_0$ in the irreducible component surjecting to the $0$-dimensional Schubert variety in $\gr{\Lambda_0}_{\res}$
		\item $\cangenstr_i$ is the blow-up of $\cangenstr_{i-1}$ in the union of all strict transforms of irreducible component surjecting to $i-1$-dimensional Schubert varieties in $\gr{\Lambda_0}_{\res}$
	\end{enumerate}
\end{lemma}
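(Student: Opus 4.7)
The plan is to compare both constructions stage by stage, using the bijection between irreducible components of $\Mgr{2}{\Gamma^{\mathrm{st}}}_{\res}$ and subsets of $\binom{[5]}{2}$. Since $k=2$, Conjecture~\ref{conj_convexcl_bijection} holds by Proposition~\ref{prop_conj_k_2}, so Proposition~\ref{porp_pluecker_bijection_irr_comp} applies; write $C_I$ for the irreducible component indexed by $I\in\binom{[5]}{2}$. Inspecting the proof of Proposition~\ref{porp_pluecker_bijection_irr_comp} shows that the Schubert cell $X_I^\circ\subseteq \gr{\Lambda_0}_{\res}$ pulls back under $\projectgr{0}$ to the open stratum $C_I^\circ$ of the corresponding component, so $(\projectgr{0})^{-1}(X_I)=\bigcup_{J\leq I}C_J$ set-theoretically, with reduced structure on both sides.

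Write $\widetilde{\cangenstr}_i$ for the $i$-th stage of the alternative construction in the lemma. Since strict transforms are functorial in compositions of blow-ups, the strict transform $\cangenstr\to \Mgr{2}{\Gamma^{\mathrm{st}}}$ factors as a sequence $\cangenstr_i\to \cangenstr_{i-1}$ mirroring the sequence $\cangen_i\to \cangen_{i-1}$, and each $\cangenstr_i\to \cangenstr_{i-1}$ is the blow-up in the ideal-theoretic pullback of the center $Z_i\subseteq \cangen_{i-1}$ under the projection $\pi_i\colon \cangenstr_{i-1}\to \cangen_{i-1}$. I would then prove by induction on $i$ that $\cangenstr_i\cong \widetilde{\cangenstr}_i$ over $\Mgr{2}{\Gamma^{\mathrm{st}}}$, the base case $i=0$ being trivial.

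For the induction step, the key identification is the following: $Z_i$ is the union over $I$ with $\dim X_I=i-1$ of the strict transforms $\str{X_I}$ in $\cangen_{i-1}$, and I would show that the pullback $\pi_i^{-1}(\str{X_I})$ to $\cangenstr_{i-1}\cong \widetilde{\cangenstr}_{i-1}$ equals the strict transform of $\bigcup_{J\leq I}C_J$ under the blow-ups already carried out on the alternative side. All components $C_J$ with $J<I$ satisfy $\dim X_J<i-1$ and have therefore been blown up in some earlier stage of $\widetilde{\cangenstr}_\bullet$; their strict transforms in $\widetilde{\cangenstr}_{i-1}$ are already Cartier exceptional divisors and contribute nothing new to a further blow-up. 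The only remaining non-Cartier component is $\str{C_I}$, which is precisely the center prescribed by the alternative construction at stage $i$. Taking the union over $I$ with $\dim X_I=i-1$ matches the two centers and closes the induction.

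The principal obstacle is upgrading the set-theoretic matching of centers to a scheme-theoretic one so that the two blow-ups are literally isomorphic. I would address this by working throughout with reduced closed subschemes, which is admissible since Schubert varieties are reduced and the components $C_I$ inherit reduced structure from the reduced flat $\rg$-scheme $\Mgr{2}{\Gamma^{\mathrm{st}}}$ (flatness by \cite{G01}); then a blow-up in a union of reduced closed subschemes is determined by the underlying set. For $n=5$ and $k=2$ the affine charts of $\Mgr{2}{\Gamma^{\mathrm{st}}}$ are small enough (cf.\ \cite[Appendix~D]{Gor19}) that a direct local computation of the matching ideal sheaves is available as a fallback if the abstract argument needs reinforcement.
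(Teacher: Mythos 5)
Your overall strategy (induct on the stage and match the centers) is the same as the paper's, but the step that carries the actual content is not established.  Set $\pi_i\colon\widetilde{\cangenstr}_{i-1}\to\cangen_{i-1}$.  The claim you need is that the scheme-theoretic inverse image $\pi_i^{-1}(\str{X_I})$ coincides with the strict transform $\str{C_I}$ of the component $C_I$.  You assert that $\pi_i^{-1}(\str{X_I})$ equals ``the strict transform of $\bigcup_{J\leq I}C_J$ under the blow-ups already carried out,'' and then argue that the pieces with $J<I$ drop out; but this conflates \emph{pullback of the strict transform} with \emph{strict transform of the pullback}, which are different operations in general.  One has $\pi_i^{-1}(\str{X_I})\subseteq\pi_i^{-1}(p^{-1}(X_I))=q^{-1}\bigl(\projectgr{0}^{-1}(X_I)\bigr)$, i.e.\ the inverse image is a priori contained in the full \emph{total} transform of $\bigcup_{J\leq I}C_J$, and it is precisely the identification of which part of that total transform is actually hit that needs proof.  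The paper closes this by a surjectivity/uniqueness argument (the full resolution $\cangen$ surjects onto the intermediate blow-up, so the exceptional divisor over $\str{X_I}$ is dominated by a component of $\cangenstr_{\res}$, and the component of $\Mgr{2}{\Gamma^{\mathrm{st}}}_{\res}$ surjecting onto $X_I$ is unique); your proposal has nothing playing that role.

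There is a second, closely related confusion.  For $J<I$ you say that the strict transforms of $C_J$ in $\widetilde{\cangenstr}_{i-1}$ ``are already Cartier exceptional divisors and contribute nothing new to a further blow-up.''  Once $\str{C_J}$ has been a blow-up center, its \emph{strict} transform in all later stages is empty; the \emph{total} transform is the exceptional divisor.  These are not the same thing, and neither supports the inference you want: blowing up in a union $Z\cup D$ with $D$ an effective Cartier divisor is \emph{not} in general the same as blowing up in $Z$ (the ideal of the union is the intersection of ideals, not the product, so the usual ``invertible factors can be dropped'' argument does not apply).  Finally, you propose to escape the scheme/set gap by declaring that both centers are reduced so that agreeing on points suffices; but reducedness of the scheme-theoretic pullback $\pi_i^{-1}(\str{X_I})$ is exactly one of the things you have not established, and cannot be assumed.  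The remark that one could fall back on a chart computation in \cite[Appendix~D]{Gor19} is an acknowledgment that the abstract argument is incomplete rather than a substitute for it.
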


\begin{proof}
	Recall that by definition $\cangen_{i+1}$ is the blow-up of $\cangen_{i}$ in union of the strict transform of Schubert varieties of dimension $i$. We will inductively show that $\cangenstr_{i+1}$ is the strict transform of the map $\mathrm{pr}_i\colon \cangenstr_{i}\rightarrow \cangen_{i}$ under the blow-up $\cangen_{i+1} \rightarrow\cangen_{i}$.\\
	We have to show that for the strict transform $\str{X_I}$ of a Schubert variety $X_I$ of dimension $i$ under the blow-up $\cangen_i\rightarrow \gr{\Lambda_0}$ the inverse image $\mathrm{pr}_i^{-1}(\str{X_I})$ in $\cangenstr_i$ is the strict transform $\str{C_I}$ of the irreducible component $C_I$ of $\Mgr{k}{\Gamma^{\mathrm{st}}}_{\res}$ under the blow-up $\cangenstr_i\rightarrow\Mgr{k}{\Gamma^{\mathrm{st}}}$. But $\cangen\rightarrow\bl{\str{X_I}}{ \cangen_{i}}$ is surjective and the exceptional divisor $E$ in $\bl{\str{X_I}}{ \cangen_{i}}$ is an irreducible component in the special fiber. Hence there exists an irreducible component in $\cangen_{\res}$ surjecting to $E$. Again since $\cangenstr\rightarrow \cangen$ is surjective there exists an irreducible component of $\cangenstr_{\res}$ surjecting to $E$. Now the image of $E$ in $\bl{\mathrm{pr}_i^{-1}(\str{X_I})}{ \cangenstr_{i}}$ is the exceptional divisor and its image in $\cangen_i$ is an irreducible component.\\
	 Using that the irreducible components in $\cangenstr_{i,\res}$ are precisely the strict transforms of irreducible components of $\Mgr{k}{\Gamma^{\mathrm{st}}}$ and that the irreducible component surjecting to $X_I$ is unique, we have shown the claim.
\end{proof}

\begin{thm}
\label{thm_5_2_goe}
	For $n=5$ and $k=2$ the candidate $\cangenstr$ restricts to the semi-stable resolution defined in \cite{G04} in a neighbourhood of the worst singularity of $\loc$.
\end{thm}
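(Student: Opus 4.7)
The plan is to match G\"ortz's construction in \cite{G04} step-by-step with the explicit description of $\cangenstr$ provided by Lemma \ref{lem_cangenstr_bl_of_irrcom}. Recall that in loc.\ cit.\ G\"ortz works in an affine open neighbourhood $U$ of the worst singularity of $\loc$ (the unique closed $T$-orbit, corresponding to the $0$-dimensional Schubert variety $X_{\{0,1\}}$ in $\gr{\Lambda_0}_\res$), and constructs $\cangoe$ as a sequence of blow-ups whose centers are certain irreducible components of the iterated special fibres, stratified by the dimension of their image under $\projectgr{0}$. Lemma \ref{lem_cangenstr_bl_of_irrcom} describes $\cangenstr$ by precisely the same recipe, now performed globally on $\loc$. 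Hence once we verify that the two sequences of centers coincide on $U$, the universal property of blow-ups yields that $\cangoe$ agrees with the restriction of $\cangenstr$ to $U$.

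First I would fix the affine chart $U$ used in \cite{G04}, together with the identification of irreducible components $C_I$ of $\loc_\res$ with subsets $I \in \binom{[5]}{2}$ via the projection $\projectgr{0}$: by construction $C_I$ is the unique component surjecting to the Schubert variety $X_I$. One then checks, using the explicit equations for $U$, that every $C_I \cap U$ is non-empty (the worst singularity lies in the closure of every component), so all ten components are visible in $U$ and G\"ortz's stratification by the dimension of the target Schubert variety matches ours. Inductively at the $i$-th step, the center of the blow-up $\cangenstr_i \to \cangenstr_{i-1}$, namely the union over all $I$ with $\dim X_I = i-1$ of the strict transforms of the $C_I$, restricts on $U$ to G\"ortz's $i$-th center. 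Moreover the strict transforms of different $C_I$'s with $\dim X_I = i-1$ become disjoint after the first $i-1$ blow-ups, since any intersection sits above a strictly smaller-dimensional Schubert variety, already blown up at an earlier step; hence the blow-up in the union factors as a sequence of individual blow-ups, in any order, matching \cite{G04} exactly.

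The main obstacle is the chart-level verification that G\"ortz's centers, described in \cite{G04} by explicit ideals in the chosen affine chart, really coincide with the $C_I \cap U$ for the appropriate $I$. Concretely, one must compute a defining ideal for each $C_I \cap U$ using the moduli description of $\loc$ and compare with the ideals appearing in loc.\ cit., carrying out the comparison for each dimension stratum $i = 1, \dots, 6$ in turn. Once this local identification is in place, the desired isomorphism follows formally from the universal property of blow-ups together with the disjointness observation above.
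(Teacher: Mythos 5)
Your proposal misidentifies the structural relationship between the two resolutions, and this misidentification is the crux of the argument. You assert that G\"ortz's $\cangoe$ and $\cangenstr$ are built by ``precisely the same recipe,'' so that the theorem reduces to a chart-level verification that ``the two sequences of centers coincide on $U$'' and then follows ``formally from the universal property of blow-ups.'' But the centers do \emph{not} coincide: as the paper points out, G\"ortz's resolution $\cangoe$ blows up only the strict transforms of those irreducible components of $\loc_{\res}$ that do \emph{not} correspond to a lattice class in $\Gamma^{\mathrm{st}}$, whereas $\cangenstr$ (by Lemma \ref{lem_cangenstr_bl_of_irrcom}) blows up the strict transforms of \emph{all} the non--top-dimensional components, stratified by dimension. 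So the set of centers used by $\cangenstr$ is a strict superset of the set used by $\cangoe$, and your plan to verify a coincidence of centers would fail at the outset. In particular the isomorphism does not ``follow formally from the universal property of blow-ups'' once one sees one sequence is literally bigger than the other.

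The missing step is precisely what the paper supplies: the inclusion of centers yields a local factorisation $\cangenstr \rightarrow \cangoe \rightarrow \Mgr{2}{\Gamma^{\mathrm{st}}}$, and then one invokes the \emph{semi-stability} of $\cangoe$ (proved in \cite{G04}) to conclude that the extra centers --- the strict transforms of components \emph{corresponding} to lattices in $\Gamma^{\mathrm{st}}$ --- are already Cartier divisors sitting inside the special fibre of $\cangoe$, so blowing them up is an isomorphism. This gives a map $\cangoe \rightarrow \cangenstr$ in the other direction and hence the local identification $\cangenstr\vert_U \cong \cangoe$. Without this semi-stability input and the observation that one center set contains the other, your argument has no way to reconcile the two different sequences of blow-ups. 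The disjointness of strict transforms of same-dimensional components that you note is correct and is indeed used (cf.\ Lemma \ref{lem_gen_proj_totaltransforms}), but it addresses the order of blow-ups within a dimension stratum, not the mismatch in which components are blown up at all.
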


\begin{proof}
	In Lemma \ref{lem_cangenstr_bl_of_irrcom} we have described $\cangenstr$ by a sequence of blow-ups of $\Mgr{2}{\Gamma^{\mathrm{st}}}$ in irreducible components of $\Mgr{2}{\Gamma^{\mathrm{st}}}_{\res}$. Since the resolution $\cangoe$ is constructed in the same way, but blowing up just the strict transforms of irreducible components not corresponding to a lattice in $\Gamma^{\mathrm{st}}$ we have locally a factorisation of $\cangenstr\rightarrow \Mgr{2}{\Gamma^{\mathrm{st}}}$ over $\cangoe$. Using that $\cangoe$ is semi-stable we now see that we get an inclusion $\cangoe \rightarrow \cangenstr $. 
\end{proof}

\bibliographystyle{alpha}
\bibliography{literatur}

\end{document}